\documentclass[10pt]{amsart}

\usepackage[utf8]{inputenc}
\usepackage[T1]{fontenc}
\usepackage[french,english]{babel}

\usepackage{lmodern}
\usepackage{newtxtext}

\usepackage{graphicx}	

\usepackage[a4paper,plainpages=false,colorlinks,linkcolor=bleuFonce,citecolor=rougeFonce,urlcolor=vertFonce,breaklinks]{hyperref}

\usepackage{placeins}
\usepackage{float} 

\usepackage{tikz}
\usetikzlibrary{patterns}

\usepackage[normalem]{ulem}


\usepackage{color}
\definecolor{vertFonce}	{rgb}{0,0.5,0}
\definecolor{numLignes}	{rgb}{0.17,0.57,0.7}	
\definecolor{gris}		{rgb}{0.5,0.5,0.5}
\definecolor{grisFonce}	{rgb}{0.2,0.2,0.2}
\definecolor{orange}	{rgb}{1,0.65,0.31}		
\definecolor{orangeFonce}{rgb}{1,0.4,0}
\definecolor{bleuFonce}	{rgb}{0,0,0.4}
\definecolor{rougeFonce}{rgb}{0.3,0,0}
\definecolor{rougeWord}	{rgb}{0.5,0,0}
\definecolor{vertClair}	{rgb}{0.8,1,0.8}
\definecolor{rougeClair}{rgb}{1,0.5,0.5}
\definecolor{violet}	{rgb}{0.5,0,0.5}


\usepackage{pict2e}
\setlength{\unitlength}{4pt}

\usepackage{multido}


\usepackage{amsfonts,amssymb,amsthm,amsmath} 
\usepackage{dsfont}		
\usepackage{mathrsfs}
\usepackage{yfonts}
\usepackage{cancel}
 
\newtheorem{lem}{Lemma}[section]
\newtheorem{theorem}{Theorem}[section]
\newtheorem{cor}{Corollary}[section]
\newtheorem{prop}{Proposition}[section]

\newtheorem{remark}{Remark}[section]

\newcommand{\step}[1]	{\paragraph{\itshape\bfseries #1.}}




%
\usepackage{stmaryrd}
\SetSymbolFont{stmry}{bold}{U}{stmry}{m}{n}
\newcommand		{\subsetArrow}	{\mathrel{\ooalign{$\subset$\cr%
\hidewidth\raise-.087ex\hbox{$_\shortrightarrow\mkern-1.5mu$}\cr}}}
\newcommand		{\subsetarrow}	{\mathrel{\ooalign{$\subset$\cr%
\hidewidth\raise-1.45ex\hbox{$\vec{}\mkern6mu$}\cr}}}


\newcommand{\qqquad}	{\qquad\quad}
\newcommand{\qqqquad}	{\qquad\qquad}



\newcommand		{\N}		{\mathbb N}			
\newcommand		{\RR}		{\mathbb R}			
\newcommand		{\R}		{\RR}

\newcommand		{\Rd}		{\R^3}
\newcommand		{\Rdd}		{\R^6}

\newcommand		{\CC}		{\mathbb C}			
\newcommand		{\cH}		{\mathcal H}		
\newcommand		{\cD}		{\mathcal D}		
\newcommand		{\cN}		{\mathcal N}		
\renewcommand	{\L}		{\mathcal L}		
\newcommand		{\cW}		{\mathcal W}		

\newcommand		{\cG}		{\mathcal G}

\newcommand		{\cL}		{\mathcal L}		


\newcommand		\sfA		{\mathsf A}
\newcommand		\sfB		{\mathsf B}
\newcommand		\sfC		{\mathsf C}
\newcommand		\sfD		{\mathsf D}
\newcommand		\sfG		{\mathsf G}
\newcommand		\sfH		{\mathsf H}			
\newcommand		\sfI		{\mathsf I}
\newcommand		\sfJ		{\mathsf J}
\newcommand		\sfQ		{\mathsf Q}
\newcommand		\sfL		{\mathsf L}			
\newcommand		\sfP		{\mathsf P}			
\newcommand		\sfR		{\mathsf R}			
\newcommand		\sfT		{\mathsf T}			
\newcommand		\sfU		{\mathsf U}
\newcommand		\sfV		{\mathsf V}
\newcommand		\sfX		{\mathsf X}			


\newcommand		{\lt}			{\left}				%
\newcommand		{\rt}			{\right}			%
\renewcommand	{\(}			{\lt(}
\renewcommand	{\)}			{\rt)}
\newcommand		{\bangle}[1]	{\lt\langle #1\rt\rangle}
\newcommand		{\weight}[1]	{\bangle{#1}}	

\newcommand		{\inprod}[2]	{\bangle{#1, #2}}
\newcommand		{\com}[1]		{\lt[{#1}\rt]}		

\newcommand		{\n}[1]			{\lt\lvert #1 \rt\rvert}
		
\newcommand		{\nrm}[1]		{\lt\lVert #1\rt\rVert}
\newcommand		{\Nrm}[2]		{\nrm{#1}_{#2}}	
				
\newcommand		{\Lp}[2]		{\Nrm{#1}{#2}}

\newcommand		{\indic}	{\mathds{1}}		

\renewcommand		{\d}		{\mathop{}\!\mathrm{d}}		
\newcommand			{\bd}		{\partial}			
\newcommand			{\dpt}		{\partial_t}

\newcommand			{\dt}		{\frac{\d}{\d t}}	
\newcommand			{\ddt}[1]	{\frac{\d #1}{\d t}}

\newcommand			{\grad}		{\nabla}
\newcommand			{\lapl}		{\Delta}
\newcommand			{\Dx}		{\nabla_x}
\newcommand			{\Dv}		{\nabla_\xi}
\newcommand			{\conj}[1]	{\overline{#1}}		

\DeclareMathOperator{\cF}		{\mathcal{F}}		
\DeclareMathOperator{\im}		{Im}				
\DeclareMathOperator{\tr}		{Tr}				
\DeclareMathOperator{\diag}		{diag}

\newcommand		{\F}[1]			{\cF\!\( #1 \)}		
\renewcommand	{\Im}[1]		{\im\!\( #1 \)}		
\newcommand		{\Tr}[1]		{\tr\!\( #1 \)}		
\newcommand		{\Diag}[1]		{\diag\!\( #1 \)}

\newcommand		{\intd}			{\int_{\Rd}}
\newcommand		{\intdd}		{\int_{\Rdd}}
\newcommand		{\iintd}		{\iint_{\Rdd}}


\newcommand		{\jj}			{\mathrm{j}}	
\newcommand		{\init}			{\mathrm{in}}

\newcommand		{\fb}			{\mathfrak b}
\newcommand		{\eps}			{\varepsilon}

\newcommand		{\cC}			{\mathcal{C}}

\usepackage{braket}
\newcommand		{\Inprod}[2]	{\Braket{#1 | #2}}

\renewcommand	{\r}		{\op}				
\newcommand		{\op}		{\boldsymbol{\rho}}	

\newcommand		{\opm}		{\boldsymbol{m}}	
\newcommand		{\opw}		{\boldsymbol{w}}	
\newcommand		{\opup}		{\boldsymbol{\upsilon}}
\newcommand		{\opmu}		{\boldsymbol{\mu}}	
\newcommand		{\diagopmu}	{\rho_{\opmu}}
\newcommand		{\opnu}		{\boldsymbol{\nu}}	
\newcommand		{\tildop}		{\,\tilde{\!\op}}	


\newcommand		{\opp}		{\boldsymbol{p}}

\newcommand		{\opN}		{\boldsymbol{\rho}_N}
\newcommand		{\opNr}		{\boldsymbol{\rho}_{N:1}}
\newcommand		{\opupN}	{\opup_N}
\newcommand		{\opNop}	{\boldsymbol{\rho}_{N,\op}}	
\newcommand		{\opNf}		{\boldsymbol{\rho}_{N,f}}	
\newcommand		{\opupNop}	{\opup_{N,\op}}

\newcommand		{\Dh}		{\boldsymbol{\nabla}}	
\newcommand		{\DDh}		{\boldsymbol{\Delta}}	
\newcommand		{\Dhx}[1]	{\Dh_{\!x} #1}			
\newcommand		{\Dhv}[1]	{\Dh_{\!\xi} #1}		
\newcommand		{\Dhxj}[1]	{\Dh_{\!x_\jj} #1}		
\newcommand		{\Dhvj}[1]	{\Dh_{\!\xi_\jj} #1}	

\DeclareMathOperator{\dG}	{\d\Gamma}			
\newcommand		{\dGl}		{\dG_l}				
\newcommand		{\dGr}		{\dG_r}				
\newcommand		{\dGplr}	{\dG^+_{l,r}}		
\newcommand		{\dGprl}	{\dG^+_{r,l}}		
\newcommand		{\dGmrl}	{\dG^+_{r,l}}		
\newcommand		{\DG}[1]	{\dG\!\(#1\)}		
\newcommand		{\DGl}[1]	{\dGl\!\(#1\)}		
\newcommand		{\DGr}[1]	{\dGr\!\(#1\)}		
\newcommand		{\DGplr}[1]	{\dGplr\!\(#1\)}	
\newcommand		{\DGprl}[1]	{\dGprl\!\(#1\)}	
\newcommand		{\DGmrl}[1]	{\dGmrl\!\(#1\)}	

\newcommand		{\h}		{\mathfrak{h}}		

\DeclareMathOperator{\adj}	{ad}
\newcommand		{\ad}[1]	{\adj_{#1}}
\newcommand		{\Ad}[2]	{\ad{#1}\!\( #2 \)}


\title[\textsc{From Schrödinger to Vlasov}]{\Large From Many-body Quantum Dynamics\\ to the Hartree--Fock and Vlasov Equations\\ with Singular Potentials}
\author[\textsc{J. Chong}]{\large\textsc{Jacky J. Chong}}
\author[\textsc{L. Lafleche}]{\large\textsc{Laurent Lafleche}}
\address[J. Chong, L. Lafleche]{Department of Mathematics, The University of Texas at Austin, Austin, TX 78712, USA, {\tt jwchong@math.utexas.edu}, {\tt lafleche@math.utexas.edu}}

\address[L. Lafleche]{Institut Camille Jordan, UMR 5208 CNRS \& Université Claude Bernard Lyon 1, France, {\tt lafleche@math.univ-lyon1.fr}}

\author[\textsc{C. Saffirio}]{\large\textsc{Chiara Saffirio}}
\address[C. Saffirio]{Department of Mathematics and Computer Science, University of Basel, 4051 Basel, Switzerland, {\tt chiara.saffirio@unibas.ch}}

\subjclass[2010]{82C10, 35Q41, 35Q55 (82C05,35Q83).}
\keywords{mean-field limit, semiclassical limit, Hartree--Fock equation, many-body Schrödinger equation, Vlasov equation, singular interaction.}

\begin{document}

\begin{abstract}
	We obtain the combined mean-field and semiclassical limit from the $N$-body Schrödinger equation for fermions interacting via singular potentials. To obtain the result, we first prove the uniformity in Planck's constant $h$ propagation of regularity for solutions to the Hartree--Fock equation with singular pair interaction potentials of the form $\pm |x-y|^{-a}$, including the Coulomb and gravitational interactions.
		
	\noindent In the context of mixed states, we use these regularity properties to obtain quantitative estimates on the distance between solutions to the Schrödinger equation and solutions to the Hartree--Fock and Vlasov equations in Schatten norms. For $a\in(0,1/2)$, we obtain local-in-time results when $N^{-1/2} \ll h \leq N^{-1/3}$. In particular, it leads to the derivation of the Vlasov equation with singular potentials. For $a\in[1/2,1]$, our results hold only on a small time scale, or with an $N$-dependent cutoff.
\end{abstract}

\begingroup
\def\uppercasenonmath#1{} 
\let\MakeUppercase\relax 
\maketitle
\thispagestyle{empty} 
\endgroup

\bigskip

\renewcommand{\contentsname}{\centerline{Table of Contents}}
\setcounter{tocdepth}{2}	
\tableofcontents

 
\part{\large Introduction}

{\section{Background}}

\subsection{The Equations}

	We consider a system of $N$ identical fermions with unit mass interacting through a pair potential $K(x-y)$. The state of the system is described by an $N$-body  anti-symmetric wave function $\psi_N = \psi_N(t, x_1, x_2, \dots, x_N)$ belonging to the Hilbert space of square-integrable complex-valued functions $\h=L^2(\R^{3N},\CC)$, with evolution given by the $N$-body Schrödinger equation
	\begin{equation}\label{eq:Schrodinger}
		i\hbar\,\dpt \psi_N = \sum_{k=1}^N-\frac{\hbar^2}{2}\,\Delta_{x_k} \psi_N + \sum_{1\leq k<l\leq N} K(x_k-x_l)\,\psi_N,
	\end{equation}
	where $h$ is the Planck constant and $\hbar = \frac{h}{2\,\pi}$ is the reduced Planck constant. In applications, one is typically interested in systems where the number of particles $N$ is large, thus making the microscopic description given by the solution to Equation~\eqref{eq:Schrodinger} not suitable for studies. In fact, the high dimensionality of the problem presents a formidable barrier for understanding qualitative behaviors of the many-body dynamics from the wave function at the microscopic scale. Instead, one could consider the problem at a macroscopic scale and look at the classical phase space distributions of particles $f=f(t,x,\xi)$, where $(x,\xi)\in\Rd\times\Rd$ are the spatial and momentum variables. In particular, we consider scales where the dynamics of a large number of interacting particles can be approximated by the Vlasov equation
	\begin{equation}\label{eq:Vlasov}
		\partial_t f + \xi\cdot\Dx f + E_f\cdot\Dv f = 0,
	\end{equation}
	where $E_f = -\nabla V_f$ is the force field corresponding to the mean-field potential
	\begin{equation*}
		V_f(x) = (K * \rho_f)(x) = \intd K(x-y)\,\rho_f(y)\d y
	\end{equation*}
	and $\rho_f$ is the spatial distribution of particles defined by
	\begin{equation}\label{eq:spatial_density}
		\rho_f(x) = \intd f(x,\xi)\d \xi.
	\end{equation}

	To explore the connection between the microscopic and the  macroscopic scales of the system,  we consider an intermediate mean-field quantum equation. Roughly speaking, we approximate the many-body effects exerted by the system on each particle by an
	effective interaction potential obtained by averaging the pair potential
	$K$ with the underlying spatial density of the system. To draw a parallel with classical mechanics, one could consider the mean-field equation called the Hartree equation which is the quantum analogue of the Vlasov equation. 
	More precisely, let us take a positive self-adjoint trace class operator $\op$ acting on $L^2(\Rd,\CC)$, which can be seen as a positive linear convex combination  of projections onto one-particle wave functions. We use the same notation to denote both the operator $\op$ and its integral kernel $\op(x,y)$. Here, $\op$ plays the role of the quantum one-particle phase space distribution of particles. Moreover, the effective one-particle Hamiltonian is given by $H = -\frac{\hbar^2}{2}\, \Delta + V_{\op}$, called the Hartree Hamiltonian, where $V_{\op}$ is the mean-field potential $V_{\op} = K*\rho(x)$ and $\rho(x)$ is the quantum spatial distribution of particles defined by 
	\begin{equation}\label{eq:spatial_density_quantum}
		\rho(x) = \Diag{\op}(x) := h^3\, \op(x,x).
	\end{equation}
	With these notations, the Hartree equation reads 
	\begin{equation*}
		i\hbar\,\partial_t \op = \com{H,\op},
	\end{equation*}
	where $\com{A, B} := AB-BA$ is the commutator of the operators $A$ and $B$. If the particles obey the Fermi statistics, a more accurate description of their evolution is given by the Hartree--Fock equation
	\begin{align}\label{eq:Hartree--Fock}
		i\hbar\,\partial_t \op &= \com{H_{\op},\op}, & H_{\op} &= -\frac{\hbar^2}{2}\, \Delta + V_{\op} - h^3\, \sfX_{\op}\,,
	\end{align}
	where the exchange term $\sfX_{\op}$ is the operator with integral kernel
	\begin{equation}\label{eq:exchange_operator}
		\sfX_{\op}(x,y) = K(x-y)\, \op(x,y).
	\end{equation}
	
\subsection{Mean-field and Semiclassical Scalings}
	
	Our goal in this paper is to study simultaneously the mean-field limit, corresponding to the approximations made when the number of particles $N$ is large and each pair interaction is weak, and the semiclassical limit, corresponding to a change of scales where the Planck constant $h$ becomes negligible. Let us elaborate more on the two scalings.   
	
	To understand of the dynamics generated by the many-body Schrödinger equation~\eqref{eq:Schrodinger} at different scales, it is convenient to recast the equation in its dimensionless form. Suppose $L$ is some characteristic length of the problem and $T$ is some characteristic timescale, then we define the following dimensionless variables 
	\begin{equation*}
		\widetilde x := x/L \quad \text{ and } \quad \widetilde T := t/T.
	\end{equation*}
	We also recast the interaction potential in its dimensionless form via the change of scale
	\begin{equation*}
		\widetilde K(\widetilde x):= \frac{N\, T^2}{m\, L^2}K( x)=\frac{N\, T^2}{m\, L^2}K(L\, \widetilde x)
	\end{equation*}
	where $m$ denotes the mass which we have set to one. If we define the rescaled dimensionless parameter 
	\begin{equation*}
		\widetilde \hbar = \frac{\hbar\, T}{m\, L^2}
	\end{equation*}
	and the new rescaled wave function 
	\begin{equation*}
		\widetilde \psi_N(\widetilde t, \widetilde x_1,\ldots, \widetilde x_N) := L^{dN/2}\,\psi_N(t, x_1, \ldots, x_N),
	\end{equation*}
	then multiplying Equation~\eqref{eq:Schrodinger} by $\frac{T^2}{m\,L^2}$ yields the dimensionless equation 
	\begin{equation*}
		i\,\widetilde{\hbar}\,\partial_{\,\widetilde t\,} \widetilde{\psi}_N = \sum_{k=1}^N -\frac{\widetilde{\hbar}^2}{2}\,\Delta_{\widetilde x_k} \widetilde{\psi}_N + \frac{1}{N}\,\sum_{1\leq k<l\leq N} \widetilde{K}(\widetilde x_k-\widetilde x_l) \, \widetilde{\psi}_N.
	\end{equation*}
	Moreover, in the case of an homogeneous interaction of the form $K(x) = \kappa \n{x}^{-a}$ for some parameter $\kappa\in\R$, this gives $\widetilde{K}(\widetilde x) = \widetilde{\kappa}\, \n{\widetilde x}^{-a}$ where $\widetilde{\kappa} = \kappa\,N\,T^2/(m\,L^{2+a})$. From here, we consider the case of space-time scales where $\widetilde \kappa$ is of order 1 and we simply set $\widetilde \kappa = 1$. This provides a $N^{-1}$ prefactor in front of the interaction potential which is usually referred to as the mean-field scaling. In this class of scales, the dimensionless parameter $\widetilde \hbar$ is of the order $L^a/(\kappa NT)$. 
	Furthermore, we shall refer to the scale where $\widetilde \hbar$ becomes negligible  as the semiclassical regime. For convenience, let us express $L$ and $T$ in terms of the parameters $N, \kappa$ and $\widetilde\hbar$ as follows 
		\begin{equation}\label{eq:characteristic_length_scale}
			L =\, \(\frac{\hbar^2}{m\,\kappa\, N\,\widetilde\hbar^2}\)^\frac{1}{2-a},\quad\quad\quad
			T =\, \frac{\widetilde \hbar \, m}{\hbar}\(\frac{\hbar^2}{m\,\kappa\, N\,\widetilde\hbar^2}\)^\frac{2}{2-a} .
		\end{equation}
	From here, we impose the condition that $N\,\widetilde\hbar^2\ll \kappa^{-1}$ to guarantee that $L\gg 1$. In particular, we could set $\kappa = N^{-1}$.

	While $\hbar$ and $N$ can a priori be considered as independent parameters, certain constraints arise when dealing with fermions. In Section~\ref{sec:constraints_scaling}, it is explained that the Pauli principle imposes a limitation on $N\,h^3$, which must remain bounded to ensure convergence of the one-particle density operator to a nonzero function on the phase space. This is in contrast to bosonic systems, which are system of particles that have permutation symmetry as opposed the anti-permutation symmetry of fermions, where the Pauli principle does not apply.
	
	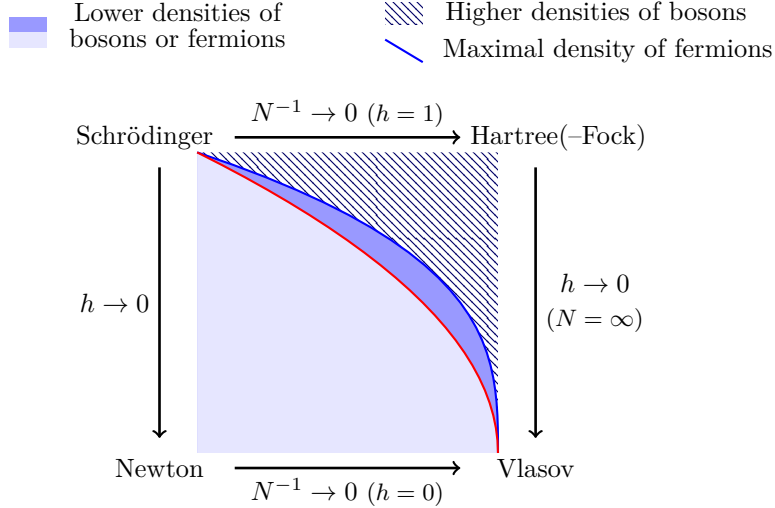
\begin{figure}[ht]\centering
		\begin{tikzpicture}
			%
			\fill[fill=blue!40!white]
			(-4.5,5.8)--(-4.5,5.6)--(-4,5.6)--(-4,5.8) node[midway, right=5]{\shortstack{Lower densities of\\bosons or fermions}};
			\fill[fill=blue!10!white]
			(-4.5,5.6)--(-4.5,5.4)--(-4,5.4)--(-4,5.6);
			
			\fill[pattern=north west lines,pattern color = bleuFonce]
			(0.5,6.0)--(0.5,5.7)--(1,5.7)--(1,6.0) node[midway, right=5]{Higher densities of bosons};
			\draw[line width = 0.8, blue]
			(1,5.2)--(0.5,5.5) node[midway, black, right=11]{Maximal density of fermions};
			\node[align=left] at (-2.7,4.2) {Schrödinger};
			\node[] at (2.8,4.2) {Hartree(--Fock)};
			\node[] at (-2.5,-0.2) {Newton};
			\node[] at (2.5,-0.2) {Vlasov};
			%
			%
			\fill[pattern=north west lines,pattern color = bleuFonce] 
			(2,0)--(2,4)--(-2,4)
			-- plot [domain=-2:2, samples=120] (\x, {4*((2-\x)/4)^(1/3)})
			-- cycle;
			\fill[fill=blue!40!white]
			(2,0)--(-2,0)--(-2,2)
			-- plot [domain=-2:2, samples=100] (\x, {4*((2-\x)/4)^(1/3)})
			-- cycle;
			\fill[fill=blue!10!white]
			(2,0)--(-2,0)--(-2,2)
			-- plot [domain=-2:2, samples=100] (\x, {4*((2-\x)/4)^(1/2)})
			-- cycle;
			%
			%
			\draw[->,line width = 1](-1.5,4.2)--(1.5,4.2) node[midway, above]{$N^{-1}\to 0$ \small$\(h=1\)$};
			\draw[<-,line width = 1](-2.5,0.2)--(-2.5,3.8) node[midway,left]{$h\to 0$};
			\draw[<-,line width = 1](2.5,0.2)--(2.5,3.8) node[midway,right]{\shortstack{$h\to 0$\\\\\small$\(N=\infty\)$}};
			\draw[->,line width = 1](-1.5,-0.2)--(1.5,-0.2) node[midway, below]{$N^{-1}\to 0$ \small$\(h=0\)$};
			%
			%
			\draw [domain=0:4, line width=0.8,blue] plot ({-(\x/4)^3*4+2}, {\x});
			\draw [domain=0:4, dashed, line width=0.8, red] plot ({-(\x/4)^2*4+2}, {\x});
		\end{tikzpicture}
		\caption{The different scalings for the combined mean-field and semiclassical limits. The dashed (red) curve corresponds to the equation $h = N^{-1/2}$ and the continuous (blue) curve to $h = N^{-1/3}$.}\label{fig:scaling}
	\end{figure}
	
	In addition, observe that the particle density, defined as the number of particles per unit of characteristic volume, scales as $N\,L^{-3}$, where $N$ is the total number of particles. By using the scaling given by Formula~\eqref{eq:characteristic_length_scale}, we can express the density as follows
	\begin{equation*}
		\frac{N}{L^3}\simeq N^{\frac{5-a}{2-a}}\, h^{\frac{6}{2-a}}\,\kappa^{\frac{3}{2-a}}.
	\end{equation*}
	This explains why the region of Figure~\ref{fig:scaling} closer to the Hartree--Fock equation corner corresponds to relatively higher densities, while the region below corresponds to relatively lower densities. Moreover, we should note that in this work, we consider $h$ to satisfy the constraint $N^{-\frac12}\ll h \leq CN^{-\frac13}$, which corresponds to the dark shaded region in the figure.  It should be noted that the constraint $N^{-\frac12} \ll h$, meaning $N \, h^2 \to \infty$, could be technical and it arises in the proof of the main result (See Proposition~\ref{prop:propagator_bound}).
	
	With a little abuse of notation and language, we shall drop the tildes and study the equation
	\begin{align}\label{eq:Schrodinger_rescaled}
		i\hbar\,\dpt \psi_N &= H_N\,\psi_N, & H_N &= \sum_{k=1}^N -\frac{\hbar^2}{2}\Delta_{x_k} + \frac{1}{N}\,\sum_{1\leq k<l\leq N} K(x_k-x_l)
	\end{align}
	 where $N$ is large and $\hbar$ is small, and with
	\begin{equation}\label{eq:force}
		K(x) = \frac{\kappa}{\n{x}^a},
	\end{equation}
	where $\kappa\in\R$ is of order $1$ and $a\in(0,1]$.
	
	More precisely, we study the time evolution of $N$-body fermionic mixed states, which are self-adjoint, positive trace class operators of rank larger than one. By the spectral theorem, they can be expressed in the following way
	\begin{equation}\label{eq:mixed-state-rapresentation}
		\op_N = \sum_{j=1}^\infty \lambda_j \ket{\psi_j}\!\!\bra{\psi_j} \quad \text{ with }\quad \lambda_j \ge 0
	\end{equation}
	where $\set{\psi_j}_{j\in \N} \subset \h^{\otimes N}$ is an orthonormal set of anti-symmetric wave functions. The operator $\op_N$ is called a pure state provided it is a rank one projection, that is, $\op_N=\ket{\psi_N}\!\!\bra{\psi_N}$. The time evolution equation for density operators is given by the Liouville--von Neumann equation
	\begin{equation}\label{eq:Liouville}
		i\hbar\,\dpt \op_{N} = \com{H_N,\op_{N}}
	\end{equation}
	where the Hamiltonian $H_N$ is given in Equation~\eqref{eq:Schrodinger_rescaled}, which is the quantum analogue of the classical Liouville equation, equivalent to the $N$-body Newton laws.

\subsection{State of the Art}

	Both the problems of the mean-field limits and the semiclassical limits are well-known questions that are largely addressed in the literature. However, the derivation of the Vlasov--Poisson equation, i.e. the case of the Coulomb and gravitational potentials, remains an open problem, both in the case of quantum mechanics and in the case of classical Newton's laws.
	
	\subsubsection{The Classical Mean-Field Limit} In the context of classical mechanics, the problem of justifying the Vlasov equation~\eqref{eq:Vlasov} starting from the dynamics of $N$-particles obeying Newton's laws  was first proven for twice differentiable potentials in the pioneering works by Neunzert and Wick~\cite{neunzert_theoretische_1972}, Braun and Hepp~\cite{braun_vlasov_1977}, and then by Dobrushin~\cite{dobrushin_vlasov_1979} using the Wasserstein--Monge--Kantorovich distance (see also~\cite{spohn_large_1991} for an introduction to the topic). The class of potentials was then extended to less regular potentials but still locally H\"older continuous by Hauray and Jabin \cite{hauray_$n$-particles_2007, hauray_particle_2015}, which was later improved by Jabin and Wang using entropy methods in \cite{jabin_mean_2016}, where the potential is only required to be bounded.
	
	From another point of view, it was also proved in \cite{hauray_particle_2015, boers_mean_2015} that it is possible to obtain the mean-field limit for potentials with a vanishing cutoff, converging to potentials almost as singular as the Coulomb potential when $N\to\infty$. This is in particular interesting from a numerical point of view. These results were then improved by Lazarovici \cite{lazarovici_vlasovpoisson_2016}, allowing the cutoff potential to converge to the Coulomb potential, and by Lazarovici and Pickl \cite{lazarovici_mean_2017}, with a $N$-dependent cutoff of the order of the inter-particle distance.
	
	\subsubsection{Combined Mean-Field and Semiclassical Limits} The first rigorous derivation of the Vlasov equation~\eqref{eq:Vlasov} from the $N$-body Schrödinger equation~\eqref{eq:Schrodinger} was proved by Narnhofer and Sewel \cite{narnhofer_vlasov_1981} in the case of smooth potentials, with $\hbar = N^{-1/3}$. Subsequently, the restriction on the potential was substantially relaxed by Spohn~\cite{spohn_vlasov_1981} to twice differentiable potentials. For the same kind of potentials, a more explicit rate of convergence without assuming $\hbar = N^{-1/3}$ was later obtained by Graffi, Martinez, and Pulvirenti~\cite{graffi_mean-field_2003} in the case of weak convergence, and more recently by Golse and Paul~\cite{golse_schrodinger_2017} in the quantum Wasserstein metrics, and by Chen, Lee and Liew for fermions~\cite{chen_combined_2021} {in the scaling $\hbar = N^{-1/3}$}. 
	
	\subsubsection{Quantum Mean-Field Limit} It is also possible to first look at the mean-field limit with $\hbar=1$, i.e. without taking the semiclassical limit, leading to the Hartree and the Hartree--Fock equations. In this case, the situation is better understood, even for the Coulomb and gravitational potentials. For bosons, weak convergence was proved in \cite{bardos_weak_2000, erdos_derivation_2001, bardos_derivation_2002}, and explicit rates in stronger norms were obtained in \cite{rodnianski_quantum_2009, grillakis_second-order_2010, pickl_simple_2011, chen_rate_2011, kuz_rate_2015, mitrouskas_bogoliubov_2019, chen_rate_2018, paul_size_2019}. For fermions, weak convergence was proved in \cite{bardos_mean_2003} for bounded potentials, and estimates in trace norm and singular potentials such as the Coulomb potential were obtained in \cite{frohlich_microscopic_2011, bach_kinetic_2016, petrat_new_2016, petrat_hartree_2017}.
	
	Some of these results have been extended by taking into account the semiclassical parameter $\hbar$. For fermions, taking $\hbar=N^{-1/3}$, convergence of the Husimi transform has been proven in \cite{elgart_nonlinear_2004} for analytic interactions and short times. Schatten norms estimates have been obtained in  \cite{benedikter_mean-field_2014, benedikter_mean-field_2016, petrat_new_2016} for at least twice differentiable potentials. Assuming a certain semiclassical structure on the solution of the Hartree equation, a result was obtained in the case of pure states and singular potentials in \cite{porta_mean_2017, saffirio_mean-field_2018}. 
	
	For bosons, results were obtained for at least twice differentiable potentials in \cite{golse_mean_2016, golse_derivation_2018, golse_empirical_2019}. 
	
	\subsubsection{Semiclassical Limit} Another possible direction is to look only at the semiclassical limit $\hbar\to 0$, either for the number of particles $N$ fixed or in the mean-field regime. This last case corresponds to going from the Hartree or the Hartree--Fock equation to the Vlasov equation. In the case of the Hartree equation, this was proved in \cite{lions_sur_1993, markowich_classical_1993} in weak topology, but including singular potentials such as the Coulomb interaction (see also \cite{figalli_semiclassical_2012} for the case of quantum Liouville dynamics). Explicit rates in stronger norms were then obtained in \cite{athanassoulis_semiclassical_2011, amour_semiclassical_2013, benedikter_hartree_2016, golse_schrodinger_2017} for at least twice differentiable potentials, and then in \cite{lafleche_propagation_2019, saffirio_semiclassical_2019, saffirio_hartree_2020, lafleche_global_2021, lafleche_strong_2021} for singular interactions.
	
	 To our knowledge our work is the first one addressing mixed states (see \eqref{eq:mixed-state-rapresentation}) in the case of singular interactions of the form \eqref{eq:force} and proving in this context the approximation of the mean-field dynamics with the Hartree--Fock equation on time scales of order one when $a\in(0,1/2)$ and to time scales of order $\sqrt{\hbar}$ when $a\in[1/2,1]$. See also Remark~\ref{remark:time-validity}.

\subsection{Constraints on the Scalings}\label{sec:constraints_scaling}
    The Fourier transform is defined by
    \begin{equation}
        \widehat g(\xi) = \intd e^{-2\pi i\,x\cdot \xi}\, g(x)\d x
    \end{equation}
    for $g \in L^2(\R^3)$. We also adopt the following conventions for the time-dependent operator solution $\op = \op(t)$ to the Hartree--Fock equation~\eqref{eq:Hartree--Fock}
	\begin{equation}\label{eq:scaling_op}
		\begin{split}
			\Nrm{\op}{\infty} &= \cC_\infty, \vspace{3pt}
			\\
			\Tr{\op} &= h^{-3}.
		\end{split}
	\end{equation}
	for some constant $\cC_\infty$. In particular, $\intd \rho(x)\d x = h^3 \Tr{\op} = 1$. For such an operator, we define its Wigner transform by
	\begin{equation*}
		f_{\op}(x,\xi) := \intd e^{-i\,y\cdot\xi/\hbar} \,\op(x+\tfrac{y}{2},x-\tfrac{y}{2})\d y,
	\end{equation*}
	so that it is a function of the phase space with mass $\iint f_{\op}\d x\d\xi = h^3\Tr{\op} = 1$.
	It is well known that, under some regularity assumptions, the Wigner transform of solutions $\op$ to the Hartree--Fock equation~\eqref{eq:Hartree--Fock} converge to solutions of the Vlasov equation~\eqref{eq:Vlasov} in the semiclassical limit $h\to 0$ (see e.g. \cite{lions_sur_1993}). We refer to \cite{lions_sur_1993} for a listing of the properties of the Wigner transform. One of them is the fact that
	\begin{equation}\label{eq:L2_norm}
		\Nrm{f_{\op}}{L^2(\Rdd)} = h^{\frac{3}{2}} \Nrm{\op}{2},
	\end{equation}
	where we denote by 
	\begin{equation}\label{eq:Schatten}
		\Nrm{\op}{p} = \(\Tr{\n{\op}^p}\)^\frac{1}{p}
	\end{equation}
	the Schatten norm of order $p$. Here, the absolute value of an operator $A$ is defined by $\n{A} = \sqrt{A^*A}$. Since we want to address the case when $f_{\op}$ converges in $L^2(\Rdd)$ to a solution $f$ of the Vlasov equation, this implies that $h^{\frac{3}{2}} \Nrm{\op}{2} \underset{h\to 0}{\rightarrow} \Nrm{f}{L^2(\Rdd)}$, so that $\Nrm{\op}{2}$ is of size $h^{-3/2}$.
	
	For an $N$-particle density operator $\op_{N}$, we look at its corresponding one-particle reduced density operator $\op_{N:1}$ defined as the partial trace of $\op_{N}$ with respect to the variables $2$ to $N$, that is, 
    \begin{equation*}
        \op_{N:1} = \tr_{2, \ldots, N}(\op_{N}).
    \end{equation*}
    Since we also want the corresponding Wigner transform $f_{N:1}$ of the operator $\op_{N:1}$ of the $N$-particle density operator to converge to $f$, we have as well
	\begin{equation}\label{eq:CV_of_L2_norm}
		\Nrm{f_{N:1}}{L^2(\Rdd)} = h^{\frac{3}{2}} \Nrm{\op_{N:1}}{2} {\longrightarrow} \Nrm{f}{L^2(\Rdd)}\quad \mbox{as } N\to\infty\ \mbox{and } h\to 0.
	\end{equation}
	However, in the case of fermions, we also know that (see for instance~\cite[Equation 12.5.12]{lieb_analysis_2001}, or \cite[Theorem 8.4]{solovej_many_2014})
	\begin{equation}\label{eq:fermionic_inequality}
		0 \leq \op_{N:1} \leq \frac{\Tr{\op_{N:1}}}{N} = \frac{h^{-3}}{N}.
	\end{equation}
	Therefore, by bounding the square of the Hilbert--Schmidt norm by the product of the trace norm and the operator norm, we deduce that
	\begin{equation}\label{eq:fermionic_inequality2}
		\Nrm{\op_{N:1}}{2}^2 \leq \Nrm{\op_{N:1}}{1} \Nrm{\op_{N:1}}{\infty} \leq \frac{h^{-6}}{N}.
	\end{equation}
	Combining Inequality~\eqref{eq:fermionic_inequality2} and Formula~\eqref{eq:L2_norm} with $\op = \op_{N:1}$, we obtain the bound
	\begin{equation}
		h \leq \cC_2^{-2/3} \, N^{-1/3},
	\end{equation}
	where $\cC_2 = \Nrm{f_{N:1}}{L^2(\Rdd)}$ converges to $\Nrm{f}{L^2(\Rdd)}$, which remains of order $1$. Hence, we are mainly interested in the case when $N\,h^3$ is bounded above by a constant independent of $N$ and $h$.  In particular, the case when $N\,h^3$ is of order one is called the critical scaling regime. This corresponds to the blue line in Figure~\ref{fig:scaling}.
	
	Notice that our analysis still makes sense if $N\,h^3 \to \infty$. However, in this situation, even though the solution to the $N$-body Schrödinger equation and the solution to the Hartree--Fock equation are close, they will not converge in the semiclassical limit to a nontrivial solution of the Vlasov equation, but to zero.

\section{Functional Spaces}

\subsection{Semiclassical Spaces}
	
	Since we want to look at the convergence in the semiclassical limit $\hbar\to 0$ towards probability distributions of the phase space, we define the semiclassical versions of the Lebesgue norms of the phase space as the following scaled Schatten norms 
	\begin{equation}\label{eq:def_norm}
		\Nrm{\op}{\L^p} = h^{\frac{3}{p}} \Nrm{\op}{p} = h^{\frac{3}{p}} \Tr{\n{\op}^p}^\frac{1}{p}.
	\end{equation}
	More generally, given any positive operator $m$, we define the corresponding weighted spaces by the norm $\Nrm{\op}{\L^p(m)} = \Nrm{\op\,m}{\L^p}$.  With this choice of scaling of the norm, notice that for any operator $\op\geq 0$ satisfying the scaling assumptions~\eqref{eq:scaling_op}, one obtains
	\begin{equation*}
		\Nrm{\op}{\L^1} = 1, \qqquad \Nrm{\op}{\L^2} = \Nrm{f_{\op}}{L^2(\Rdd)},
		\qqquad
		\Nrm{\op}{\L^\infty} = \cC_\infty.
	\end{equation*}
	One useful property of the norm defined by~\eqref{eq:def_norm} is that it is compatible with taking powers of the operator, in the sense that for any $c>0$, $\Nrm{\op^c}{\L^p} = \Nrm{\op}{\L^{pc}}^c$. In particular, in the rest of the paper we will often work with the operator $\sqrt{\op}$, which satisfies, as one would expect, $\Nrm{\sqrt{\op}}{\L^2}= 1$ and $\Nrm{\sqrt{\op}}{\L^\infty} = \sqrt{\cC_\infty}$. 
    
    The fact that these norms are good analogues of the classical Lebesgue norms can be better understood in light of particular examples. One class of examples  is when the density operator has the form $f(x)\,g(\opp)$, where $\opp = -i\hbar\nabla$ is the momentum operator. Then the Kato--Seiler--Simon inequality \cite[Theorem~4.1]{simon_trace_2005} reads
	\begin{equation}\label{eq:Kato_Seiler_Simon}
		\Nrm{f(x)\,g(\opp)}{\L^p} \leq \Nrm{f}{L^p}\Nrm{g}{L^p} \ \text{ if } p\in[2,\infty),
	\end{equation}
	with equality when $p=2$, and where $L^p = L^p(\Rd)$. It is the analogue of the identity $\Nrm{f(x)\,g(\xi)}{L^p_{x,\xi}} = \Nrm{f}{L^p}\Nrm{g}{L^p}$. Another class of examples is the class of Toeplitz operators, namely when $\op$ is an averaging of coherent states, as presented in Remark~\ref{remark:on_initial_data_of_Hartree}.
	
	We also want to consider the semiclassical version of Sobolev spaces of the phase space. Thus, as in \cite{lafleche_strong_2021}, we introduce the following operators
	\begin{equation}
		\Dhx \op := \com{\nabla,\op} \quad \text{ and } \quad
		\Dhv \op := \com{\frac{x}{i\hbar},\op},
	\end{equation}
	which can be seen as an application of the correspondence principle of quantum mechanics. More precisely, one can observe that these operators correspond to the gradients of the Wigner transform, since
	\begin{equation*}
		f_{\Dhx \op} = \Dx f_{\op}\quad \text{ and } \quad f_{\Dhv \op} = \Dv f_{\op}.
	\end{equation*}
	In the rest of the paper, we shall refer to $\Dhx\op$ and $\Dhv\op$ as the first-order quantum gradients or, simply, the quantum gradients.

	We define the semiclassical analogues of the weighted kinetic homogeneous Sobolev norms by
	\begin{align*}
		\Nrm{\op}{\dot{\cW}^{1,p}(m_n)}^p &:= \sum_{\jj=1}^3 \Nrm{\Dhvj{\op}}{\L^p(m_n)}^p + \Nrm{\Dhxj{\op}}{\L^p(m_n)}^p,
		\\
		\Nrm{\op}{\dot{\cW}^{1,\infty}(m_n)} &:= \sup_{\jj\in \{1,2,3\}} \( \Nrm{\Dhvj{\op}}{\L^\infty(m_n)}, \Nrm{\Dhxj{\op}}{\L^\infty(m_n)}\),
	\end{align*}
	and consider the particular case of the weight defined for $n\in \N$ by
	\begin{equation}\label{eq:weight}
		m_n := 1 + \n{\opp}^n.
	\end{equation}
	where $\opp = -i\hbar\nabla$ so $\n{\opp}^2 = -\hbar^2\Delta$. We also define the inhomogeneous version by
	\begin{equation}
		\Nrm{\op}{\cW^{1,p}(m_n)}^p := \Nrm{\op}{\L^p(m_n)}^p + \Nrm{\op}{\dot{\cW}^{1,p}(m_n)}^p,
	\end{equation}
	with the usual modification when $p=\infty$. In particular, for $p=2$, we have that $\Nrm{\op}{\cW^{1,2}} = \Nrm{f_{\op}}{H^1(\Rdd)}$.
	
\subsection{Fermionic Fock Space}
	
	Let $\h^{\wedge N} := \h\wedge \cdots \wedge \h$ denote the $n$-fold anti-symmetric tensor product of $\h=L^2(\R^3, \CC)$. We define the fermionic (anti-symmetric) Fock space over $\h$ to be the closure of
	\begin{equation}
		 \cF(\h)= \cF := \CC\oplus \bigoplus_{n=1}^\infty \h^{\wedge n}
	\end{equation}
	with respect to the norm induced by the endowed inner product
	\begin{equation}
		\Inprod{\psi}{\varphi}_{\cF} = \conj{\psi^{(0)}}\, \varphi^{(0)} + \sum_{n\geq 1} \int_{\R^{3n}} \conj{\psi^{(n)}(\underline{x}_n)}\,\varphi^{(n)}(\underline{x}_n) \d x_1\cdots \d x_n,
	\end{equation}
	for any pair of vectors $\psi = (\psi^{(0)}, \psi^{(1)}, \ldots)$ and $\varphi = (\varphi^{(0)}, \varphi^{(1)}, \ldots)$ in $\cF$ where $\underline{x}_k = (x_1,\ldots, x_k)\in \RR^{3k}$. For simplicity of notation, we will also denote the closure by $\cF$.  The vacuum, defined by the vector $$\Omega_{\cF} = (1, 0, \ldots)\in\cF,$$
	describes the state with no particles.
	We define the number of particles operator by
	\begin{equation}\label{eq:def_number_operator}
		\cN \psi = \(n\,\psi^{(n)}\)_{n\in\N}
	\end{equation}
	whose meaning can be interpreted as counting the number of particles in each sector of $\cF$. A class of operators on $\cF$ that is important to our studies is the class of mixed states on $\cF$, which are high rank density matrices on $\cF$. More specifically, we are interested in operators of the form 
	\begin{equation}\label{eq:diagonalisation_op_N}
		\opN := \sum_{\jj\in\N} \lambda_\jj \ket{\psi_\jj}\!\!\bra{\psi_\jj}, 
	\end{equation}
	for some orthonormal set $\psi_\jj$ of vectors of $\cF$ with the normalization
	\begin{equation}
		\Tr{\opN} = \sum_\jj \lambda_\jj = h^{-3} \ \ \text{ and } \ \ h^3 \Tr{\cN\,\opN} = N.
	\end{equation}
	Here, $N$ is the mean number of particles. Moreover, for each $(n,m)\in\N^2$, we define the operator $\opN^{(n,m)}$ as the operator with the integral kernel 
	\begin{equation}
		\opN^{(n,m)}(\underline{x}_{n},\underline{y}_{m}) = \sum_{\jj\in\N} \lambda_\jj \,\psi_\jj^{(n)}(\underline{x}_{n})\,\conj{\psi_\jj^{(m)}(\underline{y}_{m})}.
	\end{equation}
	As in the case of the one-particle operator given in Equation \eqref{eq:def_norm}, we define the Fock space semiclassical Schatten norms by
	\begin{equation}
		\Nrm{\opN}{\L^p(\cF)} := h^\frac{3}{p} \tr_{\cF}\(\n{\opN}^p\)^\frac{1}{p},
	\end{equation}
	so that $\Nrm{\opN}{\L^1(\cF)} = 1$ and $\Nrm{\cN\opN}{\L^1(\cF)} = N$. We also define the one-particle reduced density matrix, i.e. the analogue of the classical one-particle marginal, by
	\begin{equation*}
		\opNr := \sum_{n\in\N} \frac{n}{N}\,\tr_{2..n}\!\(\opN^{(n,n)}\),
	\end{equation*}
	where $\tr_{2..n}$ indicates the partial trace with respect to all variables except the first.

\section{Main Results}

\subsection{Propagation of Regularity}

	Our first result gives the local-in-time and uniform in $\hbar$  propagation of the regularity of the solution to the Hartree--Fock equation~\eqref{eq:Hartree--Fock}. Let us notice that there are no constraints on the scaling here since we are only considering the mean-field equation. Moreover, this result also holds uniformly in $\hbar$ in the case of the Coulomb potential. 
    
    Recall that we work exclusively with the singular interaction potential $K(x) = \kappa\, |\cdot|^{-a}$ for $0<a\le 1$. We define the parameter
	\begin{equation}
		\fb := \frac{3}{a+1}
	\end{equation}
	which corresponds to the integrability of the force field since $\nabla K \in L^{\fb,\infty}$.

	\begin{theorem}[Propagation of regularity]\label{thm:regularity}
		Let $a\in (0,1]$, $m_n=1+\n{\opp}^n$ with $n\in 2\N$ satisfying $n\geq 6$ and $\op$ be a solution to the Hartree--Fock equation~\eqref{eq:Hartree--Fock} with initial condition $\op^\init\in \L^\infty(m_n)$ satisfying \eqref{eq:scaling_op} and such that
		\begin{equation}\label{eq:regularity0_0}
			\op^\init \in \cW^{1,2}(m_n)\cap \cW^{1,4}(m_{n-2}).
		\end{equation}
		Then there exists $T>0$ such that
		\begin{equation}
			\op \in L^\infty([0,T], \cW^{1,2}(m_n)\cap \cW^{1,4}(m_{n-2})),
		\end{equation}
		uniformly in $\hbar \in (0, 1)$.
	\end{theorem}

	\begin{remark}
		In the case when $a\in (0, \tfrac12)$, we further extend the local-in-time and uniform-in-$\hbar$ propagation of regularity result of Theorem \ref{thm:regularity} to a global-in-time result  in \cite{chong_global--time_2022}. 
	\end{remark}
	
	\begin{remark}[On the initial data of the Hartree equation]\label{remark:on_initial_data_of_Hartree}
		Define $\varphi(x) = e^{-\pi\n{x}^2/2}$ and $\varphi_{x,\xi}(y) := \frac{1}{h^{9/4}} \varphi\!\(\frac{y-x}{\sqrt{h}}\) e^{i\,y\cdot\xi/\hbar}$. Then one can define an approximation of the Dirac delta on the phase space by $\op_{x,\xi} := \ket{\varphi_{x,\xi}}\!\!\bra{\varphi_{x,\xi}}$. Now for any $g : \Rdd \to \R$ such that $g \in W^{1,\infty}(1+\n{\xi}^n) \cap W^{1,2}(1+\n{\xi}^n) \cap L^2$, one can define the averaging of coherent states, also called Toeplitz operator (see e.g.~\cite{golse_mean_2016, golse_schrodinger_2017}) or Wick quantization (see e.g.~\cite{lerner_fefferman-phong_2007}), as the operator
		\begin{equation*}
			\tildop_g := \iintd g(x,\xi)\,\op_{x,\xi} \d x\d \xi.
		\end{equation*}
		This defines a positive compact operator such that
		\begin{align*}
			\Nrm{\tildop_g}{\L^\infty} &\le \Nrm{g}{L^\infty(\Rdd)},
			&
			\Nrm{\tildop_g}{\L^2} &\le \Nrm{g}{L^2(\Rdd)},
		\end{align*}
		and more generally, as proved for example in~\cite{lions_sur_1993}, such that for any convex function $\Phi$ such that $\Phi(0) = 0$, it holds
		\begin{equation*}
			h^3 \Tr{\Phi\!\(\tildop_g\)} \leq \iintd \Phi(g) \d x\d\xi.
		\end{equation*}
		In particular, in Theorem~\ref{thm:regularity}, we can take $\op^\init = \tildop_g$ with $\Nrm{g}{L^2(\Rdd)} = 1$ and $\Nrm{g}{L^\infty(\Rdd)} = \cC_\infty^{1/2}$, and then $\op^\init$ satisfies the assumptions~\eqref{eq:scaling_op}.
		
		However, we can consider more general operators than simply the averaging of coherent states. Given a function $g$ on the phase space, one can perform the inverse of the Wigner transform, called the Weyl quantization, to define the operator $\op_g$ as the operator with integral kernel
		\begin{equation}
			\op_g(x,y) = \intd e^{-2i\pi \(y-x\)\cdot\xi}\, g(\tfrac{x+y}{2},h\xi) \d \xi.
		\end{equation}
		This operator satisfies the hypotheses of the initial condition of Theorem~\ref{thm:regularity} if $g$ is sufficiently smooth and decays at infinity, as proved for example in \cite[Section~3]{lafleche_strong_2021}.
	\end{remark}
	
\subsection{Mean-Field and Semiclassical Limits}
	
	To state our mean-field results, we assume there exists a constant $C>0$ independent of $N$ and $\hbar$ such that
	\begin{equation}\label{eq:scaling}
		 N^{-\frac{1}{2}} \ll h \leq C N^{-\frac{1}{3}},
	\end{equation}
	where $a\ll b$ means that $\frac{a}{b}\to 0$ as $N\rightarrow \infty$. We also assume that the constant $\cC_\infty$ defined in Formula~\eqref{eq:scaling_op} is independent of $N$ and $\hbar$, and satisfies
	\begin{equation}\label{eq:scaling_op_strict}
		\cC_\infty < (N\,h^3)^{-1}.
	\end{equation}
	We define the following trace class norm over the Fock space weighted by the number operator
	\begin{equation}
		\Nrm{\opN}{\L^1_k(\cF)} := \Nrm{\(\cN+N\)^k\opN}{\L^1(\cF)}.
	\end{equation}
	In what follows, for technical reasons related to the well-posedness of the auxiliary dynamics given in Appendix~\ref{sec:auxiliary_dynamics_appendix}, we will assume that the initial quantum spatial distribution of particles~\eqref{eq:spatial_density_quantum} satisfies
	\begin{equation*}
		\intd \rho^\init(x) \(1+\n{x}\)^3\d x \leq C, 
	\end{equation*}
	where $C$ may depend on $h$.
	
	\begin{theorem}[Mean-field limit]\label{thm:mean_field}
		Let $a\in(0,\frac{1}{2})$ and assume conditions~\eqref{eq:scaling} and~\eqref{eq:scaling_op_strict} are satisfied. Let $n\in 2\N$ satisfying $n \geq 6$. Let $\op$ be a solution to the Hartree--Fock equation~\eqref{eq:Hartree--Fock} with initial condition $\op^\init\in \L^\infty(m_n)$ satisfying \eqref{eq:scaling_op} and such that
		\begin{subequations}
			\begin{align}\label{eq:regularity_op}
				\op^\init &\in \cW^{2,2}(m_n)\cap \cW^{2,4}(m_{n-2})
				\\\label{eq:regularity_sqrt}
				\sqrt{\op^\init} &\in \cW^{1,2}(m_n)\cap \cW^{1,q}(m_{n-2})
			\end{align}
		\end{subequations}
		with $q\in [\tfrac{6}{1-2a},\infty]$. Then, there exist $T>0$, $\opNop^\init \in\L^1(\cF)$, $\lambda>0$ and $C > 0$ such that for any $\opN$ solution of  the second quantized version of~\eqref{eq:Liouville} (see Equation~\eqref{eq:density_matrix_Cauchy_prblm_on_Fock} below) with initial condition $\opN^\init\in\L^1(\cF)$ commuting with $\cN$, for any $t\in[0,T]$ and $p \in [1, \infty]$,
		\begin{equation*}
			\Nrm{\opNr - \op}{\L^p} \leq \frac{C\,e^{\lambda\,t}}{\min(N^{1/2},N\,h^{3/p'})} \(1+ \Nrm{\opN^\init - \opNop^\init}{\L^1_k(\cF)}\)
		\end{equation*}
		for any $k \geq \frac{1}{2\,p} + \frac{3}{2}\lceil\frac{\ln N}{p\ln\(N h^2\)}\rceil$ where $p'=p/(p-1)$ is the H\"older conjugate of $p$.
	\end{theorem}
	
	\begin{remark}
		The $N$-body operator $\opNop^\init$ is explicitly created from $\op^\init$ via the Bogoliubov transformation (see Equation \eqref{eq:reference_state} in Section~\ref{sec:Bogoliubov}). In particular, one should note that $\opNop^\init$ is constructed so that its one-particle reduced density matrix coincides with the initial data $\op^{\init}$ of the Hartree--Fock equation.
	\end{remark}
	
	\begin{remark}
		In the particular case when $h = N^{-\frac{1}{3}}$, $\Nrm{\opN^\init-\opNop^\init}{\L^1(\cF)} \leq C N^{-4}$, and $\Nrm{\cN^4\(\opN-\opNop\)}{\L^1(\cF)} \leq C$, then for any $t\in[0,T]$, one obtains
		\begin{equation*}
			\Nrm{f_{N:1} - f_{\op}}{L^2} = \Nrm{\opNr - \op}{\L^2} \leq \frac{C_T}{N^{1/2}}.
		\end{equation*}
		where $f_{N:1}$ denotes the Wigner transform of $\opNr$.
	\end{remark}
	
	One can combine the above theorem with the result proved in \cite{lafleche_strong_2021} by two of the authors to obtain an estimate directly between the solution of the $N$-body Schrödinger equation~\eqref{eq:Schrodinger} and the Vlasov equation~\eqref{eq:Vlasov}. To simplify, we restrict our attention to the case when $p\leq 2$.
	
	\begin{theorem}[Combined mean-field and semiclassical limits]
		Assume the conditions of Theorem~\ref{thm:mean_field} and that $f$ is a positive solution of the Vlasov equation~\eqref{eq:Vlasov} with initial condition satisfying
		\begin{equation*}
			\(1+\n{x}^8 +\n{\xi}^8\) \nabla_x^{\ell_0}\nabla_{\xi}^\ell f^\init \in L^\infty(\Rdd)\cap L^2(\Rdd) \ \text{ where } \ \ell_0+\ell \le 9.
		\end{equation*}
        Moreover, assume $\opN^\init \in \cL^1(\cF)$ is such that $[\mathcal{N},\opN^\init]=0$. Then, for any $p\in[1,2]$, there exists $T>0, C_T>0$ and an operator $\opNf^\init\in\L^1(\cF)$ such that for any solution $\opN$ to the second quantized version of~\eqref{eq:Liouville} (see Equation~\eqref{eq:density_matrix_Cauchy_prblm_on_Fock} below) with initial condition $\opN^\init$, the estimate 
		\begin{equation*}
			\Nrm{\opNr - \op_f}{\L^p} \leq C_T\(\frac{1}{N\,h^{3/p'}}+h\) \(1 + \Nrm{\opN^\init - \opNf^\init}{\L^1_k(\cF)}\),
		\end{equation*}
        holds for any $t\in[0,T]$ and any $k \geq \frac{1}{2\,p} + \frac{3}{2}\lceil\frac{\ln N}{p\ln\(N h^2\)}\rceil$.
	\end{theorem}
	
	\begin{remark}
		In particular, when $\Nrm{\opN^\init - \opNf}{\L^1_k(\cF)} \leq C$ and $p=2$, then, by Identity~\eqref{eq:L2_norm}, we obtain again a $L^2$ convergence result with the quantitative bound
		\begin{equation*}
			\Nrm{f_{N:1} - f}{L^2(\Rdd)} \leq C_T\(\frac{1}{N\,h^{3/2}}+h\),
		\end{equation*}
		where $f_{N:1}$ is the Wigner transform of $\opNr$.
		
		In our result, it is interesting to notice that the semiclassical error $h$ is larger than the mean-field error when $N \gg h^{-5/2}$, and smaller when $N \ll h^{-5/2}$. When the two are of the same order, one obtains an error of order $h = N^{-2/5}$, which is the best of the rates in term of the number of particles, while the rate is of order $h=N^{-1/3}$ in the critical scaling. However, we do not claim our results yield the optimal rates. 
	\end{remark}
		
	\begin{remark}\label{remark:time-validity}
		In the particular case of the Coulomb potential, we can still obtain an estimate for small times or with a $N$ dependent cutoff (see Theorem~\ref{thm:mean_field_2} and Remark~\ref{remark:cutoff}). Our results are summarized in the following table.
		\vspace{5pt}
		\begin{center}
			\textbf{Time of Validity}\\ \vspace{1em}
			\begin{tabular}{|c|c|c|}
				\hline
				 & $a\in(0,1/2)$ & $a\in [1/2,1]$ \\
				\hline
				Semiclassical Regularity & $t<T$ & $t<T$ \\
				\hline
				Mean-field & $t<T$ & $t\ll h^{a-1/2}$ or cutoff \\
				\hline
				Mean-field + semiclassical & $t<T$ & $t\ll h^{a-1/2}$ or cutoff \\
				\hline
			\end{tabular} 
		\end{center}
	\end{remark}

\section{The Strategy and the General Result}

	\subsection{Second Quantization}
	The method of second quantization provides a mathematical framework for studying the notion of quantum fluctuations. The goal of this section is to recast the original Cauchy problem \eqref{eq:Liouville} with a mixed state initial data on $\h^{\wedge N}$ to a problem on the Fock space $\cF$. We briefly present the method of second quantization and state the corresponding Hamiltonian evolution problem on $\cF$. We refer the interested reader to \cite{berezin_method_1966, reed_functional_1980, folland_harmonic_1989, derezinski_mathematics_2013, baez_introduction_2014} for a more complete presentation.
	
	For every $f \in \h$, we define the associated creation operator $a^\ast(f)$ and its adjoint the annihilation operator $a(f)$ on $\cF$ by their actions on the $n$-sector of $\cF$ as follows
	\begin{align*}
		(a^*(f)\,\psi)^{(n)}(\underline{x}_n) &:= \frac{1}{\sqrt{n}}\sum^n_{j=1}(-1)^{j-1}f(x_j)\, \psi^{(n-1)}(\underline{x}_{n\backslash\, j})
		\\
		(a(f)\,\psi)^{(n)}(\underline{x}_n) &:= \sqrt{n+1}\intd \conj{f(x)} \, \psi^{(n+1)}(x, \underline{x}_n) \d x,
	\end{align*}
	where $ \underline{x}_{n\backslash\, j}:= (x_1, \ldots, \cancel{x}_j, \ldots, x_n)$. Moreover, the action of the annihilation operator on the vacuum of $\cF$ is defined to be $a(f)\,\Omega_{\cF} = 0$. Then, we extend the operators linearly to the whole $\cF$. It can easily be checked that the collection of creation and annihilation operators on $\cF$ satisfies the canonical anti-commutation relations (CAR)
	\begin{equation}\label{CAR}
		\com{a(f), a^*(g)}_+ = \inprod{f}{g}_\h, \quad \com{a(f), a(g)}_+ = \com{a^*(f), a^*(g)}_+ = 0
	\end{equation}
	for all $f, g \in \h$ where $\com{A, B}_+ = AB + BA$ is the anti-commutator of the operators $A, B$. Moreover, from relation \eqref{CAR}, we have the identity
	\begin{equation}
		\Nrm{a(f)\psi}{\cF}^2+\Nrm{a^*(f)\psi}{\cF}^2 = \Nrm{f}{\h}^2\Nrm{\psi}{\cF}^2 \ \ \implies \ \ \Nrm{a^\sharp(f)}{\infty}= \Nrm{f}{\h}
	\end{equation} 
	for all $f \in \h$ where $a^\sharp$ is either $a^*$ or $a$. Thus, both the creation and annihilation operators are bounded operators on $\cF$. 

	At times, it is more convenient to deal with creation and annihilation operators at a given position, say $x$, as opposed to $a^*(f)$ and $a(f)$. Thus, it is useful to introduce, at least formally, the fermionic creation and annihilation operator-valued distributions at the position $x$, denoted respectively by $a^*_x$ and $a_x$, as follows
	\begin{subequations}
		\begin{align}
			(a_x^* \psi)^{(n)}(\underline{x}_n) &= \frac{1}{\sqrt{n}}\,\sum_{j=1}^n \(-1\)^{j-1}\delta(x-x_j)\,\psi^{(n-1)}(\underline{x}_{n\backslash j}),\\
			(a_x \psi)^{(n)}(\underline{x}_n) &= \sqrt{n+1}\, \psi^{(n+1)}(x,\underline{x}_n).
		\end{align}
	\end{subequations}
	It is also straightforward to check that $a^*_x$ and $a_x$ satisfy the anti-commutation relations
	\begin{equation}
		\com{a_x,a_y^*}_+ = \delta(x-y), \qquad \com{a_x,a_y}_+ = \com{a_x^*,a_y^*}_+ = 0,
	\end{equation}
	and that the creation and annihilation operators can be rewritten as follows
	\begin{equation}\label{eq:op-val-distr}
		a^*(f) = \intd f(x)\, a_x^* \d x, \qquad a(f) = \intd \conj{f(x)}\,a_x\d x.
	\end{equation}

	To every observable $O$ on $\h$ corresponds an induced linear operator $\dG(O):\cF\rightarrow \cF$ called the {\it second quantization} of $O$ on $\cF$, defined as 
	\begin{equation}
		\DG{O} = 0\oplus\bigoplus_{n=1}^\infty \dG_n(O) \qquad 
	\end{equation}
	where $\dG_n(O)$ is the $n$-particle operator
	\begin{equation}
		\dG_n(O) = \sum_{j=1}^n 1_{\h^{j-1}}\otimes O \otimes 1_{\h^{n-j}}. 
	\end{equation}
	An important example of second quantized operator is the number operator which is simply the second quantization of the identity operator. Another relevant class of operators are the trace class operators. It is straightforward to check that the second quantization of trace class operators on $\h$ are also trace class operators on $\cF$.
	
	If the observable $O$ has the distributional kernel $O(x, y)$, then we could rewrite $\dG(O)$ in terms of the operator-valued distributions $a^*_x$ and $a_x$ as follows
	\begin{equation}
		\dG(O) = \intdd O(x, y)\,a_x^*\, a_y \d x\d y.
	\end{equation}
	In particular, the number operator can be rewritten as
	\begin{equation}
		\cN = \intd a_x^*\, a_x \d x.
	\end{equation}

	\subsection{State Purification and Time Evolution} 
	
	We define the Fock space Hamiltonian by
	\begin{equation}\label{eq:Fock_Hamiltonian}
		\sfH_N = \intd a_x^* \(-\tfrac{\hbar^2}{2}\lapl_x\) a_x \d x+ \frac{1}{2N}\intdd K(x-y)\,a^*_x \,a^*_y \,a_y\, a_x\d x\d y.
	\end{equation}
	By direct computation, we see that $\sfH_N$ commutes with the number operator, which implies that the expectation of the number of particles is conserved under the Hamiltonian dynamics. 
	Moreover, its action on the $n$-sector is given for any $\psi\in\cF$ by 
	\begin{equation}
		(\sfH_N\psi)^{(n)}= \sfH_N^{(n)}\psi^{(n)} =\sum^n_{k=1}-\frac{\hbar^2}{2}\lapl_{x_k}\psi^{(n)}+\frac{1}{N}\sum^n_{k<l} K(x_l-x_k)\,\psi^{(n)},
	\end{equation}
	which, on the $N$-sector of $\cF$, coincides with the mean-field Hamiltonian defined in Equation~\eqref{eq:Schrodinger_rescaled}. We consider the Cauchy problem 
	\begin{equation}\label{eq:density_matrix_Cauchy_prblm_on_Fock}
		i\hbar\,\dpt \opN = \com{\sfH_N, \opN}, \ \text{ with }\ \opN(t=0) = \op_N^\init = \sum_\jj \lambda_\jj \ket{\psi_\jj}\!\!\bra{\psi_j}
	\end{equation}
	where the data are defined as in \eqref{eq:diagonalisation_op_N}. Following the idea of \cite{benedikter_mean-field_2016}, we reformulate Equation~\eqref{eq:density_matrix_Cauchy_prblm_on_Fock} as an evolution problem of a pure state\footnote{Here, we make the identification of $\ket{\Psi}\!\!\bra{\Psi}$ with $\Psi \in \cG$. In other words, pure state density matrices are simply vectors.} in the fermionic Fock space 
	\begin{equation}
		\cG := \cF(\h\oplus\h)
	\end{equation} 
	which hereinafter will be referred to as the double Fock space. This procedure is commonly known as purification of mixed states. For completeness, we devote the remainder of this section to review the state purification process. 

	For any operator $\opN$ as defined in  Equation \eqref{eq:diagonalisation_op_N} and any orthonormal basis $\phi_{\jj}$ of $\cF$, we construct the following Hilbert--Schmidt operator on $\cF$
	\begin{equation}
		\opupN := \sum_{\jj\in\N} \eps_\jj \ket{\psi_{\jj}}\!\!\bra{\phi_{\jj}},
	\end{equation}
	where $\n{\eps_\jj}^2 = \lambda_\jj$. Then, we see that $\opN = \n{\opupN}^2$, which is called the Schmidt decomposition of $\opN$. In particular, its scaled Hilbert--Schmidt norm, defined by $\Nrm{\opup}{\L^2(\cF)}^2 = h^3 \tr(\n{\opup}^2)$, is 
	\begin{equation}
		\Nrm{\opupN}{\L^2(\cF)}^2 = \Nrm{\opN}{\L^1(\cF)} = 1.
	\end{equation}
	It is important to observe that the decomposition is not unique. In fact, we will need to make a definite choice later. 

	Recall that the space of Hilbert--Schmidt operators $\L^2(\cF)$ is isomorphic to the tensor product $\cF(\h)\otimes\cF(\h)$, as Hilbert spaces, via the linear mapping $\sfJ_h=\sfJ$ that maps $\ket{\psi}\!\!\bra{\phi}\mapsto h^{-\frac{3}{2}}\,\overline\phi\otimes\psi$. One can then associate to $\opupN$ an element of $\cF(\h)\otimes\cF(\h)$ as follows
	\begin{equation}\label{eq:def_tensor_prod_vector}
		\sfJ\opupN = h^{-\frac{3}{2}}\sum_{\jj\in\N} \eps_\jj \, \conj{\phi_{\jj}}\otimes\psi_{\jj} .
	\end{equation}
	 Furthermore, we can associate to every element \eqref{eq:def_tensor_prod_vector} a vector in the double Fock space $\cG$ via the isomorphism $\sfU : \cF\otimes\cF \to \cG$ defined by the following: for $F\in \h^{\wedge n}$ and $G \in \h^{\wedge m}$
	\begin{equation}
		\sfU\(F\otimes G\) = \sqrt{\frac{\(n+m\)!}{n!\,m!}} \(J_l^{\otimes n}F\)\otimes_a \(J_r^{\otimes m}G\),
	\end{equation}
	where $J_l, J_r:\h\rightarrow \h\oplus\h$ are respectively the canonical embedding of $\h$ into the \textit{left} and the \textit{right} coordinate of $\h\oplus\h$, and $\otimes_a$ is the anti-symmetric tensor product. Then extend the mapping linearly  to the entire $\cF\otimes\cF$. The unitary map $\sfU$ is known as the exponential law for Fock spaces and it satisfies the following properties (see \cite[Theorem 3.43]{derezinski_mathematics_2013} or \cite[Chapter 3]{baez_introduction_2014})
	\begin{subequations}
		\begin{align}
			\Omega_\cG &= \sfU(\Omega_{\cF} \otimes \Omega_{\cF}),
			\\
			a_l^\sharp(f) := a^\sharp(f\oplus 0) &= \sfU \(a^\sharp(f)\otimes 1\) \sfU^*,
			\\
			a_r^\sharp(f) := a^\sharp(0\oplus f) &= \sfU \((-1)^\cN \otimes a^\sharp(f)\) \sfU^*,
		\end{align}
	\end{subequations}
	with $a^\sharp$ is either $a$ or $a^*$ and $f \in \h$. The presence of the operator $(-1)^\cN$ ensures that the operators satisfy the CAR. It can also be readily checked that $a^\sharp_l(f)$ anti-commutes with $a^\sharp_r(g)$ for all $f, g \in \h$. 

	Just like in the case of $\cF$, it is useful to define the left and right creation and annihilation operator-valued distributions at the position $x$ by 
	\begin{align*}
		(a_{x, l}^*\Psi)^{(n, m)}(\underline x_n, \underline y_m) &:= \frac{1}{\sqrt{n}}\,\sum_{j=1}^n \(-1\)^{j-1}\delta(x-x_j)\,\Psi^{(n-1, m)}(\underline{x}_{n\backslash j}, \underline y_m),
		\\
		(a_{x, l}\Psi)^{(n, m)}(\underline x_n, \underline y_m) &:= \sqrt{n+1}\,\Psi^{(n+1, m)}(x, \underline x_n, \underline y_m),
		\\
		(a_{x, r}^*\Psi)^{(n, m)}(\underline x_n, \underline y_m) &:= \frac{1}{\sqrt{m}}\,\sum_{j=1}^m \(-1\)^{n+j-1}\delta(x-y_j)\,\Psi^{(n, m-1)}(\underline x_n, \underline{y}_{m\backslash j}),
		\\
		(a_{x, r}\Psi)^{(n, m)}(\underline x_n, \underline y_m) &:= \(-1\)^n\sqrt{m+1}\,\Psi^{(n, m+1)}(\underline x_n, x, \underline y_m).
	\end{align*}
	This allows us to express $a^\sharp_\sigma (f)$ for $f \in \h$ in terms of operator-valued distributions, that is, 
	\begin{equation}
		a_\sigma(f) = \intd \overline{f(x)} \, a_{x, \sigma} \d x \ \ \text{ and } \ \ a_\sigma^*(f) = \intd f(x)\, a^*_{x, \sigma} \d x
	\end{equation}
	where $\sigma \in \{l, r\}$. It is again straightforward to check the CAR relations: $\com{a_{x, \sigma}, a^*_{y, \sigma}}_+ = \delta(x-y)$ and $[a^\sharp_{x, \sigma}, a^\sharp_{y, \sigma'}]_+ = 0$ where $\sigma, \sigma' \in \{l, r\}$. 

	For every observable $O$ on $\h$, we can define the left or right induced linear operator $\DGl{O}, \DGr{O} : \cG\rightarrow \cG$ 
	\begin{align*}
		\DGl{O} &:= \DG{O\oplus 0} = \sfU\(\dG(O)\otimes 1\)\sfU^* =\intdd O(x, y)\,a^*_{x, l}\, a_{y, l} \d x\d y,
		\\
		\DGr{O} &:= \DG{0\oplus O} = \sfU\(1\otimes\dG(O)\)\sfU^* =\intdd O(x, y)\,a^*_{x, r}\, a_{y, r} \d x\d y.
	\end{align*}
	The number operator on $\cG$ is defined by 
	\begin{equation}
		\cN = \cN_l + \cN_r = \sfU \(\cN\otimes 1 + 1\otimes\cN\) \sfU^*.
	\end{equation}
	We shall denote by
	\begin{equation}\label{eq:def_I_G}
		\sfI_\cG := \sfU\sfJ
	\end{equation}
	the transformation from $\L^2(\cF)$ to $\cG$ mapping density operators to vectors of the double Fock space. Then for an operator $\opupN \in \L^2(\cF)$, the action of the operator $\cN$ in $\cG$ becomes $\cN \,\sfI_\cG(\opupN) = \sfI_\cG(\cN\opupN + \opupN\cN)$.
	
	With the above purification process, we can recast our Cauchy problem for mixed states to a Cauchy problem for pure states defined on the double Fock space~$\cG$. Recall the solution to the Cauchy problem~\eqref{eq:density_matrix_Cauchy_prblm_on_Fock} in the Schr\"odinger picture is given by 
	\begin{equation}
		\opN = e^{-i(t/\hbar)\, \sfH_N} \opN^\init \, e^{i(t/\hbar)\, \sfH_N}.
	\end{equation}
	We define the time evolution of $\opupN$ with initial state $\opupN^\init$ by
	\begin{equation}
		\opupN = e^{-i(t/\hbar)\, \sfH_N} \opupN^\init \, e^{i(t/\hbar)\, \sfH_N}.
	\end{equation}
	Then $\opN = \n{\opupN}^2$ solves Equation~\eqref{eq:Liouville} with initial data $\opN^\init = \n{\opupN^\init}^2$. In the double Fock space $\cG$, this corresponds to say that the evolution is given by $\Phi = \Phi(t)$ with
	\begin{equation}
		\Phi := \sfI_\cG(\opupN) = e^{-i(t/\hbar)\, \sfL_N} \sfI_\cG(\opupN^\init) = e^{-i(t/\hbar)\, \sfL_N}\Phi^\init
	\end{equation}
	where the Liouvillian $\sfL_N$ is defined by $\sfL_N = \sfU\(\sfH_N\otimes 1 - 1 \otimes \sfH_N\)\sfU^*$. In particular, for any observable $\mathsf{O}$ of $\cF$, we have the relation
	\begin{equation}\label{eq:mixed-state-pure-state-trace}
		\tr_{\cF}\(\mathsf{O}\opN\) = \Inprod{\Phi}{\(\mathsf{O}\otimes 1\)\Phi}_{\cG} = \tr_{\cG}\((\mathsf{O}\otimes 1)\ket{\Phi}\!\bra{\Phi}\),
	\end{equation}
	which allows us to compute the mean value of the observable $\mathsf{O}$ with respect to the mixed state $\op_N$ in terms of the purified state $\Phi$. In particular, we could express the one-particle reduced density matrix of $\op_N$ in terms of $\Phi$, that is, the integral kernel of $\op_{N:1}$ is given by 
	\begin{equation}
		\op_{N:1}(x, y)=\frac{1}{N\,h^3}\Inprod{\Phi}{a_{x, l}^*\, a_{y, l}\,\Phi}. 
	\end{equation}
	Notice that we are using the normalization $\Tr{\opNr} = h^{-3}$.

\subsection{Bogoliubov Transformation and Quasi-Free States}\label{sec:Bogoliubov}
	In general, we do not know if the evolution of the Cauchy problem \eqref{eq:density_matrix_Cauchy_prblm_on_Fock} can be well-approximated by its mean-field dynamics. Therefore, it is natural to restrict our studies to a subclass of initial data. As stated in \cite{benedikter_mean-field_2016}, equilibrium states at finite positive temperature are believed to be well-approximated by mixed quasi-free states. In the particular case of non interacting fermions at positive temperature, equilibrium states are exactly described by mixed quasi-free states (see~\cite{bratteli_operator_1981}). 
	Furthermore, mixed quasi-free states have the important property that they can be represented by the action of a Bogoliubov transformation on the vacuum of the double Fock space $\cG$, which is a key object in our study of the mean-field limit.

	In this section, we give a brief overview of rudimentary facts about Bogoliubov transformation in the framework of the double Fock space $\cG$ and construct a class of quasi-free states exhibiting the structure of pure states in $\cG$, with average number of particles $N$ and pairing density equal to zero. We follow closely the presentation given in~\cite{solovej_many_2014}.
	
	\subsubsection{Bogoliubov Transformation} For the pairs $f=f_1\oplus f_2, g= g_1\oplus g_2 \in \h\oplus \h$, we define the corresponding field operators by
	\begin{align*}
		A(f, g) &:= a(f)+a^*(\overline g)= a_l(f_1)+a_r(f_2)+a^*_l(\conj{g_1})+a^*_r(\conj{g_2})
		\\
		A^*(f, g) &:= (A(f, g))^* = a_l(\conj{g_1})+a_r(\conj{g_2})+a^*_l(f_1)+a^*_r(f_2).
	\end{align*}
	Notice the field operator $A(f, g)$ and its adjoint satisfy the relation 
	\begin{equation}\label{eq:field_op_conj_relation}
		A^*(f, g) = A(C(f, g))
	\end{equation}
	for all $f, g \in \h\oplus \h$ where $C:(\h\oplus\h)\,\oplus\,(\h\oplus\h)\to (\h\oplus\h)\,\oplus\,(\h\oplus\h)$ is the anti-linear map defined by $C(f_1\oplus f_2,g_1\oplus g_2)=(\overline{g_1}\oplus \overline{g_2},\overline{f_1}\oplus \overline{f_2})$. 
	We can also readily check the collection of field operators satisfy the anti-commutation relations:
	\begin{equation}\label{eq:field_op_anticommutation_relation}
		\com{A(f, g), A^*(h, k)}_+ = \Inprod{(f, g)}{(h, k)}_{(\h\oplus \h)\oplus (\h\oplus \h)}, \quad \com{A^\sharp(f, g), A^\sharp(h, k)}_+ = 0
	\end{equation}
	where $A^\sharp = A$ or $A^*$ and $f, g, h, k \in \h\oplus \h$. 

	A linear isomorphism
	$\nu:(\h\oplus\h)\,\oplus\,(\h\oplus\h)\,\rightarrow\,(\h\oplus\h)\,\oplus\,(\h\oplus\h)$ is called a Bogoliubov (canonical) transformation of $(\h\oplus\h)\,\oplus\,(\h\oplus\h)$ provided it preserves the anti-commutation relations \eqref{eq:field_op_anticommutation_relation}, that is, we have that 
	\begin{equation}\label{eq:field_op_anticommutation_relation_preserved}
		\com{A(\nu(f, g)), A^*(\nu(h, k))}_+ = \Inprod{(f, g)}{(h, k)}_{(\h\oplus \h)\oplus (\h\oplus \h)}
	\end{equation}
	for all $f, g, h, k \in \h\oplus \h$ and, likewise, for the other relations. Hence, it follows from~\eqref{eq:field_op_conj_relation} and~\eqref{eq:field_op_anticommutation_relation_preserved} that $\nu$ is a Bogoliubov transformation provided it satisfies the conditions
	\begin{equation}\label{eq:Bogolibov-properties}
	\nu\, C = C\,\nu\,, \qquad \text{ and } \qquad \nu^*\nu= \nu \nu^* = I\,,
	\end{equation}
	where $I$ is the corresponding identity map. 
	
	It is more convenient to express conditions \eqref{eq:Bogolibov-properties} as follows: $\nu$ is a Bogoliubov transformation on $(\h\oplus\h)\,\oplus\,(\h\oplus\h)$ if there exist operators $U$, $V: \h\oplus\h\rightarrow \h\oplus\h$ satisfying the properties
	\begin{equation}\label{eq:Bogoliubov-cond}
		U^*U+V^*V=I\qquad \text{ and }\qquad U^*\overline{V}+V^*\overline{U}=0
	\end{equation}
	such that $\nu$ has the form 
	\begin{equation}\label{eq:Bogoliubov-expres}
		\nu = \(\begin{array}{ll} U & \overline{V} \\ V & \overline{U} \end{array}\).
	\end{equation}
	Moreover, we say that the Bogoliubov transformation $\nu$ is (unitarily) implementable on $\cG$ if there exists a unitary map $\sfR_{\nu}:\cG\mapsto\cG$ such that
	\begin{equation}\label{eq:implementation_of_nu}
		\sfR^*_\nu\,A(f,g)\, \sfR_\nu = A(\nu(f,g))
	\end{equation}
	for all $f, g \in \h\oplus \h$. A necessary and sufficient condition for the transformation $\nu$ to be implementable is given by Shale and Stinespring in \cite{shale_spinor_1965}: $\nu$ is implementable if and only if $V$ is a Hilbert--Schmidt operator. In particular, if $\Tr{V^*V}$ is finite, then $\nu$ is an implementable Bogoliubov transformation. It is common to refer to $\sfR_\nu$ as the Bogoliubov transformation on $\cG$.

	\subsubsection{Quasi-Free States} A fermionic state $\opN$ on $\cF$ is said to be quasi-free provided it possesses the following factorization properties 
	\begin{subequations}\label{eq:def_quasifree_conditions}
		\begin{align}
			&\tr_{\cF}\(a^{\sharp_1}(f_1)\cdots a^{\sharp_{2n+1}}\!\(f_{2n+1}\) \opN\) =0,
			\\
			&\tr_{\cF}\(a^{\sharp_1}(f_1)\cdots a^{\sharp_{2n}}\!\(f_{2n}\) \opN\)
			\\\nonumber
			&\qquad= \sum_{\sigma} (-1)^\sigma \prod_{j=1}^n\tr_{\cF}\(a^{\sharp_{\sigma(2j-1)}}(f_{\sigma(2j-1)})a^{\sharp_{\sigma(2j)}}(f_{\sigma(2j)}) \opN\), 
		\end{align}
	\end{subequations}
	where $f_k \in \h$ and the sum is over all permutations $\sigma$ of $\set{1,\dots,2n}$ satisfying  
	\begin{equation*}
		\forall j\in\set{1,\dots, n}, \sigma(2j-1) < \sigma(2j), \text{ and }
		\sigma(2j-1) < \sigma(2j+1) \text{ if } j<n.
	\end{equation*}
	In short, a state is said to be quasi-free if the higher-order reduced density matrices of $\opN$ are completely determined by the generalized one-particle reduced density matrix. We could also express Conditions \eqref{eq:def_quasifree_conditions} in terms of the purified state $\Phi$. This means that any quasi-free mixed state can be viewed as the partial trace of a quasi-free pure state. Moreover, using the fact that pure quasi-free states are completely characterized by their generalized one-particle reduced density matrix, it can be shown that a pure quasi-free state $\Phi$ on $\cG$ can be written as $\Phi = \sfR_{\nu}\Omega$ for some Bogoliubov transformation $\sfR_{\nu}$.

	Let us now construct the Bogoliubov transformation and its corresponding class of quasi-free states that we will study in Part~\ref{part:mean-field} of the paper. Let $\omega$ be a one-particle density operator on $\h$ satisfying the properties: $0\leq \omega \leq 1$ and $\Tr{\omega}=N$. Define the map
	$\nu : (\h\oplus\h)\oplus(\h\oplus\h)\mapsto(\h\oplus\h)\oplus(\h\oplus\h)$
	given by Equation \eqref{eq:Bogoliubov-expres} with $U$ and $V$ having the explicit forms
	\begin{equation}
		U = \(\begin{array}{ll}
		u & 0 \\ 0 & \overline{u}
		\end{array}\)
		\quad\quad\mbox{and}\quad\quad
		V =\(\begin{array}{ll}
		 0 & \overline{v} \\ -v & 0
		\end{array}\)
	\end{equation}
	with 
	\begin{equation}
		u := \sqrt{1-\omega} \quad\quad\mbox{and}\quad\quad v := \sqrt{\omega}\,.
	\end{equation}
	Notice $U$ and $V$ satisfy Equation~\eqref{eq:Bogoliubov-cond} which means $\nu$ is a Bogoliubov transformation. Furthermore, $V$ is a Hilbert--Schmidt operator. Indeed, since $\Tr{V^*V}=2\Tr{\omega} = 2\,N$ is clearly finite, then, by the Shale--Stinespring condition \cite{shale_spinor_1965}, $\nu$ is implementable.
	Hence, there exists a unitary map $\sfR_{\nu}:\cG\mapsto\cG$ implementing $\nu$. Consequentially, Equation \eqref{eq:implementation_of_nu} yields the relations
	\begin{align*}
		\sfR^*_{\nu}a_{x, l}\sfR_{\nu} =&\ a_l(u_x)-a^*_r(\conj{v_x}),\\
		\sfR^*_{\nu}a_{x, r}\sfR_{\nu} =&\ a_r(\conj{u_x})+a^*_l(v_x).
	\end{align*}
	where we used the notation $u_x(y)= u(y, x), v_x(y)= v(y, x)$. 
	
	Let us now use the Bogoliubov transformation to represent quasi-free mixed states. The construction we present here is an example of the well-known Araki--Wyss representation, see \cite{araki_representations_1964, araki_quasifree_1971, derezinski_mathematics_2013}. More precisely, we are interested in constructing a quasi-free mixed state with one-particle reduced density $\op$ on the double Fock space $\cG$. To this end, we define $\sfR_{\op}$ as the Bogoliubov transform with
	\begin{equation*}
		\omega = N\,h^3 \op
	\end{equation*}
	and let the unitary map $\sfR_{\op}$ act on the vacuum $\Omega_\cG$, i.e.
	\begin{equation}\label{eq:quasi-free-mixed}
		\Phi_{\op} := \sfR_{\op}\,\Omega_{\cG}\in\cG\,.
	\end{equation}
	We can now compute the integral kernel of the one-particle reduced density matrix associated with 
	the state $\Phi_{\op}$:
	\begin{equation*}
		\begin{split}
		\op_{N:1}(x,y) &= \frac{1}{N\,h^3} \Inprod{\Phi_{\op}}{a_{l,y}^* \, a_{l,x} \Phi_{\op}} = \frac{1}{N\,h^3} \Inprod{\Omega_\cG}{\sfR^*_{\op}a_{l,y}^*\sfR_{\op}\sfR^*_{\op}a_{l,x}\sfR_{\op}\Omega_{\cG}}
		\\
		&= \frac{1}{N\,h^3} \Inprod{\Omega_\cG}{a_{l}(\overline{v_y})\,a^*_r(\overline{v_x})\,\Omega_{\cG}} =\frac{1}{N\,h^3} \, (v^*v)(x,y)=\op(x,y).
		\end{split}
	\end{equation*}
	Therefore, the one-particle reduced density matrix associated with $\Phi_{\op}$ corresponds to the operator $\op$. Furthermore, the off-diagonal term associated with the state $\Phi_{\op}$, referred to as pairing density, is zero. Indeed, since
	\begin{equation*}
		\begin{split}
		\alpha_{\Phi_{\op}}(x,y)&:= \Inprod{\sfR_{\op}\Omega_\cG}{a_{l,y}\,a_{l,x}\, \sfR_{\op}\Omega_{\cG}} = \Inprod{\Omega_\cG}{a_{l}(u_y)\,a_l(\conj{v_x})\,\Omega_{\cG}} = 0
		\end{split}
	\end{equation*}
	where we used that $[a_l(u_y),a_r^*(\overline{v_x})]_+=0$. Undoing the purification process, we can now define the reference state (mean-field approximation) $\opNop$, associated to the solution $\op$ of the Hartree--Fock equation~\eqref{eq:Hartree--Fock}, as stated in Theorem~\ref{thm:mean_field} by
	\begin{equation}\label{eq:reference_state}
		\opNop = \n{\sfI_\cG^{-1}(\Phi_{\op})}^2.
	\end{equation}

\subsection{The General Result}
	
	In this section, we state a more general result from which our main results would follow. Our result is obtained by controlling the growth of the weighted norm
	\begin{equation*}
		\Nrm{\Psi}{\cG_k} := \Nrm{\(\cN+1\)^k\Psi}{\cG}.
	\end{equation*}

	\begin{theorem}\label{thm:mean_field_2}
		Let $a\in[0,1]$ and assume Condition~\eqref{eq:scaling} is satisfied. Let $(k,n)\in\N^2$ and $\alpha\in \lt[0,1\rt]$ satisfying $n \geq 6$ and $\alpha > a - \frac{1}{2}$. Let $\op$ be a solution of the Hartree--Fock equation~\eqref{eq:Hartree--Fock} with initial condition $\op^\init\in \L^\infty(m_n)$ satisfying \eqref{eq:scaling_op} and such that
		\begin{align}\label{eq:regularity_op_2}
			\op^\init &\in \cW^{2,2}(m_n)\cap \cW^{2,4}(m_{n-2})
			\\\label{eq:regularity_sqrt_2}
			\sqrt{\op^\init} &\in \cW^{1,2}(m_n)\cap \cW^{1,q}(m_{n-2})
		\end{align}
		with $q\in [2,\infty]$ satisfying
		\begin{equation}\label{eq:scaling_q_2}
			\frac{3}{q} \in \lt[2\(\alpha - a - \frac{1}{4}\), \alpha - a + \frac{1}{2}\rt].
		\end{equation}
		Let $\Psi^\init\in \cG$. Then, there exists $T>0$ and $C>0$ such that for any $t\in[0,T]$ and any $p\in[1,\infty)$
		\begin{equation*}
			\Nrm{\op_{N:1}-\op}{\L^p} \leq \frac{C\,e^{\lambda\,h^{-\alpha}\,t}}{\min(N^\frac{1}{2}, N\,h^\frac{3}{p'})} \(\Nrm{\Psi^\init}{\cG_{\frac{3}{2}k+\frac{1}{2\,p}}}^2 + \frac{h^{2 k \(\alpha-1\)}}{N^{k-\frac{1}{p}}} t^2 \Nrm{\Psi^\init}{\cG_{\frac{3}{2}k}}^2\).
		\end{equation*}
		where $\lambda = C_{a,\alpha}\n{\kappa} C_{\op}$ for some constant $C_{\op}$ depending only on $T$ and the initial condition of the Hartree--Fock equation. 
	\end{theorem}

    In the above theorem, we assumed we know the perturbation of the vacuum, $\Psi^\init$. As done in \eqref{eq:reference_state} for the reference state $\opNop$, we can associate to $\Psi^\init$ an operator $\opN = \n{\sfI_\cG^{-1}(\sfR_{\op}\Psi)}^2$ which solves the Schrödinger equation~\eqref{eq:density_matrix_Cauchy_prblm_on_Fock}.
     
	\begin{remark}
		In particular, notice that
		\begin{equation*}
			\frac{h^{2 k\(\alpha-1\)}}{N^{k-\frac{1}{p}}} \leq 1 \iff k \geq \frac{\ln N}{p\,\ln\(Nh^{2\(1-\alpha\)}\)}.
		\end{equation*}
		More specifically, if $N = h^{-c}$, then this is equivalent to $k\geq \frac{c}{p\(c+2 (\alpha-1)\)}$. For instance, take $c=3$. Then for any $a<1/2$, we can take $\alpha = 0$ and $k = 3$, leading to
		\begin{equation*}
			\Nrm{\op_{N:1}-\op}{\L^p} \leq \frac{C\,e^{\lambda\,t}}{N^{\min(\frac{1}{2},\frac{1}{p})}} \Nrm{\Psi}{\cG_{5}}^2.
		\end{equation*}
		In the case of the Coulomb potential $a=1$, we can take $k = 2$ and any $\alpha > 1/2$, leading to
		\begin{equation*}
			\Nrm{\op_{N:1}-\op}{\L^p} \leq \frac{C}{N^{\min(\frac{1}{2},\frac{1}{p})}} \Nrm{\Psi}{\cG_4}^2 e^{\lambda\,t/h^\alpha},
		\end{equation*}
		which is small only for small times $t\ll N^{-1/6} = h^{1/2}$. This is an improvement in comparison to non semiclassical estimates which are valid only for $t\ll h$.
	\end{remark}
	
	\begin{remark}\label{remark:cutoff}
		When $a\geq 1/2$, one can also consider the potential with a $h$-dependent cut-off. For example, a way to get a potential bounded at distance $\n{x} \leq R$ is to take
		\begin{equation}\label{eq:cutoff}
			K_R(x) = \frac{\omega_a\,\kappa}{2} \int_0^{R^{-2}} s^{\frac{a}{2}-1} e^{-\pi\n{x}^2s}\d s \,\underset{R\to 0}{\longrightarrow}\, \kappa \, \frac{1}{\n{x}^a},
		\end{equation}
		which is a radial decreasing potential satisfying $K_R(x) \leq \n{\kappa}\max\!\(\frac{1}{\n{x}^a}, \frac{\omega_a}{a R^a}\)$. For the Coulomb interaction potential for example, assuming $R\leq 1$ and $N=h^{-c}$ and taking $c=3$ and $p\leq 2$, this leads to
		\begin{equation*}
			\Nrm{\op_{N:1}-\op}{\L^p} \leq \frac{C\,e^{\lambda\,t/\sqrt{R}}}{N^{1/2}} \Nrm{\Psi}{\cG_{ 5}}^2.
		\end{equation*}
		Thus, one obtains a quantitative convergence result as soon as $R > \frac{4 \lambda^2 t^2}{\(\ln N\)^2}$.
	\end{remark}
	
	\begin{remark}
		Let $\opNop$ be defined by Equation~\eqref{eq:reference_state}. Then the standard deviation of the number of particles $\sigma_\cN^2 := h^3\Tr{\cN^2\,\opNop} - \(h^3\Tr{\cN\,\opNop}\)^2$ is given by
		\begin{equation*}
			\sigma_\cN^2 = \Tr{\omega-\omega^2} = N\(1-\cC_2^2 Nh^3\).
		\end{equation*}
		In particular, $\sigma_\cN \leq \sqrt{N}$.
		
		Notice also that $\sigma_\cN = 0 \iff \omega = \omega^2 \iff \cC_2^2Nh^3 = 1$. This implies that in order for the reference state $\opNop$ to have a fixed number of particles, it has to be a pure state and the scaling has to be the critical scaling $Nh^3 = \cC_2^{-2}$. In this case, the regularity conditions \eqref{eq:regularity0_0} are not expected to hold. However, it is a good question to know whether it is possible to find a state $\opN = \n{\sfI_\cG^{-1}(\sfR_{\op}\Psi)}^2$ with a fixed number of particles but still close to $\opNop$, in the sense that the associated $\Psi$ satisfies $\Nrm{\Psi}{\cG_5} \ll N^\frac{1}{2}$.
	\end{remark}

\part{\large Propagation of Regularity}\label{part:regularity}

	This part is devoted to the proof of Theorem~\ref{thm:regularity} about the propagation of the semiclassical regularity of the solutions of the Hartree--Fock equation~\eqref{eq:Hartree--Fock}, and also of higher regularity properties needed to obtain Theorem~\ref{thm:mean_field}.

\section{The Classical Case: Regularity for the Vlasov Equation}\label{sect:regularity-vlasov}
	
	As a warm-up and an explanation of our strategy, we start by the analogue of our method in the classical case of the kinetic Vlasov equation. We define
	\begin{equation*}
		N_{p,x} := \iintd \n{\Dx f}^p m
		\quad \text{ and } \quad N_{p,\xi} := \iintd \n{\Dv f}^p m.
	\end{equation*}
	Denoting $\sfT:= \xi\cdot\Dx + E\cdot\Dv$, then we have
	\begin{equation}\label{eq:time-der-grad}
		\begin{split}
			\partial_t(\Dx f) &= -\sfT \Dx f - \nabla E \cdot \Dv f
			\\
			\partial_t(\Dv f) &= -\sfT\Dv f - \Dx f.
		\end{split}
	\end{equation}
	
	\begin{prop}
		Let $n>3$ and $f$ be a solution of Equation \eqref{eq:Vlasov}  with initial condition satisfying
		\begin{equation*}
			\nabla_{x,\xi}f^\init \in L^p(1 + \n{\xi}^n)
		\end{equation*}
		for any $p\in [1,\infty)$. Then there exists a time $T>0$ such that
		\begin{equation*}
			\nabla_{x,\xi}f \in L^\infty((0,T),L^p(1 + \n{\xi}^n)).
		\end{equation*}
	\end{prop}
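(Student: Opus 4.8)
The plan is to close a Gr\"onwall estimate for the weighted gradient norms $N_{p,x}$, $N_{p,\xi}$ together with $N_{p,0}:=\iintd\n f^p\,m$, the nonlinearity being controlled by bounding the force field $E=-\nabla V_f$ and its gradient $\nabla E$ in $L^\infty$ in terms of a fixed finite family of Lebesgue norms of the spatial density $\rho_f$, which are themselves controlled by the quantities being propagated. Differentiating $N_{p,0}$, $N_{p,x}$, $N_{p,\xi}$ in time using $\partial_t f=-\sfT f$ and \eqref{eq:time-der-grad}, the transport part is skew-symmetric up to the weight: since the phase-space vector field $(\xi,E(x))$ is divergence free while $m=1+\n\xi^n$ depends only on $\xi$, one has $\iintd\sfT(G)\,m=-\iintd G\,\sfT(m)$ with $\sfT(m)=E\cdot\Dv m$ and $\n{\Dv m}\le n\,\n\xi^{n-1}\le n\,m$, so each transport term contributes at most $C\,\Nrm E{L^\infty}$ times the corresponding norm. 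The genuine forcing terms are handled by H\"older in the measure $m\,\dd x\,\dd\xi$ followed by Young's inequality: $\iintd\n{\Dx f}^{p-1}\n{\Dv f}\,m\le N_{p,x}^{1/p'}N_{p,\xi}^{1/p}\le N_{p,x}+N_{p,\xi}$, and symmetrically for the term in $\partial_t(\Dv f)$. Altogether
\begin{equation*}
	\dt\(N_{p,0}+N_{p,x}+N_{p,\xi}\)\le C_p\(1+\Nrm E{L^\infty}+\Nrm{\nabla E}{L^\infty}\)\(N_{p,0}+N_{p,x}+N_{p,\xi}\).
\end{equation*}

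To control the field, write $E=-\nabla K*\rho_f$ and $\nabla E=-\nabla K*\nabla\rho_f$; since $\nabla K\in L^{\fb,\infty}$ with $\fb=\tfrac3{a+1}$ (so $\fb'=\tfrac3{2-a}$), the generalized Young inequality in Lorentz spaces gives $\Nrm E{L^\infty}\lesssim\Nrm{\rho_f}{L^{\fb',1}}$ and $\Nrm{\nabla E}{L^\infty}\lesssim\Nrm{\nabla\rho_f}{L^{\fb',1}}$. The density norms follow from velocity averaging: for $s\in[1,\infty)$,
\begin{equation*}
	\rho_f(x)=\intd f(x,\xi)\dd\xi\le\(\intd\n{f(x,\xi)}^s\,m\dd\xi\)^{1/s}\(\intd m^{-s'/s}\dd\xi\)^{1/s'},
\end{equation*}
and the last factor is finite precisely when $s<1+\tfrac n3$ — a nontrivial range of $s\ge1$ because $n>3$, which is the only place the hypothesis on $n$ enters. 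Hence $\Nrm{\rho_f}{L^s_x}\lesssim N_{s,0}^{1/s}$ and, by the same computation with $\Dx f$ in place of $f$, $\Nrm{\Dx\rho_f}{L^s_x}\lesssim N_{s,x}^{1/s}$ for $s<1+\tfrac n3$. Combining the bound on $\Dx\rho_f$ with the Sobolev embedding $\dot W^{1,s}(\Rd)\hookrightarrow L^{3s/(3-s)}(\Rd)$ and with mass conservation $\Nrm{\rho_f}{L^1}=\Nrm{f^\init}{L^1}$, real interpolation places $\rho_f$ and $\nabla\rho_f$ in $L^{\fb',1}$, so that $\Nrm E{L^\infty}+\Nrm{\nabla E}{L^\infty}$ is bounded by a polynomial in $N_{s,0},N_{s,x},N_{s,\xi}$ over a fixed finite set of exponents $s$ (depending on $a$, and including $p$).

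Inserting this into the differential inequality produces a closed superlinear Gr\"onwall inequality for the sum of $N_{s,0}+N_{s,x}+N_{s,\xi}$ over the finitely many exponents involved, and a continuity/bootstrap argument then yields a time $T>0$, independent of $p$, on which every $N_{p,0}+N_{p,x}+N_{p,\xi}$ stays finite, which is the claim. The main obstacle is the field estimate for the most singular potentials: bounding $\Nrm{\nabla E}{L^\infty}$ needs $\nabla\rho_f\in L^{\fb'}$, and for $\nabla\rho_f$ the velocity-averaging bound only delivers $L^s$ with $s<1+\tfrac n3$, with no Sobolev gain available (that would require second-order $x$-derivatives of $f$, which are not propagated here). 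Closing the loop therefore forces $\tfrac3{2-a}=\fb'<1+\tfrac n3$, which is automatic for $a$ not too large (in particular for $a<\tfrac12$ when $n>3$), but near the Coulomb exponent $a=1$ it requires a larger power $n$; moreover the endpoint $s=\fb'$ has to be reached through the full family of $L^p$ bounds and Lorentz-space refinements rather than a single estimate — this is the genuinely delicate point of the argument.
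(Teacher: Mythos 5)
Your strategy is the same as the paper's: a weighted Gr\"onwall estimate for $N_{p,x}$ and $N_{p,\xi}$, skew-symmetry of the transport operator up to the term $\sfT(m)=E\cdot\Dv m\lesssim \Nrm{E}{L^\infty}m$, H\"older on the cross term $\iintd\n{\Dx f}^{p-1}\n{\Dv f}\,m$, and velocity averaging to convert $\Nrm{\nabla\rho_f}{L^{s}}$ back into $N_{s,x}$, closing the system over finitely many exponents. The resulting Riccati-type inequality is the paper's \eqref{eq:U-gronwall}.

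The one real problem is your weight normalization, and it is exactly the source of the constraint $\fb'<1+\tfrac n3$ that you flag as "genuinely delicate" at the end. The hypothesis $\nabla_{x,\xi}f^\init\in L^p(1+\n{\xi}^n)$ means $\Nrm{\nabla f^\init\,(1+\n{\xi}^n)}{L^p}<\infty$, i.e.\ the weight enters the integrand to the $p$-th power; this is why the paper sets $m:=1+\n{\xi}^{np}$ inside $N_{p,x}$. With that weight, H\"older in $\xi$ gives
\begin{equation*}
	\intd\n{\Dx f}\dd\xi\;\leq\;\Big(\intd\n{\Dx f}^p\weight{\xi}^{np}\dd\xi\Big)^{1/p}\Big(\intd\weight{\xi}^{-np'}\dd\xi\Big)^{1/p'},
\end{equation*}
and the last factor is finite as soon as $np'>3$, which holds for \emph{every} $p\in[1,\infty)$ once $n>3$. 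Hence $\Nrm{\nabla\rho_f}{L^{p}}\lesssim N_{p,x}^{1/p}$ is available directly at $p=\fb'=\tfrac{3}{2-a}$ for all $a\in(0,1]$: no Sobolev embedding, no interpolation, and no extra condition on $n$ near the Coulomb case. By using the first power of the weight you only reach $s<1+\tfrac n3$, which for $a=1$ and $3<n\leq 6$ fails to cover $s=\fb'=3$, so your proof as written does not give the stated proposition in that range; correcting the weight removes the obstruction entirely. A secondary, purely cosmetic difference: where you invoke $\nabla K\in L^{\fb,\infty}$ plus Lorentz duality (which then needs $\nabla\rho_f\in L^{\fb',1}$, obtained by real interpolation), the paper simply splits $\nabla K=F_1+F_2\in L^{\fb_1}+L^{\fb_2}$ with $\fb_1<\fb<\fb_2$ and runs the Gr\"onwall argument at the two exponents $p=\fb_1'$ and $p=\fb_2'$; this avoids Lorentz spaces and makes the closure of the system explicit, since the norms $\Nrm{\nabla\rho_f}{L^{\fb_k'}}$ appearing in the force estimates are exactly the propagated quantities $u_k=N_{\fb_k',x}^{1/\fb_k'}$.
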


	\begin{proof}
		Let $m := 1+\n{\xi}^{np}$. To simplify the computations, we observe that $\sfT^* = -\sfT$ and $\sfT(uv) = u\,\sfT(v) + \sfT(u)\,v$, so that by writing $u^p := \n{u}^{p-1}u$, it holds
		\begin{equation*}
			\iintd \sfT(u)\cdot u^{p-1}m = -\iintd u\cdot\sfT(u^{p-1})\,m + \n{u}^p\sfT(m).
		\end{equation*}
		But
		\begin{align*}
			u\cdot(\sfT(u^{p-1})) &= u^{p-1}\cdot\sfT(u) + (p-2)\(\sfT(u)\cdot u\)\n{u}^{p-2}
			\\
			&= \(p-1\)u^{p-1}\cdot\sfT(u).
		\end{align*}
		We deduce
		\begin{equation*}
			-p\iintd \sfT(u)\cdot u^{p-1}m = \iintd \n{u}^p\sfT(m).
		\end{equation*}
		Therefore, differentiating the weighted $L^p$ norms, we obtain
		\begin{align*}
			\ddt{N_{p,x}} &= \iintd \n{\Dx f}^p \sfT(m) - p\(\Dx f\)^{p-1}\cdot\nabla E\cdot\Dv f \,m\d x\d\xi
			\\
			\ddt{N_{p,\xi}} &= \iintd \n{\Dv f}^p \sfT(m) - p\(\Dv f\)^{p-1}\cdot\Dx f \,m\d x\d\xi.
		\end{align*}
		Then by Young's inequality for the product
		\begin{equation*}
			\sfT(m) = n\,p\, E\cdot \xi^{np-1} \leq np\Nrm{E}{L^\infty} m.
		\end{equation*}
		We cut $\nabla K = F_1+F_2\in L^{\fb_1} + L^{\fb_2}$. The difficult term is
		\begin{multline*}
			\iintd (\Dx f)^{p-1}\cdot\nabla E\cdot\Dv f\,m\d x\d \xi \leq \Nrm{\nabla K * \nabla\rho}{L^{\infty}} \iintd \n{\Dx f}^{p-1}\n{\Dv f}m\d x\d \xi
			\\
			\leq \(\Nrm{F_1}{L^{\fb_1}} \Nrm{\nabla\rho}{L^{{\fb_1'}}}+\Nrm{F_2}{L^{\fb_2}} \Nrm{\nabla\rho}{L^{{\fb_2'}}}\) N_{p,x}^{1-1/p} N_{p,\xi}^{1/p}
			\\
			\leq B_K \(\Nrm{\nabla\rho}{L^{{\fb_1'}}} + \Nrm{\nabla\rho}{L^{{\fb_2'}}}\) N_{p,x}^{1-1/p} N_{p,\xi}^{1/p},
		\end{multline*}
		with $B_K = \Nrm{\nabla K}{L^{\fb_1}+L^{\fb_2}}$ and where we used three times H\"older's inequality. Then, when $n\geq 3/\fb$, we get
		\begin{equation*}
			\Nrm{\nabla\rho}{L^{\fb'}} = \Nrm{\intd\Dx f\d \xi}{L^{\fb'}} \leq \Nrm{\Dx f\, (1+\n{\xi}^{n})}{L^{\fb'}_{x,\xi}} \leq C\,N_{\fb',x}^{1/\fb'}.
		\end{equation*}
		Therefore, taking respectively $p=\fb_1'$ and $p = \fb_2'$ and using the notation $u_k = u_k(t) := N_{\fb_k',x}^\frac{1}{\fb_k'}$ and $v_k = v_k(t) := N_{\fb_k',\xi}^\frac{1}{\fb_k'}$, for $k=1$ and $k=2$, it holds
		\begin{align*}
			\dt u_k &\leq n \Nrm{E}{L^\infty} u_k + B_K\(u_1+u_2\) v_k
			\\
			\dt v_k &\leq n \Nrm{E}{L^\infty} v_k + u_k v_k,
		\end{align*}
		so that defining $U := u_1 + u_2 + v_1 + v_2$ we get
		\begin{equation}\label{eq:U-gronwall}
			\dt U \leq n \Nrm{E}{L^\infty} U + \(B_K+\frac{1}{2}\) U^2,
		\end{equation}
		where we used the fact $2uv\leq u^2+v^2$, and we obtain by Gr\"onwall's inequality that $U$ remains finite as long as $t<T$ where $T$ depends on the growth of $\Nrm{E(t,\cdot)}{L^\infty}$, which we can control by
		\begin{equation*}
			\Nrm{E}{L^\infty} \leq C_K \(\Nrm{\nabla\rho}{L^{c_1'}} +\Nrm{\nabla\rho}{L^{c_2'}}\) \leq C_K \(N_{c_1',x}^{1/c_1'} + N_{c_2',x}^{1/c_2'}\),
		\end{equation*}
		with $C_K = \Nrm{K}{L^{c_1}+L^{c_2}}$ . 
		In particular, for the Coulomb interaction, one can choose $c_1=\fb_1 < 3/2$ and $c_2=\fb_2 > 3$.
	\end{proof}

\section{The Quantum Case: Propagation of Regularity for the Hartree--Fock Equation}

	In this section, we prove the semiclassical analogue of the propagation of regularity for the Vlasov equation shown in Section~\ref{sect:regularity-vlasov}. The main difficulty is to close the Gr\"onwall inequality, which we managed to do by propagating at the same time the $\L^\infty(m_n)$, the $\cW^{1,2}(m_n)$, and the $\cW^{1,q}(m_{n-2})$ norms with $q\geq 2$ and
	\begin{equation*}
		m_n = 1+\n{\opp}^n,
	\end{equation*}
	where $n\in 2\N$. This first step allows us to prove that the $\cW^{1,q}(m_n)$ norm remains bounded on some time interval for $q\in[2,q_a)$ with $q_a := \infty$ if $\fb := \frac{3}{a+1} \geq 2$ and 
	\begin{equation*}
		\frac{1}{q_a} := \frac{1}{\fb} - \frac{1}{2}
	\end{equation*}
	when $\fb<2$. It is the content of the following proposition, where we only consider $q\in[2,4]$ for simplicity.

	\begin{prop}\label{prop:regu_HF}
		Fix $a\in(0,1]$. If ${\op}^{\init}\in \cW^{1,2}(m_n)\cap \cW^{1,4}(m_{n-2}) \cap \L^\infty(m_n)$, then there exists $T>0$ such that
		\begin{align*}
			{\op} &\in L^\infty((0,T),\cW^{1,2}(m_n)\cap \cW^{1,4}(m_{n-2}) \cap \L^\infty(m_n)),
			\\
			{\rho} &\in L^\infty((0,T),H^{1} \cap W^{1,4} \cap L^1 \cap L^\infty).
		\end{align*}
	\end{prop}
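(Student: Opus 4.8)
The plan is to carry out, at the operator level, the argument of Section~\ref{sect:regularity-vlasov}: the transport field $\sfT$ is replaced by the commutator $\tfrac{1}{i\hbar}[H_\op,\cdot]$, the gradients $\Dx f$ and $\Dv f$ by the quantum gradients $\Dhx\op$ and $\Dhv\op$, the force field $E$ by $E_\op=-\nabla V_\op$, and the weighted Lebesgue norms by the semiclassical Schatten norms $\Nrm{\cdot}{\L^p(m)}$. Concretely, I would set up a closed Grönwall system for the quantities $A_p(t):=\Nrm{\Dhx\op(t)}{\L^p(m)}$ and $B_p(t):=\Nrm{\Dhv\op(t)}{\L^p(m)}$ with $p\in\{2,4\}$, together with $\Nrm{\op(t)}{\L^\infty(m)}$, and deduce from it a time $T>0$, independent of $\hbar$, on which all these stay finite; the bounds on $\rho$ then follow from partial-trace (Kato--Seiler--Simon type) estimates and from $\Nrm{\rho}{L^1}=h^3\Tr\op=1$. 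All computations are first performed with $K$ replaced by a smooth truncation so that every quantum gradient and weighted trace is finite, the bounds being uniform in the truncation.

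\textbf{Evolution of the quantum gradients.} From $i\hbar\,\dpt\op=[H_\op,\op]$ with $H_\op=-\tfrac{\hbar^2}{2}\Delta+V_\op-h^3\sfX_\op$, and the Jacobi identity, one gets componentwise
\begin{align*}
i\hbar\,\dpt(\Dhx\op) &= [H_\op,\Dhx\op]+[\nabla V_\op,\op]-h^3[[\nabla,\sfX_\op],\op],\\
i\hbar\,\dpt(\Dhv\op) &= [H_\op,\Dhv\op]-i\hbar\,\Dhx\op-h^3[[\tfrac{x}{i\hbar},\sfX_\op],\op],
\end{align*}
using $[\nabla,-\tfrac{\hbar^2}{2}\Delta]=0$, $[\nabla,V_\op]=\nabla V_\op$ (a multiplication operator), $[\tfrac{x}{i\hbar},-\tfrac{\hbar^2}{2}\Delta]=\opp$ so that its commutator with $\op$ yields $-i\hbar\,\Dhx\op$, and $[\tfrac{x}{i\hbar},V_\op]=0$ since $V_\op$ is a multiplication operator --- the exact analogue of the absence of a $\nabla E$ term in $\dpt\Dv f$. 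The exchange contributions are of the same structural type as, and are estimated by the same tools as, the Hartree ones, using that $h^3\sfX_\op$ has kernel $h^3K(x-y)\op(x,y)$ whose diagonal $h^3\op(x,x)=\rho$ is of size $O(1)$; below I treat them as perturbations.

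\textbf{Weighted Schatten estimates and the coupling terms.} For a self-adjoint $A$ solving $i\hbar\,\dpt A=[H_\op,A]+G$, differentiating $\Nrm{A\,m}{\L^p}^p$ and using cyclicity of the trace, the transport commutator $[H_\op,A]$ contributes only through $[H_\op,m]$; since $m=1+\n{\opp}^n$ commutes with $-\tfrac{\hbar^2}{2}\Delta$, one has $[H_\op,m]=[V_\op-h^3\sfX_\op,m]$, and a commutator inequality of the type $\Nrm{[\phi(x),\n{\opp}^n]\,\n{\opp}^{-(n-1)}}{\infty}\lesssim\Nrm{\nabla\phi}{L^\infty}$ bounds its contribution by $C\Nrm{E_\op}{L^\infty}$ (plus an exchange analogue) --- the semiclassical counterpart of $\sfT(m)\leq np\Nrm{E}{L^\infty}m$. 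This gives
\begin{align*}
\dt A_p &\lesssim \Nrm{E_\op}{L^\infty}\,A_p+\Nrm{[\nabla V_\op,\op]\,m}{\L^p}+(\text{exch.}),\\
\dt B_p &\lesssim \Nrm{E_\op}{L^\infty}\,B_p+A_p+(\text{exch.}).
\end{align*}
For the singular term I would use a commutator bound $\Nrm{[\phi(x),\op]\,m}{\L^p}\lesssim\Nrm{\nabla\phi}{L^\infty}\,\Nrm{\Dhv\op\,m}{\L^p}$ (the semiclassical version of $\nabla E\cdot\Dv f$, available because $\Dhv\op=[\tfrac{x}{i\hbar},\op]$ and translations preserve Schatten norms), applied with $\phi=\nabla V_\op$, so that $\nabla\phi=\nabla^2V_\op=\nabla K*\nabla\rho$ and the term is controlled by $\Nrm{\nabla K*\nabla\rho}{L^\infty}\,B_p$; then I split $\nabla K=F_1+F_2\in L^{\fb_1}+L^{\fb_2}$ at the singularity, with $\fb_1<\tfrac{3}{a+1}<\fb_2$, exactly as in the classical proof, to get $\Nrm{\nabla K*\nabla\rho}{L^\infty}\lesssim\Nrm{\nabla\rho}{L^{\fb_1'}}+\Nrm{\nabla\rho}{L^{\fb_2'}}$. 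The passage back to the operator is $\Nrm{\nabla\rho}{L^r}=\Nrm{\Diag{\Dhx\op}}{L^r}\lesssim\Nrm{\Dhx\op}{\L^r(m)}$ whenever $nr>3$ and $r\geq2$ --- the quantum analogue of $\Nrm{\intd\Dx f\dd\xi}{L^r}\leq\Nrm{\Dx f\,\weight{\xi}^n}{L^r_{x,\xi}}$, itself a Kato--Seiler--Simon/Sobolev-type estimate --- the exponents $\fb_1',\fb_2'$ being reached from $p\in\{2,4\}$ and the conserved $\Nrm{\op}{\L^1}=1$ by interpolation; this is where $a\leq1$ (and the finiteness of $q_a$ when $\fb<2$) is used. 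Likewise $\Nrm{E_\op}{L^\infty}=\Nrm{\nabla K*\rho}{L^\infty}\lesssim\Nrm{\rho}{L^{c_1'}}+\Nrm{\rho}{L^{c_2'}}\lesssim1+\Nrm{\op}{\L^\infty(m)}$ using $\rho\in L^1\cap L^\infty$.

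\textbf{Closing the Grönwall; the main obstacle.} Propagating $\Nrm{\op}{\L^\infty(m)}$ by the same kind of estimate --- here there is no singular source, because $V_\op$ is a multiplication operator and the kinetic part commutes with $m$ --- and setting $U:=\Nrm{\op}{\L^\infty(m)}+\sum_{p\in\{2,4\}}(A_p+B_p)$, the bounds above give $\dt U\lesssim(1+U)^2$ uniformly in $\hbar$; Grönwall's lemma then produces the $\hbar$-independent time $T>0$, and the stated regularity of $\rho$ follows from the partial-trace estimates and $\Nrm{\rho}{L^1}=1$. The main obstacle is precisely the one the paper flags: \emph{closing} this coupled system, i.e. controlling the singular term $[\nabla V_\op,\op]$ uniformly in $\hbar$ --- which needs both the commutator inequality $\Nrm{[\nabla V_\op,\op]\,m}{\L^p}\lesssim\Nrm{\nabla K*\nabla\rho}{L^\infty}B_p$ and the partial-trace bound $\Nrm{\nabla\rho}{L^r}\lesssim\Nrm{\Dhx\op}{\L^r(m)}$ with Hölder exponents that match the singularity of $\nabla K$, forcing $a\leq1$ and the restriction on $q$ --- together with showing that the exchange terms, although not small, are governed by analogous commutator and Kato--Seiler--Simon estimates and hence do not spoil the Grönwall. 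This is why all three families of norms, $\L^\infty(m)$, $\cW^{1,2}(m)$ and $\cW^{1,4}(m)$, have to be propagated simultaneously.
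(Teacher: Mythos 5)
Your overall architecture is the right one and matches the paper's: the same evolution equations for $\Dhx\op$ and $\Dhv\op$, the same decision to propagate $\L^\infty(m)$, $\cW^{1,2}(m)$ and $\cW^{1,4}(m)$ simultaneously and close the Grönwall by interpolating the intermediate exponents, the same splitting $\nabla K\in L^{\fb_1}+L^{\fb_2}$, and the same trace-type estimate $\Nrm{\nabla\rho}{L^r}\lesssim\Nrm{\Dhx\op\,m}{\L^r}$. However, there is a genuine gap at the single most delicate step. Your key inequality $\frac{1}{\hbar}\Nrm{[E_{\op},\op]\,m}{\L^p}\lesssim\Nrm{\nabla E_{\op}}{L^\infty}\Nrm{\Dhv\op\,m}{\L^p}$ is only proved (by the explicit kernel computation) for $p=2$, where the Schatten norm is the $L^2$ norm of the kernel. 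For $p=4$ it amounts to the statement that the bounded function $(E_{\op}(x)-E_{\op}(y))\otimes(x-y)/\n{x-y}^2$ is a Schur multiplier of $\mathfrak S^4$, which does not follow from "translations preserve Schatten norms"; that observation only yields the $p=\infty$ bound with $\Nrm{\widehat{\nabla E_{\op}}}{L^1}$ in place of $\Nrm{\nabla E_{\op}}{L^\infty}$, a strictly stronger norm. The paper's Proposition~\ref{prop:estim_commutator_Lq_Besov} resolves this by complex bilinear interpolation between the $\mathfrak S^2$ kernel bound and the $\mathfrak S^\infty$ Fourier bound, producing Besov norms $\Nrm{\rho}{B^{1-3(1/r'-1/\fb)}_{r,1}}$ with $1/r=1/2-1/q$, which are then controlled by real interpolation of $\Nrm{\rho}{L^r}$ and $\Nrm{\rho}{W^{1,r}}$ (Corollary~\ref{cor:estim_commutator_Lq}). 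Without this (or an operator-Lipschitz theorem supplied with proof), the $q=4$ branch of your Grönwall system is not established, and the $q=2$ branch alone does not close.

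Two further steps are treated too lightly. First, the weight commutator $[V_{\op},m]$ with $m=1+\n{\opp}^n$, $n\geq 3$, is not bounded by $\Nrm{\nabla V_{\op}}{L^\infty}$ alone: expanding it produces terms $\opp_\jj^{\ell}\,\partial_\jj V_{\op}$ with $\ell$ up to $2n-1$, i.e.\ momentum-weighted quantities $\nabla K*\Diag{\ad{\opp_\jj}^{\ell}(\Dhxj\op)}$, and the point of the paper's Propositions~\ref{prop:weighted_com_est} and \ref{prop:com_V} is that the extra factors of $\opp_\jj$ are absorbed into the weight (via the unmixing Lemma~\ref{lem:mixing} and Proposition~\ref{prop:diag_vs_weights}) and the extra $\hbar$'s are used to lower the Schatten exponent, yielding a bound by products such as $\Nrm{\Dhxj\op\,m}{\L^r}\Nrm{\opmu\,m}{\L^{r_1}}$ rather than by $\Nrm{E_{\op}}{L^\infty}$. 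Second, the exchange terms are not "of the same structural type" as the direct ones: $\sfX_{\op}$ is not a multiplication operator, and the bounds $\Nrm{\sfX_{\opmu}}{2}\lesssim h^{-a}\Nrm{\opmu\,\n{\opp}^a}{2}$ rest on the Hardy--Rellich inequality (Lemmas~\ref{lem:X_commut_p} and \ref{lem:X_p_bound}); they do end up perturbative because of the prefactor $h^3$, but this must be checked, in particular to verify that the resulting powers $h^{3/2-a}$ and $h^{3(1/q+1/2-1/\fb)}$ are nonnegative for the exponents you use. Your claim that the $\L^\infty(m)$ norm has "no singular source" is likewise inaccurate: equation~\eqref{eq:rho} carries the sources $[V_{\op},m]\op$ and $[h^3\sfX_{\op},m]\op$, which is precisely why the $\L^\infty(m)$ norm must be coupled into the Grönwall system for $a\geq 1/2$ (and replaced by $h^{-3/q}\Nrm{\op\,m}{\L^q}$ for $a<1/2$).
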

	
	Now that we know that the first-order semiclassical Sobolev norms remain bounded for some finite time $T>0$ for $q\in[2,q_a)$, we can use this first result and a similar strategy to prove the propagation of higher Sobolev norms on the same time scale. This is done in the following proposition.
		
	\begin{prop}\label{prop:regu_HF_2}
		Under the hypotheses of Proposition~\ref{prop:regu_HF} and assuming moreover that $\op^\init\in \cW^{2,2}(m_n)\cap \cW^{2,4}(m_{n-2})$, then
		\begin{align*}
			\op &\in L^\infty((0,T),\cW^{2,2}(m_n)\cap \cW^{2,4}(m_{n-2})\cap\cW^{1,\infty}(m_n)),
			\\
			\rho &\in L^\infty((0,T),H^{2} \cap W^{2,4}).
		\end{align*}
	\end{prop}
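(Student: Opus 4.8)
The plan is to carry out, at second order, the semiclassical analogue of the computation of Section~\ref{sect:regularity-vlasov}, on the \emph{same} time interval $[0,T]$ produced by Proposition~\ref{prop:regu_HF}. The key point is that on $[0,T]$ the first-order semiclassical Sobolev norms of $\op$ and the norms $\Nrm{\rho}{H^1\cap W^{1,4}\cap L^1\cap L^\infty}$ are already under control, so the estimate on the second-order quantities will be \emph{linear} in them with coefficients in $L^\infty(0,T)$, and Gr\"onwall closes without further shrinking $T$.

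First I would commute the Hartree--Fock equation~\eqref{eq:Hartree--Fock} twice with $\nabla$ and with $\tfrac{x}{i\hbar}$. Using the Jacobi identity, the facts that $H_0:=-\tfrac{\hbar^2}{2}\Delta$ commutes with $\nabla$ and that $\com{\tfrac{x}{i\hbar},H_0}=\opp$, one obtains for each of the second-order quantum gradients $G\in\{\Dh_x^2\op,\ \Dh_x\Dh_\xi\op,\ \Dh_\xi^2\op\}$ an equation
\begin{equation*}
	i\hbar\,\dpt G = \com{H_\op,G} + S_G - i\hbar\,L_G ,
\end{equation*}
where $L_G$ is a fixed linear combination of second-order quantum gradients (the operator analogue of the transport term $-\Dx f$ appearing in $\dpt\Dv f$ in Section~\ref{sect:regularity-vlasov}), and the source $S_G$ is a sum of commutators $\com{\nabla^j V_\op,\Dh^{2-j}\op}$ with $j\in\{1,2\}$, where $\nabla^j V_\op$ denotes the corresponding multiplication operator (the $\Dh_\xi$-commutator of a multiplication operator vanishing, and the $j=0$ term $\com{V_\op,G}$ being absorbed into $\com{H_\op,G}$), together with exchange commutators $h^3\com{\Dh^j\sfX_\op,\Dh^{2-j}\op}$. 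Since $\opp=-i\hbar\nabla$ commutes with $H_0$ and hence with $m=1+\n{\opp}^n$, conjugation by $e^{-itH_0/\hbar}$ is an isometry of every $\L^p(m)$; more generally, since $H_\op$ is self-adjoint, the flow $A\mapsto\tfrac1{i\hbar}\com{H_\op,A}$ is instantaneously norm-preserving.

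Next I would differentiate $\Nrm{G\,m}{\L^p}$ in time for $p\in\{2,4\}$. Writing $\com{H_\op,G}m=\com{H_\op,G\,m}+G\com{m,H_\op}$, the first term contributes nothing to $\tfrac{d}{dt}\Nrm{G\,m}{\L^p}$, so only two sources of growth remain: the weight commutator $G\com{m,H_\op}=G\com{|\opp|^n,V_\op-h^3\sfX_\op}$, which by the commutator estimates of \cite{lafleche_strong_2020} is controlled by $\Nrm{\nabla V_\op}{\infty}\Nrm{G\,m}{\L^p}$ plus lower-order weighted norms and an exchange contribution; and the source $\tfrac1{i\hbar}S_G\,m$, which by the semiclassical commutator bound $\tfrac1\hbar\Nrm{\com{g(x),A}}{\L^p}\lesssim\Nrm{\nabla g}{\infty}\Nrm{\Dh_\xi A}{\L^p}$ is controlled in $\L^p(m)$ by $\Nrm{\nabla^2 V_\op}{\infty}\Nrm{\op}{\cW^{2,p}(m)}+\Nrm{\nabla^3 V_\op}{\infty}\Nrm{\Dh_\xi\op}{\L^p(m)}$, plus the exchange terms and the $O(1)$ coupling $\Nrm{L_G\,m}{\L^p}$. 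Here $\Nrm{\nabla V_\op}{\infty}$ and $\Nrm{\nabla^2 V_\op}{\infty}=\Nrm{\nabla K*\nabla\rho}{L^\infty}$ are bounded on $[0,T]$, after splitting $\nabla K=F_1+F_2$ into a singular part and a tail in $L^{\fb_1}+L^{\fb_2}$ exactly as in Section~\ref{sect:regularity-vlasov} (for the Coulomb case $a=1$ one measures $\nabla\rho$ in $L^{\fb_1'}\cap L^{\fb_2'}=L^4\cap L^2$), by quantities already supplied by Proposition~\ref{prop:regu_HF}; while $\Nrm{\nabla^3 V_\op}{\infty}=\Nrm{\nabla K*\nabla^2\rho}{L^\infty}\lesssim\Nrm{\nabla^2\rho}{L^{\fb_1'}\cap L^{\fb_2'}}\lesssim\Nrm{\op}{\cW^{2,2}(m)}+\Nrm{\op}{\cW^{2,4}(m)}$ by the trace/diagonal estimate (for the weight $m=1+\n{\opp}^n$ with $n$ as in Proposition~\ref{prop:regu_HF}) that simultaneously yields $\rho\in H^2\cap W^{2,4}$ from the operator bounds. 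Crucially $\Nrm{\Dh_\xi\op}{\L^p(m)}$ is already bounded, so $\nabla^3 V_\op$ — the only potential derivative requiring the \emph{second-order} $\nabla^2\rho$ — is never paired with a second-order quantity, and no genuinely quadratic term arises. Setting $U:=\Nrm{\op}{\cW^{2,2}(m)}+\Nrm{\op}{\cW^{2,4}(m)}+\Nrm{\op}{\cW^{1,\infty}(m)}$ (the $\cW^{1,\infty}(m)$ norm propagated in the same coupled system, exactly as the $\L^\infty(m)$ norm at first order), one obtains $\dt U\leq C(t)(1+U)$ with $C\in L^\infty(0,T)$, and Gr\"onwall gives the conclusion on $[0,T]$; the claimed bounds on $\rho$ then follow by the same trace estimates.

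The main obstacle is the treatment of the exchange terms $h^3\com{\Dh^j\sfX_\op,\Dh^{2-j}\op}$ at second order: commuting $\nabla$ and $\tfrac{x}{i\hbar}$ through the operator with kernel $K(x-y)\op(x,y)$ produces kernels such as $\nabla K(x-y)\,\op(x,y)$, $K(x-y)\,(\Dh^2\op)(x,y)$, and $\hbar^{-2}\,(x-y)\otimes(x-y)\,K(x-y)\,\op(x,y)$, whose $\L^p(m)$ norms must be bounded uniformly in $\hbar$. This forces one to pair the $h^3$ prefactor against Hardy--Littlewood--Sobolev and Kato--Seiler--Simon estimates for the singular kernels together with the regularity of $\op$ and $\sqrt{\op}$, as in \cite{lafleche_strong_2020}, using that each $\Dh_\xi$ falling on $\sfX_\op$ costs at most a power of $\hbar^{-1}$ that is dominated by $h^3$. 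The remaining, purely technical, point is the bookkeeping needed to check that every term genuinely reduces, through Hölder in Schatten norms (splitting $\L^2$ and $\L^4$) and the trace estimates, to a product of a quantity appearing in $U$ with a coefficient in $L^\infty(0,T)$.
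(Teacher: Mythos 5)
Your proposal follows essentially the same route as the paper: commuting the Hartree--Fock equation twice to get equations of the form $i\hbar\,\dpt\opmu=\com{\sfA,\opmu}+\com{\sfB,\Dhx\op}+\com{\sfC,\op}$, differentiating the weighted Schatten norms so that only the weight commutator and the source terms contribute, observing that every term is (at most) linear in the second-order quantities with coefficients already bounded on $[0,T]$ by Proposition~\ref{prop:regu_HF}, splitting the cases $a\geq\tfrac12$ (propagating $\cW^{1,\infty}(m)$ jointly) and $a<\tfrac12$ (using $\Nrm{\Dhx\op\,m}{\L^\infty}\leq C h^{-3/4}\Nrm{\Dhx\op\,m}{\L^4}$), and concluding by a linear Gr\"onwall argument plus the trace estimate of Proposition~\ref{prop:diag_vs_weights} for $\rho$. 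The only caveat is that your commutator bound $\tfrac1\hbar\Nrm{\com{g(x),A}}{\L^p}\lesssim\Nrm{\nabla g}{\infty}\Nrm{\Dhv A}{\L^p}$ is only immediate for $p=2$; for $p=4$ one must use the Besov-interpolated version (Proposition~\ref{prop:estim_commutator_Lq_Besov} and Corollary~\ref{cor:estim_commutator_Lq}), which replaces $\Nrm{\nabla^2 V_{\op}}{L^\infty}$ by $\Nrm{\rho}{L^r}^{1-s}\Nrm{\rho}{W^{1,r}}^{s}$ with $\tfrac1r+\tfrac1q=\tfrac12$ — still controlled by the first-order regularity, so the argument closes exactly as you describe.
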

	
	\begin{remark}
		The propagation of the second-order Sobolev norms will allow us to remove the constraint $q\in[2,q_a)$ and to get the boundedness of first-order Sobolev norms also for $q\geq q_a$. This is relevant when $a\geq \tfrac{1}{2}$.
	\end{remark}
	
	In order to perform the mean-field limit, we actually need to prove the propagation of these norms for $\sqrt{\op}$ instead of $\op$ (Cf. Equations \eqref{eq:def_Dqq}-\eqref{eq:def_Dqq_tilde}), which works in a similar way.
	
	\begin{prop}\label{prop:regu_HF_sqrt}
		Under the hypotheses of Proposition~\ref{prop:regu_HF_2}, if $\sqrt{\op^\init}\in \cW^{1,q}(m_n)$ for some $q\in [2,\infty]$, then
		\begin{equation*}
			\sqrt{\op} \in L^\infty((0,T),\cW^{1,q}(m_n)).
		\end{equation*}
	\end{prop}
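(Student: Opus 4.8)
The plan is to reduce the evolution of $\sigma:=\sqrt{\op}$ to a \emph{linear} Heisenberg-type equation whose coefficients are already controlled by Propositions~\ref{prop:regu_HF} and \ref{prop:regu_HF_2}, and then to rerun essentially the same weighted Grönwall argument as for $\op$ itself. First I would observe that, once the regularity of $\op$ is known, for each $t\in[0,T]$ the one-particle Hamiltonian $H_{\op(t)}=-\tfrac{\hbar^2}{2}\Delta+V_{\op}-h^3\sfX_{\op}$ is a bounded self-adjoint perturbation of $-\tfrac{\hbar^2}{2}\Delta$ ($V_{\op}=K*\rho$ is bounded since $\rho\in L^1\cap L^\infty$, and the exchange operator with kernel $K(x-y)\,\op(x,y)$ is bounded), depending on $t$ regularly enough to generate a unitary propagator $\cU(t)$ with $i\hbar\,\dpt\cU=H_{\op}\,\cU$, $\cU(0)=\Id$. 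By uniqueness for the Hartree--Fock equation $\op(t)=\cU(t)\,\op^\init\,\cU(t)^*$, and since $\cU(t)$ is unitary, functional calculus gives $\sigma(t)=\cU(t)\,\sigma^\init\,\cU(t)^*$. Differentiating, $\sigma$ solves
\begin{equation*}
	i\hbar\,\dpt\sigma=\com{H_{\op},\sigma},
\end{equation*}
the same equation as $\op$, except that now $V_{\op}$ and $\sfX_{\op}$ are \emph{given} coefficients whose space regularity we already own.

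Next I would differentiate the quantum gradients, exactly mirroring the classical identities \eqref{eq:time-der-grad}. Using $\com{\nabla,-\tfrac{\hbar^2}{2}\Delta}=0$, $\com{\tfrac{x}{i\hbar},-\tfrac{\hbar^2}{2}\Delta}=-i\hbar\nabla$, the fact that $V_{\op}$ is a multiplication operator (so $\Dhv V_{\op}=0$), and the Jacobi identity, one obtains
\begin{align*}
	i\hbar\,\dpt\(\Dhx\sigma\)&=\com{H_{\op},\Dhx\sigma}+\com{\nabla V_{\op}-h^3\Dhx\sfX_{\op},\sigma},\\
	i\hbar\,\dpt\(\Dhv\sigma\)&=\com{H_{\op},\Dhv\sigma}-i\hbar\,\Dhx\sigma-h^3\com{\Dhv\sfX_{\op},\sigma},
\end{align*}
the $-i\hbar\,\Dhx\sigma$ term being the quantum analogue of the $-\Dx f$ term in \eqref{eq:time-der-grad}. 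Here $\Dhx\sfX_{\op}$ and $\Dhv\sfX_{\op}$ denote the quantum gradients of the exchange operator, which by the Leibniz rule split into pieces built from $\nabla K$ paired with $\op$ and from $K$ paired with $\Dhx\op$ or $\Dhv\op$.

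I would then differentiate $\Nrm{\Dhx\sigma}{\L^q(m)}$ and $\Nrm{\Dhv\sigma}{\L^q(m)}$ in time with $m=1+\n{\opp}^n$. The transport part $\com{H_{\op},\cdot}$ contributes only through the commutator of the weight with $V_{\op}$ (the kinetic part commutes with $m$), which is controlled by $\Nrm{\nabla V_{\op}}{L^\infty}$ and higher derivatives $\nabla K*\nabla\rho$, all finite on $[0,T]$ by Propositions~\ref{prop:regu_HF}--\ref{prop:regu_HF_2}; the source $\com{\nabla V_{\op},\sigma}$ is handled the same way (writing it, as in \cite{lafleche_strong_2020}, in terms of $\nabla^2 V_{\op}$ and $\Dhv\sigma$), using $\Nrm{\sigma}{\L^2}=1$, $\Nrm{\sigma}{\L^\infty}=\sqrt{\cC_\infty}$ and interpolation exactly as the difficult term was treated in Section~\ref{sect:regularity-vlasov}. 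The genuinely technical point — and the main obstacle — is the exchange contribution $h^3\com{\Dhx\sfX_{\op},\sigma}$ and $h^3\com{\Dhv\sfX_{\op},\sigma}$: each Leibniz piece must be estimated in the relevant Schatten norm via Kato--Seiler--Simon-type inequalities \eqref{eq:Kato_Seiler_Simon} together with the already-established bounds on $\op$ in $\cW^{1,q}(m)$, $\cW^{2,q}(m)$ and $\cW^{1,\infty}(m)$ (interpolated for intermediate $q$). These estimates are parallel to, and no harder than, those in the proofs of Propositions~\ref{prop:regu_HF} and \ref{prop:regu_HF_2}, so I would invoke them rather than reprove them. Assembling everything yields a \emph{linear} differential inequality $\dt U\le C(t)\,U$ with $C\in L^\infty(0,T)$ for $U:=\Nrm{\Dhx\sigma}{\L^q(m)}+\Nrm{\Dhv\sigma}{\L^q(m)}$ — linear, not quadratic, precisely because the coefficients no longer depend on $\sigma$ — and Grönwall's inequality closes the estimate on $[0,T]$, uniformly in $\hbar$. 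The case $q=\infty$ is identical, with Schatten norms replaced by operator norms and the $\cW^{1,\infty}(m)$ bound from Proposition~\ref{prop:regu_HF_2} in place of the $\cW^{1,q}(m)$ one.
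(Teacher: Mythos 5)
Your proposal is correct and follows essentially the same route as the paper: exploit that $\sqrt{\op}$ solves the same Heisenberg equation with coefficients built from $\op$ (already controlled by Propositions~\ref{prop:regu_HF} and~\ref{prop:regu_HF_2}), write the evolution inequalities for $\Dhx\sqrt{\op}$ and $\Dhv\sqrt{\op}$, bound the right-hand side with the same commutator and exchange-term estimates, and close a now-linear Grönwall inequality on $[0,T]$. The only point the paper makes explicit that you gloss over is the preliminary propagation of the weighted zeroth-order norms $\Nrm{\sqrt{\op}\,m}{\L^q}$ for $q\in[2,\infty]$ (needed as a coefficient in the exchange-term bound of Proposition~\ref{prop:exchange_com}), but this is obtained by the very same machinery you already invoke.
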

	
	This proposition then implies also the regularity of $\op$ as indicated in next lemma.
	\begin{lem}\label{lem:regu_rho_vs_sqrt}
		Let $\op \geq 0$ be a compact operator. Then for any $q\in[1,\infty]$,
		\begin{equation}\label{eq:regu_rho_vs_sqrt}
			\Nrm{\op}{\dot{\cW}^{1,q}(m_n)} \leq 2 \Nrm{\sqrt{\op}}{\L^\infty(m_n)} \Nrm{\sqrt{\op}}{\dot{\cW}^{1,q}(m_n)}.
		\end{equation}
	\end{lem}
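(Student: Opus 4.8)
The plan is to write $\op = \sqrt{\op}\,\sqrt{\op}$ and apply the Leibniz rule for the quantum gradients, which holds because $\Dhx$ and $\Dhv$ are (graded) commutators with fixed operators: since $\Dhx A = \com{\nabla, A}$ and $\Dhv A = \com{x/(i\hbar), A}$, both satisfy $\Dhx(AB) = (\Dhx A)\,B + A\,(\Dhx B)$ and likewise for $\Dhv$. Writing $S := \sqrt{\op}$, this gives
\begin{align*}
	\Dhx \op &= (\Dhx S)\,S + S\,(\Dhx S), & \Dhv \op &= (\Dhv S)\,S + S\,(\Dhv S).
\end{align*}
Then I would insert the weight $m$ and take the $\L^q$ norm. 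The first step is therefore just this algebraic identity together with the triangle inequality in $\L^q(m)$.

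The second step is to estimate each of the four resulting terms. For the term $(\Dhx S)\,S\,m$ I would move the weight next to $S$ — but here one must be careful, since in general $m$ does not commute with $\Dhx S$; however, because $m = 1 + \n{\opp}^n$ is a function of $\opp = -i\hbar\nabla$, and the quantum gradient in the $x$-direction is $\Dhx\, \cdot = \com{\nabla,\cdot}$, the weight $m$ commutes with $\Dhx$, so $\Dhx S$ already carries the weight consistently and we may write $(\Dhx S)\, m = \Dhx(S m)$-type manipulations only up to the nuance that $m$ need not commute with $S$ itself. The clean way is to use the definition $\Nrm{A}{\L^q(m)} = \Nrm{A\,m}{\L^q}$ directly and apply the noncommutative Hölder inequality $\Nrm{AB}{\L^q} \leq \Nrm{A}{\L^\infty}\Nrm{B}{\L^q}$ (valid for all $q\in[1,\infty]$, an instance of the operator Hölder inequality with the convention $\L^q$ as in \eqref{eq:def_norm}). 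Applying this with $A = S$ and $B = (\Dhx S)\, m$ handles $S\,(\Dhx S)\,m$, giving $\Nrm{S}{\L^\infty}\Nrm{\Dhx S}{\L^q(m)}$; for the other ordering $(\Dhx S)\,S\,m$ I would instead split as $A = (\Dhx S)$ (in $\L^q(m)$ after commuting $m$ past, using that $m$ commutes with $\Dhx S$ since both are built from $\nabla$) and $B = S$ (in $\L^\infty$, and $\Nrm{S}{\L^\infty} = \Nrm{\sqrt{\op}}{\L^\infty} \leq \Nrm{\sqrt{\op}}{\L^\infty(m)}$ since $m\geq 1$). Summing the four terms, two of type $\Dhx$ and two of type $\Dhv$, and recalling the definition of $\Nrm{\cdot}{\dot\cW^{1,q}(m)}$ as the $\L^q(m)$ norm of the quantum gradients, yields exactly the factor $2$.

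The only genuine subtlety — and the step I would flag as the main obstacle — is the interplay between the weight $m$ and the factor $S$ that one wants to place in $\L^\infty$: since $m$ does not commute with $S$, one cannot naively write $\Nrm{S (\Dhx S) m}{\L^q} = \Nrm{S}{\L^\infty}\Nrm{(\Dhx S)m}{\L^q}$ without first arranging the weight on the correct side. This is resolved by noting that in the term $S\,(\Dhx S)$ the weight $m$ already sits to the right of $\Dhx S$, so operator Hölder applies immediately with $S$ on the left; and in the term $(\Dhx S)\,S$, one writes $\Nrm{(\Dhx S)\,S\,m}{\L^q}$ and uses that $\Nrm{A}{\L^q(m)}$ depends on $m$ only through the factor $m$ placed to the right, so Hölder with $B = S$ in $\L^\infty$ placed on the right gives $\Nrm{\Dhx S}{\L^q(m)}\Nrm{S}{\L^\infty}$ — note $\Nrm{\cdot}{\L^q}$ is not symmetric under cyclic rotation in general, but the submultiplicativity $\Nrm{AB}{\L^q}\leq \Nrm{A}{\L^q}\Nrm{B}{\L^\infty}$ and $\Nrm{AB}{\L^q}\leq\Nrm{A}{\L^\infty}\Nrm{B}{\L^q}$ both hold, which is all that is needed. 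The same argument applies verbatim with $\Dhx$ replaced by $\Dhv$, completing the proof. No well-posedness or dynamics is used here — it is a purely functional-analytic identity about compact positive operators and the semiclassical Sobolev norms.
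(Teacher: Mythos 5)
Your decomposition and overall strategy coincide with the paper's: write $\op=\sqrt{\op}\,\sqrt{\op}$, apply the Leibniz rule for the commutator-gradients, and estimate the two resulting terms by operator H\"older. The treatment of $\sqrt{\op}\,(\Dh_{\!\eta}\sqrt{\op})\,m$ is correct. However, your handling of the other term, $(\Dh_{\!\eta}\sqrt{\op})\,\sqrt{\op}\,m$, rests on a false claim: the operator $m=1+\n{\opp}^n$ does \emph{not} commute with $\Dh_{\!\eta}\sqrt{\op}$. What is true is that the operation $A\mapsto\Dh_{\!x}A=\com{\nabla,A}$ commutes with right-multiplication by $m$ (because $\com{\nabla,m}=0$), but $\com{\nabla,\sqrt{\op}}$ is a commutator of $\nabla$ with an essentially arbitrary compact operator and has no reason to commute with functions of $\opp$; for instance, if $\sqrt{\op}=\ket{\varphi}\bra{\varphi}$ then $\com{\nabla,\sqrt{\op}}=\ket{\nabla\varphi}\bra{\varphi}-\ket{\varphi}\bra{\nabla\varphi}$, which does not commute with $m$. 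Consequently you cannot move $m$ next to $\Dh_{\!\eta}\sqrt{\op}$, and the asserted bound $\Nrm{(\Dh_{\!\eta}\sqrt{\op})\,\sqrt{\op}\,m}{\L^q}\leq\Nrm{\Dh_{\!\eta}\sqrt{\op}}{\L^q(m)}\Nrm{\sqrt{\op}}{\L^\infty}$ is not justified as stated.

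The gap is easily repaired, and the paper's proof does so by distributing the weight the other way: apply H\"older as $\Nrm{(\Dh_{\!\eta}\sqrt{\op})(\sqrt{\op}\,m)}{\L^q}\leq\Nrm{\Dh_{\!\eta}\sqrt{\op}}{\L^q}\Nrm{\sqrt{\op}\,m}{\L^\infty}$, keeping the weight with the $\L^\infty$ factor, and then use $m\geq 1$ (so that $\Nrm{A}{\L^q}=\Nrm{A\,m\,m^{-1}}{\L^q}\leq\Nrm{A\,m}{\L^q}$) to absorb the missing weight into $\Nrm{\Dh_{\!\eta}\sqrt{\op}\,m}{\L^q}$. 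This yields $\Nrm{\sqrt{\op}}{\L^\infty(m)}\Nrm{\Dh_{\!\eta}\sqrt{\op}}{\L^q(m)}$ for this term as well, and summing the two contributions gives the factor $2$. Note the correct bound carries $\Nrm{\sqrt{\op}\,m}{\L^\infty}$ rather than the smaller $\Nrm{\sqrt{\op}}{\L^\infty}$ you aimed for, but this is exactly what the statement of the lemma requires.
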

	
	\begin{proof}[Proof of Lemma~\ref{lem:regu_rho_vs_sqrt}]
		By the product rule for commutators and H\"older's inequality for Schatten norms, for any $\eta \in \set{x,\xi}$,
		\begin{align*}
			\Nrm{\Dh_{\!\eta} \op}{\L^q} &= \Nrm{\(\Dh_{\!\eta}(\sqrt{\op})\, \sqrt{\op} + \sqrt{\op}\, \Dh_{\!\eta}(\sqrt{\op})\)m_n}{\L^q}
			\\
			&\leq \Nrm{\Dh_{\!\eta}(\sqrt{\op})}{\L^q} \Nrm{\sqrt{\op}\, m_n}{\L^\infty} + \Nrm{\sqrt{\op}}{\L^\infty} \Nrm{\Dh_{\!\eta}\sqrt{\op}\,m_n}{\L^q},
		\end{align*}
		which implies Inequality~\eqref{eq:regu_rho_vs_sqrt}.
	\end{proof}

\subsection{The Strategy}

	Both the Hartree and the Hartree--Fock equations can be written in the form
	\begin{equation*}
		i\hbar\,\dpt\op = \com{H,\op}
	\end{equation*}
	with $H = \frac{\n{\opp}^2}{2} + V_{\op} - h^3\sfX_{\op}$ (with $\sfX_{\op} = 0$ in the case of the Hartree equation). If we look at the time derivatives of quantum gradients, since $\Dhx H = \nabla V_{\op} = -E_{\op}$ and $ \frac{1}{i\hbar}\com{\Dhv H,\op} = \frac{1}{i\hbar} \com{\opp,\op} = - \Dhx\op$ and since $\com{x,\sfX_{\op}} = \sfX_{\com{x,\op}}$ and $\com{\nabla,\sfX_{\op}} = \sfX_{\com{\nabla,\op}}$ (see Lemma~\ref{lem:X_commut_gradients}), we obtain
	\begin{align}\label{eq:time-der-grad-op}
		\partial_t \Dhx \op &= \frac{1}{i\hbar}\com{H,\Dhx\op} - \frac{1}{i\hbar} \com{h^3\sfX_{\Dhx \op},\op} - \frac{1}{i\hbar} \com{E_{\op},\op}
		\\\nonumber
		\partial_t \Dhv \op &= \frac{1}{i\hbar}\com{H,\Dhv\op} - \frac{1}{i\hbar} \com{h^3\sfX_{\Dhv \op},\op} - \Dhx\op.
	\end{align}
	These equations are of the form
	\begin{equation}\label{eq:abstract_equation}
		i\hbar\, \dpt \opmu = \com{\sfA,\opmu} + \com{\sfB,\op},
	\end{equation}
	with $\sfA$ and $\sfB$ self-adjoint. Our goal is to propagate the weighted Schatten norms for solutions of these equations, where we recall that Schatten norms were defined in Equation~\eqref{eq:Schatten}. Computing the time derivative of such quantities, we get the following result.
	\begin{lem}\label{lem:abstract_lemma}
		Let $\op$, $\sfA$ and $\sfB$ be self-adjoint operators and $\opmu = \opmu(t)$ be a family of self-adjoint operators satisfying Equation~\eqref{eq:abstract_equation}. Then, formally, for any even integer $q\geq 2$ we have
		\begin{equation*}
			 \dt \Nrm{\opmu\, m_n}{q} \leq \frac{1}{\hbar}\Nrm{\com{\sfA,m_n}\opmu}{q} + \frac{1}{\hbar} \Nrm{\com{\sfB,\op}m_n}{q}.
		\end{equation*}
	\end{lem}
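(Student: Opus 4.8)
The plan is to differentiate the $q$-th power of the Schatten norm $\Nrm{\opmu\, m}{q}^q = \Tr{\n{\opmu\,m}^q}$ directly, using Equation~\eqref{eq:abstract_equation} to replace $i\hbar\,\dpt\opmu$. Write $\sfM := \opmu\, m$ and note that $m$ is time-independent, so $i\hbar\,\dpt\sfM = \com{\sfA,\opmu}m + \com{\sfB,\op}m$. Since $q$ is an even integer, $\n{\sfM}^q = (\sfM^*\sfM)^{q/2}$ is a polynomial in $\sfM$ and $\sfM^*$, so $\dt\Tr{(\sfM^*\sfM)^{q/2}}$ expands by the Leibniz rule into $\tfrac q2$ terms of the shape $\Tr{(\sfM^*\sfM)^{j}\,(\dpt\sfM^*\,\sfM + \sfM^*\,\dpt\sfM)\,(\sfM^*\sfM)^{q/2-1-j}}$. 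By cyclicity of the trace each of these equals $\Tr{\dpt\sfM^*\,\sfM(\sfM^*\sfM)^{q/2-1}} + \Tr{\sfM^*\,\dpt\sfM\,(\sfM^*\sfM)^{q/2-1}}$, which is $2\re\Tr{\dpt\sfM^*\,\sfM\n{\sfM}^{q-2}}$; summing gives
\begin{align*}
	\dt\Nrm{\sfM}{q}^q = q\,\re\Tr{(\dpt\sfM)^*\,\sfM\,\n{\sfM}^{q-2}} = \frac{q}{\hbar}\,\im\Tr{\(\com{\sfA,\opmu}m+\com{\sfB,\op}m\)^*\,\sfM\,\n{\sfM}^{q-2}}.
\end{align*}

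Next I would split this into the $\sfA$-contribution and the $\sfB$-contribution. For the $\sfB$-term, one simply bounds $\n{\im\Tr{(\com{\sfB,\op}m)^*\,\sfM\,\n{\sfM}^{q-2}}}$ by $\Nrm{\com{\sfB,\op}m}{q}\,\Nrm{\sfM\,\n{\sfM}^{q-2}}{q'}$ via the Hölder inequality for Schatten norms with $\tfrac1q+\tfrac1{q'}=1$, and since $\Nrm{\sfM\,\n{\sfM}^{q-2}}{q'} = \Nrm{\n{\sfM}^{q-1}}{q'} = \Nrm{\sfM}{q}^{q-1}$, this term contributes $\tfrac q\hbar\,\Nrm{\com{\sfB,\op}m}{q}\,\Nrm{\opmu\,m}{q}^{q-1}$. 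The $\sfA$-term is the delicate one: naively $\com{\sfA,\opmu}m$ does not factor through $\sfM=\opmu\,m$. The key algebraic manipulation is to write $\com{\sfA,\opmu}m = \sfA\,\opmu\,m - \opmu\,\sfA\,m = \sfA\,\sfM - \opmu\,m\,\sfA + \opmu\com{m,\sfA} = \com{\sfA,\sfM} + \com{\sfM,\sfA}$... more carefully, $\com{\sfA,\opmu}m = \com{\sfA,\opmu\,m} - \opmu\com{\sfA,m} = \com{\sfA,\sfM} + \opmu\com{m,\sfA}$. The term $\Tr{\com{\sfA,\sfM}^*\,\sfM\,\n{\sfM}^{q-2}}$ then has purely imaginary trace contribution that cancels: expanding and using cyclicity together with self-adjointness of $\sfA$, one finds $\re\Tr{\com{\sfA,\sfM}^*\,\sfM\,\n{\sfM}^{q-2}} = 0$ (this is exactly the computation showing $\Tr{\n{\sfM}^q}$ is conserved under pure unitary conjugation by $e^{-it\sfA/\hbar}$). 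Hence only $\opmu\com{m,\sfA} = -\opmu\com{\sfA,m}$ survives, and applying Hölder as before gives $\tfrac q\hbar\,\Nrm{\opmu\com{\sfA,m}}{q}\,\Nrm{\opmu\,m}{q}^{q-1} \le \tfrac q\hbar\,\Nrm{\com{\sfA,m}\,\opmu}{q}\,\Nrm{\opmu\,m}{q}^{q-1}$ after noting $\Nrm{\opmu\com{\sfA,m}}{q} = \Nrm{\com{\sfA,m}^*\opmu^*}{q} = \Nrm{\com{m,\sfA}\,\opmu}{q} = \Nrm{\com{\sfA,m}\,\opmu}{q}$ using self-adjointness of $m$, $\opmu$ and the unitary invariance of Schatten norms under taking adjoints.

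Finally I would divide through by $q\,\Nrm{\opmu\,m}{q}^{q-1}$, using $\dt\Nrm{\sfM}{q} = \tfrac1q\,\Nrm{\sfM}{q}^{1-q}\,\dt\Nrm{\sfM}{q}^q$, to obtain the stated inequality $\dt\Nrm{\opmu\,m}{q} \le \tfrac1\hbar\Nrm{\com{\sfA,m}\opmu}{q} + \tfrac1\hbar\Nrm{\com{\sfB,\op}m}{q}$. The word ``formally'' in the statement signals that I would not dwell on the justification of differentiating the norm or the convergence of the traces; the real mathematical content, and the step I expect to require the most care, is the vanishing of the unitary part $\re\Tr{\com{\sfA,\sfM}^*\,\sfM\,\n{\sfM}^{q-2}} = 0$ and the correct bookkeeping of adjoints so that the commutator $\com{\sfA,m}$ ends up multiplying $\opmu$ on the correct side — everything else is Hölder and cyclicity of the trace.
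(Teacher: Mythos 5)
Your proposal follows essentially the same route as the paper: differentiate $\Nrm{\opmu\,m}{q}^{q}$, use cyclicity of the trace to reduce the $\sfA$-contribution to a commutator with the weight $m$, and finish with H\"older's inequality for Schatten norms together with $\Nrm{\opmu\com{\sfA,m}}{q}=\Nrm{\com{\sfA,m}\opmu}{q}$. The paper organizes the $\sfA$-computation by isolating $\com{m^2,\sfA}$ directly via cyclicity rather than by subtracting $\com{\sfA,\opmu\,m}$, but the two computations are identical in content.

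One step is misstated and should be corrected. Writing $M:=\opmu\,m$, the trace $\Tr{\com{\sfA,M}^*M\n{M}^{q-2}}$ is in fact \emph{real}: by cyclicity, $\Tr{M^*\sfA M\n{M}^{q-2}}=\Tr{\sfA\n{M^*}^{q}}$ and $\Tr{\sfA M^*M\n{M}^{q-2}}=\Tr{\sfA\n{M}^{q}}$, so the whole expression equals $\Tr{\sfA\(\n{M^*}^{q}-\n{M}^{q}\)}$, which is real but generically nonzero because $M$ is not normal. Hence your claim that $\re\Tr{\com{\sfA,M}^*M\n{M}^{q-2}}=0$ is false as stated. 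What does vanish, and what your own displayed formula actually requires (after substituting $\dpt M=\tfrac{1}{i\hbar}(\cdots)$ the derivative is $\pm\tfrac{q}{\hbar}$ times the \emph{imaginary} part of the trace), is $\im\Tr{\com{\sfA,M}^*M\n{M}^{q-2}}=0$, which is exactly the reality statement above. The mechanism you invoke — invariance of Schatten norms under the unitary flow generated by $\sfA$ — is the right one and yields precisely this; only the real/imaginary bookkeeping is off, and the fix is immediate. The remainder of the argument (the $\sfB$-term via H\"older, the identity $\Nrm{M\n{M}^{q-2}}{q'}=\Nrm{M}{q}^{q-1}$, and the final division by $q\Nrm{M}{q}^{q-1}$) is correct and matches the paper.
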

	
	Applying this lemma for $\opmu = \op$ solution of the Hartree--Fock equation or for $\opmu = \Dhx{\op}$ or $\opmu = \Dhv{\op}$, and with $m_n = 1$ (for $n=0$) or $m_n = \opp_\jj^n$ for some $\jj\in\set{1,2,3}$, we obtain
	\begin{align}\label{eq:rho}
		\dt \Nrm{\op\, m_n}{q} &\leq \frac{1}{\hbar}\Nrm{\com{V_{\op},m_n}\op}{q} + \frac{1}{\hbar} \Nrm{\com{h^3\sfX_{\op},m_n}\op}{q},
		\\
		\dt \Nrm{\Dhx \op\, m_n}{q} &\leq \frac{1}{\hbar}\Nrm{\com{V_{\op},m_n}\Dhx \op}{q} + \frac{1}{\hbar} \Nrm{\com{E_{\op},\op}m_n}{q}\nonumber
		\\\label{eq:Dhx}
		&\qquad + \frac{1}{\hbar}\Nrm{\com{h^3\sfX_{\op},m_n}\Dhx \op}{q} + \frac{1}{\hbar} \Nrm{\com{h^3\sfX_{\Dhx \op},\op}m_n}{q},
		\\
		\dt \Nrm{\Dhv \op\, m_n}{q} &\leq \frac{1}{\hbar}\Nrm{\com{V_{\op},m_n}\Dhv \op}{q} + \Nrm{\Dhx\op\, m_n}{q}\nonumber
		\\\label{eq:Dhv}
		&\qquad + \frac{1}{\hbar}\Nrm{\com{h^3\sfX_{\op},m_n}\Dhv \op}{q} + \frac{1}{\hbar} \Nrm{\com{h^3\sfX_{\Dhv \op},\op}m_n}{q},
	\end{align}
	where we used the fact that $\com{H,m_n} = \com{V_{\op}-h^3\sfX_{\op},m_n}$ since $[\n{\opp}^2,m_n] = 0$. In the next sections,  we will bound all the weighted Schatten norms of the commutators appearing on the right-hand side of inequalities~\eqref{eq:rho}, \eqref{eq:Dhx} and \eqref{eq:Dhv} in order to get a Gr\"onwall-type inequality.

	\begin{proof}[Proof of Lemma~\ref{lem:abstract_lemma}]
		First notice that
		\begin{equation*}
			 \dpt \opmu^2 = \frac{1}{i\hbar}\com{\sfA,\opmu^2} + \frac{1}{i\hbar}\(\com{\sfB,\op}\opmu + \opmu\com{\sfB,\op}\).
		\end{equation*}
		Therefore, using that $2p := q$ is even and the cyclicity of the trace, we get
		\begin{align}\nonumber
			\dt \Nrm{\opmu\, m_n}{2p}^{2p} &= \dt \Tr{\(m_n\opmu^2m_n\)^p} = p\Tr{m_n\(\dt(\opmu^2)\) m_n\(m_n\opmu^2m_n\)^{p-1}}
			\\\label{eq:IA_IB}
			&= \frac{p}{i\hbar}\Tr{m_n \(\com{\sfA,\opmu^2} +\com{\sfB,\op}\opmu + \opmu\com{\sfB,\op}\) m_n\(m_n\opmu^2m_n\)^{p-1}}
			\\\nonumber
			& =: I_\sfA + I_\sfB.
		\end{align}
		For $I_\sfA$, we use again the cyclicity of the trace to write
		\begin{align*}
			I_\sfA &= \frac{p}{i\hbar} \Tr{\sfA\opmu^2m_n^2 (\opmu^2 m_n^2)^{p-2} \opmu^2m_n^2 - \opmu^2 \sfA m_n^2 (\opmu^2 m_n^2)^{p-2} \opmu^2m_n^2 }
			\\
			&= \frac{p}{i\hbar} \Tr{ m_n^2 \sfA\opmu^2m_n^2 (\opmu^2 m_n^2)^{p-2} \opmu^2 - \sfA m_n^2 (\opmu^2 m_n^2)^{p-2} \opmu^2m_n^2\opmu^2 }
			\\
			&= \frac{p}{i\hbar} \Tr{ \com{m_n^2,\sfA}\(\opmu^2 m_n^2\)^{p-1} \opmu^2 }.
		\end{align*}
		This can also be written as
		\begin{align*}
			I_\sfA &= \frac{p}{i\hbar} \Tr{\opmu \(\com{m_n,\sfA}m_n+m_n\com{m_n,\sfA}\)\opmu \n{m_n\opmu}^{2(p-1)} }
			\\
			&= \frac{2p}{i\hbar} \Im{\Tr{\opmu\, m_n\com{m_n,\sfA}\opmu \n{m_n\opmu}^{2(p-1)}}}.
		\end{align*}
		Therefore, by H\"older's inequality for the trace, we obtain
		\begin{align}\nonumber
			\n{I_\sfA} &\leq \frac{2p}{\hbar} \Nrm{\opmu\, m_n}{2p} \Nrm{\com{m_n,\sfA}\opmu}{2p} \Nrm{m_n\,\opmu}{2p}^{2(p-1)}
			\\\label{eq:IA}
			&\leq \frac{q}{\hbar} \Nrm{\opmu\, m_n}{q}^{q-1} \Nrm{\com{\sfA,m_n}\opmu}{q},
		\end{align}
		where we used the fact that since $\opmu$ and $m_n$ are self-adjoint, and since the Schatten norm is invariant by taking the adjoint, we have $\Nrm{m_n\,\opmu}{2p} = \Nrm{\opmu\,m_n}{2p}$. For the $\sfB$ term, we get more easily
		\begin{equation*}
			I_\sfB = \frac{2p}{i\hbar}\Im{\Tr{m_n\com{\sfB,\op}\opmu\, m_n \n{m_n\opmu}^{2(p-1)}}}\,.
		\end{equation*}
		By using again H\"older's inequality and the commutation in the Schatten norm, we obtain
		\begin{equation}\label{eq:IB}
			\n{I_\sfB} \leq \frac{q}{\hbar}\Nrm{m_n\com{\sfB,\op}}{q}\Nrm{\opmu\, m_n}{q}^{q-1}.
		\end{equation}
		We conclude the proof by combining inequalities~\eqref{eq:IA} and~\eqref{eq:IB} on Formula~\eqref{eq:IA_IB} and using the fact that $\dt \Nrm{\opmu\, m_n}{q} = \tfrac{1}{q} \Nrm{\opmu\, m_n}{q}^{1-q} \dt \Nrm{\opmu\, m_n }{q}^{q}$.
	\end{proof}

\subsection{Preliminary Inequalities}

	In order to simplify the computations, we will sometimes use weights of the form
	\begin{equation*}
		m_n = 1 + \n{\opp}^n
	\end{equation*}
	and
	\begin{equation*}
		\tilde{m}_n = 1 + \sum_{\jj = 1}^3 \opp_\jj^n.
	\end{equation*}
	Thanks to the following lemma, these weights define equivalent weighted Schatten norms.
	\begin{lem}
		Let $n\in\N$ be even. Then there exists $C>0$ such that for any $p\in [1,\infty]$ and any operator $\op$
		\begin{align}\label{eq:equiv_weight_1}
			C^{-1}\Nrm{\op\, \tilde{m}_n}{p} &\leq \Nrm{\op\, m_n}{p} \leq C \Nrm{\op\, \tilde{m}_n}{p}
			\\\label{eq:equiv_weight_2}
			C^{-1}\Nrm{\op\, \opp_{\jj}^n}{p} &\leq \Nrm{\op\, m_n}{p} \leq C \(\Nrm{\op}{p}+\sum_{\jj = 1}^3 \Nrm{\op\, \opp_{\jj}^n}{p}\).
		\end{align}
	\end{lem}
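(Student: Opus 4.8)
The plan is to regard $m$ and $\tilde m$ as Fourier multipliers, i.e.\ as functions of the momentum operator $\opp = -i\hbar\nabla$: up to reading $\opp_\jj^n$ as the multiplier with symbol $\n{\xi_\jj}^n$ (which coincides with the usual power when $n$ is even), $m$ and $\tilde m$ correspond to the symbols $g(\xi) = 1 + \n{\xi}^n$ and $\tilde g(\xi) = 1 + \sum_{\jj=1}^3 \n{\xi_\jj}^n$. The key elementary point is that these two symbols are pointwise comparable on $\R^3$: by equivalence of norms on this finite-dimensional space there is $c_n>0$, depending only on $n$, with $c_n^{-1}\,\tilde g \leq g \leq c_n\,\tilde g$. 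Since $g,\tilde g \geq 1$, both $m$ and $\tilde m$ are invertible, and they commute (being functions of $\opp$), so $m\,\tilde m^{-1}$, $\tilde m\,m^{-1}$ and $\opp_\jj^n\,m^{-1}$ are again Fourier multipliers, with symbols bounded respectively by $c_n$, $c_n$ and $1$; hence their operator norms $\Nrm{\cdot}{\infty}$ are bounded by the same constants.

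Granting this, I would obtain \eqref{eq:equiv_weight_1} directly from the ideal (Hölder) property of the Schatten norms, $\Nrm{AB}{p} \leq \Nrm{A}{p}\Nrm{B}{\infty}$: writing $\op\,m = (\op\,\tilde m)\,(\tilde m^{-1}m)$ gives $\Nrm{\op\,m}{p} \leq c_n\,\Nrm{\op\,\tilde m}{p}$, and exchanging the roles of $m$ and $\tilde m$ yields the reverse bound, so \eqref{eq:equiv_weight_1} holds with $C=c_n$. For the lower bound in \eqref{eq:equiv_weight_2} the same manipulation applies, using that $m$ and $\opp_\jj^n$ commute: $\Nrm{\op\,\opp_\jj^n}{p} = \Nrm{(\op\,m)\,(m^{-1}\opp_\jj^n)}{p} \leq \Nrm{\op\,m}{p}$. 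For the upper bound in \eqref{eq:equiv_weight_2} I would combine \eqref{eq:equiv_weight_1} with the triangle inequality for $\Nrm{\cdot}{p}$: since $\tilde m = 1 + \sum_{\jj=1}^3\opp_\jj^n$,
\begin{equation*}
	\Nrm{\op\,m}{p} \leq c_n\,\Nrm{\op\,\tilde m}{p} \leq c_n\(\Nrm{\op}{p} + \sum_{\jj=1}^3 \Nrm{\op\,\opp_\jj^n}{p}\).
\end{equation*}

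I do not expect any genuine analytic obstacle: everything reduces to the pointwise comparison of the two symbols on $\R^3$ together with the Schatten–Hölder inequality. The only point worth making explicit is that, because $m$ and $\tilde m$ are functions of the same operator $\opp$, they commute, so that $m\,\tilde m^{-1}$ is literally a bounded Fourier multiplier (with symbol $g/\tilde g$) rather than a product of two unbounded, non-commuting operators; this is what makes the one-line Hölder estimates above legitimate, and it is the reason the statement is stated for weights built solely out of $\opp$.
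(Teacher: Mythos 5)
Your proof is correct and follows essentially the same route as the paper's: both arguments rest on the observation that $m$, $\tilde m$ and $\opp_\jj^n$ are commuting functions of $\opp$ whose quotient symbols are bounded, combined with the Schatten--Hölder inequality $\Nrm{AB}{p}\leq\Nrm{A}{p}\Nrm{B}{\infty}$ and, for the last bound, the triangle inequality. Your explicit justification of the symbol comparison via equivalence of norms on $\R^3$ is a welcome (if minor) elaboration of the paper's unproved claim that $\Nrm{g}{L^\infty}<C$.
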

	
	\begin{proof}
		We observe that $\tilde{m}_n$ and $m_n$ commute, $m_n$ is invertible, and $m_n^{-1} \tilde{m}_n = g(\opp)$ with $\Nrm{g}{L^\infty} < C$ uniformly in $\hbar$. Therefore, we obtain
		\begin{equation*}
			\Nrm{\op\, \tilde{m}_n}{p} = \Nrm{\op\, m_n \,g(\opp)}{p} \leq C \Nrm{\op\, m_n}{p},
		\end{equation*}
		which proves the first inequality of Equation~\eqref{eq:equiv_weight_1}. The second one follows by reversing the role of $\tilde{m}_n$ and $m_n$, and the first inequality of Equation~\eqref{eq:equiv_weight_2} by replacing $\tilde{m}_n$ by $\opp_{\jj}^n$. The second inequality of Equation~\eqref{eq:equiv_weight_2} follows from the second inequality of Equation~\eqref{eq:equiv_weight_1} and the triangle inequality for Schatten norms.
	\end{proof}
	
	We will need the following \textit{rearrangement inequality} similar to~\cite[Equation (56)]{lafleche_strong_2021}.
	\begin{lem}\label{lem:rearrangement}
	Let $p\geq 1$ and $(n,m) \in \N^2$. Then for any self-adjoint operators $A$ and $B$, the following inequality holds
		\begin{equation}\label{eq:rearrangement}
			\Nrm{B^nAB^m}{p} \leq 2 \Nrm{AB^{n+m}}{p}.
		\end{equation}
	\end{lem} 
	
	\begin{proof}[Proof of Lemma~\eqref{lem:rearrangement}]
		Assume that $A\geq 0$. Then by H\"older's inequality, we have
		\begin{equation}\label{eq:rearrangement_0}
			\Nrm{B^nAB^m}{p} \leq \Nrm{B^n A^\frac{n}{n+m}}{\frac{n+m}{n}\, p} \Nrm{A^\frac{m}{n+m} B^m}{\frac{n+m}{m}\, p}.
		\end{equation}
		Now observe that since by definition of the absolute value $\n{BA}=\n{\n{B}A}$, we get $\Nrm{B^n A^\frac{n}{n+m}}{\frac{n+m}{n}\, p} = \Nrm{\n{B}^n A^\frac{n}{n+m}}{\frac{n+m}{n}\, p}$, and since the Schatten norm is invariant by taking the adjoint
		\begin{equation*}
			\Nrm{A^\frac{m}{n+m} B^m}{\frac{n+m}{m}\, p} = \Nrm{B^m A^\frac{m}{n+m}}{\frac{n+m}{m}\, p} = \Nrm{\n{B}^m A^\frac{m}{n+m}}{\frac{n+m}{m}\, p}\,.
		\end{equation*}
		Now, by the Araki--Lieb--Thirring inequality
		\begin{align*}
			\Nrm{\n{B}^n A^\frac{n}{n+m}}{\frac{n+m}{n}\, p} &\leq \Nrm{\n{B}^{n+m} A}{p}^\frac{n}{n+m} = \Nrm{ AB^{n+m}}{p}^\frac{n}{n+m},
			\\
			\Nrm{\n{B}^m A^\frac{m}{n+m}}{\frac{n+m}{m}\, p} &\leq \Nrm{\n{B}^{n+m} A}{p}^\frac{m}{n+m} = \Nrm{ AB^{n+m}}{p}^\frac{m}{n+m}.
		\end{align*}
		Combining these inequalities with Inequality~\eqref{eq:rearrangement_0} leads to
		\begin{equation}\label{eq:rearrangement_positive}
			\Nrm{B^nAB^m}{p} \leq \Nrm{AB^{n+m}}{p}.
		\end{equation}
		In the more general case of a self-adjoint operator $A$ possibly not nonnegative, we write $A = A_+ - A_-$ with $A_+ = \frac{\n{A}+A}{2}$ and $\frac{\n{A}-A}{2}$. Then by Inequality~\eqref{eq:rearrangement_positive} and the triangle inequality for Schatten norms, we get
		\begin{multline*}
			\Nrm{B^nAB^m}{p} \leq \Nrm{A_+B^{n+m}}{p} + \Nrm{A_-B^{n+m}}{p}
			\\
			\leq \frac{1}{2} \(\Nrm{\n{A}B^{n+m}+ AB^{n+m}}{p} + \Nrm{\n{A}B^{n+m}-AB^{n+m}}{p}\)
			\\
			\leq \Nrm{\n{A}B^{n+m}}{p} + \Nrm{AB^{n+m}}{p}
		\end{multline*}
		and we conclude by using again the fact that $\n{\n{A}B} = \n{AB}$.
	\end{proof}
	
	Let us define the adjoint representation of $A$ as
	\begin{equation*}
		\Ad{A}{B} := \com{A,B}.
	\end{equation*}
	Then, using the above Lemma, we can prove the following inequality.
	\begin{lem}\label{lem:expand_commutators}
		Let $n\in\N$. Then for any self-adjoint operators $A$ and $B$, we have 
		\begin{equation*}
			\Nrm{\ad{B}^n(A)}{p} \leq 2^{n+1} \Nrm{AB^n}{p}.
		\end{equation*}
	\end{lem}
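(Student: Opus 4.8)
The plan is to reduce the iterated commutator to a sum of the ``sandwiched'' terms $B^k A B^{n-k}$ to which Lemma~\ref{lem:mixing} applies directly. The starting point is the binomial-type expansion
\begin{equation*}
	\ad{B}^n(A) = \sum_{k=0}^n (-1)^{n-k}\binom{n}{k}\, B^k A B^{n-k},
\end{equation*}
which I would establish by induction on $n$: the case $n=0$ is trivial, and the inductive step follows from $\ad{B}^{n+1}(A) = B\,\ad{B}^n(A) - \ad{B}^n(A)\,B$ together with the Pascal identity $\binom{n}{k-1}+\binom{n}{k} = \binom{n+1}{k}$, exactly as in the classical binomial theorem since the coefficients are scalars.

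Next I would take Schatten $p$-norms. By the triangle inequality for $\Nrm{\,\cdot\,}{p}$,
\begin{equation*}
	\Nrm{\ad{B}^n(A)}{p} \leq \sum_{k=0}^n \binom{n}{k}\, \Nrm{B^k A B^{n-k}}{p},
\end{equation*}
and now each summand is estimated by Lemma~\ref{lem:mixing} (with the pair of exponents $k$ and $n-k$, noting $B$ and $A$ are self-adjoint), giving $\Nrm{B^k A B^{n-k}}{p} \leq 2\,\Nrm{A B^n}{p}$ for every $k\in\{0,\dots,n\}$. Summing and using $\sum_{k=0}^n \binom{n}{k} = 2^n$ yields $\Nrm{\ad{B}^n(A)}{p} \leq 2\cdot 2^n\,\Nrm{AB^n}{p} = 2^{n+1}\,\Nrm{AB^n}{p}$, which is the claim.

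There is no real obstacle here: the only point requiring a line of justification is the binomial expansion of $\ad{B}^n(A)$, and the rest is a mechanical combination of the triangle inequality with the already-proven unmixing inequality. (One could alternatively bypass the explicit expansion and prove the bound by a direct induction on $n$, applying Lemma~\ref{lem:mixing} once at each step to move one factor of $B$ past $A$, but this produces a worse constant, so the expansion route is preferable.)
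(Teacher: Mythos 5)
Your proof is correct and follows essentially the same route as the paper: expand $\ad{B}^n(A)$ binomially into terms $B^{n-k}AB^k$, then apply the triangle inequality and the unmixing Lemma~\ref{lem:mixing} to each term and sum the binomial coefficients. The extra details you supply (the induction for the expansion and the bookkeeping of the constant $2^{n+1}$) are exactly what the paper leaves implicit.
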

	
	\begin{proof}
		This follows from the expansion
		\begin{equation*}
			\ad{B}^n(A) = \sum_{k=0}^n \binom{n}{k} \(-1\)^{k} B^{n-k} A B^k,
		\end{equation*}
		together with the triangle inequality for the Schatten norms and the rearrangement inequality~\eqref{eq:rearrangement}.
	\end{proof}
	
	\begin{lem}\label{lem:weighted_interpolation}
		Let $(p_0, p_1) \in [2,\infty]^2$ and $(n_0,n_1)\in \R_+^2$. Then for any $A$ self-adjoint 
		and 
		$\theta\in[0,1]$,
		\begin{equation}\label{eq:weighted_interpolation}
			\Nrm{A\,B^{n_\theta}}{p_\theta} \leq \Nrm{A\,B^{n_0}}{p_0}^{1-\theta} \Nrm{A\,B^{n_1}}{p_1}^\theta
		\end{equation} 
		where $\frac{1}{p_\theta} = \frac{1-\theta}{p_0} + \frac{\theta}{p_1}$ and $n_\theta = \(1-\theta\)n_0+\theta \,n_1$.
	\end{lem}
	
	\begin{proof}
		Let $S$ be the set of values of $\theta\in[0,1]$ such that Equation~\eqref{eq:weighted_interpolation} holds. Then $0$ and $1$ are in $S$. Moreover, if $\theta_1$ and $\theta_2$ are in $S$, then for $\theta := \(\theta_1+\theta_2\)/2$, since the Schatten norms are invariant by taking the adjoint and $A$ and $B$ are self-adjoint,
		\begin{equation*}
			\Nrm{A\,B^{n_\theta}}{p_\theta} = \Nrm{B^{n_\theta}\,A}{p_\theta} = \Nrm{A\,B^{2\,n_\theta}\,A}{p_\theta/2}^{1/2}
		\end{equation*}
		Hence, since $2\,n_\theta = n_{\theta_1} + n_{\theta_2}$, $p_\theta\geq 2$ and $2/p_\theta = 1/n_{\theta_1} + 1/n_{\theta_2}$, by H\"older's inequality,
		\begin{equation*}
			\Nrm{A\,B^{2\,n_\theta}\,A}{p_\theta/2}^{1/2} \leq \Nrm{A\,B^{n_{\theta_1}}}{p_{\theta_1}}^{1/2} \Nrm{A\,B^{n_{\theta_2}}}{p_{\theta_2}}^{1/2}
		\end{equation*}
		where we used again the invariance of Schatten norms by taking the adjoint. Hence $\theta\in S$, and so we deduce finally that $S$ is a dense subset of $[0,1]$.
	\end{proof}

	The next proposition {allows} us to control $\Nrm{\nabla\rho}{L^p}$ by $\Nrm{\Dhx{\op}\,m_n}{\L^p}$ for some weight $m_n$.
	\begin{prop}\label{prop:diag_vs_weights}
		Let $p\in[1,\infty]$ and $n>\frac{3}{p'}$. Then there exists a constant $C>0$ such that for any compact self-adjoint operator $\opmu$
		\begin{equation*}
			\Nrm{\Diag{\opmu}}{L^p} \leq C \Nrm{\opmu\,m_n}{\L^p},
		\end{equation*}
		with $m_n = 1+\n{\opp}^n$.
	\end{prop}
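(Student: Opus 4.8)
The plan is to argue by duality, after a preliminary reduction to a nonnegative operator. The elementary pointwise bound $|\Diag{\opmu}(x)|\leq\Diag{|\opmu|}(x)$ (write $\opmu=\sum_j\lambda_j\ket{\psi_j}\!\bra{\psi_j}$, so that $\big|\sum_j\lambda_j|\psi_j(x)|^2\big|\leq\sum_j|\lambda_j|\,|\psi_j(x)|^2$) gives $\Nrm{\Diag{\opmu}}{L^p}\leq\Nrm{\Diag{|\opmu|}}{L^p}$, while the polar decomposition together with the unitary invariance of the Schatten norms gives $\Nrm{|\opmu|\,m}{\L^p}=\Nrm{\opmu\,m}{\L^p}$; hence it suffices to treat $\opmu\geq 0$. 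For such $\opmu$ one has $\Diag{\opmu}\geq 0$, so that $\Nrm{\Diag{\opmu}}{L^p}=\sup\big\{\int_{\Rd}\Diag{\opmu}(x)\,\psi(x)\dd x:\ \psi\geq 0,\ \Nrm{\psi}{L^{p'}}\leq 1\big\}$. Writing $M_\psi$ for the operator of multiplication by $\psi$, one has (first for $\psi$ compactly supported and $\opmu$ trace class, then by density) the identity
\begin{equation*}
	\int_{\Rd}\Diag{\opmu}\,\psi\dd x = h^3\,\tr\!\big(M_{\sqrt\psi}\,\opmu\,M_{\sqrt\psi}\big) = h^3\,\tr\!\big((1+\n{\opp}^n)^{\frac12}\opmu(1+\n{\opp}^n)^{\frac12}\cdot(1+\n{\opp}^n)^{-\frac12}M_\psi(1+\n{\opp}^n)^{-\frac12}\big),
\end{equation*}
the last step being the cyclicity of the trace.

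Next I would apply Hölder's inequality for Schatten norms to this trace, splitting it as $\L^p\times\L^{p'}$; because the power of $h$ compensates exactly ($3-\tfrac3p-\tfrac3{p'}=0$), this gives
\begin{equation*}
	\int_{\Rd}\Diag{\opmu}\,\psi\dd x\ \leq\ \Nrm{(1+\n{\opp}^n)^{\frac12}\opmu(1+\n{\opp}^n)^{\frac12}}{\L^p}\;\Nrm{(1+\n{\opp}^n)^{-\frac12}M_\psi(1+\n{\opp}^n)^{-\frac12}}{\L^{p'}}.
\end{equation*}
The first factor is bounded by $2\Nrm{\opmu\,m}{\L^p}$ by the unmixing inequality~\eqref{eq:mixing}, applied with the self-adjoint operator $B:=(1+\n{\opp}^n)^{\frac12}$ and exponents $1,1$ (note $B^2=m$). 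For the second factor, since $\psi\geq 0$ one can write $(1+\n{\opp}^n)^{-\frac12}M_\psi(1+\n{\opp}^n)^{-\frac12}=T\,T^*$ with $T:=(1+\n{\opp}^n)^{-\frac12}M_{\sqrt\psi}$, so that $\Nrm{T\,T^*}{\L^{p'}}=\Nrm{T}{\L^{2p'}}^2$; since $2p'\in[2,\infty)$, the Kato--Seiler--Simon inequality~\eqref{eq:Kato_Seiler_Simon} applied to $T^*=M_{\sqrt\psi}\,(1+\n{\opp}^n)^{-\frac12}$ (which is of the form $f(x)\,g(\opp)$) yields $\Nrm{T}{\L^{2p'}}\leq\Nrm{\sqrt\psi}{L^{2p'}}\,\big(\int_{\Rd}(1+\n{\xi}^n)^{-p'}\dd\xi\big)^{\frac1{2p'}}$, and $\Nrm{\sqrt\psi}{L^{2p'}}^2=\Nrm{\psi}{L^{p'}}$. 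Taking the supremum over $\psi$ then gives the claim with $C=2\big(\int_{\Rd}(1+\n{\xi}^n)^{-p'}\dd\xi\big)^{1/p'}$, which is finite precisely because $n\,p'>3$, i.e. $n>\tfrac3{p'}$. The endpoint $p=1$ (where $2p'=\infty$) is covered either by replacing the Kato--Seiler--Simon bound with the trivial estimate $\Nrm{M_{\sqrt\psi}(1+\n{\opp}^n)^{-\frac12}}{\infty}\leq\Nrm{\sqrt\psi}{L^\infty}$, or more directly via $h^3\tr(\opmu)=h^3\tr\!\big((\opmu\,m)\,m^{-1}\big)\leq h^3\Nrm{\opmu\,m}{1}\Nrm{m^{-1}}{\infty}\leq\Nrm{\opmu\,m}{\L^1}$.

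The one genuinely load-bearing choice is the \emph{symmetric} splitting $m^{\frac12}\opmu m^{\frac12}$ against $m^{-\frac12}M_\psi m^{-\frac12}$: the naive asymmetric splitting $(\opmu\,m)(m^{-1}M_\psi)$ would force the use of the Kato--Seiler--Simon inequality for $(1+\n{\opp}^n)^{-1}M_\psi$ at the exponent $p'$, which is only licit for $p'\geq 2$ (hence $p\leq 2$), whereas the symmetric splitting only ever needs it at $2p'\geq 2$, covering all $p\in(1,\infty]$ uniformly. The remaining points — justifying the trace identity and the cyclicity by approximating $\opmu$ by finite-rank truncations and $\psi$ by compactly supported functions, and checking that the relevant products are trace class or Hilbert--Schmidt so that Hölder's inequality applies — are routine.
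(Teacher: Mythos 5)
Your proof is correct and follows essentially the same route as the paper's: duality over nonnegative test functions, the symmetric splitting $m^{1/2}\opmu\, m^{1/2}$ against $m^{-1/2}\varphi\, m^{-1/2}$, Hölder for Schatten norms, the unmixing inequality~\eqref{eq:mixing}, and Kato--Seiler--Simon at exponent $2p'$ with finiteness from $n p'>3$. The only (harmless) differences are cosmetic: you reduce to $\opmu\geq 0$ first instead of splitting $\varphi$ into positive and negative parts, and you treat the endpoint $p=1$ explicitly.
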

	
	\begin{remark}
		In particular, since for $k\in\N$, $\nabla^k\rho = \Diag{\Dhx^k{\op}}$, the above estimate implies
		\begin{equation*}
			\Nrm{\nabla^k\rho}{L^p} \leq C \Nrm{\Dhx^k{\op}\,m_n}{\L^p}.
		\end{equation*}
	\end{remark}

	\begin{proof}
		Let $\diagopmu(x) := \Diag{\opmu}\!(x) = h^3\opmu(x,x)$. Then, using the dual formulation of the $L^p$ norm and separating $\varphi$ into the sum of its positive and negative parts $\varphi := \varphi_++\varphi_-$, we have
		\begin{equation*}
			\Nrm{\diagopmu}{L^p} \leq \sup_{\Nrm{\varphi}{L^{p'}}\leq 1}\( \n{\intd \diagopmu\,\varphi_-} + \n{\intd \diagopmu\,\varphi_+}\),
		\end{equation*}
		from which we deduce that we can actually restrict ourselves to nonnegative functions $\varphi$ and identifying the function $\varphi$ with the multiplication operator by the function $\varphi$, we get
		\begin{equation}\label{eq:dual_Lp}
			\Nrm{\diagopmu}{L^p} \leq 2\,\sup_{\substack{\varphi\geq 0\\\Nrm{\varphi}{L^{p'}}\leq 1}} \n{\intd \diagopmu\,\varphi} = 2 \sup_{\substack{\varphi\geq 0\\\Nrm{\varphi}{L^{p'}}\leq 1}} \n{h^3 \Tr{\opmu\,\varphi}}.
		\end{equation}
		Taking $m_n(y) := \sqrt{1+\n{y}^n}$ and $w(y) = m_n(y)^{-1}$, we see that $\opm := m_n(\opp)$ is a positive invertible operator and its inverse $\opw := w(\opp)$ is a compact operator. By H\"older's inequality for the trace, we have
		\begin{equation}\label{eq:weighted_holder}
			h^3\Tr{\opmu\,\varphi} = h^3\Tr{\opm\,\opmu\, \opm\,\opw\,\varphi\,\opw} \leq \Nrm{\opm\,\opmu\,\opm}{\L^p} \Nrm{\opw\,\varphi\,\opw}{\L^{p'}}.
		\end{equation}
		However, since $\varphi$ is a nonnegative function, we get that it is also a positive operator. Hence
		\begin{equation*}
			\Nrm{\opw\,\varphi\,\opw}{\L^{p'}} = \Nrm{\n{\sqrt{\varphi}\,\opw}^2}{\L^{p'}} = \Nrm{\sqrt{\varphi}\,\opw}{\L^{2p'}}^2 \leq \Nrm{\varphi}{L^{p'}} \Nrm{w}{L^{2p'}}^2
		\end{equation*}
		where to get the last inequality we used the Kato--Seiler--Simon Inequality~\eqref{eq:Kato_Seiler_Simon} since $2p'\geq 2$. Combining the above inequality with inequalities~\eqref{eq:dual_Lp} and \eqref{eq:weighted_holder} yields
		\begin{equation*}
			\Nrm{\diagopmu}{L^p} \leq C_{p,n}\,\Nrm{\opm\,\opmu\,\opm}{\L^p} \leq C_{p,n}\,\Nrm{\opmu\,\opm^2}{\L^p}
		\end{equation*}
		where the second inequality is a consequence of Lemma~\ref{lem:rearrangement}, and $C_{p,n} = 2\Nrm{w}{L^{2p'}}^2$ is finite because $n>\frac{3}{p'}$ by assumption.
	\end{proof}

\subsection{Commutators Involving the Direct Term}\label{sec:commutators}

	In the semiclassical case, instead of $\nabla E_{\op}\cdot\Dv f$ (see Equation~\eqref{eq:time-der-grad}), the time derivative of the gradient let appear the term $\frac{1}{i\hbar} \com{E_{\op},\op}$ (see Equation~\eqref{eq:time-der-grad-op}). Hence we will need to get semiclassical estimates on this quantity, which is the purpose of the following proposition.

	\begin{prop}[Commutator estimates]\label{prop:estim_commutator_Lq_Besov} Let $a\in(0,1]$, $\fb = \frac{3}{a+1}$ and $(q,r)\in[2,\infty]^2$ be such that $\frac{1}{r}+\frac{1}{q} = \frac{1}{2}$. Then for any compact operator $\op_2$ it holds
		\begin{equation}\label{eq:estim_commutator_Lq_Besov}
			\frac{1}{\hbar}\Nrm{\com{E_{\op},\op_2}}{\L^q} \leq C\,\Nrm{\rho}{B^{1-3\(\frac{1}{r'}-\frac{1}{\fb}\)}_{r,1}} \Nrm{\Dhv\op_2}{\L^q}.
		\end{equation}
		When $q=2$ and $r=\infty$, we also have
		\begin{equation}\label{eq:estim_commutator_L2}
			\frac{1}{\hbar}\Nrm{\com{E_{\op},\op_2}}{\L^2} \leq C\,\Nrm{\nabla\rho}{L^{\fb',1}} \Nrm{\Dhv\op_2}{\L^2}
		\end{equation}
		for $a\in\left[\frac{1}{2},1\right]$, and 
		\begin{equation}\label{eq:estim_commutator_L2_slow-tail}
			\frac{1}{\hbar}\Nrm{\com{E_{\op},\op_2}}{\L^2} \leq C\,\Nrm{\rho}{L^{\frac{3}{1-a},1}} \Nrm{\Dhv\op_2}{\L^2}
		\end{equation}
		for $a\in\(0,\frac{1}{2}\)$.
	\end{prop}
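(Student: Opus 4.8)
The plan is to Littlewood--Paley decompose the force field and estimate each dyadic block by trading one power of $\hbar$ for a quantum velocity gradient. Write $E_{\op}=-\nabla K\ast\rho$ and split $E_{\op}=\sum_{j\geq -1}E_j$ with $\widehat{E_j}$ supported in a dyadic annulus $\{\n{\xi}\sim 2^j\}$ (the low-frequency piece being harmless since $\nabla E_{-1}\in L^\infty$). For each block one has the elementary kernel identity
\begin{equation*}
	\com{E_j,\op_2}(x,y)=\big(E_j(x)-E_j(y)\big)\,\op_2(x,y)=\int_0^1 \nabla E_j\big(sx+(1-s)y\big)\cdot\big(\com{x,\op_2}(x,y)\big)\dd s,
\end{equation*}
and since $\com{x,\op_2}=i\hbar\,\Dhv\op_2$ this gives $\tfrac1\hbar\com{E_j,\op_2}=i\int_0^1\mathcal M_{s,j}\big(\Dhv\op_2\big)\dd s$, where $\mathcal M_{s,j}$ is the ``twisted multiplication'' $T(x,y)\mapsto\nabla E_j(sx+(1-s)y)\cdot T(x,y)$. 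Writing $\nabla E_j$ through its Fourier modes and using that conjugation and multiplication by the unitaries $e^{2\pi i s x\cdot\xi}$ leave all Schatten norms invariant, one checks that $\mathcal M_{s,j}$ is bounded on $\L^q$ uniformly in $s\in[0,1]$; this is immediate at $s\in\{0,1\}$ (left/right multiplication) and holds in general because $\nabla E_j$ is frequency-localized, so its relevant symbol-class norm is comparable to $\Nrm{\nabla E_j}{L^\infty}$. Hence $\tfrac1\hbar\Nrm{\com{E_j,\op_2}}{\L^q}\leq C\,\Nrm{\nabla E_j}{L^\infty}\Nrm{\Dhv\op_2}{\L^q}$.

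The next step is to convert $\Nrm{\nabla E_j}{L^\infty}$ into a norm of the $j$-th Littlewood--Paley block $\rho_j$ of $\rho$, using Bernstein and the weak-type (O'Neil / Hardy--Littlewood--Sobolev) convolution inequality against $\nabla K\in L^{\fb,\infty}$. When $\tfrac1\fb+\tfrac1r>1$, i.e.\ $r<\fb'$, one chooses $\mu$ by $1+\tfrac1\mu=\tfrac1\fb+\tfrac1r$, writes $\nabla E_j=-\nabla K\ast\nabla\rho_j$, and gets $\Nrm{\nabla E_j}{L^\infty}\leq C\,2^{3j/\mu}\Nrm{\nabla E_j}{L^\mu}\leq C\,2^{3j/\mu}\Nrm{\nabla K}{L^{\fb,\infty}}\Nrm{\nabla\rho_j}{L^r}\leq C\,2^{j(1+3/\mu)}\Nrm{\rho_j}{L^r}$, and $1+3/\mu=1-3(\tfrac1{r'}-\tfrac1\fb)=:\sigma$ is exactly the exponent in the statement. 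When instead $r\geq\fb'$ one keeps a single derivative on $\rho$ and uses directly $L^{\fb,\infty}\ast L^{\fb',1}\subset L^\infty$, which gives $\Nrm{\nabla E_j}{L^\infty}\leq C\,2^j\Nrm{\rho_j}{L^{\fb',1}}$, and after the dyadic sum lands on $\Nrm{\rho}{\dot B^{1}_{\fb',1}}$, which embeds into $\Nrm{\rho}{\dot B^{\sigma}_{r,1}}$ by a Besov embedding at the common differential dimension $a-1$. Summing the block estimates over $j$ and using the triangle inequality for Schatten norms reconstructs $C\,\Nrm{\rho}{B^{\sigma}_{r,1}}\Nrm{\Dhv\op_2}{\L^q}$, the $\ell^1$ summation over dyadic scales being the reason for the third Besov index $1$; the relation $\tfrac1r+\tfrac1q=\tfrac12$ simply parametrizes this family of estimates by $q$ and keeps the Bernstein/Young exponents admissible.

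For the endpoint $q=2$, $r=\infty$ I would argue directly at the kernel level, without Littlewood--Paley, since $\Nrm{\com{E_{\op},\op_2}}{\L^2}^2=h^3\iintd\n{\com{E_{\op},\op_2}(x,y)}^2\dd x\dd y$ and the Kato--Seiler--Simon inequality is an equality there. Using the same gradient identity, $\com{E_{\op},\op_2}(x,y)=i\hbar\int_0^1\nabla E_{\op}(sx+(1-s)y)\cdot(\Dhv\op_2)(x,y)\dd s$; pulling out $\Nrm{\nabla E_{\op}}{L^\infty}$ pointwise in $s$ and changing variables reduces matters to bounding $\Nrm{\nabla E_{\op}}{L^\infty}$. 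For $a\in[\tfrac12,1]$, where $\fb'=\tfrac3{2-a}\geq 2$, writing $\nabla E_{\op}=-\nabla K\ast\nabla\rho$ and using $L^{\fb,\infty}\ast L^{\fb',1}\subset L^\infty$ gives $\Nrm{\nabla E_{\op}}{L^\infty}\leq C\Nrm{\nabla\rho}{L^{\fb',1}}$, which is \eqref{eq:estim_commutator_L2}. For $a\in(0,\tfrac12)$ the kernel $\nabla K$ is too singular for this pairing ($\fb>2$, so it would require $\nabla\rho$ in a space of integrability $<2$), so one distributes both derivatives onto $K$, $\nabla E_{\op}=-\nabla^2 K\ast\rho$ with $\nabla^2 K\in L^{3/(a+2),\infty}$, and uses $L^{3/(a+2),\infty}\ast L^{3/(1-a),1}\subset L^\infty$ to get $\Nrm{\nabla E_{\op}}{L^\infty}\leq C\Nrm{\rho}{L^{3/(1-a),1}}$, which is \eqref{eq:estim_commutator_L2_slow-tail}.

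The step I expect to be the main obstacle is the uniform-in-$s$ Schatten bound on the twisted multiplications $\mathcal M_{s,j}$ by $\Nrm{\nabla E_j}{L^\infty}$: the crude route through the Wiener-algebra norm $\Nrm{\widehat{\nabla E_j}}{L^1}$ is too lossy and would reconstruct a strictly smaller Besov space than claimed, so one must genuinely exploit the smoothness of $\nabla E_j$ at scale $2^{-j}$ together with the fact that $\Dhv\op_2$ lives in $\L^q$. The other delicate points are the dyadic bookkeeping — arranging that the powers of $2^j$ and the Besov embeddings assemble to exactly $B^{1-3(1/r'-1/\fb)}_{r,1}$ across the two regimes $r\lessgtr\fb'$ — and the $a\in(0,\tfrac12)$ Hilbert--Schmidt endpoint, where redistributing the derivatives onto $K$ is forced and one must check that $L^{3/(1-a),1}$ is the correct (and, for $a<1$, finite) target Lorentz space.
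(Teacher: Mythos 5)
Your treatment of the two Hilbert--Schmidt endpoints \eqref{eq:estim_commutator_L2} and \eqref{eq:estim_commutator_L2_slow-tail} is correct and is exactly the paper's argument: compute the kernel, pull out $\Nrm{\nabla E_{\op}}{L^\infty}$, and bound that quantity by the dual Lorentz pairing $L^{\fb,\infty}*L^{\fb',1}$ (resp.\ $L^{\frac{3}{a+2},\infty}*L^{\frac{3}{1-a},1}$ after moving both derivatives onto $K$).

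The general-$q$ estimate \eqref{eq:estim_commutator_Lq_Besov}, however, hinges on a block estimate that you assert rather than prove, and which I believe is false as stated: that $T(x,y)\mapsto \nabla E_j(sx+(1-s)y)\cdot T(x,y)$ is bounded on $\L^q$, uniformly in $s\in[0,1]$, with norm $\lesssim\Nrm{\nabla E_j}{L^\infty}$ ``because $\nabla E_j$ is frequency-localized''. For $s\in(0,1)$ this is a Schur-multiplier bound; the only factorization available is the Fourier one, which yields the Wiener norm $\Nrm{\widehat{\nabla E_j}}{L^1}$ --- precisely the bound you dismiss as too lossy. Frequency localization does not repair this: for $f$ with $\widehat{f}$ supported in $\{\n{\xi}\sim 2^j\}$ there is no control of $\Nrm{\widehat{f}}{L^1}$ by $\Nrm{f}{L^\infty}$ (a sum of $N$ translated bumps with random signs has $\Nrm{f}{L^\infty}\sim 1$ but $\Nrm{\widehat{f}}{L^1}\gtrsim N^{1/2}$), and the Schur-multiplier norm of $(x,y)\mapsto f(sx+(1-s)y)$ on $\B(L^2)$ is genuinely not comparable to $\Nrm{f}{L^\infty}$ (this is the same obstruction as multiplying a Weyl symbol by a merely bounded function). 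A telltale sign: your argument never actually uses the constraint $\frac1r+\frac1q=\frac12$, since Bernstein converts $\Nrm{\nabla E_j}{L^\infty}$ into $2^{j\sigma}\Nrm{\rho_j}{L^r}$ for any $r$; if the block bound held, you would obtain the proposition with the optimal Besov index for every $q$, strictly stronger than the statement. The paper instead proves only the two extreme cases --- $q=2$, where the kernel computation gives the $\Nrm{\nabla E_{\op}}{L^\infty}$ bound (so $E_{\op}\in B^1_{\infty,1}$ suffices), and $q=\infty$, where one must pay the Wiener norm $\Nrm{\widehat{\nabla E_{\op}}}{L^1}$ (so $E_{\op}\in B^{1+\frac32}_{2,1}$) --- and then runs a bilinear complex interpolation between Besov spaces and Schatten classes; the line $\frac1r+\frac1q=\frac12$ and the index $1-3(\frac1{r'}-\frac1{\fb})$ are exactly what this interpolation produces. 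If you interpolate each of your dyadic blocks between its $\L^2$ and $\L^\infty$ bounds you land on the same line and the same Besov space, at which point the Littlewood--Paley decomposition buys nothing over the paper's argument.
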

	
	From the fact that $(L^r,W^{1,r})_{s,1} = B^s_{r,1}$ for any $r\in[1,\infty)$ and $s\in(0,1)$, we deduce the following inequality in terms of more classical Sobolev spaces.
	\begin{cor}\label{cor:estim_commutator_Lq}
		Let $(q,r)\in(2,\infty]\times (\fb',\infty)$ be such that $\frac{1}{r}+\frac{1}{q} = \frac{1}{2}$. Then it holds
		\begin{equation*}
			\frac{1}{\hbar}\Nrm{\com{E_{\op},\op_2}}{\L^q} \leq C \Nrm{\rho}{L^r}^{1-s} \Nrm{\rho}{W^{1,r}}^s \Nrm{\Dhv\op_2}{\L^q}
		\end{equation*}
		with $s = 1-3\(\frac{1}{r'}-\frac{1}{\fb}\)$.
	\end{cor}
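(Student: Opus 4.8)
The plan is to obtain the corollary immediately from Proposition~\ref{prop:estim_commutator_Lq_Besov} by re-expressing the Besov norm there through real interpolation, exactly as announced in the line preceding the statement. Set $s := 1-3\bigl(\tfrac{1}{r'}-\tfrac{1}{\fb}\bigr)$, the regularity index appearing in~\eqref{eq:estim_commutator_Lq_Besov}. The first thing I would check is that, for $(q,r)$ in the stated range, $s$ lies in $(0,1)$: the hypothesis $r>\fb'$ is equivalent to $\tfrac{1}{r'}>\tfrac{1}{\fb}$, which already forces $s<1$, and the remaining constraints $\tfrac1r+\tfrac1q=\tfrac12$ with $q>2$ are used to keep $s>0$; this is the only point requiring a small verification, and it is precisely what makes the identity $B^{s}_{r,1}=(L^r,W^{1,r})_{s,1}$ recalled in the text applicable.

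Granting that, I would apply Proposition~\ref{prop:estim_commutator_Lq_Besov} with the pair $(q,r)$ to get
\begin{equation*}
\frac{1}{\hbar}\Nrm{\com{E_{\op},\op_2}}{\L^q} \leq C\,\Nrm{\rho}{B^{s}_{r,1}}\,\Nrm{\Dhv\op_2}{\L^q},
\end{equation*}
and then bound the Besov norm by the classical interpolation inequality for the real method with second exponent $1$,
\begin{equation*}
\Nrm{\rho}{(L^r,W^{1,r})_{s,1}} \leq C\,\Nrm{\rho}{L^r}^{1-s}\,\Nrm{\rho}{W^{1,r}}^{s}.
\end{equation*}
This last inequality follows from the definition of the $(s,1)$ interpolation norm: one uses $K(t,\rho)\le\min\bigl(\Nrm{\rho}{L^r},\,t\,\Nrm{\rho}{W^{1,r}}\bigr)$ and splits the integral in $\tfrac{\mathrm{d}t}{t}$ at $t_0=\Nrm{\rho}{L^r}/\Nrm{\rho}{W^{1,r}}$, each of the two resulting pieces being controlled by $\Nrm{\rho}{L^r}^{1-s}\Nrm{\rho}{W^{1,r}}^{s}$ up to a constant depending only on $s$. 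Chaining the two displays gives the claimed estimate.

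Since every ingredient is either quoted (Proposition~\ref{prop:estim_commutator_Lq_Besov} and the Besov interpolation identity) or a textbook fact about real interpolation, there is no substantial obstacle here; the only genuine thing to be careful about is the bookkeeping of exponents ensuring $s\in(0,1)$ over the admissible range of $(q,r)$.
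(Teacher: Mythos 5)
Your proposal is correct and follows exactly the route the paper takes: the corollary is deduced from Proposition~\ref{prop:estim_commutator_Lq_Besov} by invoking the real-interpolation identity $(L^r,W^{1,r})_{s,1}=B^s_{r,1}$ together with the standard $K$-functional bound $\Nrm{\rho}{(L^r,W^{1,r})_{s,1}}\leq C\Nrm{\rho}{L^r}^{1-s}\Nrm{\rho}{W^{1,r}}^{s}$. Your additional remark that one must check $s\in(0,1)$ (noting that $r>\fb'$ gives $s<1$) is a sensible piece of bookkeeping that the paper leaves implicit.
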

	From the fact that $\frac{1}{r}+\frac{1}{q} = \frac{1}{2}$ and the fact that $r>\fb'$, when $a \geq 1/2$, the above results only work when $q<q_a$ with $\frac{1}{q_a} = \frac{1}{\fb} - \frac{1}{2}$.

		
	\begin{proof}[Proof of Proposition~\ref{prop:estim_commutator_Lq_Besov}]
		First observe that the integral kernel of the operator $\com{E_{\op},\op_2}$ can be written as
		\begin{align*}
			\com{E_{\op},\op_2}\!(x,y) &= \(E_{\op}(x)-E_{\op}(y)\)\op_2(x,y)
			\\
			&= \frac{\(E_{\op}(x)-E_{\op}(y)\)\otimes\(x-y\)}{\n{x-y}^2}\cdot \(x-y\)\op_2(x,y).
		\end{align*}
		Thus, we can explicitly compute its Hilbert--Schmidt norm by computing the $L^2$ norm of the kernel, and since the kernel of the operator $\Dhv{\op_2}$ is $\frac{x-y}{i\hbar}\op_2(x,y)$, we get the following estimate 
		\begin{multline}\label{eq:estim_commutator_L2_0}
			\frac{1}{\hbar}\Nrm{\com{E_{\op},\op_2}}{2} = \(\iintd\n{\frac{\(E_{\op}(x)-E_{\op}(y)\)\otimes\(x-y\)}{\n{x-y}^2}\cdot \Dhv{\op_2}(x,y)}^2 \d x\d y\)^{\frac{1}{2}}
			\\
			\leq \Nrm{\nabla E_{\op}}{L^\infty} \Nrm{\Dhv{\op_2}}{2}.
		\end{multline}
		In particular, for $a\in\left[\frac{1}{2},1\right]$, since $\nabla E_{\op} = \nabla K*\nabla \rho$ with $\nabla K\in L^{\fb,\infty}$, we deduce Inequality~\eqref{eq:estim_commutator_L2} using the fact that the dual of $L^{\fb',1}$ is $L^{\fb,\infty}$ (see e.g. \cite{hunt_lpq_1966})
		
		If $a\in\(0,\frac{1}{2}\)$, we use that $\nabla E_{\op} = \nabla^2 K* \rho$ with $\nabla^2 K\in L^{\frac{3}{a+2},\infty}$. Thus Inequality~\eqref{eq:estim_commutator_L2_slow-tail} follows from H\"older's inequality for Lorentz norms.
		
		A second possibility is to use the fundamental theorem of calculus for $E_{\op}$ and then the Fourier inversion theorem to rewrite the integral kernel of the commutator as
		\begin{align*}
			\frac{1}{i\hbar}[E_{\op},\op_2](x,y) &= \int_0^1\nabla E_{\op}((1-\theta)x+\theta y)\d\theta\cdot (\Dhv\op_2)(x,y)
			\\
			&= \int_{[0,1]\times\R^3} \widehat{\nabla E_{\op}}(z)\, e^{2i\pi z\cdot(1-\theta)x}\cdot (\Dhv\op_2)(x,y)\, e^{2i\pi z\cdot\theta y}\d\theta\d z,
		\end{align*}
		which implies that by writing $e_\omega$ the operator of multiplication by $e^{2i\pi\omega\cdot x}$, it holds
		\begin{equation*}
			\frac{1}{i\hbar}[E_{\op},\op_2] = \int_{[0,1]\times\R^3} \widehat{\nabla E_{\op}}(z)\, e_{(1-\theta)z} \(\Dhv\op_2\) e_{\theta z} \d\theta\d z.
		\end{equation*}
		Since the operators $e_\omega$ are bounded operators of norm $\Nrm{e_\omega}{\infty} = 1$, we deduce the following estimate on the operator norm of the commutator
		\begin{equation}\label{eq:estim_commutator_Linfty}
			\frac{1}{\hbar}\Nrm{\com{E_{\op},\op_2}}{\infty} \leq \int_{[0,1]\times\R^3} \n{\widehat{\nabla E_{\op}}(z)} \Nrm{\Dhv\op_2}{\infty} \d\theta\d z = \Nrm{\widehat{\nabla E_{\op}}}{L^1} \Nrm{\Dhv\op_2}{\infty}.
		\end{equation}
		In order to get a result for a general $q\in[2,\infty]$, we proceed by complex interpolation. Defining the vector-valued Hilbert--Schmidt operator $\opmu := \Dhv{\op_2}$, we observe that $\op_2(x,y) = i\hbar\, \frac{x-y}{\n{x-y}^2}\cdot\opmu(x,y)$ and the commutator can be rewritten as the bilinear operator
		\begin{equation*}
			\Lambda(E,\opmu) := \com{E,\frac{x-y}{\n{x-y}^2}\cdot\opmu} = \frac{1}{i\hbar} \com{E,\op_2}.
		\end{equation*}
		Thus, using the fact that $B^0_{\infty,1}\subset L^\infty$ and $B^\frac{3}{2}_{2,1}\subset \F{L^1}$, inequalities~\eqref{eq:estim_commutator_L2_0} and \eqref{eq:estim_commutator_Linfty} imply
		\begin{align*}
			\Nrm{\Lambda(E,\opmu)}{2} &\leq C \Nrm{E}{B^1_{\infty,1}} \Nrm{\opmu}{2},
			\\
			\Nrm{\Lambda(E,\opmu)}{\infty} &\leq C \Nrm{E}{B^{1+\frac{3}{2}}_{2,1}} \Nrm{\opmu}{\infty}.
		\end{align*}
		By the same proof, one obtains the inequality for any vector-valued Hilbert--Schmidt operator $\opmu$. Finally, we use the fact that the complex interpolation space between the involved Besov spaces is given by $[B^1_{\infty,1},B^{1+\frac{3}{2}}_{2,1}]_{\frac{2}{r}} = B^{1+\frac{3}{r}}_{r,1}$ (see for example~\cite[Theorem 6.4.5]{bergh_interpolation_1976}), while the complex interpolation of Schatten spaces $\mathfrak{S}^q$ gives $[\mathfrak{S}^2,\mathfrak{S}^\infty]_{1-\frac{2}{q}} = \mathfrak{S}^q$ (see for example \cite[Section 1.19.7]{triebel_interpolation_1978}), so that by bilinear interpolation (see \cite[Section~4.4]{bergh_interpolation_1976}), we obtain
		\begin{equation*}
			\Nrm{\Lambda(E,\opmu)}{q} \leq C \Nrm{E}{B^{1+\frac{3}{r}}_{r,1}} \Nrm{\opmu}{q} \quad \text{ with } \quad \frac{1}{r} = \frac{1}{2} - \frac{1}{q}.
		\end{equation*}
		If we take $E_{\op} = \nabla K * \rho$ with $\rho\in L^1\cap L^p$ for some $p\in(1,\infty)$ and $\nabla K\in L^{\fb,\infty}$, we know that $E_{\op}\in L^{\tilde{r}}$ for some $\tilde{r}\in(1,\infty)$. Moreover, $E_{\op}$ is proportional to $(-\Delta)^\frac{a-3}{2}\nabla\rho$, therefore we can apply \cite[Proposition 2.30]{bahouri_fourier_2011} and deduce that
		\begin{equation*}
			\Nrm{E_{\op}}{B^{1+\frac{3}{r}}_{r,1}} \leq C \Nrm{\rho}{B^{\frac{3}{r}+a-1}_{r,1}} = \Nrm{\rho}{B^{1-3\(\frac{1}{r'}-\frac{1}{\fb}\)}_{r,1}}.
		\end{equation*}
		Taking $\opmu = \Dhv{\op_2}$ yields the results.
	\end{proof}
	
	To get estimates with weights, notice that we can write $\com{E_{\op},\op}\opp_\jj^{2n}$ in the following form
	\begin{equation*}
		\frac{1}{i\hbar}\com{E_{\op},\op}\opp_\jj^{2n} = \frac{1}{i\hbar}\com{E_{\op},\op\, \opp_\jj^{2n}} - \frac{1}{i\hbar}\com{E_{\op},\opp_\jj^{2n}}\op.
	\end{equation*}
	To control the $\L^q$ norm of the first term of the right-hand side we use Proposition~\ref{prop:estim_commutator_Lq_Besov}, which gives
	\begin{equation*}
		\frac{1}{\hbar}\Nrm{\com{E_{\op},\op\,\opp_\jj^{2n}}}{\L^q} \leq C \Nrm{\rho}{B^{1-3\(\frac{1}{r'}-\frac{1}{\fb}\)}_{r,1}} \Nrm{\Dhv(\op\,\opp_\jj^{2n})}{\L^q},
	\end{equation*}
	and we can also replace $\Nrm{\rho}{B^{1-3\(\frac{1}{r'}-\frac{1}{\fb}\)}_{r,1}}$ by $\Nrm{\nabla\rho}{L^{\fb',1}}$ when $q=2$. 	
%
%


	To bound the second term, we will write the potential $K(x)$ as a sum of a singular part localized near $x=0$ and a long-range part and use Proposition~\ref{prop:weighted_com_est} and Proposition~\ref{prop:weighted_com_est_long_range} below. More precisely, for some infinitely smooth and compactly supported function $\chi$ verifying $\indic_{|x|\leq 1}\leq \chi(x) \leq \indic_{|x|\leq 2}$, we can write 
	\begin{equation}\label{eq:splitting-K}
		K = K_0 + K_\infty
	\end{equation} 
	with 
	\begin{equation*}
		K_0 = \chi\,K\quad \text{ and } \quad K_\infty = \(1-\chi\) K.
	\end{equation*}
	Now the first part of $K$ verifies $\nabla K_0\in L^\fb$ for any $\fb<3/2$ while $K_\infty \in L^\infty\cap C^\infty$. We start by the following proposition to control the singular part of the potential.

	\begin{prop}[Weighted commutator estimate]\label{prop:weighted_com_est}
		Let $E_{\op}^0 = -\nabla K_0 * \rho$ with $K_0 = \chi\,K$ as described above and let $m_n := 1+\n{\opp}^n$. Take $(q,r,r_1)\in \left[\frac{3}{2},\infty\right]\times[1,\infty]^2$, such that
		\begin{equation}\label{eq:exponents_weighted_commutator}
			\frac{1}{r} + \frac{1}{r_1} = \frac{1}{q} + \frac{1}{3}.
		\end{equation}
		Then for any $n_0>3/\fb-1$, $k' > 3/r' - 2$ and $k > 3/r-1$ there exists a constant $C>0$ such that
		\begin{multline*}
			\frac{1}{\hbar}\Nrm{\com{E_{\op}^0,\opp_\jj^{2n}}\opmu}{\L^q}
			\\
			\leq C \(\Nrm{\Dhxj\op \,m_{n+n_0}}{\L^{\fb'}} \Nrm{\opmu \,m_{2n-1}}{\L^q} + \Nrm{\Dhxj{\op}\,m_{2n+k'}}{\L^r} \Nrm{\opmu \,m_{n+k}}{\L^{r_1}}\).
		\end{multline*}
		Replacing $E_{\op}^0$ by $V_{\op}^0 = K_0 * \rho$ just amounts to replacing $\Dhxj{\op}$ by $\op$, hence by the same proof,
		\begin{equation*}
			\frac{1}{\hbar}\Nrm{\com{V_{\op}^0,\opp_\jj^{2n}}\opmu}{\L^q} \leq C \(\Nrm{\op \,m_{n+n_0}}{\L^{\fb'}} \Nrm{\opmu \,m_{2n-1}}{\L^q} + \Nrm{\op\,m_{2n+k'}}{\L^r} \Nrm{\opmu \,m_{n+k}}{\L^{r_1}}\).
		\end{equation*}
	\end{prop}

	\begin{proof}[Proof of Proposition~\ref{prop:weighted_com_est}]
		To shorten the notation, let $E := E_{\op}^0$. We notice that $\com{E,\opp_\jj} = E\,\opp_\jj-\opp_\jj\,E = i\hbar\,\partial_{\jj} E$ is the operator of multiplication by $x\mapsto i\hbar\,\partial_{\jj} E(x)$, and since $\opp_\jj^2 = \opp_\jj\opp_\jj$, we get
		\begin{equation*}
			\frac{1}{i\hbar}\com{E,\opp_\jj^2} = \frac{1}{i\hbar}\(\com{E,\opp_\jj}\opp_\jj + \opp_\jj\com{E,\opp_\jj}\) = \(\partial_{\jj} E\) \opp_\jj + \opp_\jj\( \partial_{\jj} E\),
		\end{equation*}
		and more generally, for any $n\in\N$
		\begin{equation*}
			\frac{1}{i\hbar}\com{E,\opp_\jj^{2n}} = \sum_{k=0}^{n-1} \opp_\jj^{2k} \(\partial_{\jj} E\, \opp_\jj + \opp_\jj\,\partial_{\jj} E\) \opp_\jj^{2\(n-k-1\)} = \sum_{k=0}^{2n-1} \opp_\jj^{k}\, \partial_{\jj} E\, \opp_\jj^{2n-1-k}.
		\end{equation*}
		From this formula and the triangle inequality for Schatten norms, we deduce
		\begin{equation*}
			\frac{1}{\hbar}\Nrm{\com{E,\opp_\jj^{2n}}\opmu\,}{\L^q} \leq \sum_{k=0}^{2n-1}\Nrm{\opp_\jj^{k}\, \partial_{\jj} E\, \opp_\jj^{2n-1-k}\opmu\,}{\L^q}.
		\end{equation*}
		We cannot directly  apply  H\"older's inequality here since $\opp_\jj^{k} E$ is an unbounded operator, therefore we have to make some commutations between $\opp_\jj^{k}$ and $\partial_{\jj} E$. By the Leibniz formula
		\begin{equation*}
			\opp_\jj^{k}\, \partial_{\jj} E = \sum_{\ell=0}^k \binom{k}{\ell} g_\ell\, \opp_\jj^{k-\ell},
		\end{equation*}
		where $g_\ell$ is the function defined by $g_\ell(x) = (\opp_\jj^\ell(\partial_{\jj} E))(x)$, as usual also identified with a multiplication operator. This yields
		\begin{equation*}
			\frac{1}{\hbar}\Nrm{\com{E,\opp_\jj^{2n}}\opmu\,}{\L^q} \leq \sum_{\ell=0}^{2n-1} C_\ell \Nrm{g_\ell\,\opp_\jj^{2n-1-\ell}\opmu\,}{\L^q},
		\end{equation*}
		where $C_\ell = \sum_{k=\ell}^{2n-1} \binom{k}{\ell}$. We will now distinguish two cases to bound the terms of the sum depending on the values of $\ell$.
		
		\step{1. Case $\ell$ small} Take $\ell < n$. In this case, we use H\"older's inequality for Schatten norms and the fact that the norm of the operator of multiplication by a function is the $L^\infty$ norm of this function to deduce that
		\begin{equation*}
			\Nrm{g_\ell\,\opp_\jj^{2n-1-\ell}\opmu\,}{\L^q} \leq \Nrm{g_\ell}{L^\infty}\Nrm{\opp_\jj^{2n-1-\ell}\opmu\,}{\L^q} \leq \Nrm{g_\ell}{L^\infty} \Nrm{\opmu\,m_{2n-1}}{\L^q},
		\end{equation*}
		where we used Inequality~\eqref{eq:equiv_weight_2}. Now, observe that
		\begin{equation*}
			g_\ell = -\nabla K_0 * (\opp_\jj^\ell\partial_{\jj}\rho) = -\nabla K_0 * \Diag{\ad{\opp_\jj}^\ell\!\!\(\Dhxj{\op}\)}.
		\end{equation*}
		Therefore, since $\nabla K_0\in L^\fb$ with $\fb<3/2$, by Young's inequality
		\begin{equation*}
			\Nrm{g_\ell}{L^\infty} \leq C_K \Nrm{\Diag{\ad{\opp_\jj}^\ell\!\!\(\Dhxj{\op}\)}}{L^{\fb'}},
		\end{equation*}
		where $C_K = \Nrm{\nabla K_0}{L^\fb}$. By Proposition~\ref{prop:diag_vs_weights} and Lemma~\ref{lem:expand_commutators}, we get that for any $n_0 > 3/\fb - 1 > 0$,
		\begin{equation*}
			\Nrm{g_\ell}{L^\infty} \leq C_{K,n_0} \Nrm{\ad{\opp_\jj}^\ell\!\!\(\Dhxj{\op}\) m_{2+n_0}}{\L^{\fb'}} \leq 2^{\ell+1} C_{K,n_0} \Nrm{\Dhxj{\op}\,m_{n+n_0}}{\L^{\fb'}}
		\end{equation*}
		where we used the fact that $\ell \leq n-1$.
		
		\step{2. Case $\ell$ large} Take $\ell\in\N\cap [n,2n-1]$ and define $\frac{1}{\tilde{q}} = \frac{1}{q} + \frac{1}{3}$. Then since $\tilde{q} \leq q$, 
		\begin{equation*}
			\Nrm{\cdot}{\L^q} = h^{3/q} \Nrm{\cdot}{q} \leq h^{3/q} \Nrm{\cdot}{\tilde{q}} = h^{-1} \Nrm{\cdot}{\L^{\tilde{q}}}.
		\end{equation*}
		Since $\frac{1}{r} = \frac{1}{\tilde{q}} - \frac{1}{r_1}$, and multiplying and dividing by $m_k := 1+\n{\opp}^{k}$, we deduce
		\begin{align*}
			\Nrm{g_\ell\,\opp_\jj^{2n-1-\ell}\opmu\,}{\L^q} &\leq h^{-1}\Nrm{g_\ell\, m_k^{-1} m_k\,\opp_\jj^{2n-1-\ell}\opmu\,}{\L^{\tilde{q}}}
			\\
			&\leq C_m^{1/r} \,h^{-1}\Nrm{g_\ell}{L^r} \Nrm{\opmu\,m_{n+k-1}}{\L^{r_1}},
		\end{align*}
		%
		where $C_m = \intd \frac{\d x}{(1+\n{x}^k)^r}$ is finite because $k<3/r$ and we used the fact that $\ell \geq n$. Note that since $\ell\geq 1$
		\begin{equation*}
			g_\ell = -\partial_{\jj}\nabla K_0 * \opp_\jj^\ell\rho = i\hbar\(\partial_{\jj}\nabla K_0\) *  \Diag{\ad{\opp_\jj}^{\ell-1}\!\!\(\Dhxj{\op}\)}.
		\end{equation*}
		Hence, to control $g_\ell$, we can use the fact that the convolution by $\partial_{\jj}\nabla K_0$ is continuous from $L^r$ to $L^r$ by the  Calder\'on--Zygmund Theorem (see e.g. \cite[Theorem~5.1]{duoandikoetxea_fourier_2001}) to obtain
		\begin{equation*}
			\Nrm{g_\ell}{L^r} \leq C_K \,h \Nrm{\Diag{\ad{\opp_\jj}^{\ell-1}\!\!\(\Dhxj{\op}\)}}{L^r}.
		\end{equation*}
		%
		By Proposition~\ref{prop:diag_vs_weights} and Lemma~\ref{lem:expand_commutators}, it yields for any $\eps = k'+2-3/r' > 0$,
		\begin{equation*}
			h^{-1}\,\Nrm{g_\ell}{L^r} \leq C_{K,\eps} \Nrm{\ad{\opp_\jj}^{\ell-1}\!\!\(\Dhxj{\op}\) m_{3/r'+\eps}}{\L^r} \leq 2^\ell\, C_{K,k'} \Nrm{\Dhxj{\op}\,m_{2n+k'}}{\L^r}.
		\end{equation*}
	\end{proof}
	
	Thanks to Lemma~\ref{lem:weighted_interpolation}, we can modify Proposition~\ref{prop:weighted_com_est} in a way depending only on the $\L^2$ and $\L^4$ norms.
	\begin{cor}\label{cor:direct-term_L2-L4}
		Assume $n\geq 3$, then there exists $\theta\in(0,1)$ and $C>0$ such that
		\begin{align*}
			\frac{C}{\hbar}\Nrm{\com{E_{\op}^0,\opp_\jj^{2n}}\opmu}{\L^2} &\leq \Nrm{\Dhxj{\op}\,m_{2n}}{\L^2}^{1-\theta} \Nrm{\Dhxj{\op}\,m_{2n-2}}{\L^4}^\theta \Nrm{\opmu \,m_{2n}}{\L^2}
			\\
			&\quad+ \Nrm{\Dhxj{\op}\,m_{2n}}{\L^2} \Nrm{\opmu\,m_{2n}}{\L^2}^{1/3} \Nrm{\opmu\,m_{2n-2}}{\L^4}^{2/3}
		\end{align*}
		and
		\begin{align*}
			\frac{C}{\hbar}\Nrm{\com{E_{\op}^0,\opp_\jj^{2n}}\opmu}{\L^4} &\leq \Nrm{\Dhxj{\op}\,m_{2n+2}}{\L^2}^{1-\theta} \Nrm{\Dhxj{\op}\,m_{2n}}{\L^4}^\theta \Nrm{\opmu \,m_{2n}}{\L^4}
			\\
			&\quad+ \Nrm{\Dhxj{\op}\,m_{2n+2}}{\L^2}^{1/3} \Nrm{\Dhxj{\op}\,m_{2n}}{\L^4}^{2/3} \Nrm{\opmu \,m_{2n}}{\L^4}
		\end{align*}
	\end{cor}
	
	\begin{proof}
		In the case $q=2$, use Proposition~\ref{prop:weighted_com_est} with $r=2$, $r_1 = 3$ to get for any $n_0 > 3/\fb - 1$ and $k > 1/2$,
		\begin{equation*}
			\frac{1}{\hbar}\Nrm{\com{E_{\op}^0,\opp_\jj^{2n}}\opmu}{\L^2} \leq C \(\Nrm{\Dhxj\op \,m_{n+n_0}}{\L^{\fb'}} \Nrm{\opmu \,m_{2n}}{\L^2} + \Nrm{\Dhxj{\op}\,m_{2n}}{\L^2} \Nrm{\opmu \,m_{n+k}}{\L^3}\).
		\end{equation*}
		Since $\nabla K_0\in L^\fb$ for any $\fb < 3/2$, we can in particular take $\fb\in(4/3, 3/2)$ so that $\fb'\in (3,4)$ and we can apply Lemma~\ref{lem:weighted_interpolation} with $p_0 = 2$, $p_1 = 4$ and $p_\theta = b'$ leading to
		\begin{equation*}
			\Nrm{\Dhxj{\op}\,m_{n+n_0}}{\L^{\fb'}} \leq \Nrm{\Dhxj{\op}\,m_{2n}}{\L^2}^{1-\theta} \Nrm{\Dhxj{\op}\,m_{\tilde{n}}}{\L^4}^\theta
		\end{equation*}
		where $\theta = 4\(\frac{1}{b} - \frac{1}{2}\)\in(\frac{2}{3},1)$ and $\tilde{n} \geq \frac{\(2\theta-1\)n+n_0}{\theta} \in (\frac{n+3n_0}{2},n+n_0)$. On the other hand, taking $p_\theta = 3$ yields
		\begin{equation*}
			\Nrm{\opmu\,m_{n+k}}{\L^3} \leq \Nrm{\opmu\,m_{2n}}{\L^2}^{1/3} \Nrm{\opmu\,m_{\tilde{n}}}{\L^4}^{2/3}
		\end{equation*}
		with $\tilde{n} \geq \frac{n+3k}{2}$. In particular, when $n\geq 3 $, taking $\fb$ close to $3/2$, $k$ close to $1/2$ and $n_0$ close to $1$ allows to take $\tilde{n} \leq 2n-2$.
		
		In the case $q=4$, take $r=3$ and $r_1 = 4$ in Proposition~\ref{prop:weighted_com_est} to get for any $n_0 > 3/\fb - 1$ and $k'>0$, 
		\begin{equation*}
		\begin{split}
			\frac{1}{\hbar}&\Nrm{\com{E_{\op}^0,\opp_\jj^{2n}}\opmu}{\L^4}\\ &\ \ \ \leq C \(\Nrm{\Dhxj\op \,m_{n+n_0}}{\L^{\fb'}} \Nrm{\opmu \,m_{2n}}{\L^4} + \Nrm{\Dhxj{\op}\,m_{2n+k'}}{\L^3} \Nrm{\opmu \,m_{2n}}{\L^4}\).
			\end{split}
		\end{equation*}
		As previously, we interpolate the $\L^{\fb'}$ norm between the $\L^2$ and the $\L^4$ norm leading to
		\begin{equation*}
			\Nrm{\Dhxj{\op}\,m_{n+n_0}}{\L^{\fb'}} \leq \Nrm{\Dhxj{\op}\,m_{\tilde{n}}}{\L^2}^{1-\theta} \Nrm{\Dhxj{\op}\,m_{2n}}{\L^4}^\theta
		\end{equation*}
		with again $\theta = 4\(\frac{1}{\fb} - \frac{1}{2}\)\in(\frac{2}{3},1)$ but with $\tilde{n} \geq \frac{\(1-2\theta\)n+n_0}{\theta}$ possibly negative. On the other hand, taking $p_\theta = 3$ yields
		\begin{equation*}
			\Nrm{\Dhxj{\op}\,m_{2n+k'}}{\L^3} \leq \Nrm{\Dhxj{\op}\,m_{\tilde{n}}}{\L^2}^{1/3} \Nrm{\Dhxj{\op}\,m_{2n}}{\L^4}^{2/3}
		\end{equation*}
		with $\tilde{n} \geq 2n+3k'$. In particular, taking $b$ close to $3/2$, $n_0$ close to $1$ and $k'$ sufficiently small allows to take $\tilde{n} \leq 2n+2$.
	\end{proof}
	
	Now we treat the long range part $K_\infty$ of the potential.
	\begin{prop}\label{prop:weighted_com_est_long_range}
		Let $E_{\op}^\infty = -\nabla K_\infty*\rho$ and $V_{\op}^\infty = K_\infty*\rho$ with $\rho = \Diag{\op}$ and $n\geq 1$ be an integer, then there exists a constant $C>0$ independent of $\hbar$ such that for any $q \in [1,\infty]$ and any positive compact operators $\op$ and $\opmu$,
		\begin{align*}
			\frac{1}{\hbar}\Nrm{\com{E_{\op}^\infty,\opp_\jj^{2n}}\opmu\,}{\L^q} &\leq C\(\Nrm{\op\, m_{2n}}{\L^2} + \hbar \Nrm{\Dhxj{\op}\, m_{2n}}{\L^2}\) \Nrm{\opmu\,m_{2n}}{\L^q}
			\\
			\frac{1}{\hbar}\Nrm{\com{V_{\op}^\infty,\opp_\jj^{2n}}\opmu\,}{\L^q} &\leq C\(\Nrm{\rho}{L^1} + \hbar \Nrm{\op\, m_{2n}}{\L^2}\) \Nrm{\opmu\,m_{2n}}{\L^q}.
		\end{align*}
	\end{prop}

	\begin{proof}[Proof of Proposition~\ref{prop:weighted_com_est_long_range}]
		As in the previous case, i.e. the proof of Proposition~\ref{prop:weighted_com_est}, and with the same notations, it holds
		\begin{equation*}
			\frac{1}{\hbar}\Nrm{\com{E_{\op}^\infty,\opp_\jj^{2n}}\opmu\,}{\L^q} \leq \sum_{\ell=0}^{2n-1} C_\ell \Nrm{g_\ell\,\opp_\jj^{2n-1-\ell}\opmu\,}{\L^q},
		\end{equation*}
		where $C_\ell = \sum_{k=\ell}^{2n-1} \binom{k}{\ell}$. We use H\"older's inequality for Schatten norms and the fact that the norm of the operator of multiplication by a function is the $L^\infty$ norm of this function to deduce that
		\begin{equation*}
			\Nrm{g_\ell\,\opp_\jj^{2n-1-\ell}\opmu\,}{\L^q} \leq \Nrm{g_\ell}{L^\infty}\Nrm{\opp_\jj^{2n-1-\ell}\opmu\,}{\L^q} \leq \Nrm{g_\ell}{L^\infty} \Nrm{\opmu\,m_{2n}}{\L^q},
		\end{equation*}
		where we used Inequality~\eqref{eq:equiv_weight_2}. Now, observe that
		\begin{equation*}
			g_\ell = -\partial_{\jj}\nabla K_\infty * (\opp_\jj^\ell\rho) = -\partial_{\jj}\nabla K_\infty * \Diag{\ad{\opp_\jj}^\ell\!\!\(\op\)}.
		\end{equation*}
		Therefore, since $\partial_{\jj}\nabla K_\infty\in L^2$, by Young's inequality, which is just the Cauchy--Schwarz inequality in this case
		\begin{equation*}
			\Nrm{g_\ell}{L^\infty} \leq C_K \Nrm{\Diag{\ad{\opp_\jj}^\ell\!\!\(\op\)}}{L^2},
		\end{equation*}
		where $C_K = \Nrm{\partial_{\jj}\nabla K_\infty}{L^2}$. By Proposition~\ref{prop:diag_vs_weights}, we get that
		\begin{equation*}
			\Nrm{g_\ell}{L^\infty} \leq C \Nrm{\ad{\opp_\jj}^\ell\!\!\(\op\) m_2}{\L^2}.
		\end{equation*}
		When $\ell = 0$, since $2n\geq 2$, it implies that
		\begin{equation*}
			\Nrm{g_\ell}{L^\infty} \leq C \Nrm{\op \,m_{2n}}{\L^2}.
		\end{equation*}
		When $\ell > 0$, we use the fact that $\ad{\opp_\jj}\!\!\(\op\) = -i\hbar \,\Dhxj{\op}$, $\ell \leq 2n-1$ and Lemma~\ref{lem:expand_commutators} to get
		\begin{equation*}
			\Nrm{g_\ell}{L^\infty} \leq C\,\hbar \Nrm{\ad{\opp_\jj}^{\ell-1}\!\!\(\Dhxj{\op}\) m_2}{\L^2} \leq 2^\ell\,C_{K,n_0}\,\hbar \Nrm{\Dhxj{\op}\, m_{2n}}{\L^2}.
		\end{equation*}
		
		In the case when $E_{\op}^\infty$ is replaced by $V_{\op}^\infty$, one obtains the same estimates with $\ell>0$ and $\Dhxj{\op}$ replaced by $\op$. The only remaining point is the case $\ell = 0$, that is defining $g_\ell = -\partial_{\jj} K_\infty * (\opp_\jj^\ell\rho)$, it remains to notice that since $\partial_{\jj} K_\infty \in L^{\infty}$,
		\begin{equation*}
			\Nrm{g_0}{L^\infty} = \Nrm{\partial_{\jj} K_\infty * \rho}{L^\infty} \leq C_K\,\Nrm{\rho}{L^1}
		\end{equation*}
		with $C_K = \Nrm{\nabla K_\infty}{L^\infty}$.
	\end{proof}
	

\subsection{Preliminary Properties of the Exchange Operator}

	\subsubsection{Preliminary Identities} Let $\sfX = \sfX_{\op}$ be the operator of integral kernel $\sfX(x,y) = K(x-y)\,\op(x,y)$ with $K(x) = \n{x}^{-a}$ and recall the notation of the adjoint representation of $A$, $\Ad{A}{B} = \com{A,B}$.
	\begin{lem}\label{lem:X_commut_gradients}
		Let $a\in(0,1]$. Then the following identities holds true
		\begin{align*}
			\com{x,\sfX_{\op}} &= \sfX_{\com{x,\op}}, & \com{\nabla,\sfX_{\op}} &= \sfX_{\com{\nabla,\op}},
		\end{align*}
		and more generally, with the adjoint notation, $\ad{x}^n(\sfX_{\op}) = \sfX_{\ad{x}^n(\op)}$ and $\ad{\nabla}^n(\sfX_{\op}) = \sfX_{\ad{\nabla}^n(\op)}$. In particular, since $\Dhx{\op} = \Ad{\nabla}\op$ and $\Dhv{\op} = \frac{1}{i\hbar}\Ad{x}\op$, it can be written $\Dhv^n(\sfX_{\op}) = \sfX_{\Dhv^n\op}$, and $\Dhx^n(\sfX_{\op}) = \sfX_{\Dhx^n\op}$.
	\end{lem}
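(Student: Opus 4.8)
The plan is to verify both identities directly at the level of integral kernels, where each side is an explicit pointwise expression, and then obtain the general statement by induction. The two elementary facts I would start from concern how $\com{x,\cdot}$ and $\com{\nabla,\cdot}$ act on kernels: if $A$ has integral kernel $A(x,y)$, then $\com{x,A}$ has kernel $(x-y)\,A(x,y)$, and $\com{\nabla,A}$ has kernel $(\nabla_x+\nabla_y)A(x,y)$, the $\nabla_y$ term arising from an integration by parts that moves the derivative in $A\nabla$ off the test function. The other structural input is that $\opmu\mapsto\sfX_{\opmu}$ is simply the Hadamard (entrywise) product of the kernel of $\opmu$ with the fixed kernel $K(x-y)$, hence linear in $\opmu$.

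For the first identity I would observe that the kernel $(x-y)\,K(x-y)\,\op(x,y)$ of $\com{x,\sfX_{\op}}$ equals $K(x-y)$ times the kernel $(x-y)\,\op(x,y)$ of $\com{x,\op}$; that is, it is the Hadamard product of $K(x-y)$ with the kernel of $\com{x,\op}$, which is by definition the kernel of $\sfX_{\com{x,\op}}$. (This is nothing more than the commutativity of the two pointwise multiplications.) For the second identity the key point is that $(\nabla_x+\nabla_y)$ is a derivation for the Hadamard product --- a Leibniz rule --- and that it annihilates $K(x-y)$, since $\nabla_x K(x-y)+\nabla_y K(x-y) = (\nabla K)(x-y)-(\nabla K)(x-y)=0$. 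Hence $(\nabla_x+\nabla_y)$ applied to the kernel $K(x-y)\,\op(x,y)$ gives $K(x-y)\,(\nabla_x+\nabla_y)\op(x,y)$, which is the kernel of $\sfX_{\com{\nabla,\op}}$; this is exactly $\com{\nabla,\sfX_{\op}} = \sfX_{\com{\nabla,\op}}$.

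The higher-order identities follow by induction, since the two identities just established hold with $\op$ replaced by an arbitrary operator $\opmu$: applying them repeatedly with $\opmu=\ad{x}^{n-1}(\op)$, respectively $\opmu=\ad{\nabla}^{n-1}(\op)$, yields $\ad{x}^n(\sfX_{\op})=\sfX_{\ad{x}^n(\op)}$ and $\ad{\nabla}^n(\sfX_{\op})=\sfX_{\ad{\nabla}^n(\op)}$. The assertions for the quantum gradients then come for free from $\Dhx\op=\ad{\nabla}(\op)$ and $\Dhv\op=\frac{1}{i\hbar}\ad{x}(\op)$ together with the linearity of $\opmu\mapsto\sfX_{\opmu}$, which lets the scalar $(i\hbar)^{-n}$ pass through $\sfX$, giving $\Dhv^n(\sfX_{\op})=\sfX_{\Dhv^n\op}$ and $\Dhx^n(\sfX_{\op})=\sfX_{\Dhx^n\op}$.

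The only genuine subtlety --- and the point I would flag as the main (if mild) obstacle --- is justifying these kernel manipulations rigorously: differentiating under the integral sign, the integration by parts that produces the $\nabla_y$ term, and the absence of boundary contributions despite the singularity of $K$ at the origin and the finite regularity of $\op$. I would handle this by first reading all the identities as equalities of quadratic forms on a dense domain of smooth, rapidly decaying functions, where each manipulation is unambiguously legitimate, and then extending to genuine operator identities on the natural domains by density, using the Schatten-norm bounds established in the subsequent sections.
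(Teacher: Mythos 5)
Your proof is correct and follows essentially the same route as the paper's: both verify the two identities at the level of integral kernels, the first being immediate and the second resting on the fact that $(\nabla_x+\nabla_y)$ annihilates $K(x-y)$ (the paper phrases this as $\nabla_x K(x-y)=-\nabla_y K(x-y)$ after testing against $\varphi\in C^\infty_c$ and integrating by parts), with the higher-order statements following by iteration. Your explicit remarks on the induction, the linearity of $\opmu\mapsto\sfX_{\opmu}$, and the density argument are consistent with, and slightly more detailed than, what the paper writes.
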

	
	\begin{proof}
		The first identity follows immediately by looking at the integral kernel of the operator
		\begin{equation*}
			\com{x,\sfX_{\op}}(x,y) = \frac{\(x-y\)\r(x,y)}{\n{x-y}^a} = \sfX_{\com{x,\op}}(x,y).
		\end{equation*}
		To get the second we take $\varphi\in C^\infty_c$, integrate by parts and use the fact that $\nabla_x K(x-y) = -\nabla_y K(x-y)$ to get
		\begin{multline*}
			\com{\nabla,\sfX_{\op}}\!\varphi(x) = \nabla \intd \frac{\op(x,y)}{\n{x-y}^a}\,\varphi(y)\d y - \intd \frac{\op(x,y)}{\n{x-y}^a}\,\nabla\varphi(y)\d y
			\\
			= \intd \(\nabla_x +\nabla_y\)\(\frac{\op(x,y)}{\n{x-y}^a}\)\,\varphi(y)\d y = \intd \frac{\(\nabla_x +\nabla_y\)\!\(\op(x,y)\)}{\n{x-y}^a}\,\varphi(y)\d y,
		\end{multline*}
		and we conclude by noticing that $\(\nabla_x +\nabla_y\)\!\(\op(x,y)\)$ is nothing but the integral kernel of the operator $\com{\nabla,\op}$.
	\end{proof}

	\begin{lem}
		Let $\eta=1$, $\eta=\opp_\jj$ or $\eta=x_\jj$, then we have
		\begin{align}\label{eq:commut_X_0}
			\eta^n\,\sfX_{\op} &= \sum_{k=0}^n \binom{n}{k} \sfX_{\ad{\eta}^k(\op)}\,\eta^{n-k}
			\\\label{eq:commut_X_1}
			\com{\eta^n,\sfX_{\op}} &= \sum_{k=1}^n \binom{n}{k} \sfX_{\ad{\eta}^k(\op)}\,\eta^{n-k}.
		\end{align}
	\end{lem}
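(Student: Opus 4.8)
The plan is to reduce both identities to a purely algebraic binomial expansion combined with the intertwining property already proved in Lemma~\ref{lem:X_commut_gradients}. The case $\eta = 1$ is trivial, since then $\ad{\eta} = 0$, so only the $k=0$ term survives in \eqref{eq:commut_X_0} (giving $\sfX_{\op} = \sfX_{\op}$) and the sum in \eqref{eq:commut_X_1} is empty (giving $0 = 0$); so one may assume $\eta \in \{\opp_\jj, x_\jj\}$.

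First I would establish the abstract operator identity, valid for any two operators $\eta$ and $A$ on a common domain and any $n\in\N$,
\begin{equation}\label{eq:binom_abstract}
	\eta^n A = \sum_{k=0}^n \binom{n}{k}\,\ad{\eta}^k(A)\,\eta^{n-k},
\end{equation}
by induction on $n$. The case $n=0$ is immediate. For the inductive step, multiply \eqref{eq:binom_abstract} on the left by $\eta$ and use the elementary relation $\eta\,\ad{\eta}^k(A) = \ad{\eta}^{k+1}(A) + \ad{\eta}^k(A)\,\eta$ to split the result into two sums; reindexing the first one by $j = k+1$ and invoking Pascal's rule $\binom{n}{k-1} + \binom{n}{k} = \binom{n+1}{k}$ recovers \eqref{eq:binom_abstract} with $n$ replaced by $n+1$.

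Next I would observe that Lemma~\ref{lem:X_commut_gradients} extends to $\eta \in \{\opp_\jj, x_\jj\}$. For $\eta = x_\jj$ this is just the $\jj$-th component of the identity $\ad{x}^k(\sfX_{\op}) = \sfX_{\ad{x}^k(\op)}$. For $\eta = \opp_\jj = -i\hbar\,\partial_\jj$ one has $\ad{\opp_\jj}^k = (-i\hbar)^k\,\ad{\partial_\jj}^k$, so by the lemma and the linearity of the map $\op \mapsto \sfX_{\op}$ (evident from the kernel formula $\sfX_{\op}(x,y) = K(x-y)\,\op(x,y)$),
\begin{equation*}
	\ad{\opp_\jj}^k(\sfX_{\op}) = (-i\hbar)^k\,\sfX_{\ad{\partial_\jj}^k(\op)} = \sfX_{(-i\hbar)^k\ad{\partial_\jj}^k(\op)} = \sfX_{\ad{\opp_\jj}^k(\op)}.
\end{equation*}
Hence in all relevant cases $\ad{\eta}^k(\sfX_{\op}) = \sfX_{\ad{\eta}^k(\op)}$.

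Finally, taking $A = \sfX_{\op}$ in \eqref{eq:binom_abstract} and substituting this last relation immediately gives \eqref{eq:commut_X_0}. Identity \eqref{eq:commut_X_1} then follows by subtracting $\sfX_{\op}\,\eta^n$ from both sides of \eqref{eq:commut_X_0}: the $k=0$ term of the sum is exactly $\binom{n}{0}\sfX_{\ad{\eta}^0(\op)}\,\eta^n = \sfX_{\op}\,\eta^n$, which cancels, so the remaining sum runs from $k = 1$ to $n$. There is no real obstacle here; the only points requiring mild care are the bookkeeping in the induction for \eqref{eq:binom_abstract} and, on the analytic side, the verification that these formal manipulations are legitimate on the domains on which $\op$ and its iterated commutators with $\eta$ act, which is ensured by the regularity assumptions under which the lemma is subsequently applied.
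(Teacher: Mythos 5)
Your proof is correct and follows essentially the same route as the paper: establish the abstract binomial commutator expansion $\eta^n A = \sum_{k=0}^n \binom{n}{k}\,\ad{\eta}^k(A)\,\eta^{n-k}$ (the paper derives it from the recursion $A^kB = (A^{k-1}B)A + A^{k-1}\ad{A}(B)$, which is the same induction), then apply Lemma~\ref{lem:X_commut_gradients} with $A = \sfX_{\op}$ and subtract the $k=0$ term to get the commutator identity. Your extra remarks on the $\eta=1$ case and on passing from $\partial_\jj$ to $\opp_\jj$ by linearity are fine but not needed beyond what the paper leaves implicit.
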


	\begin{proof}
		Since $A^k B = (A^{k-1}B)A + A^{k-1}\Ad{A}{B}$, we easily deduce the following commutator expansion
		\begin{equation*}
			A^nB = \sum_{k=0}^n \binom{n}{k} \ad{A}^k(B) \, A^{n-k}.
		\end{equation*}
		Therefore, we deduce the result by taking $B = \sfX_{\op}$ and by Lemma~\ref{lem:X_commut_gradients}.
	\end{proof}
	
	\subsubsection{Preliminary Inequalities} We know from \cite[Equation~(39a)]{lafleche_strong_2021} that if $a\in[0,3/2)$ and $q=2$, then
	\begin{equation}\label{eq:X_HS_bound}
		\Nrm{\sfX_{\op}}{q} \leq C\,h^{-a} \Nrm{\op \n{\opp}^a}{2}.
	\end{equation}
	By the fact that the Schatten norms of smaller order controls the Schatten norms of higher order, we deduce that this inequality actually holds for any $q\in[2,\infty]$. The next proposition will allow us to put the weight $\n{\opp}^a$ on an other operator $\opmu$ instead of $\op$.
	\begin{lem}\label{lem:X_commut_p} Let $\opmu$ and $\tilde{\opmu}$ be compact operators. Then for any $q\in[2,\infty]$ and any $\theta\in\{0,1\}$,
		\begin{equation}\label{eq:X_commut_p}
			\Nrm{\sfX_{\tilde{\opmu}}\,\opmu}{q} \leq C_{a}\,h^{-a}\Nrm{\tilde{\opmu}\n{\opp}^{a\(1-\theta\)}}{2} \Nrm{\opmu^*\n{\opp}^{\theta a}}{\infty},
		\end{equation}
		where $\opmu^*$ is the adjoint operator of $\opmu$.
	\end{lem}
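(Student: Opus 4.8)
I would like to reduce the estimate for $\Nrm{\sfX_{\tilde\opmu}\,\opmu}{q}$ to the unweighted Hilbert--Schmidt bound \eqref{eq:X_HS_bound}, paying the price of moving a fractional power of $\n{\opp}$ from the exchange operator onto $\opmu$. The key algebraic tool is the pointwise identity for the kernel of the exchange operator: $\sfX_{\tilde\opmu}(x,y) = K(x-y)\,\tilde\opmu(x,y)$ with $K(x)=\n{x}^{-a}$, so that $\sfX_{\tilde\opmu}\opmu$ has kernel $\intd K(x-z)\,\tilde\opmu(x,z)\,\opmu(z,y)\dd z$. The strategy is to write $K$ as a multiplier acting in the $z$ variable and commute the corresponding power of momentum through. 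More precisely, one uses that $K(x-z) = \n{x-z}^{-a}$ is, up to constants, the kernel of $\n{\opp_z}^{a-3}$ in a distributional sense; but cleaner is to exploit the factorization hidden in \eqref{eq:X_HS_bound} together with the splitting $\tilde\opmu = (\tilde\opmu \n{\opp}^{a(1-\theta)})\n{\opp}^{-a(1-\theta)}$.

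\textbf{First steps.} First I would treat the case $\theta=0$. Here \eqref{eq:X_commut_p} reads $\Nrm{\sfX_{\tilde\opmu}\opmu}{q} \le C_a h^{-a}\Nrm{\tilde\opmu\n{\opp}^a}{2}\Nrm{\opmu^*}{\infty}$, which follows directly from H\"older's inequality for Schatten norms, $\Nrm{\sfX_{\tilde\opmu}\opmu}{q} \le \Nrm{\sfX_{\tilde\opmu}}{2}\Nrm{\opmu}{\infty}$ (valid since $q\ge 2$ so $\Nrm{\cdot}{q}\le\Nrm{\cdot}{2}$ after dropping the $\n{\opp}^\theta$ factor), combined with \eqref{eq:X_HS_bound} applied to $\tilde\opmu$ and the identity $\Nrm{\opmu}{\infty}=\Nrm{\opmu^*}{\infty}$. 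For $\theta=1$, I would instead write, using the adjoint and the fact that $\sfX_{\tilde\opmu}^*=\sfX_{\tilde\opmu^*}$,
\begin{align*}
	\Nrm{\sfX_{\tilde\opmu}\opmu}{q} = \Nrm{\opmu^*\sfX_{\tilde\opmu^*}}{q} \le \Nrm{\opmu^*\n{\opp}^a}{\infty}\Nrm{\n{\opp}^{-a}\sfX_{\tilde\opmu^*}}{q}.
\end{align*}
It remains to bound $\Nrm{\n{\opp}^{-a}\sfX_{\tilde\opmu^*}}{q} \le \Nrm{\n{\opp}^{-a}\sfX_{\tilde\opmu^*}}{2}$ by $C_a h^{-a}\Nrm{\tilde\opmu}{2}$. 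To see this, note $\n{\opp}^{-a}\sfX_{\tilde\opmu^*}$ has kernel $\intd G_a(x-z)\,K(z-y)\,\tilde\opmu^*(z,y)\dd z$ where $G_a$ is the kernel of $\n{\opp}^{-a}$; since $\n{\opp}=-i\hbar\nabla$, one has $G_a(x) = h^{-a}\,c_a\,\n{x}^{a-3}$ up to normalization, so the composition $\n{\opp}^{-a}$ acting by convolution against $K=\n{\cdot}^{-a}$ is, by the standard convolution identity $\n{\cdot}^{a-3}*\n{\cdot}^{-a} = c\,\n{\cdot}^{-3}$ — wait, this is borderline — so more safely I would use the Hardy--Littlewood--Sobolev / fractional integration estimate $\Nrm{\n{\opp}^{-a}(fg)}{L^2}$-type bound directly on the kernel in Hilbert--Schmidt norm, yielding $\Nrm{\n{\opp}^{-a}\sfX_{\tilde\opmu^*}}{2}\le C_a h^{-a}\Nrm{\tilde\opmu^*}{2}=C_a h^{-a}\Nrm{\tilde\opmu}{2}$.

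\textbf{The main obstacle.} The delicate point is the last Hilbert--Schmidt estimate $\Nrm{\n{\opp}^{-a}\sfX_{\tilde\opmu}}{2}\le C_a h^{-a}\Nrm{\tilde\opmu}{2}$ in the $\theta=1$ case: one is composing the singular multiplier $\n{\opp}^{-a}$ (a fractional integral, which is gaining $a$ derivatives, i.e.\ smoothing) against the singular multiplication-type operator $\sfX_{\tilde\opmu}$ whose kernel already has the $\n{x-y}^{-a}$ singularity. Controlling this in Hilbert--Schmidt norm amounts to showing that the operator $\n{\opp}^{-a}\,m_{K}$ (multiplication by the kernel singularity) is bounded on $L^2$ with norm $O(h^{-a})$, which is exactly a Hardy-type or Kato-Seiler-Simon-type inequality; the careful scaling in $h$ (the $h^{-a}$ factor) comes from $\n{\opp}=-i\hbar\nabla$ and must be tracked through the rescaling $\opp=\hbar\,(-i\nabla)$. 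I would model this computation on \cite[Section 3]{lafleche_strong_2020}, where the analogous bound \eqref{eq:X_HS_bound} is derived, adapting it by inserting the extra $\n{\opp}^{-a}$ and using that fractional integration of order $a$ against a kernel decaying like $\n{x-y}^{-a}$ produces an $L^2$-bounded object by Hardy's inequality since $a<3/2$. The interpolation in $q$ is then free: the bound in $\Nrm{\cdot}{2}$ automatically gives the bound in $\Nrm{\cdot}{q}$ for all $q\ge 2$, since $\Nrm{\cdot}{q}\le\Nrm{\cdot}{2}$, so no separate argument for large $q$ is needed.
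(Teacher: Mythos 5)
Your proof is correct, and for the case $\theta=1$ it takes a genuinely different route from the paper. The $\theta=0$ case is identical: H\"older plus the Hilbert--Schmidt bound \eqref{eq:X_HS_bound} (note that for a general compact, possibly non-self-adjoint $\tilde{\opmu}$ you need the extension \eqref{eq:X_HS_bound_2}, which the paper derives first by the same kernel-plus-Hardy computation you invoke later). For $\theta=1$, the paper does \emph{not} factor out $\n{\opp}^{-a}$; instead it computes $\Nrm{\sfX_{\tilde{\opmu}}\opmu}{2}^2 = \Tr{\sfX_{\tilde{\opmu}^*}\sfX_{\tilde{\opmu}}\opmu\opmu^*}$, uses the ``integration by parts'' identity $\Tr{\sfX_{B}C}=\Tr{B\,\sfX_C}$ to move the exchange structure onto $\sfX_{\tilde{\opmu}}\opmu\opmu^*$, applies \eqref{eq:X_HS_bound_2} to that operator, and closes a bootstrap by dividing both sides by $\Nrm{\sfX_{\tilde{\opmu}}\opmu}{2}$. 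Your argument instead proves the new one-sided estimate $\Nrm{\n{\opp}^{-a}\sfX_{\opmu_2}}{2}\leq C_a h^{-a}\Nrm{\opmu_2}{2}$ and then factors $\opmu^*\sfX_{\tilde{\opmu}^*}=(\opmu^*\n{\opp}^a)(\n{\opp}^{-a}\sfX_{\tilde{\opmu}^*})$. The key estimate you need is exactly the $L^2$-boundedness of $(-\Delta)^{-a/2}m_{\n{\cdot-y}^{-a}}$, uniformly in $y$, which is the adjoint of the Hardy--Herbst inequality $\Nrm{\n{x}^{-a}(-\Delta)^{-a/2}f}{L^2}\leq C_a\Nrm{f}{L^2}$ for $a<3/2$ — so your self-correction away from the (divergent, since $\n{\cdot}^{-3}\notin L^1_{\loc}(\Rd)$) convolution identity and toward Hardy is the right move; the Kato--Seiler--Simon reference is not the relevant one here. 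Your route is more direct and avoids the paper's division step, which implicitly presupposes that $\Nrm{\sfX_{\tilde{\opmu}}\opmu}{2}$ is finite and can be cancelled; the price is that you must make sense of the unbounded multiplier $\n{\opp}^{-a}$ and of the insertion $\n{\opp}^a\n{\opp}^{-a}=\Id$, which is a routine density/adjoint argument but should be stated. The final reduction to general $q\geq 2$ via $\Nrm{\cdot}{q}\leq\Nrm{\cdot}{2}$ is the same in both proofs.
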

	
	\begin{proof}
		Take $\opmu_2$ a compact but possibly not self-adjoint operator. Then
		\begin{align*}
			\Nrm{\sfX_{\opmu_2}}{2}^2 &= \Tr{\sfX_{\opmu_2}^*\,\sfX_{\opmu_2}} = \iintd \frac{\opmu_2^*(x,y)\,\opmu_2(y,x)}{\n{x-y}^{2a}}\d x\d y
			\\
			&= \iintd \frac{\n{\opmu_2(x,y)}^2}{\n{x-y}^{2a}}\d x\d y = \iintd \frac{\n{\opmu_2(x,y+x)}^2}{\n{y}^{2a}}\d x\d y,
		\end{align*}
		so that by the Hardy--Rellich inequality, 
		\begin{align*}
			\Nrm{\sfX_{\opmu_2}}{2}^2 &\leq \cC_{a}\,\iintd \n{\Delta^{a/2}_y\opmu_2(x,y+x)}^2\d x\d y = \cC_{a}\,\iintd \n{\Delta^{a/2}_y\opmu_2(x,y)}^2\d x\d y
			\\
			&\leq C_{a}\,h^{- 2a}\iintd \n{\(\opmu_2 \n{\opp}^a\)(x,y)}^2\d x\d y,
		\end{align*}
		where $\cC_{a}$ is the constant appearing in the Hardy--Rellich inequality and $C_{a} = \(2\pi\)^a\cC_{a}$. From this we deduce the generalization of \eqref{eq:X_HS_bound} for possibly not self-adjoint operators
		\begin{equation}\label{eq:X_HS_bound_2}
			\Nrm{\sfX_{\opmu_2}}{2} \leq C_{a}\,h^{- a}\Nrm{\opmu_2 \n{\opp}^a}{2}.
		\end{equation}
		By H\"older's inequality, taking $\opmu_2=\tilde{\opmu}$, it implies \eqref{eq:X_commut_p} when $\theta=0$.
		Now, noticing that we have the following integration by parts like formula
		\begin{equation*}
			\Tr{\sfX_{\tilde{\opmu}}\,\opmu} = \iintd \frac{\tilde{\opmu}(x,y)\,\opmu(y,x)}{\n{x-y}^a}\d x\d y = \Tr{\tilde{\opmu}\, \sfX_{\opmu}},
		\end{equation*}
		and using the cyclicity of the trace, H\"older's inequality and Inequality~\eqref{eq:X_HS_bound_2} with $\opmu_2 = \sfX_{\tilde{\opmu}}\,\opmu\opmu^*$, we get
		\begin{multline*}
			\Nrm{\sfX_{\tilde{\opmu}}\,\opmu}{2}^2 = \Tr{\sfX_{\tilde{\opmu}^*}\,\sfX_{\tilde{\opmu}}\,\opmu\opmu^*} = \Tr{\tilde{\opmu}^*\,\sfX_{(\sfX_{\tilde{\opmu}}\,\opmu\opmu^*)}}
			\\
			\leq \Nrm{\tilde{\opmu}}{2} \Nrm{\sfX_{(\sfX_{\tilde{\opmu}}\,\opmu\opmu^*)}}{2} \leq C_{a}\,h^{-a}\Nrm{\tilde{\opmu}}{2} \Nrm{\sfX_{\tilde{\opmu}}\,\opmu\opmu^*\n{\opp}^a}{2}.
		\end{multline*}
		By H\"older's inequality, this leads to
		\begin{equation*}
			\Nrm{\sfX_{\tilde{\opmu}}\,\opmu}{2}^2 \leq C_{a}\,h^{-a}\Nrm{\tilde{\opmu}}{2} \Nrm{\sfX_{\tilde{\opmu}}\,\opmu}{2}\Nrm{\opmu^*\n{\opp}^a}{\infty}.
		\end{equation*}
		We deduce the result by dividing both sides by $\Nrm{\sfX_{\tilde{\opmu}}\,\opmu}{2}$ and then using the fact that for $q\geq2$, $\Nrm{\sfX_{\tilde{\opmu}}\,\opmu}{q}\leq \Nrm{\sfX_{\tilde{\opmu}}\,\opmu}{2}$.
	\end{proof}
	
	The following lemma will allow us to replace the Hilbert--Schmidt norm on the right-hand side of Inequality~\eqref{eq:X_HS_bound} by another Schatten norm with higher index at the expense of using a less sharp power on $\n{\opp}$.
	\begin{lem}\label{lem:X_p_bound}
		Let $\opmu$ be a compact operator. Then for any $\alpha > a$ and any $q\in[2,\infty]$, it holds
		\begin{equation}\label{eq:X_p_bound}
			\Nrm{\sfX_{\opmu}}{q} \leq C\, h^{-\alpha} \Nrm{\opmu\(1+\n{\opp}^{\alpha}\)}{q},
		\end{equation}
		for a constant $C$ depending only on $a$ and $\alpha$.
	\end{lem}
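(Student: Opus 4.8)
The strategy is to obtain~\eqref{eq:X_p_bound} by complex interpolation between the endpoints $q=2$ and $q=\infty$. Since $\sup_{\xi}\frac{1+\n{\xi}^{\alpha}}{1+\n{\xi}^{\alpha'}}<\infty$ for any $\alpha'\geq\alpha$, one has $\Nrm{\opmu\,(1+\n{\opp}^{\alpha})}{q}\leq C\Nrm{\opmu\,(1+\n{\opp}^{\alpha'})}{q}$, and together with $h\leq 1$ (in the asymptotic regime $\hbar\to 0$ we work in) this shows it suffices to prove the claim for $\alpha\in(a,2]$. In that range $g(\opp):=(1+\n{\opp}^{\alpha})^{-1}$ is a convolution operator with a nonnegative, radially decreasing kernel concentrated at the scale $h$ (for $\alpha=2$ it is the rescaled Yukawa kernel $\frac{1}{4\pi h}\,\n{x-y}^{-1}e^{-\n{x-y}/h}$). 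Setting $\opnu:=\opmu\,(1+\n{\opp}^{\alpha})$, inequality~\eqref{eq:X_p_bound} is equivalent to the operator bound $\Nrm{L\opnu}{q}\leq C\,h^{-\alpha}\Nrm{\opnu}{q}$ for the linear map $L\opnu:=\sfX_{\opnu\,g(\opp)}$, and since $[\mathfrak{S}^{2},\mathfrak{S}^{\infty}]_{\theta}=\mathfrak{S}^{q}$ with $\frac1q=\frac{1-\theta}{2}$, it is enough to show $L$ is bounded on $\mathfrak{S}^{2}$ and on $\mathfrak{S}^{\infty}$ with norm $\lesssim h^{-\alpha}$.

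The $\mathfrak{S}^{2}$ endpoint follows at once from~\eqref{eq:X_HS_bound_2}: $\Nrm{L\opnu}{2}=\Nrm{\sfX_{\opnu\,g(\opp)}}{2}\leq C_{a}\,h^{-a}\Nrm{\opnu\,g(\opp)\n{\opp}^{a}}{2}\leq C_{a}\,h^{-a}\,\sup_{\xi}\tfrac{\n{\xi}^{a}}{1+\n{\xi}^{\alpha}}\,\Nrm{\opnu}{2}\leq C_{a,\alpha}\,h^{-\alpha}\Nrm{\opnu}{2}$, using $\alpha>a$ and $h\leq1$. For the $\mathfrak{S}^{\infty}$ endpoint I would use the subordination identity underlying~\eqref{eq:cutoff}, namely $K(x)=c_{a}\int_{0}^{\infty}s^{\frac a2-1}e^{-\pi\n{x}^{2}s}\dd s$, or equivalently $\widehat{K}(\xi)=c_{a}\n{\xi}^{a-3}$. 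This represents $\sfX_{\opmu}=c_{a}\int_{0}^{\infty}s^{\frac a2-1}\,G^{(s)}_{\opmu}\dd s$, where $G^{(s)}_{\opmu}$ denotes the Schur product of $\opmu$ with the Gaussian kernel $e^{-\pi\n{x-y}^{2}s}$; because a Gaussian has Schur-multiplier norm $1$, one has $\Nrm{G^{(s)}_{\opmu}}{p}\leq\Nrm{\opmu}{p}$ for every $p\in[1,\infty]$. Splitting the integral at $s=h^{-2}$, the part $s\leq h^{-2}$ contributes at most $c_{a}\int_{0}^{h^{-2}}s^{\frac a2-1}\dd s\,\Nrm{\opmu}{\infty}\lesssim h^{-a}\Nrm{\opmu}{\infty}\leq h^{-\alpha}\Nrm{\opnu}{\infty}$, using $\opmu=\opnu\,g(\opp)$ and $\Nrm{g(\opp)}{\infty}=1$.

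The range $s>h^{-2}$ is the heart of the matter, and I expect it to be the main obstacle. The crude bound $\Nrm{G^{(s)}_{\opmu}}{\infty}\leq\Nrm{\opmu}{\infty}$ makes the $s$-integral diverge, and the weight cannot be exploited naively: writing $\sfX_{\opnu\,g(\opp)}=c_{a}\int\n{\xi}^{a-3}\,e_{\xi}\,\opnu\,e_{-\xi}\,g(\opp-h\xi)\dd\xi$ (with $e_{\xi}$ the operator of multiplication by $e^{2i\pi\xi\cdot x}$, using $e_{\xi}g(\opp)e_{-\xi}=g(\opp-h\xi)$), one sees that $\Nrm{g(\opp-h\xi)}{\infty}=1$ for every $\xi$, so no gain comes from the shifted smoothing at the level of operator norms. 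The genuine mechanism is that $G^{(s)}_{\opmu}$ is supported, in the relative Fourier variable $x-y$, in frequencies $\gtrsim s^{1/2}\gg h^{-1}$, where $g(\opp)$ really is effective: decomposing $\{\n{\xi}>h^{-1}\}$ into dyadic shells $\n{\xi}\sim R$ and each shell into cubes of side $\sim h^{-1}$, the corresponding pieces of $\sfX_{\opnu\,g(\opp)}$ have outputs nearly localized at relative frequency $\sim R$, hence are almost orthogonal; a Cotlar--Stein estimate then replaces the would-be divergent sum $\sum_{R>h^{-1}}R^{a}$ arising from $\int_{\n{\xi}\sim R}\n{\xi}^{a-3}\dd\xi\sim R^{a}$ by a convergent geometric series, the convergence being precisely where the hypothesis $\alpha>a$ enters, and yields $\int_{h^{-2}}^{\infty}s^{\frac a2-1}\Nrm{G^{(s)}_{\opnu\,g(\opp)}}{\infty}\dd s\lesssim h^{-\alpha}\Nrm{\opnu}{\infty}$. (Alternatively, one can run a direct Schur test on the kernel $K(x-y)\,(\opnu\,g(\opp))(x,y)$ exploiting the explicit, $h$-concentrated kernel of $g(\opp)$.) Adding the two ranges of $s$ gives the $\mathfrak{S}^{\infty}$ bound for $L$, and interpolating with the $\mathfrak{S}^{2}$ bound concludes.
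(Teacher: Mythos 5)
Your interpolation skeleton and your $\mathfrak{S}^2$ endpoint are correct and match the paper (the $q=2$ case is exactly Inequality~\eqref{eq:X_HS_bound_2} combined with $\sup_t t^a(1+t^\alpha)^{-1}<\infty$), and your reduction to $\alpha\le 2$ is legitimate. But the proof is not complete: the $\mathfrak{S}^\infty$ endpoint is only established for the long-range part $s\le h^{-2}$ of the Fefferman--de la Llave decomposition, and for the singular range $s>h^{-2}$ you offer a Cotlar--Stein heuristic that you yourself flag as the main obstacle and do not carry out. This is a genuine gap, not a routine verification. The pieces $G^{(s)}_{\opmu}$ are Schur products, and ``near localization of the output in the relative frequency $x-y$'' does not translate into the orthogonality of ranges and cokernels that Cotlar--Stein requires; moreover, as you correctly observe, $\Nrm{g(\opp-h\xi)}{\infty}=1$ for every $\xi$, so there is no per-piece gain to sum, and it is precisely unclear how the weight $1+\n{\opp}^{\alpha}$ is to be cashed in at the level of operator norms. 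Without this step the lemma is unproved for $q>2$.

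The paper closes exactly this point by a different and more elementary route for the $q=\infty$ endpoint: it tests the operator against $(\varphi,\phi)\in (L^2)^2$ and rewrites the bilinear form as a trace against the Riesz potential,
\begin{equation*}
	\Inprod{\varphi}{\sfX_{\opmu}\,\phi}_{L^2} = C_a\, h^{3-a}\Tr{\opmu\,\varphi \,\n{\opp}^{a-3}\phi},
\end{equation*}
then inserts $m=1+\n{\opp}^{\alpha}$ around $\opmu$, splits the symbol $\n{\xi}^{a-3}=g_0+g_1\in L^{p_0}+L^{p_1}$ with $\tfrac{3}{\alpha}<p_0'<\tfrac{3}{a}<p_1'$ and $\tfrac{1}{p_0'}+\tfrac{1}{p_1'}=\tfrac{a}{3}$, and applies H\"older for Schatten norms, the unmixing Lemma~\ref{lem:mixing} and the Kato--Seiler--Simon inequality to the factors $m^{-1/2}\varphi^{1/p'}$ and $\varphi^{1/p}\tilde g(\opp)^{1/2}$. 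The hypothesis $\alpha>a$ enters only through the nonemptiness of the window $\tfrac{3}{\alpha}<p'<\tfrac{3}{a}$, which makes both $\intd(1+\n{y}^{\alpha})^{-p'}\dd y$ finite and the symbol pieces integrable; no almost-orthogonality is needed. If you want to salvage your approach, you would have to either prove the claimed almost-orthogonality for the dyadic Schur-product pieces in operator norm, or replace the high-frequency analysis by a duality argument of this type.
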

	
	\begin{proof}
		Take $(\varphi,\phi)\in (L^2)^2$. Then one has
		\begin{equation*}
			\Inprod{\varphi}{\sfX_{\opmu}\, \phi}_{L^2} = \iintd \frac{\opmu(x,y)\,\conj{\varphi(x)}\,\phi(y)}{\n{x-y}^a} \d x \d y = \(2\pi\)^{3-a} C_{a} \Tr{\opmu\, \varphi \(-\Delta\)^\frac{a-3}{2}\phi},
		\end{equation*}
		where $\varphi$ and $\phi$ are seen as multiplication operators and $C_{a} = \frac{\omega_a}{\omega_{3-a}}$. By the definition of $\opp$, this can be written
		\begin{equation*}
			\Inprod{\varphi}{\sfX_{\opmu}\, \phi}_{L^2} = C_{a} h^{3-a} \Tr{\opmu\, \varphi \n{\opp}^{a-3}\phi} = C_{a} h^{3-a} \Tr{m_\alpha\,\opmu\,m_\alpha\,m_\alpha^{-1} \varphi\, g(\opp)\,\phi\,m_\alpha^{-1}},
		\end{equation*}
		with $g(x) = \n{x}^{a-3}$ and $m_\alpha=1+\n{\opp}^{\alpha}$. Now taking $1 \leq \frac{3}{\alpha} < p_0' < \frac{3}{a} < p_1' \leq \infty$ such that $\frac{1}{p_0'}+\frac{1}{p_1'} = \frac{a}{3}$, we have $g \in L^{p_0} + L^{p_1}$, hence we can write $g = g_0+g_1$ with $(g_0,g_1)\in L^{p_0}\times L^{p_1}$. Let $\tilde{g} = g_0$ or $\tilde{g} = g_1$, or more generally, take $\tilde{g} \in L^p$ for some $p\geq 1$ satisfying $p'>\frac{3}{\alpha}$. Then, by H\"older's inequality for Schatten norms, Lemma~\ref{lem:rearrangement} and the Kato--Seiler--Simon Inequality~\eqref{eq:Kato_Seiler_Simon}, we have
		\begin{align*}
			h^{3} &\n{\Tr{m_\alpha^\frac{1}{2}\,\opmu\,m_\alpha^\frac{1}{2}\,m_\alpha^{-\frac{1}{2}} \varphi\, \tilde{g}(\opp)\,\phi\,m_\alpha^{-\frac{1}{2}}}}
			\\
			&\leq \Nrm{m_\alpha^\frac{1}{2}\,\opmu\,m_\alpha^\frac{1}{2}}{\infty} \Nrm{m_\alpha^{-\frac{1}{2}}\varphi^\frac{1}{p'}}{\L^{2p'}} \Nrm{\varphi^\frac{1}{p} \tilde{g}(\opp)^\frac{1}{2}}{\L^{2p}} \Nrm{\tilde{g}(\opp)^\frac{1}{2}\phi^\frac{1}{p}}{\L^{2p}} \Nrm{\phi^\frac{1}{p'} m_\alpha^{-\frac{1}{2}}}{\L^{2p'}}
			\\
			&\leq C_{p}^\frac{1}{p'} \Nrm{\opmu\,m_\alpha}{\infty} \Nrm{\varphi}{L^2} \Nrm{\tilde{g}}{L^p} \Nrm{\phi}{L^2},
		\end{align*}
		where we used the notation $z^b = \n{z}^{b-1} z$ and with $C_{p} = \intd \frac{\d y}{\(1+\n{y}^\alpha\)^{p'}}$. This constant is finite since $\alpha p' > 3$. This proves Inequality~\eqref{eq:X_p_bound} when $q=\infty$. When $q=2$, the inequality follows from Formula~\eqref{eq:X_HS_bound_2}. The other cases follow by complex interpolation.
	\end{proof}

\subsection{Commutators Involving the Exchange Term}

	\begin{prop}\label{prop:exchange_moments}
		Let $a\in[0,1]$. Then there exists $C>0$ such that for any compact self-adjoint operators $\op$ and $\opmu$, any $q\in[1,\infty]$ and any integer $n\geq 2a-1$
		\begin{equation*}
			\frac{1}{\hbar} \Nrm{\com{h^3\sfX_{\op},\opp_\jj^n}\opmu\,}{\L^q} \leq 3^n\,h^{\frac{3}{2}-a}\,C \Nrm{\Dhxj{\op}\, m_n}{\L^2} \Nrm{\opmu\, m_n}{\L^q},
		\end{equation*}
		where $m_n = 1+\n{\opp}^n$.
	\end{prop}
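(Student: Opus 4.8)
The plan is to use the commutator expansion~\eqref{eq:commut_X_1} to write $\com{\opp_\jj^n,h^3\sfX_{\op}}$ as a sum over $k\in\{1,\dots,n\}$ of terms of the form $h^3\sfX_{\ad{\opp_\jj}^k(\op)}\,\opp_\jj^{n-k}$, then commute $\opp_\jj^{n-k}$ against $\opmu$ in order to collect the weight $m=1+\n{\opp}^n$ on $\opmu$, and finally estimate each resulting $\L^q$ norm by Lemma~\ref{lem:X_p_bound} (or rather the sharp Hilbert--Schmidt version~\eqref{eq:X_HS_bound}/\eqref{eq:X_HS_bound_2} combined with the fact that $\Nrm{\cdot}{q}\leq\Nrm{\cdot}{2}$ for $q\geq 2$). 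Concretely, after the expansion, $\frac{1}{\hbar}\Nrm{\com{h^3\sfX_{\op},\opp_\jj^n}\opmu}{\L^q} \leq \sum_{k=1}^n \binom{n}{k}\frac{h^3}{\hbar}\Nrm{\sfX_{\ad{\opp_\jj}^k(\op)}\,\opp_\jj^{n-k}\opmu}{\L^q}$, and I will bound each summand separately.

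For a fixed $k$, I will first use H\"older's inequality for Schatten norms to split $\Nrm{\sfX_{\ad{\opp_\jj}^k(\op)}\,\opp_\jj^{n-k}\opmu}{q} \leq \Nrm{\sfX_{\ad{\opp_\jj}^k(\op)}}{2}\,\Nrm{\opp_\jj^{n-k}\opmu}{\infty}$ when $q=2$ (and more generally interpolate, splitting $\frac1q=\frac12+\frac1r$ with the exchange factor always measured in $\L^2$ since that is where the sharp bound~\eqref{eq:X_HS_bound_2} lives). To the exchange factor I apply~\eqref{eq:X_HS_bound_2} with $\opmu_2=\ad{\opp_\jj}^k(\op)$, which gives $\Nrm{\sfX_{\ad{\opp_\jj}^k(\op)}}{2}\leq C_a h^{-a}\Nrm{\ad{\opp_\jj}^k(\op)\n{\opp}^a}{2}$. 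Then Lemma~\ref{lem:expand_commutators} bounds $\Nrm{\ad{\opp_\jj}^k(\op)\n{\opp}^a}{2}$ by $2^{k+1}\Nrm{\op\,\opp_\jj^k\n{\opp}^a}{2}$, and since $k+a\leq n+1$ one can (using Lemma~\ref{lem:mixing} and the equivalence of weights) absorb $\opp_\jj^k\n{\opp}^a$ into $m=1+\n{\opp}^n$ up to one extra factor of $\n{\opp}$, i.e. one extra $\hbar^{-1}$ — which is exactly why the statement has $h^{3/2-a}$ rather than $h^{3-a-1}=h^{2-a}$: the net power is $h^3\cdot h^{-1}\cdot h^{-a}\cdot h^{-1/2}$ where the $h^{-1/2}$ comes from... actually, one must track the $\hbar$ powers carefully: $h^3/\hbar = (2\pi)\,h^2$, the exchange bound contributes $h^{-a}$, and writing $\Dhxj\op=\com{\nabla,\op}$ means $\op\,\opp_\jj = \op\,(-i\hbar\partial_\jj)$ so extracting a $\Dhxj\op$ rather than a bare $\partial_\jj\op$ costs a factor $\hbar$, giving $h^2\cdot h^{-a}\cdot h^{-?}$; the target $h^{3/2-a}$ then forces the bookkeeping, and I would reconcile the remaining half-power by a Hilbert--Schmidt-to-operator-norm trade or by noting $\Nrm{\opp_\jj^{n-k}\opmu}{\infty}\leq\Nrm{\opmu\,m}{\L^q}h^{-3/q}$ type conversions.

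The main obstacle I anticipate is precisely this $\hbar$-power accounting together with correctly placing the weight $m$ on $\opmu$ rather than on $\op$: after commuting $\opp_\jj^{n-k}$ past $\sfX_{\ad{\opp_\jj}^k(\op)}$ one generates further lower-order exchange-type terms (via~\eqref{eq:commut_X_0} applied again), and one has to check that all of them are controlled by the same right-hand side $\Nrm{\Dhxj\op\,m}{\L^2}\Nrm{\opmu\,m}{\L^q}$; the condition $n\geq 2a-1$ is what guarantees that the worst weight one needs, roughly $\n{\opp}^{k-1+a}$ hitting $\Dhxj\op$ and $\n{\opp}^{n-k}$ hitting $\opmu$, never exceeds $\n{\opp}^n$ on either side. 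A cleaner route, which I would actually pursue, is to avoid commuting $\opp_\jj^{n-k}$ to the far right and instead keep it adjacent to the exchange operator, writing $\sfX_{\ad{\opp_\jj}^k(\op)}\,\opp_\jj^{n-k} = \sfX_{\ad{\opp_\jj}^k(\op)\,?}$-type manipulations via Lemma~\ref{lem:X_commut_p} (with $\theta=1$), which directly produces a factor $\Nrm{\opmu^*\n{\opp}^{n-k}}{\infty}\leq\Nrm{\opmu\,m}{\L^\infty}$ and a factor $\Nrm{\ad{\opp_\jj}^k(\op)\n{\opp}^{a(1-\theta)}}{2}$; summing the geometric-type series in $k$ with the binomial coefficients yields the stated constant $3^n$ (since $\sum_k\binom{n}{k}2^{k+1}\leq 2\cdot 3^n$), and the $\L^q$ case follows by the same Schatten-norm interpolation between $q=2$ and $q=\infty$ used throughout Section~\ref{sec:commutators}.
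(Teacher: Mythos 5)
Your final ``cleaner route'' is exactly the paper's proof: expand the commutator by \eqref{eq:commut_X_1}, apply Lemma~\ref{lem:X_commut_p} with $\theta=1$ to each term to get $C\,h^{-a}\Nrm{\ad{\opp_\jj}^k(\op)}{2}\Nrm{\opmu\,\opp_\jj^{n-k}\n{\opp}^{a}}{\infty}$, pull one factor of $\hbar$ out of $\ad{\opp_\jj}^k(\op)=-i\hbar\,\ad{\opp_\jj}^{k-1}(\Dhxj{\op})$, expand the remaining $k-1$ commutators by Lemma~\ref{lem:expand_commutators}, use $n-k+a\leq n$ (automatic since $k\geq 1\geq a$) and $k-1\leq n$ to place both weights under $m$, bound $\Nrm{\opmu\,m}{\infty}\leq\Nrm{\opmu\,m}{q}$, and sum the binomial series to get $3^n$. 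The half-power of $h$ you left hanging is not a real obstacle and needs no Hilbert--Schmidt-to-operator-norm trade: the $1/\hbar$ on the left is cancelled exactly by the single $\hbar$ \emph{gained} (not ``cost'') from $\ad{\opp_\jj}(\op)=-i\hbar\,\Dhxj{\op}$, and the net power is then $h^{3}\cdot h^{-a}$ from the prefactor and the exchange bound, times $h^{3/q}$ from rescaling the left-hand side to the $\L^q$ norm, times $h^{-3/2}\cdot h^{-3/q}$ from rescaling $\Nrm{\Dhxj{\op}\,m}{2}$ and $\Nrm{\opmu\,m}{q}$ to $\L^2$ and $\L^q$ norms, i.e.\ $h^{3/2-a}$ as claimed.
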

	
	\begin{proof}
		By Formula~\eqref{eq:commut_X_1}, and the triangle inequality, we have
		\begin{equation*}
			\Nrm{\com{\sfX_{\op},\opp_\jj^n}\opmu\,}{q} \leq \sum_{k=1}^n \binom{n}{k} \Nrm{\sfX_{\ad{\opp_\jj}^k\!(\op)}\,\opp_\jj^{n-k}\,\opmu}{q}.
		\end{equation*}	
		Now, Lemma~\ref{lem:X_commut_p} gives the bound
		\begin{equation*}
			\Nrm{\sfX_{\ad{\opp_\jj}^k\!(\op)}\,\opp_\jj^{n-k}\,\opmu}{q} \leq C_{a}\,h^{-a}\Nrm{\ad{\opp_\jj}^k\!(\op)\n{\opp}^{a\(1-\theta\)}}{2} \Nrm{\opmu\, \opp_\jj^{n-k}\n{\opp}^{\theta a}}{\infty}.
		\end{equation*}
		Using the fact that $\ad{\opp_\jj}\!(\op) = -i \hbar \Dhxj{\op}$ and expanding the $k-1$ commutators in $\ad{\opp_\jj}^{k-1}$ by Lemma~\ref{lem:expand_commutators}, we get
		\begin{equation*}
			\Nrm{\ad{\opp_\jj}^k\!(\op)\n{\opp}^{a\(1-\theta\)}}{2} \leq 2^k\hbar \Nrm{\Dhxj{\op} \n{\opp}^{a\(1-\theta\)+k-1}}{2}.
		\end{equation*}
		Now when $k\geq a$, we take $\theta = 1$ so that $n-k+\theta a \leq n$ and $a\(1-\theta\)+k-1 = k-1 \leq n$. When $k < a$, we take $\theta = 0$ so that $n-k+\theta a = n-k \leq n$ and $a\(1-\theta\)+k-1 \leq 2a-1 \leq n$. In all the cases, this leads to
		\begin{equation*}
			\frac{1}{\hbar} \Nrm{\com{\sfX_{\op},\opp_\jj^n}\opmu\,}{q} \leq C\,h^{-a}\sum_{k=1}^n \binom{n}{k} 2^k \Nrm{\Dhxj{\op} \,m_n}{2} \Nrm{\opmu\, m_n}{\infty}.
		\end{equation*}
		We conclude using the fact that $\Nrm{\opmu\, m_n}{\infty} \leq \Nrm{\opmu\, m_n}{q}$ and the definition \eqref{eq:def_norm} of the $\L^2$ norm.
	\end{proof}
	
	\begin{prop}\label{prop:exchange_com}
		Let $a\in[0,1]$, $\fb = \frac{3}{a+1}$ and $n\in\N$ satisfying $n\geq 2a$. Then for any $\alpha\in(a,n-a]$ and any $q\in[2,\infty]$
		\begin{align}\label{eq:exchange_com_1}
			\frac{1}{\hbar} \Nrm{\com{h^3\sfX_{\opmu},\op}\opp_\jj^n}{\L^q} &\leq 3^n\,C\, h^{3\(\frac{1}{q}+\frac{1}{2}-\frac{1}{\fb}\)}\Nrm{\op \,m_n}{\L^\infty} \Nrm{\opmu\, m_n}{\L^{2}}
			\\\label{eq:exchange_com_2}
			\frac{1}{\hbar} \Nrm{\com{h^3\sfX_{\opmu},\op}\opp_\jj^n}{\L^q} &\leq 3^n\,C \Nrm{\op\,m_n}{\L^\infty} \(h^{\frac{3}{\beta'}}\Nrm{\opmu\,m_n}{\L^q} + h^{\frac{3}{2}-a} \Nrm{\Dhxj{\opmu}\,m_n}{\L^2}\),
		\end{align}
		where $m_n = 1+\n{\opp}^n$ and $\beta = \frac{3}{\alpha+1}$.
	\end{prop}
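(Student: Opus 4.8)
The plan is to expand the commutator applied to $\opp_\jj^n$ as
$\com{h^3\sfX_{\opmu},\op}\,\opp_\jj^n = h^3\,\sfX_{\opmu}\,\op\,\opp_\jj^n - h^3\,\op\,\sfX_{\opmu}\,\opp_\jj^n$,
to bound the two terms separately in Schatten norm $\Nrm{\cdot}{q}$, and to rescale into the semiclassical norms only at the end. The mechanism is always the same: each piece is written so that the exchange operator is multiplied (on one side) by a factor of the form $\op\,\opp_\jj^{n-k}\n{\opp}^{c}$ with $c\le k$; since $\opp_\jj^{n-k}\n{\opp}^{c}\le C\,(1+\n{\opp}^n)=C\,m$ whenever $n-k+c\le n$, such a factor is bounded in operator norm by $C\Nrm{\op\,m}{\L^\infty}$, while the remaining factor carries $\sfX$ and is estimated either by Lemma~\ref{lem:X_commut_p} (to produce the $\Nrm{\opmu\,m}{\L^2}$ and $\Nrm{\Dhxj{\opmu}\,m}{\L^2}$ terms, through the $h^{-a}$ Hilbert--Schmidt bound) or by Lemma~\ref{lem:X_p_bound} (to produce the $\Nrm{\opmu\,m}{\L^q}$ term, at the cost of $h^{-\alpha}$).

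The term $h^3\,\sfX_{\opmu}\,\op\,\opp_\jj^n=h^3\,\sfX_{\opmu}(\op\,\opp_\jj^n)$ is already in this shape: by Hölder's inequality, $\Nrm{\sfX_{\opmu}\,\op\,\opp_\jj^n}{q}\le\Nrm{\sfX_{\opmu}}{q}\Nrm{\op\,\opp_\jj^n}{\infty}$, so Lemma~\ref{lem:X_p_bound} with exponent $\alpha$ gives $C\,h^{-\alpha}\Nrm{\opmu\,m}{q}\Nrm{\op\,m}{\infty}$ (absorbing $1+\n{\opp}^\alpha\le C\,m$, valid since $\alpha<n$), whereas Lemma~\ref{lem:X_commut_p} with $\theta=0$ gives instead $C\,h^{-a}\Nrm{\opmu\,m}{2}\Nrm{\op\,m}{\infty}$. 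For the second term I would push $\opp_\jj^n$ through $\sfX_{\opmu}$ using the right-multiplication analogue of \eqref{eq:commut_X_0} combined with Lemma~\ref{lem:X_commut_gradients}, namely $\sfX_{\opmu}\,\opp_\jj^n=\sum_{k=0}^n\binom{n}{k}(-1)^k\,\opp_\jj^{n-k}\,\sfX_{\ad{\opp_\jj}^k(\opmu)}$, which yields $\op\,\sfX_{\opmu}\,\opp_\jj^n=\sum_{k=0}^n\binom{n}{k}(-1)^k\,\op\,\opp_\jj^{n-k}\,\sfX_{\ad{\opp_\jj}^k(\opmu)}$. Taking an adjoint inside each summand puts $\sfX$ on the left, $\Nrm{\op\,\opp_\jj^{n-k}\,\sfX_{\ad{\opp_\jj}^k(\opmu)}}{q}=\Nrm{\sfX_{\ad{\opp_\jj}^k(\opmu)^*}\,\opp_\jj^{n-k}\,\op}{q}$, and the $k=0$ summand is then handled exactly like the first term.

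For $k\ge1$ I would use $\ad{\opp_\jj}(\opmu)=-i\hbar\,\Dhxj{\opmu}$, hence $\ad{\opp_\jj}^k(\opmu)=-i\hbar\,\ad{\opp_\jj}^{k-1}(\Dhxj{\opmu})$; Lemma~\ref{lem:X_commut_p} with $\theta=1$ routes the weight $\n{\opp}^a$ onto the $\op$-side (absorbed because $n-k+a\le n$, using $a\le1\le k$), leaving $\Nrm{\op\,\opp_\jj^{n-k}\,\sfX_{\ad{\opp_\jj}^k(\opmu)}}{q}\le C\,h^{-a}\,\Nrm{\ad{\opp_\jj}^k(\opmu)}{2}\,\Nrm{\op\,m}{\infty}$, and then Lemma~\ref{lem:expand_commutators} gives $\Nrm{\ad{\opp_\jj}^k(\opmu)}{2}=\hbar\,\Nrm{\ad{\opp_\jj}^{k-1}(\Dhxj{\opmu})}{2}\le C\,2^k\,\hbar\,\Nrm{\Dhxj{\opmu}\,m}{2}$ (for \eqref{eq:exchange_com_1} one instead expands all $k$ commutators, $\Nrm{\ad{\opp_\jj}^k(\opmu)}{2}\le C\,2^k\Nrm{\opmu\,\opp_\jj^k}{2}\le C\,2^k\Nrm{\opmu\,m}{2}$, keeping the weight on the right using the self-adjointness of $\opmu$). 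Summing the binomial coefficients produces the factor $3^n$, and collecting the powers of $h$---the $\hbar^{-1}$ prefactor, the $h^3$ from $h^3\sfX$, the $h^{3/q}$ and $h^{3/2}$ in the definitions of $\Nrm{\cdot}{\L^q}$ and $\Nrm{\cdot}{\L^2}$, and the $h^{-a}$ or $h^{-\alpha}$ from the exchange bounds---gives exactly the exponents $3(\tfrac1q+\tfrac12-\tfrac1\fb)=\tfrac3q+\tfrac12-a$, $\tfrac3{\beta'}=2-\alpha$ and $\tfrac32-a$ (the $k\ge1$ contribution in fact comes with a harmless extra $h^{3/q}\le1$). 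I expect the only delicate point to be precisely this weight-routing: the $\theta$-parameter in Lemma~\ref{lem:X_commut_p} must be chosen so that no power of $\opp_\jj$ together with $\n{\opp}^a$ ever lands on the Hilbert--Schmidt factor with total degree exceeding $n$, which forces $\theta=1$ for all $k\ge1$ and $\theta=0$ only for the gradient-free $k=0$ summand; everything else is bookkeeping.
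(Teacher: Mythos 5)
Your proposal is correct and follows essentially the same route as the paper's proof: the same splitting of the commutator into the two pieces $\opp_\jj^n\,\op\,\sfX_{\opmu}$ and $\opp_\jj^n\,\sfX_{\opmu}\,\op$ (you reach it by taking adjoints termwise rather than globally), the same binomial expansion of $\sfX_{\opmu}$ against $\opp_\jj^n$ via Lemma~\ref{lem:X_commut_gradients}, and the same weight-routing through the $\theta$-parameter of Lemma~\ref{lem:X_commut_p} combined with Lemmas~\ref{lem:expand_commutators} and~\ref{lem:X_p_bound}. The only (harmless) deviation is that you take $\theta=1$ for every $k\ge 1$, whereas the paper switches to Lemma~\ref{lem:X_p_bound} for the borderline summands with $k\le a$; both choices keep the resulting weights at degree at most $n$ and yield exactly the terms appearing on the right-hand sides of \eqref{eq:exchange_com_1}--\eqref{eq:exchange_com_2}.
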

	
	Note that the power of $h$ in the first formula is nonnegative only for $q\leq q_a$ with $\frac{1}{q_a} = \frac{1}{\fb} - \frac{1}{2}$. In the second formula, this is true for every $q$ but involves a semiclassical derivative of $\opmu$.
	
	\begin{proof}[Proof of Proposition~\ref{prop:exchange_com}]
		Since the exchange term is vanishing when $\hbar\to 0$, we can estimate the two parts of the commutator separately by writing
		\begin{equation*}
			\Nrm{\com{\sfX_{\opmu},\op}\opp_\jj^n}{q} = \Nrm{\opp_\jj^n\com{\sfX_{\opmu},\op}}{q} \leq \Nrm{\opp_\jj^n\,\op\,\sfX_{\opmu}}{q} + \Nrm{\opp_\jj^n\,\sfX_{\opmu}\,\op}{q}.
		\end{equation*}
		The first term in the right-hand side can be bounded using H\"older's inequality for Schatten norms and Lemma~\ref{lem:X_commut_p} with $\theta = 0$, leading to
		\begin{equation}\label{eq:exchange_com_3}
			\Nrm{\opp_\jj^n\,\op\,\sfX_{\opmu}}{q} \leq C\,h^{-a}\Nrm{\opp_\jj^n\,\op}{\infty} \Nrm{\opmu \n{\opp}^a}{2} \leq C\, h^{-a} \Nrm{\op\,m_n}{\infty} \Nrm{\opmu\,m_n}{2}.
		\end{equation}
		We can also use Lemma~\ref{lem:X_p_bound} with $\alpha \in (a,n]$, to get
		\begin{equation}\label{eq:exchange_com_3_bis}
			\Nrm{\opp_\jj^n\,\op\,\sfX_{\opmu}}{q} \leq C\,h^{-\alpha} \Nrm{\opp_\jj^n\,\op}{\infty} \Nrm{\opmu\(1+\n{\opp}^\alpha\)}{q} \leq C\, h^{-\alpha} \Nrm{\op\,m_n}{\infty} \Nrm{\opmu\,m_n}{q}.
		\end{equation}
		
		To treat the second term, we want to put the first weight $m_n$ either on $\opmu$ either on $\op$. To obtain this effect, we use Formula~\eqref{eq:commut_X_0} to get
		\begin{equation}\label{eq:expansion_pn_X}
			\Nrm{\opp_\jj^n\,\sfX_{\opmu}\,\op}{q} \leq \sum_{k=0}^n \binom{n}{k} \Nrm{\sfX_{\ad{\opp_\jj}^k\!(\opmu)}\,\opp_\jj^{n-k}\,\op\,}{q}.
		\end{equation}
		Now we use Lemma~\ref{lem:X_commut_p} and then expand the commutators by Lemma~\ref{lem:expand_commutators} to get for any $\theta\in\{0,1\}$
		\begin{align*}
			\Nrm{\sfX_{\ad{\opp_\jj}^k\!(\opmu)}\,\opp_\jj^{n-k}\,\op\,}{q} &\leq C\,h^{-a}\Nrm{\ad{\opp_\jj}^k\!(\opmu)\n{\opp}^{a\(1-\theta\)}}{2} \Nrm{\op\,\opp_\jj^{n-k}\n{\opp}^{\theta a}}{\infty}
			\\
			&\leq 2^k\, C\, h^{-a} \Nrm{\opmu \(1+\n{\opp}^{k+a\(1-\theta\)}\)}{2} \Nrm{\op \(1+\n{\opp}^{n-k+\theta a}\)}{\infty},
		\end{align*}
		and similarly as in the proof of Proposition~\ref{prop:exchange_moments}, if $k\geq a$, we take $\theta = 1$ and if $k\leq a$, we take $\theta = 0$ and use the fact that $2a\leq n$. In any cases, the power on $\n{\opp}$ is smaller than $n$. Therefore, recalling Inequality~\eqref{eq:expansion_pn_X}, we obtain
		\begin{equation}\label{eq:exchange_com_4}
			\Nrm{\opp_\jj^n\,\sfX_{\opmu}\,\op}{q} \leq 3^n\,C\,h^{-a} \Nrm{\op\,m_n}{\infty} \Nrm{\opmu\,m_n}{2}.
		\end{equation}
		Combining inequalities~\eqref{eq:exchange_com_3} and \eqref{eq:exchange_com_4} and using the definition~\eqref{eq:def_norm} of $\L^q$ norms yield Formula~\eqref{eq:exchange_com_1}. 
		
		To get Formula~\eqref{eq:exchange_com_2}, we start back from Inequality~\eqref{eq:expansion_pn_X}. If $k> a$, so that in particular $k\geq 1$, we use again Lemma~\ref{lem:X_commut_p} and Lemma~\ref{lem:expand_commutators} but we use first the fact that $\ad{\opp_j}\!(\opmu) = -i\hbar\,\Dhxj{\opmu}$ to get an additional $\hbar$. This yields
		\begin{equation*}
			\Nrm{\sfX_{\ad{\opp_\jj}^k\!(\opmu)}\,\opp_\jj^{n-k}\,\op\,}{q} \leq 2^k\, C\, h^{1-a} \Nrm{\Dhxj{\opmu} \(1+\n{\opp}^{k-1}\)}{2} \Nrm{\op \(1+\n{\opp}^{n-k+ a}\)}{\infty},
		\end{equation*}
		If $k\leq a$, then we use Lemma~\ref{lem:X_p_bound} with $\alpha\in(a,n-a]$ to get
		\begin{equation*}
			\Nrm{\sfX_{\ad{\opp_\jj}^k\!(\opmu)}\,\opp_\jj^{n-k}\,\op\,}{q} \leq 2^k\, C\, h^{-\alpha} \Nrm{\opmu \(1+\n{\opp}^{k+\alpha}\)}{q} \Nrm{\op \(1+\n{\opp}^{n-k}\)}{\infty}.
		\end{equation*}
		Therefore, Inequality~\eqref{eq:expansion_pn_X} implies
		\begin{equation}\label{eq:exchange_com_5}
			\Nrm{\opp_\jj^n\,\sfX_{\opmu}\,\op}{q} \leq 3^n\,C \Nrm{\op\,m_n}{\infty} \(h^{-\alpha}\Nrm{\opmu\,m_n}{q} + h^{1-a} \Nrm{\Dhxj{\opmu}\,m_n}{2}\),
		\end{equation}
		and together with Inequality~\eqref{eq:exchange_com_3} and the definition of the $\L^q$ norm, this implies Formula~\eqref{eq:exchange_com_2}.
	\end{proof}
	
\subsection{Proof of the Propagation of Regularity}
	
	\begin{proof}[Proof of Proposition~\ref{prop:regu_HF}]
		The strategy to prove Proposition~\ref{prop:regu_HF} is to look at Equations~\eqref{eq:rho}, \eqref{eq:Dhx} and \eqref{eq:Dhv} and find a Gr\"onwall-type inequality on $\Nrm{\op}{\cW^{1,q}(m_{2n})}$ (observe that we renamed $n$ by $2n$ as we need the number of moments to be even). In particular we will see that to close the Gr\"onwall argument for $q=2$, we need to estimate $\Nrm{\op}{\cW^{1,q}(m_{2n})}$ for $q\in\set{2,4}$. We will therefore proceed by interpolation and define
		\begin{equation*}
			M_2(t):= \Nrm{\op}{\cW^{1,2}(m_{2n})},
			\quad
			M_4(t):=\Nrm{\op}{\cW^{1,4}(m_{2n-2})},
			\quad
			M_\infty(t) :=\Nrm{\op\,m_{2n}}{\L^\infty}.
		\end{equation*}
		For $a\in \lt[\frac{1}{2},1\rt]$ we will find a Gr\"onwall-type inequality on $M_2(t)+M_4(t)+M_\infty(t)$, whereas for $a\in \lt[0,\frac{1}{2}\rt)$ it suffices to apply Gr\"onwall's Lemma to $M_2(t)+M_4(t)$.
			 
		We now look at Equation~\eqref{eq:rho}. Splitting the interaction $K$ as in \eqref{eq:splitting-K}, by Proposition~\ref{prop:weighted_com_est_long_range} and Proposition~\ref{prop:weighted_com_est} we have that, for $1/r+1/r_1=1/q+1/3$, 
		\begin{align*}
			\frac{1}{\hbar}\Nrm{\com{V_{\op},m_{2n}}\op}{\L^q} &\leq C \(\Nrm{\op\,m_{n+n_0}}{\L^{\fb'}}\Nrm{\op\,m_{2n-1}}{\L^{q}}+\Nrm{\op\,m_{2n+k'}}{\L^{r}}\Nrm{\op\,m_{n+k}}{\L^{r_1}}\)\\
			&\qquad + C \(\Nrm{\rho}{L^1}+\hbar\Nrm{\op\,m_{2n}}{\L^2}\)\Nrm{\op\,m_{2n}}{\L^q}
		\end{align*}
		with
		$$n_0>\frac{3}{\fb}-1,\quad\quad k'>\frac{3}{r'}-2,\quad\quad k>\frac{3}{r}-1.$$
		The contribution given by the exchange term in the right-hand side of \eqref{eq:rho} can be bounded by Proposition~\ref{prop:exchange_moments} with $\opmu=\op$. Therefore, we obtain the following bounds on the right-hand side of Equation~\eqref{eq:rho}
		\begin{align*}
			\dt\Nrm{\op\,m_{2n}}{\L^q} &\leq C \, \big(\Nrm{\op\,m_{n+n_0}}{\L^{\fb'}} \Nrm{\op\,m_{2n-1}}{\L^{q}} + \Nrm{\op\,m_{2n+k'}}{\L^{r}} \Nrm{\op\,m_{n+k}}{\L^{r_1}}
			\\
			&\qquad +\Nrm{\rho}{L^1}\Nrm{\op\,m_{2n}}{\L^q}+\hbar\Nrm{\op\,m_{2n}}{\L^2}\Nrm{\op\,m_{2n}}{\L^q}
			\\
			&\qquad + h^{\frac{3}{2}-a}\Nrm{\Dhx\op\,m_{2n}}{\L^2}\Nrm{\op\,m_{2n}}{\L^q}\big).
		\end{align*}
		In particular, for $q=\infty$ we get
		\begin{align*}
			\dt\Nrm{\op\,m_{2n}}{\L^\infty} &\leq C \, \big(\Nrm{\op\,m_{n+n_0}}{\L^{\fb'}}\Nrm{\op\,m_{2n-1}}{\L^{\infty}}+\Nrm{\op\,m_{2n+k'}}{\L^{r}}\Nrm{\op\,m_{n+k}}{\L^{r_1}}
			\\
			&\qquad +\Nrm{\rho}{L^1}\Nrm{\op\,m_{2n}}{\L^\infty}+\hbar\Nrm{\op\,m_{2n}}{\L^2}\Nrm{\op\,m_{2n}}{\L^\infty}
			\\
			&\qquad+ h^{\frac{3}{2}-a}\Nrm{\Dhx\op\,m_{2n}}{\L^2}\Nrm{\op\,m_{2n}}{\L^\infty}\big).
		\end{align*}	
		Note that in order to close the Gr\"onwall inequality and we will need bounds on $\Dhx\op$ and $\Dhv\op$. To this end, we look at Equation~\eqref{eq:Dhx} and Equation~\eqref{eq:Dhv}. We start bounding the right-hand side of Equation~\eqref{eq:Dhx}. By Proposition~\ref{prop:weighted_com_est} and Proposition~\ref{prop:weighted_com_est_long_range} we obtain 
		\begin{equation}\label{eq:com_V_Dhx}
		\begin{split}
			\frac{1}{\hbar}&\Nrm{\com{V_{\op},m_{2n}} \Dhx\op}{\L^q}
			\\
			&\quad \leq C \,\big(\Nrm{\op\,m_{n+n_0}}{\L^{\fb'}} \Nrm{\Dhx\op\,m_{2n-1}}{\L^{q}} + \Nrm{\op\,m_{2n+k'}}{\L^{r}} \Nrm{\Dhx\op\,m_{n+k}}{\L^{r_1}}
			\\
			&\qquad\quad +\Nrm{\rho}{L^1}\Nrm{\Dhx\op\,m_{2n}}{\L^q}+\hbar\Nrm{\op\,m_{2n}}{\L^2}\Nrm{\Dhx\op\,m_{2n}}{\L^q} \big)
			\end{split}
		\end{equation}
		with the usual constraints on $r,r_1,n_0,k, k'$.\\
		By writing $\com{E_{\op},\op} m_{2n} = \com{E_{\op},\op\,m_{2n}} + \op\,\com{E_{\op},m_{2n}}$, applying Proposition~\ref{prop:estim_commutator_Lq_Besov} with $\op_2=\op\,m_{2n}$, and Proposition~\ref{prop:weighted_com_est} and Proposition~\ref{prop:weighted_com_est_long_range} with $\opmu=\op$, we get
		\begin{equation}\label{eq:E_com_est}
		\begin{split}
			\dfrac{1}{\hbar}&\Nrm{\com{E_{\op},\op} m_{2n}}{\L^q} \leq C \,\big(\Nrm{\rho}{L^r}^{1-s}\Nrm{\rho}{W^{1,r}}^s \Nrm{\Dhv\op\,m_{2n}}{\L^q}
			\\
			&\qquad + \Nrm{\Dhx\op\,m_{n+n_0}}{\L^{\fb'}} \Nrm{\op\,m_{2n-1}}{\L^q} + \Nrm{\Dhx\op\,m_{2n+k'}}{\L^r} \Nrm{\op\,m_{n+k}}{\L^{r_1}}
			\\
			&\qquad + \Nrm{\op\,m_{2n}}{\L^2}\Nrm{\op\,m_{2n}}{\L^q} + \hbar\Nrm{\Dhx\op\,m_{2n}}{\L^2}\Nrm{\op\,m_{2n}}{\L^q}\big)
		\end{split}
		\end{equation}
		for $q > 2$ and $s=1-3\(\frac{1}{r'}-\frac{1}{\fb'}\)$, where we used the interpolation of Besov spaces stated in Corollary~\ref{cor:estim_commutator_Lq}.
		
		For $q = 2$, we have
		\begin{equation}\label{eq:E_com_est>}
		\begin{split}
			\dfrac{1}{\hbar}\Nrm{\com{E_{\op},\op}m_{2n}}{\L^{2}} &\leq C \,\big(\Nrm{\nabla\rho}{L^{\fb',1}} \Nrm{\Dhv\op\,m_{2n}}{\L^{2}}\\
			&\qquad + \Nrm{\Dhx\op\,m_{2n}}{\L^2}^{1-\theta} \Nrm{\Dhx\op\,m_{2n-2}}{\L^{4}}^\theta \Nrm{\op\,m_{2n}}{\L^2}
			\\
			&\qquad +\Nrm{\Dhx\op\,m_{2n}}{\L^2}\Nrm{\op\,m_{2n}}{\L^2}^{1/3}\Nrm{\op\,m_{2n-2}}{\L^4}^{2/3}
			\\
			&\qquad +{\Nrm{\op\,m_{2n}}{\L^2}^{2}}+\hbar\Nrm{\Dhx\op\,m_{2n}}{\L^2}\Nrm{\op\,m_{2n}}{\L^2}\big)
			\end{split}
		\end{equation}
		for $a\in\lt[\frac{1}{2},1\rt]$, and
		\begin{equation}\label{eq:E_com_est<}
			\begin{split}
			\dfrac{1}{\hbar}\Nrm{\com{E_{\op},\op}m_{2n}}{\L^{2}} &\leq C \,\big(\Nrm{\rho}{L^{\frac{3}{1-a},1}} \Nrm{\Dhv\op\,m_{2n}}{\L^{2}}
			\\
			&\qquad+ \Nrm{\Dhx\op\,m_{2n}}{\L^2}^{1-\theta} \Nrm{\Dhx\op\,m_{2n-2}}{\L^{4}}^\theta\Nrm{\op\,m_{2n}}{\L^2}
			\\
			&\qquad+ \Nrm{\Dhx\op\,m_{2n}}{\L^2}\Nrm{\op\,m_{2n}}{\L^2}^{1/3}\Nrm{\op\,m_{2n-2}}{\L^4}^{2/3}
			\\
			&\qquad+ \Nrm{\op\,m_{2n}}{\L^2}^{2} + \hbar\Nrm{\Dhx\op\,m_{2n}}{\L^2}\Nrm{\op\,m_{2n}}{\L^2}\big)
			\end{split}
		\end{equation}
		for $a\in\(0,\frac{1}{2}\)$.
				
		The contributions of the exchange term can be bounded using Proposition~\ref{prop:exchange_moments} and Proposition~\ref{prop:exchange_com}. Combining them with \eqref{eq:com_V_Dhx} and \eqref{eq:E_com_est} leads to, for $q > 2$,
		\begin{align*}
			\dt\Nrm{\Dhx\op\,m_{2n}}{\L^q} &\leq C \,\big(\Nrm{\op\,m_{n+n_0}}{\L^{\fb'}}\Nrm{\Dhx\op\,m_{2n-1}}{\L^{q}}+\Nrm{\op\,m_{2n+k'}}{\L^{r}}\Nrm{\Dhx\op\,m_{n+k}}{\L^{r_1}}
			\\
			&\qquad+ \Nrm{\rho}{L^1}\Nrm{\Dhx\op\,m_{2n}}{\L^q}+\hbar\Nrm{\op\,m_{2n}}{\L^2}\Nrm{\Dhx\op\,m_{2n}}{\L^q}\big)
			\\
			&+ C \,\big(\Nrm{\rho}{L^r}^{1-s}\Nrm{\rho}{W^{1,r}}^s \Nrm{\Dhv\op\,m_{2n}}{\L^q} + \Nrm{\Dhx\op\,m_{n+n_0}}{\L^{\fb'}} \Nrm{\op\,m_{2n-1}}{\L^q}
			\\
			&\qquad+ \Nrm{\Dhx\op\,m_{2n+k'}}{\L^r}\Nrm{\op\,m_{n+k}}{\L^{r_1}}+\Nrm{\op\,m_{2n}}{\L^2} \Nrm{\op\,m_{2n}}{\L^q}
			\\
			&\qquad+ \hbar\Nrm{\Dhx\op\,m_{2n}}{\L^2} \Nrm{\op\,m_{2n}}{\L^q}\big)
			\\
			&+ C\,h^{\frac{3}{2}-a}\Nrm{\Dhx\op\,m_{2n}}{\L^2} \Nrm{\Dhx\op\,m_{2n}}{\L^q}
			\\
			&+ C\,h^{3\(\frac{1}{q}+\frac{1}{2}-\frac{1}{\fb}\)} \Nrm{\op\,m_{2n}}{\L^\infty} \Nrm{\Dhx\op\,m_{2n}}{\L^{2}}.
		\end{align*}
		To bound the right-hand side of Equation~\eqref{eq:Dhv}, we use Proposition~\ref{prop:weighted_com_est} for the contribution due to the direct term and Proposition~\ref{prop:exchange_moments} and Proposition~\ref{prop:exchange_com} to estimate the contributions of the exchange term. Hence,
		\begin{align*}
			\dt\Nrm{\Dhv\op\,m_{2n}}{\L^q} &\leq  C \,\big(\Nrm{\op\,m_{n+n_0}}{\L^{\fb'}}\Nrm{\Dhv\op\,m_{2n-1}}{\L^{q}}+\Nrm{\op\,m_{2n+k'}}{\L^{r}}\Nrm{\Dhv\op\,m_{n+k}}{\L^{r_1}}
			\\
			&\qquad + \Nrm{\rho}{L^1} \Nrm{\Dhv\op\,m_{2n}}{\L^q} + \hbar \Nrm{\op\,m_{2n}}{\L^2} \Nrm{\Dhv\op\,m_{2n}}{\L^q}\big)
			\\
			&\quad+\Nrm{\Dhx\op\,m_{2n}}{\L^q}
			\\
			&\quad+ C\,h^{\frac{3}{2}-a} \Nrm{\Dhx\op\,m_{2n}}{\L^2} \Nrm{\Dhv\op\,m_{2n}}{\L^q}
			\\
			&\quad+ C\,h^{3\(\frac{1}{q}+\frac{1}{2}-\frac{1}{\fb}\)} \Nrm{\op\,m_{2n}}{\L^\infty} \Nrm{\Dhv\op\,m_{2n}}{\L^{2}}.
		\end{align*}
		To get an estimate in $\L^2$ we need a bound on the $\L^q$ norm for $q\in(2,4)$. Therefore we look for a bound when $q=4$ using Corollary~\ref{cor:direct-term_L2-L4} and proceed by interpolation.
				
		To establish a Gr\"onwall type inequality for $a\geq\frac{1}{2}$, we consider the sum $M_2(t)+M_4(t)+M_\infty(t)$ and observe that it satisfies
		\begin{multline}\label{eq:M2-M4-gronwall}
			\dt\(M_2(t)+M_4(t)+M_\infty(t)\) \leq C \(M_2(t) + M_4(t) + M_\infty(t)\)
			\\
			+ C \(1 + h^{\frac{3}{2}-a} + h^{\frac{3}{\fb'}} + h^{3\(\frac{3}{4}-\frac{1}{\fb}\)}\) \(M_2(t)+M_4(t)+M_{\infty}(t)\)^2,
		\end{multline} 
		where we used interpolation inequality with $\theta\in(0,1)$ and Young's inequality for products to bound the norms $\L^{r},\,\L^{r'},\,L^{\mathfrak{b}'}$ with $r,\,r',\,\mathfrak{b}'\in[2,4]$. Furthermore, we used the following simple inequality: for an operator $\opmu$, $k\in(0,2n)$ and $q\geq 2$, $\Nrm{\opmu\,m_{2n-k}}{\L^q}\leq \Nrm{\opmu\,m_{2n}}{\L^q}\Nrm{m_{k}^{-1}}{\L^\infty}$.
		
		We observe that Equation~\eqref{eq:M2-M4-gronwall} is a Gr\"onwall-type inequality of the same form as Equation~\eqref{eq:U-gronwall}. Thus there exists a time $T>0$, depending only in the initial data, such that $M_2(t)+M_4(t)+M_{\infty}(t)$ is bounded for all $t\in[0,T]$.
		
		For $a<\frac{1}{2}$, we consider the quantity
		$M_2(t)+M_4(t)$ and use that 
		\begin{equation*}
			\Nrm{\op\,m_{2n}}{\L^\infty} \leq C h^{-\frac{3}{q}}\Nrm{\op\,m_{2n}}{\L^q}.
		\end{equation*}
		Hence
		\begin{multline*}
			\dt\(M_2(t)+M_4(t)\)
			\\
			\leq C\(M_2(t)+M_4(t)\) + C \(1+h^{\frac{3}{2}-a}+h^{3\(\frac{1}{\fb'}-\frac{1}{2}\)}\) \(M_2(t)+M_4(t)\)^2.
		\end{multline*}
		Therefore there exists $T>0$, depending only in the initial data, such that $M_2(t)+M_4(t)$ is bounded for all $t\in[0,T]$, thus $\op\in L^{\infty}((0,T),\cW^{1,2}(m_{2n})\cap\cW^{1,4}(m_{2n})\cap\L^\infty(m_{2n}))$. Moreover, ${\rho} \in L^\infty((0,T),H^{1} \cap W^{1,4} \cap L^1 \cap L^\infty)$ thanks to Proposition~\ref{prop:diag_vs_weights} and the bounds on $M_2(t),\,M_4(t)$ and $M_\infty(t)$.
	\end{proof}
		
	\begin{proof}[Proof of Proposition~\ref{prop:regu_HF_2}]
		Similarly to what we have done for the first-order quantum gradients, we can compute the time derivative of the second order quantum gradients of~$\op$
		\begin{equation}\label{eq:grad2-moments}
			\begin{split}
			i\hbar\,\partial_t\,\Dhx^2\op &= \com{H,\Dhx^2\op} - 2\com{E_{\op},\Dhx\op} - \com{\Dhx E_{\op},\op} - 2\com{h^3\sfX_{\Dhx\op},\Dhx\op} - \com{h^3\sfX_{\Dhx^2\op},\op}
			\\
			i\hbar\,\partial_t\,\Dhv^2\op &= \com{H,\Dhv^2\op} - i\hbar\Dhv\Dhx\op - 2\com{h^3\sfX_{\Dhv\op},\Dhv\op} - \com{h^3\sfX_{\Dhv^2\op},\op}
			\\
			i\hbar\,\partial_t\,\Dhv\Dhx\op &= \com{H,\Dhv\Dhx\op} - i\hbar\Dhx^2\op - \com{E_{\op},\Dhv\op} - \com{h^3\sfX_{\Dhv\Dhx\op},\op} - \com{h^3\sfX_{\Dhx\op},\Dhv\op},
			\end{split}
		\end{equation}	
		that are of the form
		\begin{equation}\label{eq:abstract_equation_2}
			i\hbar\, \dpt \opmu = \com{\sfA,\opmu} + \com{\sfB,\Dhx\op} + \com{\sfC,\op},
		\end{equation}
		with $\sfA,\, \sfB$ and $\sfC$ being self-adjoint operators. The proof of Lemma \ref{lem:abstract_lemma} proves also the following statement.
		\begin{lem}[{\bf Lemma \ref{lem:abstract_lemma}{ bis}}]\label{lem:abstract_lemma_bis}
			Let $\op,\, \sfA,\,\sfB,\,\sfC$ be self-adjoint operators and $\opmu = \opmu(t)$ be a family of self-adjoint operators satisfying \eqref{eq:abstract_equation_2}. Then, formally, for any even integer $q\geq 2$ we have
			\begin{equation*}
				\dt \Nrm{\opmu\, m_{2n}}{q} \leq \frac{1}{\hbar}\Nrm{\com{\sfA,m_{2n}}\opmu}{q} + \frac{1}{\hbar} \Nrm{\com{\sfB,\Dhx\op}m_{2n}}{q}+\frac{1}{\hbar} \Nrm{\com{\sfC,\op}m_{2n}}{q}.
			\end{equation*}
		\end{lem}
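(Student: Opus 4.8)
The plan is to run the proof of Lemma~\ref{lem:abstract_lemma} essentially unchanged, with the single source commutator $\com{\sfB,\op}$ there replaced by the two source commutators $\com{\sfB,\Dhx\op}$ and $\com{\sfC,\op}$ of Equation~\eqref{eq:abstract_equation_2}. Since every estimate in that proof is \emph{linear} in the source term, this costs nothing beyond splitting one term into two, and no new idea is required.

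Concretely, I would first differentiate $\opmu^2$ using~\eqref{eq:abstract_equation_2}, obtaining
\[
	\dpt \opmu^2 = \tfrac{1}{i\hbar}\com{\sfA,\opmu^2} + \tfrac{1}{i\hbar}\(\com{\sfB,\Dhx\op}\opmu + \opmu\com{\sfB,\Dhx\op}\) + \tfrac{1}{i\hbar}\(\com{\sfC,\op}\opmu + \opmu\com{\sfC,\op}\).
\]
Setting $2p := q$ and using the cyclicity of the trace exactly as in the derivation of~\eqref{eq:IA_IB}, the quantity $\dt \Nrm{\opmu\, m}{q}^{q} = \dt\Tr{(m\opmu^2 m)^p}$ then splits as $I_\sfA + I_\sfB + I_\sfC$, where $I_\sfA$ is literally the term already treated in Lemma~\ref{lem:abstract_lemma}, and $I_\sfB$, $I_\sfC$ are the two copies of the term $I_\sfB$ of that proof, with $\com{\sfB,\op}$ replaced respectively by $\com{\sfB,\Dhx\op}$ and $\com{\sfC,\op}$.

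For $I_\sfA$ one reproduces~\eqref{eq:IA} verbatim, getting $\n{I_\sfA} \leq \tfrac{q}{\hbar}\Nrm{\opmu\, m}{q}^{q-1}\Nrm{\com{\sfA,m}\opmu}{q}$. For $I_\sfB$ and $I_\sfC$, the manipulation that produced~\eqref{eq:IB} — rewriting the term, after cyclicity of the trace, as $\tfrac{2p}{i\hbar}\Im{\Tr{m\,\com{\sfB,\Dhx\op}\,\opmu\, m\,\n{m\opmu}^{2(p-1)}}}$ (and similarly with $\com{\sfC,\op}$), then applying Hölder's inequality for the trace together with the invariance of the Schatten norm under taking adjoints — gives $\n{I_\sfB} \leq \tfrac{q}{\hbar}\Nrm{\com{\sfB,\Dhx\op}m}{q}\Nrm{\opmu\, m}{q}^{q-1}$ and $\n{I_\sfC} \leq \tfrac{q}{\hbar}\Nrm{\com{\sfC,\op}m}{q}\Nrm{\opmu\, m}{q}^{q-1}$. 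Summing the three bounds, then using $\dt\Nrm{\opmu\, m}{q} = \tfrac{1}{q}\Nrm{\opmu\, m}{q}^{1-q}\dt\Nrm{\opmu\, m}{q}^{q}$ and dividing by $\Nrm{\opmu\, m}{q}^{q-1}$, yields the stated inequality.

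There is no genuine obstacle; the only point to keep in mind is the word \emph{formally} in the statement. As for Lemma~\ref{lem:abstract_lemma}, one must a priori justify differentiating the Schatten norm under the trace and the finiteness and continuity of all the traces involved. This is handled by the usual density/approximation argument, and is licit precisely because the higher-order regularity of $\opmu\, m$ — and hence its membership in the relevant Schatten class along the dynamics — is being propagated by the propositions proved earlier in this part.
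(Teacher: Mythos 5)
Your proposal is correct and matches the paper exactly: the paper itself gives no separate proof, simply noting that the proof of Lemma~\ref{lem:abstract_lemma} also proves this statement, which is precisely your observation that the argument is linear in the source term and so the single $I_\sfB$ term splits into two identical treatments of $\com{\sfB,\Dhx\op}$ and $\com{\sfC,\op}$.
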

		\smallskip
	
		We consider the Identities~\eqref{eq:grad2-moments} and bound them by Lemma~\ref{lem:abstract_lemma_bis}. This yields
		\begin{equation}\label{eq:grad_xx}
		\begin{split}
			\hbar\,\dt\Nrm{\Dhx^2\op\,m_{2n}}{q} &\leq C \Nrm{\com{V_{\op},m_{2n}} \Dhx^2\op}{q}
			+C \Nrm{\com{E_{\op},\Dhx\op} m_{2n}}{q}\\
			& +C \Nrm{\com{\Dhx E_{\op},\op} m_{2n}}{q}
			+C \Nrm{\com{h^3\sfX_{\op},m_{2n}} \Dhx^2\op}{q}\\
			& + C\Nrm{\com{h^3\sfX_{\Dhx\op},\Dhx\op} m_{2n}}{q} + C\Nrm{\com{h^3\sfX_{\Dhx^2\op},\op} m_{2n}}{q}
			\end{split}
		\end{equation}
		\begin{equation}\label{eq:grad_vv}
		\begin{split}
			\hbar\,\dt\Nrm{\Dhv^2\op\,m_{2n}}{q} &\leq C \Nrm{\com{V_{\op},m_{2n}} \Dhv^2\op}{q} + C\Nrm{\Dhv\Dhx\op\,m_{2n}}{q}
			\\
			&+ C\Nrm{\com{h^3\sfX_{\op},m_{2n}}\Dhv^2\op}{q} + C\Nrm{\com{h^3\sfX_{\Dhv\op},\Dhv\op} m_{2n}}{q} \\ 
			&+ C\Nrm{\com{h^3\sfX_{\Dhv^2\op},\op} m_{2n}}{q}
		\end{split}
		\end{equation}
		\begin{equation}\label{eq:grad_vx}
		\begin{split}
			\hbar\,\dt&\Nrm{\Dhv\Dhx\op\,m_{2n}}{q} \leq C \Nrm{\com{V_{\op},m_{2n}} \Dhv\Dhx\op}{q} + C \Nrm{\com{E_{\op},\Dhv\op} m_{2n}}{q}
			\\
			&\qqqquad+ C \Nrm{\Dhx^2 \op\,m_{2n}}{q} + C \Nrm{\com{h^3\sfX_{\op},m_{2n}} \Dhv\Dhx\op}{q}
			\\
			&\qqqquad+ C \Nrm{\com{h^3\sfX_{\Dhx\op},\Dhv\op}m_{2n}}{q} + C \Nrm{\com{h^3\sfX_{\Dhv\Dhx\op},\op} m_{2n}}{q}.
		\end{split}
		\end{equation}
		We now estimate the right-hand side of Equation~\eqref{eq:grad_xx}. The first three contributions are related to the direct term in the Hartree equation, whereas in the others the exchange operator appears. By Proposition~\ref{prop:weighted_com_est} and Proposition~\ref{prop:weighted_com_est_long_range} we have
		\begin{equation}\label{eq:com_V_est}
		\begin{split}
			\frac{1}{\hbar} &\Nrm{\com{V_{\op},m_{2n}} \Dhx^2\op}{\L^q} \leq C \Nrm{\op\,m_{n+n_0}}{{\L}^{\mathfrak{b}'}}\Nrm{\Dhx^2\op\,m_{2n-1}}{{\L}^{q}}
			\\
			&\qquad + C \Nrm{\op\,m_{2n+k'}}{\L^r} \Nrm{\Dhx^2\op\,m_{n+k}}{\L^{r_1}} + C \Nrm{\rho}{L^1} \Nrm{\Dhx^2\op\,m_{2n}}{\L^q}
			\\
			&\qquad +C\,\hbar \Nrm{\op\,m_{2n}}{\L^2} \Nrm{\Dhx^2\op\,m_{2n}}{\L^q}
			\end{split}
		\end{equation}
		 As for the second term on the right-hand side of Equation~\eqref{eq:grad_xx}, we rewrite it as follows:
		\begin{equation*}
			\frac{1}{\hbar} \Nrm{\com{E_{\op},\Dhx\op} m_{2n}}{\L^q} = \frac{1}{\hbar} \Nrm{\com{E_{\op},\Dhx\op\, m_{2n}}}{\L^q} + \frac{1}{\hbar} \Nrm{\com{E_{\op},m_{2n}} \Dhx\op}{\L^q}.
		\end{equation*}
		By Proposition~\ref{prop:estim_commutator_Lq_Besov} and Corollary~\ref{cor:estim_commutator_Lq}, we get
		\begin{equation}\label{eq:estimate_grad_xx_m}
			\frac{1}{\hbar} \Nrm{\com{E_{\op},\Dhx\op\,m_{2n}}}{\L^q} \leq C \Nrm{\rho}{L^r}^{1-s} \Nrm{\rho}{W^{1,r}}^s \Nrm{\Dhv\Dhx\op\,m_{2n}}{\L^q},
		\end{equation}
		for $\frac{1}{r}+\frac{1}{q}=\frac{1}{2}$ and $s=1-3\(\frac{1}{r'}-\frac{1}{\fb}\)$. By Proposition~\ref{prop:weighted_com_est} and Proposition~\ref{prop:weighted_com_est_long_range} we have
		\begin{align*}
			\frac{1}{\hbar} &\Nrm{\com{E_{\op},m_{2n}} \Dhx\op}{\L^q} \leq C \Nrm{\Dhx\op\,m_{n+n_0}}{\L^{\fb'}}\Nrm{\Dhx\op\,m_{2n-1}}{\L^{q}}
			\\
			&\qquad +\Nrm{\Dhx\op\,m_{2n+k'}}{\L^r}\Nrm{\Dhx\op\,m_{n+k}}{\L^{r_1}}+\Nrm{\op\,m_{2n}}{\L^2}\Nrm{\Dhx\op\,m_{2n}}{\L^q}
			\\
			&\qquad +\hbar\Nrm{\Dhx\op\,m_{2n}}{\L^2}\Nrm{\Dhx\op\,m_{2n}}{\L^q}.
		\end{align*}
		This, together with \eqref{eq:estimate_grad_xx_m}, controls the second term in the right-hand side of Equation~\eqref{eq:grad_xx}.\\
		The third term on the right-hand side of Equation~\eqref{eq:grad_xx} can be dealt in an analogously manner as to the second term by using that $\Dhx E_{\op}=E_{\Dhx\op}$ and Proposition~\ref{prop:weighted_com_est}. This gives
		\begin{align*}
			\frac{1}{\hbar} &\Nrm{\com{\Dhx E_{\op},\op} m_{2n}}{\L^q} \leq C  \Nrm{\rho}{L^r}^{1-s}\Nrm{\rho}{W^{1,r}}^s \Nrm{\Dhv\Dhx\op\,m_{2n}}{\L^q}
			\\
			&\qquad+ \Nrm{\Dhx^2\op\,m_{n+n_0}}{\L^{\fb'}} \Nrm{\op\,m_{2n-1}}{\L^q} + \Nrm{\Dhx^2\op\,m_{2n+k'}}{\L^{r}} \Nrm{\op\,m_{n+k}}{\L^{r_1}}
			\\
			&\qquad+ \Nrm{\Dhx\op\,m_{2n}}{\L^{2}} \Nrm{\op\,m_{2n}}{\L^q} + \hbar\Nrm{\Dhx^2\op\,m_{2n}}{\L^{2}} \Nrm{\op\,m_{2n}}{\L^q}.
		\end{align*}
		We now turn to terms in which the contribution of the exchange term appears. By Proposition~\ref{prop:exchange_moments} we obtain
		\begin{equation}\label{eq:exchange_moments}
			\frac{1}{\hbar} \Nrm{\com{h^3\sfX_{\op},m_{2n}} \Dhx^2\op}{\L^q} \leq C \hbar^{\frac{3}{2}-a} \Nrm{\Dhx\op\,m_{2n}}{\L^2} \Nrm{\Dhx^2\op\,m_{2n}}{\L^q}\,.
		\end{equation} 
		By Proposition~\ref{prop:exchange_com} we get the bound
		\begin{equation}\label{eq:exchange_com}
			\frac{1}{\hbar} \Nrm{\com{h^3\sfX_{\Dhx\op},\Dhx\op} m_{2n}}{\L^q} \leq C \hbar^{3\(\frac{1}{q}+\frac{1}{2}-\frac{1}{\fb}\)} \Nrm{\Dhx\op\,m_{2n}}{\L^2} \Nrm{\Dhx\op\,m_{2n}}{\L^\infty}\,.
		\end{equation}
		Finally, by noticing that $\Dhx\sfX_{\Dhx\op}=\sfX_{\Dhx^2\op}$, we apply Proposition~\ref{prop:exchange_com} to the last term on the right-hand side in Equation~\eqref{eq:grad_xx}:
		\begin{equation}\label{eq:exchange_com_grad}
			\frac{1}{\hbar} \Nrm{\com{h^3\sfX_{\Dhx^2\op},\op} m_{2n}}{\L^q} \leq C \hbar^{3\(\frac{1}{q}+\frac{1}{2}-\frac{1}{\fb}\)} \Nrm{\op\,m_{2n}}{\L^\infty} \Nrm{\Dhx^2\op\,m_{2n}}{\L^2}\,.
		\end{equation}
		Therefore, using Proposition~\ref{prop:regu_HF} and estimates \eqref{eq:com_V_est}--\eqref{eq:exchange_com_grad}
		we obtain a bound on the time derivative of$\Nrm{\Dhx^2\op\,m_{2n}}{\L^{q}}$.
		
		We now look at the right-hand side of Equation~\eqref{eq:grad_vv}. By using Proposition~\ref{prop:regu_HF}, Proposition~\ref{prop:weighted_com_est}, Proposition~\ref{prop:weighted_com_est_long_range}, Proposition~\ref{prop:exchange_moments}, Proposition~\ref{prop:exchange_com} and by Proposition~\ref{prop:exchange_com} with $\Dhv\sfX_{\Dhv\op}=\sfX_{\Dhv^2\op}$ we obtain a bound on the time derivative of $\Nrm{\Dhv^2\op\,m_{2n}}{\L^{q}}$.
		
		As for the mixed term~\eqref{eq:grad_vx}, its right-hand side can be bounded as follows. By Proposition~\ref{prop:weighted_com_est} and Proposition~\ref{prop:weighted_com_est_long_range} we get
		\begin{equation}\label{eq:com_V_est-vx}
		\begin{split}
			\frac{1}{\hbar} \Nrm{\com{V_{\op},m_{2n}} \Dhv\Dhx\op}{\L^q}&\leq C \Nrm{\op\,m_{n+n_0}}{\L^{\fb'}} \Nrm{\Dhv\Dhx\op\,m_{2n-1}}{\L^{q}}\\ &+\Nrm{\op\,m_{2n+k'}}{\L^{r}} \Nrm{\Dhv\Dhx\op\,m_{n+k}}{\L^{r_1}}\\ &+\(\Nrm{\rho}{L^1}+\hbar\Nrm{\op\,m_{2n}}{\L^2}\)\Nrm{\Dhv\Dhx\op\,m_{2n}}{\L^q}.
			\end{split}
		\end{equation}
		As for the second term on the right-hand side of Equation~\eqref{eq:grad_vx}, we rewrite it as 
		\begin{equation*}
			\frac{1}{\hbar} \Nrm{\com{E_{\op},\Dhv\op} m_{2n}}{\L^q} = \frac{1}{\hbar} \Nrm{\com{E_{\op},\Dhv\op\,m_{2n}}}{\L^q} + \frac{1}{\hbar} \Nrm{\com{E_{\op},m_{2n}} \Dhv\op}{\L^q}.
		\end{equation*}
		and use again Proposition~\ref{prop:estim_commutator_Lq_Besov} and Corollary~\ref{cor:estim_commutator_Lq} for the first term on the right-hand side and Proposition~\ref{prop:weighted_com_est} and Proposition~\ref{prop:weighted_com_est_long_range} for the second term on the right-hand side. We turn now on the terms in which the contribution of the exchange term appears. By Proposition~\ref{prop:exchange_moments} we obtain
		\begin{equation}\label{eq:exchange_moments-vx}
			\frac{1}{\hbar} \Nrm{\com{\sfX_{\op},m_{2n}} \Dhv\Dhx\op}{\L^q} \leq C\, \hbar^{\frac{3}{2}-a} \Nrm{\Dhx\op\,m_{2n}}{\L^2} \Nrm{\Dhv\Dhx\op\,m_{2n}}{\L^q}.
		\end{equation} 
		By Proposition~\ref{prop:exchange_com} we get the bounds
		\begin{align}\label{eq:exchange_com-vx}
			\frac{1}{\hbar} \Nrm{\com{\sfX_{\Dhx\op},\Dhv\op} m_{2n}}{\L^q} &\leq C\, \hbar^{3\(\frac{1}{q}+\frac{1}{2}-\frac{1}{\fb}\)} \Nrm{\Dhv\op\,m_{2n}}{\L^\infty} \Nrm{\Dhx\op\,m_{2n}}{\L^2}.
			\\\label{eq:exchange_com_grad-vx}
			\frac{1}{\hbar} \Nrm{\com{\sfX_{\Dhv\Dhx\op},\Dhx\op} m_{2n}}{\L^q} &\leq C \hbar^{3\(\frac{1}{q}+\frac{1}{2}-\frac{1}{\fb}\)} \Nrm{\Dhx\op\,m_{2n}}{\L^\infty} \Nrm{\Dhv\Dhx\op\,m_{2n}}{\L^2}.
		\end{align}
		Therefore, using Proposition~\ref{prop:regu_HF} and estimates~\eqref{eq:com_V_est-vx}--\eqref{eq:exchange_com_grad-vx}, we obtain
		\begin{align*}
			\dt\Nrm{\Dhv\Dhx\op\,m_{2n}}{\L^q} &\leq C\Nrm{\Dhv\Dhx\op\,m_{2n}}{\L^{r_1}} + C\Nrm{\Dhv^2\op\,m_{2n}}{\L^q} + C\,\hbar^{\frac{3}{2}-a}\Nrm{\Dhv\Dhx\op\,m_{2n}}{\L^q}
			\\
			&\quad + C\,\hbar^{3\(\frac{1}{q}+\frac{1}{2}-\frac{1}{\fb}\)} \Nrm{\Dhv\op\,m_{2n}}{\L^\infty} + C\,\hbar^{3\(\frac{1}{q}+\frac{1}{2}-\frac{1}{\fb}\)} \Nrm{\Dhv\Dhx\op\,m_{2n}}{\L^2}
		\end{align*}
		for $s=1-3\(\frac{1}{r'}-\frac{1}{\fb}\)$ and with the constraints
		$\frac{1}{r}+\frac{1}{r_1} =\frac{1}{q}+\frac{1}{\fb'}$ and $\frac{1}{r}+\frac{1}{q}=\frac{1}{2}$. Now we define
		\begin{equation*}
			N_{x,q}(t) := \Nrm{\Dhx^2\op\,m_{2n}}{\L^{q}},\quad N_{v,q}(t):=\Nrm{\Dhv^2\op\,m_{2n}}{\L^{q}},\quad N_{xv,q}(t) := \Nrm{\Dhv\Dhx\op\,m_{2n}}{\L^{q}}
		\end{equation*}
		and denote by $N_{2n,q}(t)$ the quantity
		\begin{equation*}
			N_{2n,q}(t)=N_{x,q}(t)+N_{v,q}(t)+N_{xv,q}(t).
		\end{equation*}
		Then we proceed as for the first-order gradients. Using Proposition~\ref{prop:regu_HF}, we obtain a bound on the time derivative of $N_{2n,2}(t)+N_{2n-2,4}$.
		
		For $a\in\left[\frac{1}{2},1\right]$, we consider the quantity
		$F_{2n,\infty}(t) := N_{2n,2}(t)+N_{2n-2,4} + \Nrm{\op\,m_{2n}}{\dot{\cW}^{1,\infty}}$ and look for a Gr\"onwall type inequality. From Equation~\eqref{eq:Dhx} and Equation~\eqref{eq:Dhv} with $q=\infty$, we obtain an upper bound on the time derivative of $F_{2n,\infty}(t)$, using Equation~\eqref{eq:estim_commutator_Linfty} and Proposition~\ref{prop:diag_vs_weights} and standard interpolation allows to conclude by Gr\"onwall's Lemma. 
		
		For $a\in\(0,\frac{1}{2}\)$, using that $\op\in\cW^{1,4}(m_{2n-2})$ by Proposition~\ref{prop:regu_HF} and that
		\begin{align*}
			\Nrm{\Dhx\op\,m_{2n-2}}{\L^\infty} &\leq C\,h^{-\frac{3}{4}}\Nrm{\Dhx\op\,m_{2n-2}}{\L^4},
			\\
			\Nrm{\Dhv\op\,m_{2n-2}}{\L^\infty} &\leq C h^{-\frac{3}{4}}\Nrm{\Dhv\op\,m_{2n-2}}{\L^4},
		\end{align*}
		we get an estimate on the time derivative of $N_{2n,2}(t)+N_{2n-2,4}(t)$. By Gr\"onwall's inequality we conclude that $\op\in\cW^{2,2}(m_{2n})\cap\cW^{2,4}(m_{2n-2})$ for $a\in\(0,\frac{1}{2}\)$.
	\end{proof}
	
	\begin{proof}[Proof of Proposition~\ref{prop:regu_HF_sqrt}]
		We observe that analogously to \eqref{eq:rho}, \eqref{eq:Dhx} and \eqref{eq:Dhv}, the following bounds hold
		\begin{equation*}
			\hbar \,\dt\Nrm{\sqrt{\op}\,m_{2n}}{q} \leq  \Nrm{\com{V_{\op},m_{2n}} \sqrt{\op}}{q} + \Nrm{\com{h^3X_{\op},m_{2n}} \sqrt{\op}}{q},
		\end{equation*}
		\begin{equation}\label{eq:Dhx-sqrt}
		\begin{split}
			\hbar\, \dt\Nrm{\Dhx\sqrt{\op}\,m_{2n}}{q} &\leq \Nrm{\com{V_{\op},m_{2n}} \Dhx\sqrt{\op}}{q} + \Nrm{\com{E_{\op},\sqrt{\op}} m_{2n}}{q}
			\\
			&\quad + \Nrm{\com{h^3X_{\op},m_{2n}} \Dhx\sqrt{\op}}{q} + \Nrm{\com{h^3 X_{\Dhx\op},\sqrt{\op}} m_{2n}}{q},
		\end{split}
		\end{equation}
		and
		\begin{equation}\label{eq:Dhv-sqrt}
		\begin{aligned}
			\hbar\, \dt\Nrm{\Dhv\sqrt{\op}\,m_{2n}}{q} &\leq \Nrm{\com{V_{\op},m_{2n}}\Dhv \sqrt{\op}}{q} + \Nrm{\Dhx\sqrt{\op}\, m_{2n}}{q}
			\\
			&\quad + \Nrm{\com{h^3\sfX_{\op},m_{2n}}\Dhv \sqrt{\op}}{q} + \Nrm{\com{h^3\sfX_{\Dhv \op},\sqrt{\op}}m_{2n}}{q}.
		\end{aligned}
		\end{equation}
		As in Proposition~\ref{prop:regu_HF}, we look for a Gr\"onwall type inequality. To this end, we define 
		\begin{equation*}
			\widetilde{M}_{q}(t) = \Nrm{\sqrt{\op}}{\L^{2}(m_{2n})} + \Nrm{\sqrt{\op}}{\L^{q}(m_{2n})},
		\end{equation*}
		for $q\in[2,\infty]$ and notice that, because of Propositions~\ref{prop:weighted_com_est}, Proposition~\ref{prop:weighted_com_est_long_range} and \ref{prop:exchange_moments}, we have
		\begin{equation*}
			\dt\widetilde{M}_{q}(t)\leq C\,M_{r}(t)\,\widetilde{M}_{{r_1}}(t) + C\,M_2(t)\,\widetilde{M}_{q}(t)\,,
		\end{equation*}
		that implies the boundedness of $\widetilde{M}_{q}(t)$ for $q\in[2,\infty]$ thanks to Proposition~\ref{prop:regu_HF}. 
		
		We now define the quantity
		\begin{equation*}
			\widetilde{N}_{q}(t) = \Nrm{\sqrt{\op}}{\dot{\cW}^{1,2}(m_{2n})} + \Nrm{\sqrt{\op}}{\dot{\cW}^{1,q}(m_{2n})},
		\end{equation*}
		for $q\in[2,\infty]$ and using Equations~\eqref{eq:Dhx-sqrt} and \eqref{eq:Dhv-sqrt} we compute
		\begin{equation}\label{eq:gronwall-sqrt-1}
			\dt\!\(\widetilde{N}_2(t)+\widetilde{N}_{q}(t)\).
		\end{equation}
		The contributions due to the direct term in \eqref{eq:gronwall-sqrt-1} can be estimated in terms of $M_{r}$ and $\widetilde{N}_{r}$ by Proposition~\ref{prop:weighted_com_est}, in terms of $M_r^{1+\theta}$ (for $\theta\in(0,1)$) and $\widetilde{N}_{q}$ by Proposition~\ref{prop:estim_commutator_Lq_Besov}, together with $\widetilde{N}_{r}$ by Proposition~\ref{prop:weighted_com_est}. The contributions due to the exchange term in \eqref{eq:gronwall-sqrt-1} can be estimated in terms of $M_2$ and $\widetilde{N}_{q}$ by Proposition~\ref{prop:exchange_moments}, and in terms of $\widetilde{M}_{q}$, $\widetilde{N}_{q}$ and $N_2$ by Proposition~\ref{prop:exchange_com}. Hence, in the same spirit of the proofs of Propositions~\ref{prop:regu_HF} and \ref{prop:regu_HF_2}, using the results of Proposition~\ref{prop:regu_HF} and Proposition~\ref{prop:regu_HF_2}, we conclude by Gr\"onwall's Lemma obtaining boundedness of $\widetilde{N}_{q}$ for $q\in[2,\infty]$. 
	\end{proof}

\bigskip
\part{\large Mean-Field Limit}\label{part:mean-field}

\section{Scaling}

	In order to define the Bogoliubov rotation as explained in Section~\ref{sec:Bogoliubov}, we define
	\begin{equation}\label{eq:def_omega}
		\omega := \lambda\, \op \ \text{ with } \lambda = N\,h^3
	\end{equation}
	so that $\Tr{\omega} = N$ and $0\leq \omega \leq \lambda\,\cC_\infty \leq 1$. Notice that in the critical scaling $N = C\,h^{-3}$, $\lambda$ is a constant, while in the other cases when $N = h^{-c}$ with $c<3$ we have $\lambda \to 0$. We also define
	\begin{equation*}
		v = \sqrt{\omega}\quad \text{ and }\quad u = \sqrt{1-\omega},
	\end{equation*}
	which are well defined bounded positive operators since $0\leq \omega \leq 1$. With these definitions, we obtain the following behavior for the Schatten norms for $p\in[1,\infty]$
	\begin{align*}
		\Nrm{\omega}{p} &= \cC_p\, N\,h^{\frac{3}{p'}}
		\\
		\Nrm{v}{p} &= \cC_{p/2}^{1/2}\, N^{1/2}\, h^{3\(\frac{1}{2}-\frac{1}{p}\)},
	\end{align*}
	where $\cC_p = \Nrm{\op}{\L^p}$ and $p' = \frac{p}{p-1}$. The operator $u$ satisfies $\Nrm{u}{\infty} \leq 1$, but of course $u$ is not bounded in other Schatten norms. However, it is possible to prove that $0\leq 1-u \leq \omega$, hence $1-u$ is of the same order of magnitude as $\omega$. Since $\Dh_{\!\eta} u = -\Dh_{\!\eta}(1-u)$, it explains why we can expect the gradients of $u$ to be of the same order as $\Dh_{\!\eta} \omega$, as indicated more precisely in the following lemma.
	
	\begin{lem}\label{lem:nabla_u}
		Assume $\Nrm{\omega}{\infty} = \lambda\,\cC_\infty < 1$. Then
		\begin{equation*}
			C\,\Nrm{\Dh_{\!\eta} u\, m}{p} \leq \Nrm{\Dh_{\!\eta}\,\omega\, m}{p} + \Nrm{\omega\, \Dh_{\!\eta} m}{p},
		\end{equation*}
		with $C = 2\sqrt{1-\lambda\, \cC_\infty}$. In particular, it implies that
		\begin{equation*}
			C\,\Nrm{\Dhv u\, m}{p} \leq \cD_p\, N\,h^\frac{3}{p'}
		\end{equation*}
		where $\cD_p = \Nrm{\Dhv\,\op\, m}{\L^p} + \Nrm{\op\, \Dhv m}{\L^p}$ is of order $1$ in the semiclassical limit.
	\end{lem}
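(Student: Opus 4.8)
The plan is to reduce the weighted estimate to a Lyapunov (Sylvester) equation for the operator square root, taking advantage of the spectral gap of $u$. Since $u^{2}=1-\omega$ and $\Dh_{\!\eta}$ satisfies the Leibniz rule $\Dh_{\!\eta}(AB)=(\Dh_{\!\eta}A)\,B+A\,\Dh_{\!\eta}B$ (it is the commutator with $\nabla$, resp.\ with $\tfrac{x}{i\hbar}$), differentiating gives
\begin{equation*}
	u\,(\Dh_{\!\eta}u)+(\Dh_{\!\eta}u)\,u=-\Dh_{\!\eta}\omega .
\end{equation*}
The hypothesis $\Nrm{\omega}{\infty}=\lambda\,\cC_\infty<1$ forces $\sigma(u)\subset[c,1]$ with $c:=\sqrt{1-\lambda\,\cC_\infty}>0$, so $u$ is boundedly invertible and the map $X\mapsto uX+Xu$ has spectrum in $[2c,2]$; hence $\Dh_{\!\eta}u$ is its unique preimage of $-\Dh_{\!\eta}\omega$, which can be written through the Bochner integral
\begin{equation*}
	\Dh_{\!\eta}u=-\int_{0}^{\infty}e^{-tu}\,(\Dh_{\!\eta}\omega)\,e^{-tu}\dd t .
\end{equation*}

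Next I would right-multiply by $m$ and move it next to $\Dh_{\!\eta}\omega$ using $e^{-tu}\,m=m\,e^{-tu}-\int_{0}^{t}e^{-su}\com{u,m}\,e^{-(t-s)u}\dd s$, the fundamental theorem of calculus applied to $s\mapsto e^{-su}\,m\,e^{-(t-s)u}$. The leading term $-\int_{0}^{\infty}e^{-tu}(\Dh_{\!\eta}\omega)\,m\,e^{-tu}\dd t$ is estimated, via $\Nrm{e^{-tu}}{\infty}\le e^{-ct}$, H\"older for Schatten norms and the invariance $\Nrm{(\Dh_{\!\eta}\omega)\,m}{p}=\Nrm{m\,(\Dh_{\!\eta}\omega)}{p}$, by $\tfrac{1}{2c}\Nrm{(\Dh_{\!\eta}\omega)\,m}{p}$ (integrating $e^{-2ct}$ in $t$), which is exactly the first term of the claimed bound with $C=2c$. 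The remaining contribution contains $\com{u,m}$; rewriting $\com{u,m}=-\com{1-u,m}$ with $1-u=\omega\,(1+u)^{-1}$, $\Nrm{(1+u)^{-1}}{\infty}\le1$, and using the Leibniz identity $\Dh_{\!\eta}(\omega m)=(\Dh_{\!\eta}\omega)\,m+\omega\,\Dh_{\!\eta}m$, this commutator term is re-expressed through $(\Dh_{\!\eta}\omega)\,m$ and $\omega\,\Dh_{\!\eta}m$ and, after the same exponential bounds, absorbed into $\tfrac{1}{2c}\bigl(\Nrm{(\Dh_{\!\eta}\omega)\,m}{p}+\Nrm{\omega\,\Dh_{\!\eta}m}{p}\bigr)$. (For $\eta=x$ and $m=1+\n{\opp}^{n}$ one has $\Dhx m=0$, consistently with the weight term being absent there.)

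I expect the main obstacle to be precisely this bookkeeping of the weight: $m=1+\n{\opp}^{n}$ does not commute with $u=\sqrt{1-\lambda\,\op}$, and it is in transporting $m$ across the square root that the term $\Nrm{\omega\,\Dh_{\!\eta}m}{p}$ is produced, so one must pair each commutator contribution with a decaying exponential factor to keep the constant sharp, equal to $2\sqrt{1-\lambda\,\cC_\infty}$. Making the differentiation of the square root and the above Bochner integrals rigorous also relies on the a priori regularity of $\op$ propagated in Propositions~\ref{prop:regu_HF}--\ref{prop:regu_HF_2}.

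Finally, the particular case is just the substitution $\eta=\xi$, $\omega=\lambda\,\op$ with $\lambda=N\,h^{3}$: the main inequality becomes $C\,\Nrm{\Dhv u\,m}{p}\le N\,h^{3}\bigl(\Nrm{\Dhv\op\,m}{p}+\Nrm{\op\,\Dhv m}{p}\bigr)$, and rewriting Schatten norms as semiclassical ones through $\Nrm{\cdot}{p}=h^{-3/p}\Nrm{\cdot}{\L^{p}}$ gives $C\,\Nrm{\Dhv u\,m}{p}\le \cD_{p}\,N\,h^{3/p'}$ with $\cD_{p}=\Nrm{\Dhv\op\,m}{\L^{p}}+\Nrm{\op\,\Dhv m}{\L^{p}}$. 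This $\cD_{p}$ is $O(1)$ as $\hbar\to0$ because $\Dhv m\sim\n{\opp}^{n-1}\lesssim m$, so that $\Nrm{\op\,\Dhv m}{\L^{p}}\lesssim\Nrm{\op\,m}{\L^{p}}$, and both $\Nrm{\op\,m}{\L^{p}}$ and $\Nrm{\Dhv\op\,m}{\L^{p}}$ are bounded by the $\cW^{1,p}(m)$ regularity already established.
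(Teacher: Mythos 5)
Your route is genuinely different from the paper's: you solve the Sylvester equation $u\,(\Dh_{\!\eta}u)+(\Dh_{\!\eta}u)\,u=-\Dh_{\!\eta}\omega$ via the Bochner integral $-\int_0^\infty e^{-tu}(\Dh_{\!\eta}\omega)\,e^{-tu}\dd t$, whereas the paper expands $u=\sum_n\binom{1/2}{n}(-1)^n\omega^n$, applies the Leibniz rule for $\Dh_{\!\eta}$ to each product $\omega^n m$, and sums $\sum_n n\,\bigl|\binom{1/2}{n}\bigr|\,\Nrm{\omega}{\infty}^{n-1}=\tfrac{1}{2}(1-\Nrm{\omega}{\infty})^{-1/2}$ to obtain the constant. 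Your leading term is correct and does reproduce $C=2\sqrt{1-\lambda\,\cC_\infty}$.

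The gap is in the weight-commutator step. After transporting $m$ across $e^{-tu}$ you are left with $\com{u,m}$, i.e.\ the operator commutator $um-mu$ with $m=1+\n{\opp}^n$. This is \emph{not} a quantity of the form $\Dh_{\!\eta}\omega\, m$ or $\omega\,\Dh_{\!\eta}m$, and the identity $\Dh_{\!\eta}(\omega m)=(\Dh_{\!\eta}\omega)\,m+\omega\,\Dh_{\!\eta}m$ — which concerns the quantum gradient, i.e.\ the commutator with $\nabla$ or $x/(i\hbar)$ — says nothing about $\com{u,m}$. Concretely, for $\eta=x$ one has $\Dhx m=0$ while $\com{u,m}\neq0$; for $\eta=\xi$ the commutator $\com{u,m}$ is governed by commutators with $\opp$, hence by $\Dhx u$, so your scheme at best yields a coupled estimate involving the \emph{spatial} gradient of $u$ rather than closing in terms of $\Dhv\omega$ and $\Dhv m$. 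Moreover, even granting some control of $\com{u,m}$, the constant cannot survive: your main term already exhausts the full $\tfrac{1}{2c}\Nrm{(\Dh_{\!\eta}\omega)\,m}{p}$, so any additional positive contribution proportional to $\Nrm{(\Dh_{\!\eta}\omega)\,m}{p}$ would force a constant strictly worse than $2\sqrt{1-\lambda\,\cC_\infty}$. The paper's series expansion sidesteps this because the weight is differentiated together with $\omega^n$ by the genuine Leibniz rule, so the only extra term is $\omega^n\,\Dh_{\!\eta}m$ and no commutator of $u$ (or of $\omega$) with $m$ is ever introduced. To salvage the resolvent/semigroup representation you would either need $m$ to commute with $\omega$, or accept a weaker conclusion carrying an additional term that controls $\com{\omega,m}$.
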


	\begin{proof}
		Since $\Nrm{\omega}{\infty} < 1$, we can write $u = (1-\omega)^\frac{1}{2} = \sum_{n=0}^\infty \binom{1/2}{n} \(-1\)^n\omega^n$. Therefore, for $\eta\in\set{x,\xi}$, we obtain
		\begin{equation*}
			\Nrm{\Dh_{\!\eta} u\,m}{p} = \Nrm{\Dh_{\!\eta} (u-1)\,m}{p}\leq \sum_{n=1}^\infty \n{\binom{1/2}{n}} \Nrm{\Dh_{\!\eta}\(\omega^n\,m\)}{p}.
		\end{equation*}
		Expanding the gradient with the product rule for commutators gives
		\begin{equation*}
			\Dh_{\!\eta}\(\omega^n\,m\) = \omega^n\, \Dh_{\!\eta} m + \sum_{k=1}^n \omega^{k-1} \(\Dh_{\!\eta}\omega\) \omega^{n-k}
		\end{equation*}
		which leads to
		\begin{equation*}
			\Nrm{\Dh_{\!\eta} u\,m}{p} \leq \sum_{n=1}^\infty \n{\binom{1/2}{n}} n\Nrm{\omega}{\infty}^{n-1} \(\Nrm{\Dh_{\!\eta}\,\omega\,m}{p} + \Nrm{\omega\, \Dh_{\!\eta} m}{p}\).
		\end{equation*}
		Moreover, for $n\geq 1$, $\n{\binom{1/2}{n}} = \(-1\)^{n-1}\binom{1/2}{n} $ and $\binom{1/2}{n} = \frac{1}{2n} \binom{-1/2}{n-1}$, from which we deduce
		\begin{equation*}
			\sum_{n=1}^\infty \n{\binom{1/2}{n}} n\Nrm{\omega}{\infty}^{n-1} \leq \frac{1}{2}\, \sum_{n=1}^\infty \binom{-1/2}{n-1} \(-1\)^{n-1} \Nrm{\omega}{\infty}^{n-1} = \frac{1}{2\sqrt{1-\Nrm{\omega}{\infty}}}
		\end{equation*}
		and the proof follows by combining the two last inequalities.
	\end{proof}
	
\section{Preliminary Inequalities}

	In this section, we provide estimates which are crucial for controlling the growth of the particle number operator with respect to the fluctuation dynamics in the subsequent sections. 
	
	As a preliminary, let us begin by defining some convenient notations. 
	For any pair $(\sigma,\sigma') \in \set{l,r}^2$ and a bounded operator $O:\h_{\sigma'}\rightarrow \h_{\sigma}$, we generalize the standard notation of the second quantization of the one-particle operator by
	\begin{subequations}\label{def:gen-one-part-op}
	\begin{align}
		\dG_{\sigma,\sigma'}(O) =&\ \intdd O(x,y)\,a^*_{x,\sigma}\,a_{y,\sigma'} \d x\d y\,
		\\
		\dG_{\sigma,\sigma'}^+(O) =&\ \intdd O(x,y)\,a^*_{x,\sigma}\,a^*_{y,\sigma'} \d x\d y\,
		\\
		\dG_{\sigma,\sigma'}^-(O) =&\ \intdd O(x,y)\,a_{x,\sigma}\,a_{y,\sigma'} \d x\d y\
	\end{align}
	\end{subequations}
	where the operators are expressed in terms of operator-valued distributions \eqref{eq:op-val-distr}. 
	When $\sigma = \sigma'$, we write $\dG^\circ_\sigma := \dG^\circ_{\sigma, \sigma}$ where $\circ$ denotes either $+$, $-$, or null. Moreover, we have the relations 
    \begin{equation}\label{eq:dG_adjoint_property}
        \dG_{\sigma, \sigma'}(O)^* = \dG_{\sigma', \sigma}(O^*) \quad \text{ and } \quad \dG^+_{\sigma, \sigma'}(O)^* =\dG^-_{\sigma', \sigma}(O^*). 
    \end{equation}

	We begin by extending \cite[Lemma~4.2]{benedikter_mean-field_2016} to the case of Schatten class operators between different Hilbert spaces. See \cite[Chapter~7]{weidmann_linear_1980}. 
	
	\begin{lem}\label{lem:second_quantization_op}
		Let $(\sigma',\sigma) \in \set{l,r}^2$ and $O:\h_{\sigma'}\rightarrow \h_{\sigma}$ be a compact operator. Then, for every $p\in [1,\infty]$, we have the estimate
		\begin{equation}\label{eq:second_quantization_op}
			\Nrm{\dG_{\sigma}(O)\Psi}{\cG} \leq \Nrm{O}{p} \Nrm{\cN^{\frac{1}{p'}}\Psi}{\cG}
		\end{equation}
		for every $\Psi \in \cG$ where $\cN = \DGl{1} + \DGr{1}$. Moreover, for $p\in[1,2]$ we have the estimates
		\begin{subequations}
			\begin{align}\label{eq:second_quantization_op_-}
				\Nrm{\dG^-_{\sigma,\sigma'}(O)\Psi}{\cG} &\leq \Nrm{O}{p} \Nrm{\cN^{\frac{1}{p'}}\Psi}{\cG}, \\
				\label{eq:second_quantization_op_+}
				\Nrm{\dG^+_{\sigma,\sigma'}(O)\Psi}{\cG} &\leq \Nrm{O}{p} \Nrm{\(\cN+2\)^{\frac{1}{p'}}\Psi}{\cG}
			\end{align}
			for every $\Psi \in \cG$.
		\end{subequations}
	\end{lem}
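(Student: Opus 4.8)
The plan is to deduce all three bounds from the singular value decomposition of $O$, establishing a few endpoint exponents by hand and filling in the rest by complex interpolation. Write $O=\sum_j\mu_j\ket{u_j}\bra{v_j}$ with $\mu_j\geq 0$ and $\{u_j\}\subset\h_\sigma$, $\{v_j\}\subset\h_{\sigma'}$ orthonormal; this is the singular value decomposition of a compact operator between two Hilbert spaces, cf.\ \cite[Chapter~7]{weidmann_linear_1980}, and $\Nrm{O}{p}=(\sum_j\mu_j^p)^{1/p}$. Expanding the kernels in \eqref{def:gen-one-part-op} by means of \eqref{eq:op-val-distr}, one obtains
\begin{align*}
	\dG_\sigma(O)&=\sum_j\mu_j\,a^*_\sigma(u_j)\,a_\sigma(v_j), & \dG^-_{\sigma,\sigma'}(O)&=\sum_j\mu_j\,a_\sigma(\conj{u_j})\,a_{\sigma'}(v_j), & \dG^+_{\sigma,\sigma'}(O)&=\sum_j\mu_j\,a^*_\sigma(u_j)\,a^*_{\sigma'}(\conj{v_j}).
\end{align*}
Since $\Nrm{a^\sharp_\sigma(f)}{\infty}=\Nrm{f}{\h}$ by the CAR~\eqref{CAR}, the triangle inequality gives at once $\Nrm{\dG_\sigma(O)}{\infty}$, $\Nrm{\dG^-_{\sigma,\sigma'}(O)}{\infty}$, $\Nrm{\dG^+_{\sigma,\sigma'}(O)}{\infty}\leq\sum_j\mu_j=\Nrm{O}{1}$, which is the case $p=1$ of the three estimates (the weight being trivial there).

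For the case $p=2$ of the first two estimates I would bound one of the two factors by its unit operator norm and apply Cauchy--Schwarz over $j$: for instance $\Nrm{\dG_\sigma(O)\Psi}{\cG}\leq\sum_j\mu_j\Nrm{a_\sigma(v_j)\Psi}{\cG}\leq\Nrm{O}{2}\bigl(\sum_j\Nrm{a_\sigma(v_j)\Psi}{\cG}^2\bigr)^{1/2}$, and then, after completing $\{v_j\}$ to an orthonormal basis of $\h_\sigma$, $\sum_j\Nrm{a_\sigma(v_j)\Psi}{\cG}^2\leq\Inprod{\Psi}{\dG_\sigma(1)\Psi}=\Inprod{\Psi}{\cN_\sigma\Psi}\leq\Inprod{\Psi}{\cN\Psi}$; since $2'=2$ this is \eqref{eq:second_quantization_op} and \eqref{eq:second_quantization_op_-} at $p=2$. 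This manipulation breaks down for $\dG^+_{\sigma,\sigma'}$ (the analogous sum $\sum_j\Nrm{a^*_{\sigma'}(\conj{v_j})\Psi}{\cG}^2$ is infinite), so instead I would compute $\Nrm{\dG^+_{\sigma,\sigma'}(O)\Psi}{\cG}^2=\Inprod{\Psi}{\dG^+_{\sigma,\sigma'}(O)^*\dG^+_{\sigma,\sigma'}(O)\Psi}$, use $\dG^+_{\sigma,\sigma'}(O)^*=\dG^-_{\sigma',\sigma}(O^*)$ from \eqref{eq:dG_adjoint_property}, and normal-order the product by moving all annihilation operators to the right via the anticommutation relations. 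This produces a diagonal contribution $\sum_j\mu_j^2\bigl(1-a^*_{\sigma'}(\conj{v_j})a_{\sigma'}(\conj{v_j})-a^*_\sigma(u_j)a_\sigma(u_j)\bigr)\leq\Nrm{O}{2}^2$, the manifestly nonnegative operator $\dG^+_{\sigma,\sigma'}(O)\dG^+_{\sigma,\sigma'}(O)^*$, and --- only when $\sigma=\sigma'$ --- extra $u$--$\conj{v}$ contractions controlled by Cauchy--Schwarz against $\Nrm{O}{2}^2$. Testing against $\Psi$ and inserting the already-proven bound $\Nrm{\dG^-_{\sigma',\sigma}(O^*)\Psi}{\cG}^2\leq\Nrm{O}{2}^2\Inprod{\Psi}{\cN\Psi}$ then yields $\Nrm{\dG^+_{\sigma,\sigma'}(O)\Psi}{\cG}^2\leq\Nrm{O}{2}^2\Inprod{\Psi}{(\cN+2)\Psi}$, i.e.\ \eqref{eq:second_quantization_op_+} at $p=2$. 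Finally, for \eqref{eq:second_quantization_op} at $p=\infty$ I would use that $\dG_\sigma(O)$ acts sector by sector as the usual $\dG_n$, whose operator norm is $\leq n\Nrm{O}{\infty}$, and that $\cN$ acts as $n_l+n_r$ on the $(n_l,n_r)$-sector of $\cG$, so that $\Nrm{\dG_\sigma(O)\Psi}{\cG}^2\leq\Nrm{O}{\infty}^2\Inprod{\Psi}{\cN^2\Psi}=\Nrm{O}{\infty}^2\Nrm{\cN\Psi}{\cG}^2$.

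It then remains to interpolate in $p$. By homogeneity it suffices to treat finite-rank $O$ with $\Nrm{O}{p}=1$ and to pass to the limit in $\Nrm{\cdot}{p}$ afterwards. Working on the orthogonal complement of $\ker\cN$ (which contains the range of each operator, as they all shift the particle number), I would apply Stein's complex interpolation theorem to the entire family $T(z):=\dG_\sigma(O_z)\,\cN^{-z}$ with $O_z:=\sum_j\mu_j^{\,p(1-z)}\ket{u_j}\bra{v_j}$: at $z=\theta:=1/p'$ one has $p(1-\theta)=1$, hence $T(\theta)=\dG_\sigma(O)\,\cN^{-1/p'}$; on the line $\re z=0$, $\cN^{-it}$ is unitary and $\Nrm{O_{it}}{1}=\Nrm{O}{p}^p=1$, so the case $p=1$ gives $\Nrm{T(it)}{\infty}\leq 1$; on the line $\re z=1$, $\Nrm{\cN^{-1}}{\infty}\leq 1$ and $\Nrm{O_{1+it}}{\infty}\leq 1$, so the case $p=\infty$ gives $\Nrm{T(1+it)}{\infty}\leq 1$; Stein's theorem then returns $\Nrm{\dG_\sigma(O)\,\cN^{-1/p'}}{\infty}\leq 1=\Nrm{O}{p}$, which is \eqref{eq:second_quantization_op}. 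Estimates \eqref{eq:second_quantization_op_-} and \eqref{eq:second_quantization_op_+} follow the same way, interpolating between $p=1$ and $p=2$ with the weights $\cN^0$ and $\cN^{1/2}$ (respectively $(\cN+2)^{1/2}$), which covers $p\in[1,2]$. I expect the main obstacle to be the normal-ordering step for $\dG^+_{\sigma,\sigma'}$, namely tracking the signs and the $u$--$\conj{v}$ contractions that appear when $\sigma=\sigma'$ precisely enough to reach the weight $(\cN+2)^{1/p'}$ with constant one, together with the routine but not-quite-free check that $z\mapsto\cN^{-z}$ is a bounded analytic family on $(\ker\cN)^\perp$ so that Stein interpolation legitimately applies.
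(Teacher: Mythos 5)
Your proposal is correct and follows essentially the same architecture as the paper's proof: singular value decomposition of $O$, the $p=1$ endpoint via the triangle inequality and $\Nrm{a^\sharp(f)}{\infty}=\Nrm{f}{\h}$, and then weighted interpolation to reach general $p$. The only difference is one of self-containedness — you additionally prove the endpoint cases ($p=\infty$ for \eqref{eq:second_quantization_op} and $p=2$ for \eqref{eq:second_quantization_op_-}–\eqref{eq:second_quantization_op_+}) that the paper simply imports from \cite[Lemma~4.2]{benedikter_mean-field_2016}, and you make the interpolation explicit as a Stein analytic family, both of which are sound.
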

	
	\begin{proof}
		The case of inequality \eqref{eq:second_quantization_op} with $p=\infty$ and the case of inequalities \eqref{eq:second_quantization_op_-} and \eqref{eq:second_quantization_op_+} with $p=2$ are proved in \cite[Lemma~4.2]{benedikter_mean-field_2016}.
		
		For any compact $O$, we can write down a singular value decomposition of $O$, that is, $O = \sum_j \mu_j \inprod{\phi_j}{\cdot}\varphi_j$ where $(\phi_j)_{j\in\N}\subset \h_{\sigma'}$ and $(\varphi_j)_{j\in\N}\subset\h_\sigma$ are two orthonormal sets, and $\mu_j \geq 0$ are the singular values of $O$ (see e.g. \cite[Theorem~7.6]{weidmann_linear_1980}). Thus, using the notation $a^\sharp$ to denote either $a$ or $a^*$, we have
		\begin{equation*}
			\Nrm{\intdd O(x,y)\,a^\sharp_{x,\sigma}\,a^\sharp_{y,\sigma'} \d x\d y}{\infty} \leq \sum_j \mu_j \Nrm{a^\sharp_{\sigma}(\tilde{\phi}_j)\, a^\sharp_{\sigma'}(\tilde\varphi_j)}{\infty} 
		\end{equation*}
		where $\tilde{\phi}_j$ is either $\phi_j$ or $\bar\phi_j$. Since $\Nrm{a^\sharp_{\sigma}(\varphi)}{\infty} \leq \Nrm{\varphi}{L^2} = 1$, we obtain the estimate
		\begin{equation*}
			\Nrm{\intdd O(x,y)\,a^\sharp_{x,\sigma}\,a^\sharp_{y,\sigma'} \d x\d y\ }{\infty} \leq \sum_j \mu_j = \Nrm{O}{1}.
		\end{equation*}
		Hence, for any $\circ \in \set{+,-,\ }$, we have the estimate $\Nrm{\dG^\circ_{\sigma,\sigma'}(O)\Psi}{\cG} \leq \Nrm{O}{1} \Nrm{\Psi}{\cG}$. Finally, we deduce the desired result by weighted interpolation.
	\end{proof}
	
	As an immediate application, we can bound the expectation values of the operators \eqref{def:gen-one-part-op} in terms of the expectation values of powers of the number operator. 
	
	\begin{lem}\label{lem:second_quantization_op_2}
		For any $p\in[1,\infty]$, we have the estimate 
		\begin{equation}\label{eq:inner_product_bound}
			\Inprod{\Psi}{\dG_{\sigma}(O)\Psi}_{\cG} \leq \Nrm{O}{p}\Inprod{\Psi}{\cN^\frac{1}{p'}\Psi}_{\cG}
		\end{equation}
		for every $\Psi \in \cG$. Similarly, for any $p\in[1,2]$, we have the estimates
		\begin{subequations}
			\begin{align}\label{eq:inner_product_bound_2}
				\Inprod{\Psi}{\dG^+_{\sigma,\sigma'}(O)\Psi}_{\cG} &\leq 2^\frac{1}{2p'} \Nrm{O}{p}\Inprod{\Psi}{\(\cN+1\)^\frac{1}{p'}\Psi}_{\cG}
				\\
				\label{eq:inner_product_bound_3}
				\Inprod{\Psi}{\dG^-_{\sigma,\sigma'}(O)\Psi}_{\cG} &\leq 2^\frac{1}{2p'} \Nrm{O}{p}\Inprod{\Psi}{\(\cN+1\)^\frac{1}{p'}\Psi}_{\cG}
			\end{align}
		\end{subequations}
		for every $\Psi \in \cG$. 
	\end{lem}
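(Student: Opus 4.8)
The plan is to deduce Lemma~\ref{lem:second_quantization_op_2} directly from the operator estimates in Lemma~\ref{lem:second_quantization_op} by Cauchy--Schwarz in $\cG$, mimicking how one passes from norm bounds on $a^\sharp(f)\Psi$ to bounds on expectation values. First I would write, for the linear (non-pairing) term,
\begin{equation*}
	\Inprod{\Psi}{\dG_\sigma(O)\Psi}_\cG \leq \Nrm{\Psi}{\cG}\Nrm{\dG_\sigma(O)\Psi}{\cG} \leq \Nrm{O}{p}\Nrm{\Psi}{\cG}\Nrm{\cN^{1/p'}\Psi}{\cG}
\end{equation*}
using \eqref{eq:second_quantization_op}. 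This is not yet quite \eqref{eq:inner_product_bound}, since the right-hand side there is $\Nrm{O}{p}\Inprod{\Psi}{\cN^{1/p'}\Psi}_\cG = \Nrm{O}{p}\Nrm{\cN^{1/(2p')}\Psi}{\cG}^2$, so the natural route is instead to split the operator symmetrically: write $\dG_\sigma(O) = \intdd O(x,y) a^*_{x,\sigma} a_{y,\sigma}$ and, using that $\cN_\sigma$ is diagonalized by the particle-number grading, insert $\cN^{1/(2p')}$-type weights, or more simply observe that $\dG_\sigma(O)$ maps the $n$-sector to itself so one may replace $\Psi$ by its sector decomposition and apply \eqref{eq:second_quantization_op} sector-by-sector with $\cN$ acting as the scalar $n$. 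That gives $\Inprod{\Psi}{\dG_\sigma(O)\Psi}_\cG \leq \Nrm{O}{p}\sum_n n^{1/p'}\Nrm{\Psi^{(n)}}{\cG}^2 = \Nrm{O}{p}\Inprod{\Psi}{\cN^{1/p'}\Psi}_\cG$ after noticing the matrix element involves only the diagonal blocks; one must be a touch careful and actually bound $|\Inprod{\Psi}{\dG_\sigma(O)\Psi}|$ by writing $\dG_\sigma(O) = \dG_\sigma(O)^{1/2\,\ast}\cdots$ — cleaner is to factor $O = O_1 O_2$ is not available, so I would instead use the polarization-free bound $|\Inprod{\Psi}{\dG_\sigma(O)\Psi}| \le \Nrm{|O|^{1/2}\text{-weighted}\,\Psi}$ ... the honest simplest argument: apply \eqref{eq:second_quantization_op} with $\Psi$ replaced by $\cN^{-1/(2p')}$-modified vectors is circular, so the sector-wise argument is the one to commit to.

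For the pairing terms $\dG^\pm_{\sigma,\sigma'}(O)$, the same Cauchy--Schwarz step combined with \eqref{eq:second_quantization_op_-} and \eqref{eq:second_quantization_op_+} gives
\begin{equation*}
	\n{\Inprod{\Psi}{\dG^-_{\sigma,\sigma'}(O)\Psi}_\cG} \le \Nrm{O}{p}\Nrm{\Psi}{\cG}\Nrm{\cN^{1/p'}\Psi}{\cG}, \qquad \n{\Inprod{\Psi}{\dG^+_{\sigma,\sigma'}(O)\Psi}_\cG} \le \Nrm{O}{p}\Nrm{\Psi}{\cG}\Nrm{(\cN+2)^{1/p'}\Psi}{\cG},
\end{equation*}
and then one interpolates the two factors using $\Nrm{\Psi}{\cG}\Nrm{(\cN+2)^{1/p'}\Psi}{\cG} \le \Nrm{(\cN+1)^{1/(2p')}\Psi}{\cG}^2 \cdot c$ by the operator inequality $\Nrm{\Psi}{\cG}\Nrm{(\cN+2)^{1/p'}\Psi}{\cG} \le \tfrac12(\Nrm{\Psi}{\cG}^2 + \Nrm{(\cN+2)^{1/p'}\Psi}{\cG}^2)$ — no, that loses the product structure; the right tool is the numerical inequality $ab \le \sqrt{a^2 b'^2}$ is trivial, so instead note $\Nrm{\Psi}{\cG}\Nrm{(\cN+2)^{1/p'}\Psi}{\cG}$ and use that $1\cdot(\cN+2)^{1/p'} \le (\cN+1)^{1/p'}\cdot 2^{1/p'}$ as operators does not hold; rather $(\cN+2)^{1/p'} \le 2^{1/p'}(\cN+1)^{1/p'}$ by monotonicity since $\cN+2 \le 2(\cN+1)$. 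Hence $\Nrm{(\cN+2)^{1/p'}\Psi}{\cG} \le 2^{1/(2p')}\Nrm{(\cN+1)^{1/(2p')}\Psi}{\cG}\cdot(\cdots)$ — I must be careful that $(\cN+1)^{1/p'}$ is not a square unless I take square roots: $\Nrm{(\cN+2)^{1/p'}\Psi}{\cG}^2 = \Inprod{\Psi}{(\cN+2)^{2/p'}\Psi} \le 2^{2/p'}\Inprod{\Psi}{(\cN+1)^{2/p'}\Psi}$. Combining with Cauchy--Schwarz on the sectors I would aim to land exactly on $\Inprod{\Psi}{\dG^\pm_{\sigma,\sigma'}(O)\Psi} \le 2^{1/(2p')}\Nrm{O}{p}\Inprod{\Psi}{(\cN+1)^{1/p'}\Psi}$ by instead running the \emph{sector-shift} argument: $\dG^+$ maps the $(n-1)$-sector to the $(n+1)$-sector, so the matrix element $\Inprod{\Psi^{(n+1)}}{\dG^+\Psi^{(n-1)}}$ pairs adjacent-parity sectors; bound each by \eqref{eq:second_quantization_op_+} and sum using $\sum_n |\langle\Psi^{(n+1)},\cdot\,\Psi^{(n-1)}\rangle|$ and a Cauchy--Schwarz in $n$ with weights $(n+1)^{1/(2p')}$ on each side, which after reindexing gives the stated constant $2^{1/(2p')}$ and the power $1/p'$.

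The main obstacle is precisely this bookkeeping: turning the vector-norm bounds of Lemma~\ref{lem:second_quantization_op} (which have the asymmetric form $\Nrm{A\Psi} \le \Nrm{O}{p}\Nrm{\cN^{1/p'}\Psi}$) into the \emph{symmetric} quadratic-form bounds with a single weight $(\cN+1)^{1/p'}$ (i.e. $(\cN+1)^{1/(2p')}$ on each side) and the right numerical constant, without losing a power of $\cN$. The clean way around it is to exploit the grading of $\cF(\h\oplus\h)$: $\dG_\sigma(O)$ preserves particle number while $\dG^\pm_{\sigma,\sigma'}(O)$ shifts it by $\mp 2$, so decompose $\Psi = \bigoplus_n \Psi^{(n)}$, apply the corresponding inequality of Lemma~\ref{lem:second_quantization_op} on each sector (where $\cN$ acts as a scalar), and recombine by Cauchy--Schwarz in the sector index with the matching power weights; monotonicity of $t\mapsto t^{s}$ and $\cN+2\le 2(\cN+1)$ then supply the constant $2^{1/(2p')}$. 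The $p=\infty$ case of \eqref{eq:inner_product_bound} and the $p=2$ cases of \eqref{eq:inner_product_bound_2}--\eqref{eq:inner_product_bound_3} reduce to the already-proved endpoint estimates of Lemma~\ref{lem:second_quantization_op} together with a plain Cauchy--Schwarz, and the intermediate $p$ follow either by the sector argument above or by the same weighted interpolation already used in the proof of Lemma~\ref{lem:second_quantization_op}.
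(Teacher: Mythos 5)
Your final, committed argument (decompose $\Psi$ along the particle-number grading, apply Lemma~\ref{lem:second_quantization_op} on each sector where $\cN$ acts as a scalar, and recombine by Cauchy--Schwarz in the sector index with the weight $n^{1/p'}$ split symmetrically as $n^{1/(2p')}$ on each side, reindexing by the $\pm2$ shift for $\dG^{\pm}$) is correct and does produce the stated constant $2^{1/(2p')}$ via $\cN+2\leq 2\(\cN+1\)$. It is, however, essentially the paper's proof in discrete form: the paper achieves the same symmetric weight-splitting at the operator level by inserting $\(\cN+\epsilon\)^{\pm\frac{1}{2p'}}$ on the two sides of the inner product, commuting the weight through $\dG_\sigma(O)$ (and using $g(\cN)\,a^*=a^*g(\cN+1)$ for the pairing terms), and then applying Cauchy--Schwarz together with Lemma~\ref{lem:second_quantization_op} to the weighted vector. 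One point to correct: you dismiss exactly this route as ``circular,'' but it is not --- Lemma~\ref{lem:second_quantization_op} is applied to the \emph{modified} vector $\(\cN+\epsilon\)^{-\frac{1}{2p'}}\Psi$, and $\Nrm{\cN^{\frac{1}{p'}}\(\cN+\epsilon\)^{-\frac{1}{2p'}}\Psi}{\cG}\leq\Nrm{\cN^{\frac{1}{2p'}}\Psi}{\cG}$ closes the estimate with no circularity. Also, your aside claiming that $\(\cN+2\)^{1/p'}\leq 2^{1/p'}\(\cN+1\)^{1/p'}$ ``does not hold'' and then asserting the very same inequality two clauses later should be excised; the inequality is true (the operators commute, so it is a scalar statement), and it is what you need. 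The exploratory false starts in the middle of the write-up should be removed before this can stand as a proof.
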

	
	\begin{proof}
		For $\epsilon>0$, one has the equality
		\begin{align*}
			\Inprod{\Psi}{\DG{O}\Psi}_{\cG} &= \Inprod{\(\cN+\epsilon\)^{\frac{1}{2p'}}\Psi}{\(\cN+\epsilon\)^{-\frac{1}{2p'}}\DG{O}\Psi}_{\cG}
			\\
			&= \Inprod{\(\cN+\epsilon\)^{\frac{1}{2p'}}\Psi}{\DG{O}\(\cN+\epsilon\)^{-\frac{1}{2p'}}\Psi}_{\cG}.
		\end{align*}
		Applying the Cauchy--Schwarz inequality and Lemma~\ref{lem:second_quantization_op} yields
		\begin{align*}
			\Inprod{\Psi}{\DG{O}\Psi}_{\cG} &\leq \Nrm{O}{p} \Nrm{\(\cN+\epsilon\)^{\frac{1}{2p'}}\Psi}{\cG} \Nrm{\cN^\frac{1}{p'}\(\cN+\epsilon\)^{-\frac{1}{2p'}}\Psi}{\cG}
			\\
			&\leq \Nrm{O}{p} \Nrm{\(\cN+\epsilon\)^{\frac{1}{2p'}}\Psi}{\cG} \Nrm{\cN^\frac{1}{2p'}\Psi}{\cG}.
		\end{align*}
		Then inequality~\eqref{eq:inner_product_bound} follows by passing to the limit $\epsilon\to 0$. With a similar argument and the observation that for any nice function $g$, $g(\cN)\,a^* = a^*\,g(\cN+1)$, we obtain
		\begin{equation*}
			\Inprod{\Psi}{\dG^+_{\sigma,\sigma'}(O)\Psi}_{\cG} \leq \Nrm{O}{p} \Nrm{\cN^{\frac{1}{2p'}}\Psi}{\cG} \Nrm{\(\cN+2\)^\frac{1}{2p'}\Psi}{\cG}
		\end{equation*}
		from which we deduce inequality~\eqref{eq:inner_product_bound_2}. Inequality~\eqref{eq:inner_product_bound_3} follows immediately from Equation~\eqref{eq:dG_adjoint_property}.
	\end{proof}

\section{Quantum Fluctuations and the Mean-Field Limit}
	
	 In this section, we prove how the error of the mean-field approximation of the fermionic system can be controlled by the mean number of particles of the fluctuation dynamics about a quasi-free state. To this end, it suffices for us to specialize our study to the state vector 
	 \begin{equation}\label{fluctuation_state_vector}
	 \Psi_\text{fluc} = \sfR^*_{\op}\Phi_{t}=\sfR^*_{\op}e^{-i(t/\hbar)\sfL_N}\sfR_{\op_0}\Psi^\init
	 \end{equation}
	 and consider its mean number of particles
	\begin{equation*}
		\Inprod{\Psi_\text{fluc}}{\cN \Psi_\text{fluc}}_{\cG} = \Nrm{\cN^\frac{1}{2}\Psi_\text{fluc}}{\cG}^2.
	\end{equation*}
	More specifically, we control the error of the mean-field approximation by the norm
	\begin{equation}\label{mean-particle-number}
		\Nrm{\Psi_\text{fluc}}{\cG_k} := \Nrm{\(\cN+1\)^k\Psi_\text{fluc}}{\cG}
	\end{equation}
	for $k>0$, which allows us to handle additional small error terms. For the rest of this section, we drop the subscript of the fluctuation vector and the dependence on time to reduce cumbersome notations. 
	
	One can see that quantity \eqref{mean-particle-number} controls the difference of the one-particle density operators in the sense of the following proposition.
	
	\begin{prop}\label{prop:degree_evaporation_vs_Sp}
		Define $\Psi$ and $\op_{N:1}$ as in the Theorem~\ref{thm:mean_field_2}. Then, for any $p\in[1,\infty]$, we have the estimate 
		\begin{equation*}
			\Nrm{\op_{N:1} - \op}{\L^p} \leq \frac{C_p}{\min(N^{\frac{1}{2}},N\, h^{\frac{3}{p'}})} \Nrm{\Psi}{\cG_\frac{1}{2p}}^2
		\end{equation*}
		where $C_p = 2^{2+\frac{1}{2p}}$ if $p\geq 2$ and $C_p = 2+2^\frac{5}{4} \cC_{\frac{p}{2-p}}^{\frac{1}{2}}$ if $p< 2$.
	\end{prop}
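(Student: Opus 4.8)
## Proof Proposal

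The plan is to express the difference $\op_{N:1} - \op$ in terms of the fluctuation vector $\Psi = \Psi_{\text{fluc}} = \sfR^*_{\op}\Phi_t$ and then use the Schatten-class estimates for second-quantized operators from Lemma~\ref{lem:second_quantization_op_2}. First I would recall that the integral kernel of $\op_{N:1}$ is $\op_{N:1}(x,y) = \frac{1}{N h^3}\Inprod{\Phi}{a^*_{y,l}\,a_{x,l}\,\Phi}$, while by the computation in Section~\ref{sec:Bogoliubov} the one-particle reduced density matrix of the reference state $\Phi_{\op} = \sfR_{\op}\Omega_\cG$ is exactly $\op$, with kernel $\op(x,y) = \frac{1}{N h^3}(v^*v)(x,y)$. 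Writing $\Phi = \sfR_{\op}\Psi$ and using the Bogoliubov relations $\sfR^*_{\op}a_{x,l}\sfR_{\op} = a_l(u_x) - a^*_r(\overline{v_x})$, I would expand the quadratic expression $\Inprod{\Phi}{a^*_{y,l}\,a_{x,l}\,\Phi} = \Inprod{\Psi}{\bigl(a^*_l(u_y) - a_r(\overline{v_y})\bigr)\bigl(a_l(u_x) - a^*_r(\overline{v_x})\bigr)\Psi}$. The term $\Inprod{\Omega_\cG}{\cdots\Omega_\cG}$-type contribution, i.e. the piece coming from $a_r(\overline{v_y})a^*_r(\overline{v_x})$ acting purely on the right sector via the CAR, reproduces exactly $(v^*v)(x,y)$ up to the vacuum expectation; subtracting it leaves a sum of second-quantized operators of the form $\dG_\sigma(\cdot)$, $\dG^+_{\sigma,\sigma'}(\cdot)$, $\dG^-_{\sigma,\sigma'}(\cdot)$ whose one-particle symbols are built from products $u^*u$, $u^*\overline{v}$, $v^*\overline{v}$, plus the error between $a_r(\overline{v_y})a^*_r(\overline{v_x})$ and its vacuum value which is again a $\dG_r$-type term with symbol $v^*v$.

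The second step is to bound the $\L^p$ norm of $\op_{N:1}-\op$ by duality. Testing against a bounded operator $\sfB$ on $\h$ with $\Nrm{\sfB}{\L^{p'}} \le 1$ (using $h^{3/p}\Nrm{\cdot}{p}$ conventions), we get $h^{3/p}\n{\Tr(\sfB(\op_{N:1}-\op))} = \frac{1}{N h^{3/p'}}\n{\Inprod{\Psi}{\dG_l(\sfB)\Psi - \text{(reference)}}}$, and after the expansion above this is a finite sum of terms each of the shape $\n{\Inprod{\Psi}{\dG^\circ_{\sigma,\sigma'}(O_i)\Psi}}$ where each $O_i$ is $\sfB$ pre/post-composed with $u$ or $v$. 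I would then apply Lemma~\ref{lem:second_quantization_op_2}: for the $\dG$ and $\dG^\pm$ terms,
\begin{align*}
	\n{\Inprod{\Psi}{\dG^\circ_{\sigma,\sigma'}(O)\Psi}} \le 2^{\frac{1}{2p'}}\Nrm{O}{r}\,\Inprod{\Psi}{(\cN+1)^{1/r'}\Psi}
\end{align*}
for an appropriate $r$. The key point is the choice of $r$: for $p \ge 2$ I would simply take $r = \infty$ (so $1/r'=1$, i.e. $\Nrm{O}{\infty}$ and a factor $\Nrm{\cN^{1/2}\Psi}{\cG}^2 = \Nrm{\Psi}{\cG_{1/2}}^2$), using $\Nrm{u}{\infty}\le 1$ and $\Nrm{v}{\infty}\le 1$ so that $\Nrm{O}{\infty}\le \Nrm{\sfB}{\infty}\le \Nrm{\sfB}{\L^{p'}}$ after accounting for the $h$-powers; combined with the prefactor $\frac{1}{N h^{3/p'}}$ this gives the $N h^{3/p'}$ denominator. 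For $p \le 2$, to gain the $N^{1/2}$ improvement I would instead put one factor of $v$ into $O$ and estimate $\Nrm{v\,\sfB}{2} \le \Nrm{v}{\frac{2p'}{p'-1}}\Nrm{\sfB}{p'} = \Nrm{v}{\frac{2p}{2-p}}\Nrm{\sfB}{p'}$ by Hölder, using $\Nrm{v}{s} = \cC_{s/2}^{1/2}N^{1/2}h^{3(1/2-1/s)}$ from the Scaling section; the resulting $N^{1/2}$ from $\Nrm{v}{s}$ together with the $\frac{1}{N}$ prefactor yields $N^{-1/2}$. Tracking the index $k$ carefully, one factor of $v$ contributes $\cN^{1/2}$ (via $\Nrm{\cN^{1/2}\Psi}{\cG}$ from the $\dG^-$ with $r=2$) while the other operator, estimated in $\L^\infty$, contributes another $\cN^{1/2}$ or uses the $\cN^{1/q'}$ from $\dG_\sigma$ with suitable $q$, giving the stated exponent $\frac{1}{2p}$ (the value $\frac{1}{2p}$ rather than $\frac12$ reflects that the $v$-improved term only needs $(\cN+1)^{1/p}$ on one side).

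The main obstacle I expect is \textbf{bookkeeping of the different second-quantized pieces and reconciling the number-operator weights}: the expansion produces genuinely different operator types ($\dG$, $\dG^+$, $\dG^-$, on sectors $l,l$, $r,r$, $l,r$, $r,l$) with symbols of differing Schatten regularity, and for $p\le 2$ one must split the testing operator's Schatten norm and the number-operator power optimally so that the final exponent is exactly $\frac{1}{2p}$ and the constant is exactly $C_p = 2 + 2^{5/4}\cC_{p/(2-p)}^{1/2}$. Getting the $p\ge2$ constant $C_p = 2^{2+\frac{1}{2p}}$ is more routine since everything is estimated in operator norm. A secondary technical point is verifying that the vacuum-subtraction indeed cancels the $v^*v$ contribution exactly — this follows from the CAR $\com{a_{x,r},a^*_{y,r}}_+ = \delta(x-y)$ and $a_{r}\Omega_\cG = 0$, but must be done for the fluctuation state $\Psi$ rather than $\Omega_\cG$, so the cancellation is only of the $\Omega_\cG$-component and the remainder $\bigl(a_r(\overline{v_y})a^*_r(\overline{v_x}) - (\text{vac})\bigr)$ is $-a^*_r(\overline{v_x})a_r(\overline{v_y})$, a $\dG_r$-type operator with symbol $\overline{v}\,\overline{v}^* = \overline{v^*v}$, which is then absorbed into the same estimates. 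I would assemble these bounds, sum the finitely many terms, and absorb all numerical factors into $C_p$.
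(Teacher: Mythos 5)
Your overall route is the same as the paper's: expand $N h^3\opNr(x,y)-\omega(x,y)$ through the Bogoliubov relations into the four second-quantized pieces $\DGl{u\,O\,u}$, $\DGr{\conj{v}\,\conj{O}^*v}$, $\dG^{\pm}_{l,r}(v\,O\,u)$, test against $O$ by duality, and for $p\le 2$ gain the $N^{1/2}$ by putting $v$ into the Hilbert--Schmidt norm via $\Nrm{v\,O\,u}{2}\le\Nrm{v}{\frac{2p}{2-p}}\Nrm{O}{p'}$ with $\Nrm{v}{r}=\cC_{r/2}^{1/2}N^{1/2}h^{3(1/2-1/r)}$. Your normal-ordering remark (the $v^*v$ cancellation holds for general $\Psi$ because the CAR produces a c-number times $\Nrm{\Psi}{\cG}^2=1$) is also the correct justification.

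There is, however, one concrete misstep in your $p\ge 2$ case. You propose to apply Lemma~\ref{lem:second_quantization_op_2} with Schatten index $r=\infty$, i.e.\ to bound the symbols in operator norm, which forces the number-operator weight $\cN^{1/r'}=\cN$ and hence yields $\Nrm{\Psi}{\cG_{1/2}}^2$. That is strictly weaker than the claimed $\Nrm{\Psi}{\cG_{\frac{1}{2p}}}^2$ (for $p>2$ one has $\frac{1}{2p}<\frac12$), so as written you prove a different, weaker proposition. The correct move is to keep the duality exponent: apply Lemma~\ref{lem:second_quantization_op_2} with index $p'$ directly to each symbol, using $\Nrm{u}{\infty}\le 1$, $\Nrm{v}{\infty}\le 1$ only to reduce $\Nrm{u\,O\,u}{p'}$, $\Nrm{v\,O\,u}{p'}$, etc.\ to $\Nrm{O}{p'}$ by H\"older; the lemma then produces $\Inprod{\Psi}{(\cN+1)^{1/p}\Psi}_{\cG}=\Nrm{\Psi}{\cG_{\frac{1}{2p}}}^2$ and the constant $2^{2+\frac{1}{2p}}$. (Note also that for $p\ge 2$ one has $p'\le 2$, so the $\dG^{\pm}$ bounds of the lemma are indeed applicable with index $p'$.) With that one correction, the rest of your bookkeeping — including the $p\le 2$ constant $2+2^{5/4}\cC_{p/(2-p)}^{1/2}$ obtained by bounding the two diagonal terms in operator norm and the two off-diagonal terms through $\Nrm{v}{2p/(2-p)}$ — lines up with the paper's proof.
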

	
	\begin{proof}
		Following the proof of \cite[Proof of Theorem~2.1]{benedikter_mean-field_2016-1}, we have
		\begin{align*}
			 N h^3 \opNr(x,y)-\omega(x, y) &= \Inprod{\Psi_{N}}{a_{y,l}^* \, a_{x,l}\Psi_{N}}_{\cG} = \Inprod{\Psi}{\sfR_{\op}^*\, a_{y,l}^* \,a_{x,l}\, \sfR_{\op} \Psi}_{\cG}
			\\
			&= \langle \Psi \,|\, \big(a_l^*(u_y) \, a_l(u_x) - a_l^*(u_y) \, a_r^*(\conj{v}_x)
			\\
			&\qquad\qquad - a_r(\conj{v}_y) \, a_l(u_x)- a_r^*(\conj{v}_x) \, a_r(\conj{v}_y)\big) \Psi\rangle_{\cG}.
		\end{align*}
		Since $\op = \frac{1}{Nh^3} \omega$, we deduce
		\begin{equation}\label{eq:Fock-mean-field-error}
			\begin{aligned}
				\(\op_{N:1} - \op\)\!(x,y) 
				&= \frac{1}{Nh^3} \langle\Psi \,|\, \big(a_l^*(u_y)\, a_l(u_x)- a_l^*(u_y)\, a_r^*(\conj{v}_x)
				\\
				&\qquad\qquad\qquad - a_r(\conj{v}_y)\, a_l(u_x) - a_r^*(\conj{v}_x)\, a_r(\conj{v}_y)\big) \Psi \rangle_{\cG}.
			\end{aligned}
		\end{equation}
		In particular, pairing operator \eqref{eq:Fock-mean-field-error} with an observable $O$ yields
		\begin{multline*}
			\Tr{O\(\op_{N:1} - \op\)}
			\\
			= \frac{1}{N\,h^3} \Inprod{\Psi}{\(\DGl{u\, O\, u} - \DGr{\conj{v}\, \conj{O}^* v} - \dG^+_{l,r}(v\,O\,u) - \dG^-_{r,l}(v\,O\,u)\)\Psi}_{\cG}.
		\end{multline*}
		
		In the case $p\in[2,\infty]$, we apply the fact that $\Nrm{u}{\infty} \leq 1, \Nrm{v}{\infty} \leq 1$, and Lemma~\ref{lem:second_quantization_op_2} to deduce the estimate
		\begin{equation*}
			\Tr{O\(\op_{N:1} - \op\)} \leq \frac{2^{2+\frac{1}{2p}}}{N\,h^3} \Nrm{O}{p'} \Inprod{\Psi}{\(\cN+1\)^\frac{1}{p}\Psi}_{\cG}.
		\end{equation*}
		Then, by duality and the fact that $\Nrm{\opmu}{\L^p} = h^{\frac{3}{p}} \Nrm{\opmu}{p}$, we obtain the result when $p\geq 2$.
		
		For $p\in[1,2]$, we can bound the terms with $\DGl{u\, O\, u}$ and $\DGr{\conj{v}\, \conj{O}^* v}$ as in the previous case. For the other two terms, we begin by applying H\"older's inequality to get $\Nrm{v\, O\, u}{2} \leq \Nrm{v}{r}\Nrm{O}{p'}$ where $\frac{1}{r} =\frac{1}{2}- \frac{1}{p'}$. Then, by Lemma~\ref{lem:second_quantization_op_2}, it follows that 
		\begin{align*}
			\n{\Inprod{\Psi}{ \(\dG^+_{l,r}(v\,O\,u) + \dG^-_{r,l}(v\,O\,u)\)\Psi}_{\cG}} &= 2 \n{\Inprod{\Psi}{\dG^-_{r,l}(v\,O\,u)\Psi}_{\cG}}
			\\
			&\leq 2^\frac{5}{4} \Nrm{v}{r}\Nrm{O}{p'} \Inprod{\Psi}{\(\cN+1\)^\frac{1}{2}\Psi}_{\cG}.
		\end{align*}
		Since $\Nrm{v}{r} = \cC_{\frac{r}{2}}^{\frac{1}{2}} N^\frac{1}{2} h^{3\(\frac{1}{2} - \frac{1}{r}\)}$ and $\frac{1}{2} - \frac{1}{r} = \frac{1}{p'}$, this implies
		\begin{equation*}
			\n{\Inprod{\Psi}{ \(\dG^+_{l,r}(v\,O\,u) + \dG^-_{r,l}(v\,O\,u)\)\Psi}_{\cG}} \leq 2^\frac{5}{4} \Nrm{O}{p'} \cC_{\frac{r}{2}}^{\frac{1}{2}} N^\frac{1}{2} h^{\frac{3}{p'}} \Inprod{\Psi}{\(\cN+1\)^\frac{1}{2}\Psi}_{\cG}.
		\end{equation*}
		So, we have the estimate 
		\begin{equation*}
			\Tr{O\(\op_{N:1} - \op\)} \leq \Nrm{O}{p'}\(\frac{2}{N\,h^3} + \frac{2^\frac{5}{4} \cC_{\frac{r}{2}}^{\frac{1}{2}}}{N^{\frac{1}{2}}\,h^{\frac{3}{p}}} \) \Inprod{\Psi}{\(\cN+1\)^\frac{1}{p}\Psi}_{\cG}
		\end{equation*}
		which yields the desired result.
	\end{proof}
	
	To better understand what it means to have a small number of particles after having performed the Bogoliubov transformation, it is useful to see how this latter acts on the number operator. From the definition \eqref{eq:implementation_of_nu}, we obtain the following formula for $\sigma\in\set{r,l}$
	\begin{equation}\label{eq:bogoliubov_N_lr}
		\sfR_{\op}\,\cN_\sigma \sfR_{\op}^* = \sfA_\sigma + \sfC + \sfC^*,
	\end{equation}
	where
	\begin{equation*}
		\sfA_\sigma = \cN_\sigma + N - \DG{\omega\oplus\conj{\omega}}, \quad \text{ and } \quad \sfC = \DGmrl{uv}.
	\end{equation*}
	Since changing $v$ by $-v$ changes $\sfR_{\op}$ to $\sfR_{\op}^*$, we deduce similarly that
	\begin{equation}\label{eq:bogoliubov_N_lr_2}
		\sfR_{\op}^*\,\cN_\sigma \sfR_{\op} = \sfA_\sigma - \sfC - \sfC^*.
	\end{equation}
	From these formulas, we deduce the following interesting fact: the operator $\opnu_{\op}$ acting on the single Fock space $\cF$ and corresponding to the Bogoliubov transform of the vacuum in $\cG$ commutes with the number of particles operator .
	
	\begin{lem}\label{lem:commut_opnu_N}
		Let $\opnu_{\op} := \sfI_\cG^{-1}(\sfR_{\op}\,\Omega)$. Then
		\begin{equation*}
			\com{\cN, \opnu_{\op}} = 0.
		\end{equation*}
		This also implies that $\opNop := \n{\opnu_{\op}}^2$ commutes with $\cN$.
	\end{lem}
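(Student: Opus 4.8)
The plan is to reduce the statement to the single fact that the Bogoliubov transformation $\sfR_{\op}$ on $\cG$ commutes with the \emph{difference} $\cN_l - \cN_r$ of the left and right number operators — which is essentially already contained in formula~\eqref{eq:bogoliubov_N_lr}.

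\emph{Step 1: translate the claim into $\cG$.} Recall $\sfI_\cG = \sfU\sfJ$, that $\sfJ$ conjugates the right tensor factor, and that $\sfU$ intertwines $\cN\otimes 1$ with $\cN_l$ and $1\otimes\cN$ with $\cN_r$. Since the number operator acts in each sector of $\cF$ by multiplication by the integer $n$, it commutes with complex conjugation, and one obtains the two one-sided refinements of the relation recalled after~\eqref{eq:def_I_G}: for every $\opup\in\L^2(\cF)$,
\begin{align*}
	\cN_l\,\sfI_\cG(\opup) = \sfI_\cG(\cN\opup), \qquad \cN_r\,\sfI_\cG(\opup) = \sfI_\cG(\opup\,\cN).
\end{align*}
Writing $\Psi := \sfR_{\op}\Omega_\cG = \sfI_\cG(\opnu_{\op})$, the desired identity $\com{\cN,\opnu_{\op}} = 0$ is therefore equivalent to $\cN_l\Psi = \cN_r\Psi$, i.e.\ to $(\cN_l - \cN_r)\,\sfR_{\op}\Omega_\cG = 0$.

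\emph{Step 2: the key commutation.} In~\eqref{eq:bogoliubov_N_lr}, namely $\sfR_{\op}\,\cN_\sigma\,\sfR_{\op}^* = \sfA_\sigma + \sfC + \sfC^*$ with $\sfA_\sigma = \cN_\sigma + N - \DG{\omega\oplus\conj{\omega}}$ and $\sfC = \DGmrl{uv}$, the operator $\sfC$ and the combination $\DG{\omega\oplus\conj{\omega}}$ are independent of $\sigma$. Subtracting the $\sigma = l$ and $\sigma = r$ instances, all $\sigma$-independent terms (and the constant $N$) cancel, leaving $\sfR_{\op}(\cN_l - \cN_r)\sfR_{\op}^* = \cN_l - \cN_r$; equivalently, $\sfR_{\op}$ commutes with the self-adjoint operator $\cN_l - \cN_r$. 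Since $\Omega_\cG$ is the vacuum, $(\cN_l - \cN_r)\Omega_\cG = 0$, whence $(\cN_l - \cN_r)\sfR_{\op}\Omega_\cG = \sfR_{\op}(\cN_l - \cN_r)\Omega_\cG = 0$, proving $\com{\cN,\opnu_{\op}} = 0$. Taking adjoints (and using $\cN^* = \cN$) gives $\com{\cN,\opnu_{\op}^*} = 0$ as well, and then the Leibniz rule for commutators yields $\com{\cN,\opNop} = \com{\cN,\opnu_{\op}^*\opnu_{\op}} = \opnu_{\op}^*\com{\cN,\opnu_{\op}} + \com{\cN,\opnu_{\op}^*}\opnu_{\op} = 0$.

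The only point needing a little care — and the closest thing to an obstacle here — is the handling of the \emph{unbounded} operator $\cN_l - \cN_r$ under conjugation by the unitary $\sfR_{\op}$: one should read~\eqref{eq:bogoliubov_N_lr} as an identity on the appropriate domain, and check that $\sfR_{\op}\Omega_\cG$ lies in the domain of $\cN_l - \cN_r$. This is immediate, since $\Omega_\cG$ is an eigenvector of $\cN_l - \cN_r$ with eigenvalue $0$, so $\sfR_{\op}\Omega_\cG \in \sfR_{\op}\,\mathrm{dom}(\cN_l-\cN_r) = \mathrm{dom}\bigl(\sfR_{\op}(\cN_l-\cN_r)\sfR_{\op}^*\bigr) = \mathrm{dom}(\cN_l-\cN_r)$; alternatively, one avoids domain bookkeeping entirely by testing against the (dense, $\sfR_{\op}$-preserved up to dense range) finite-particle subspace. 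Everything else is a direct computation.
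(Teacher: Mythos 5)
Your proof is correct and follows essentially the same route as the paper's: translate $\com{\cN,\opnu_{\op}}=0$ into $(\cN_l-\cN_r)\,\sfR_{\op}\Omega_\cG=0$ via the identities $\cN_l\,\sfI_\cG(\opup)=\sfI_\cG(\cN\opup)$, $\cN_r\,\sfI_\cG(\opup)=\sfI_\cG(\opup\,\cN)$, and then use the cancellation of the $\sigma$-independent terms in \eqref{eq:bogoliubov_N_lr}--\eqref{eq:bogoliubov_N_lr_2} to commute $\cN_l-\cN_r$ through $\sfR_{\op}$ and annihilate the vacuum. Your explicit Leibniz-rule derivation of $\com{\cN,\opNop}=0$ and the remark on domains are welcome additions but do not change the argument.
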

	
	\begin{proof}
		Let $\Phi_{\op} := \sfR_{\op}\,\Omega = \sfI_\cG\opnu_{\op}$. Then  we obtain $\cN_l\Phi_{\op} = \sfI_\cG(\cN\opnu_{\op})$ and $\cN_r\Phi_{\op} = \sfI_\cG(\opnu_{\op}\cN)$, therefore
		\begin{equation*}
			\sfI_\cG^{-1}\com{\cN, \opnu_{\op}} = \(\cN_l - \cN_r\)\Phi_{\op} = \(\cN_l - \cN_r\)\sfR_{\op}\,\Omega.
		\end{equation*}
		Now we use Formula~\eqref{eq:bogoliubov_N_lr_2}, yielding
		\begin{equation*}
			\(\cN_l - \cN_r\)\sfR_{\op}\,\Omega = \sfR_{\op}\(\sfA_l-\sfA_r\)\,\Omega = \sfR_{\op}\(\cN_l-\cN_r\)\,\Omega = 0,
		\end{equation*}
		which proves the result.
	\end{proof}
	
	Since the number operator on the double Fock space $\cG$ is given by $\cN = \cN_l+\cN_r$, Equations~\eqref{eq:bogoliubov_N_lr} and \eqref{eq:bogoliubov_N_lr_2} imply
	\begin{align}\label{eq:bogoliubov_N}
		\sfR_{\op}\, \cN \sfR_{\op}^* &= \sfA + 2\sfC + 2\sfC^*
		\\\label{eq:bogoliubov_N_2}
		\sfR_{\op}^*\, \cN \sfR_{\op} &= \sfA - 2\sfC - 2\sfC^*
	\end{align}
	with $\sfA = \sfA_l + \sfA_r = \cN + 2 N - 2 \DG{\omega \oplus \conj{\omega}}$. This allows us to prove the following bounds.
	
	\begin{lem}\label{lem:bogoliubov_N_bound}
		Let $k\in\N$. Then for any $\Psi\in \cG_{k}$
		\begin{align*}
			\Nrm{\cN^k \sfR_{\op}^*\Psi}{\cG} &\leq 3^k\Nrm{\(\cN + 2N+2k\)^{k}\Psi}{\cG}
			\\
			\Nrm{\cN^k \sfR_{\op}\Psi}{\cG} &\leq 3^k\Nrm{\(\cN + 2N+2k\)^{k}\Psi}{\cG}.
		\end{align*}
	\end{lem}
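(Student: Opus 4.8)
## Proof proposal for Lemma~\ref{lem:bogoliubov_N_bound}

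The plan is to estimate $\Nrm{\cN^k \sfR_{\op}^{\#}\Psi}{\cG}$ (with $\sfR_{\op}^{\#}$ meaning either $\sfR_{\op}$ or $\sfR_{\op}^*$) by conjugating $\cN^k$ through the Bogoliubov transformation and controlling the resulting operator. The starting point is Formula~\eqref{eq:bogoliubov_N_2}, which gives $\sfR_{\op}^*\,\cN\,\sfR_{\op} = \sfA - 2\sfC - 2\sfC^*$ with $\sfA = \cN + 2N - 2\DG{\omega\oplus\conj\omega}$ and $\sfC = \dGmrl{uv}$, so that $\cN^k\sfR_{\op}^* = \sfR_{\op}^*\,(\sfR_{\op}\,\cN\,\sfR_{\op}^*)^k = \sfR_{\op}^*\,(\sfA + 2\sfC + 2\sfC^*)^k$ using \eqref{eq:bogoliubov_N}, and since $\sfR_{\op}^*$ is unitary it suffices to bound $\Nrm{(\sfA+2\sfC+2\sfC^*)^k\Psi}{\cG}$. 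The idea is then to show that each of the operators $\sfA$, $\sfC$, $\sfC^*$ is controlled, on the relevant domain, by $\cN + 2N + $ (a constant), uniformly in $h$ and $N$, and to expand the $k$-th power.

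First I would record the pointwise bounds on the building blocks. Since $0\le\omega\le 1$ we have $0 \le \DG{\omega\oplus\conj\omega} \le \cN$ in the sense of quadratic forms, whence $0 \le \sfA \le \cN + 2N$ (and also $\sfA \ge -\cN + 2N \ge 0$ only after noting $\DG{\omega\oplus\conj\omega}\le\cN$ again), so $\sfA$ is a nonnegative operator bounded above by $\cN + 2N$. For the off-diagonal pieces, $\sfC = \dGmrl{uv}$ with $uv = \sqrt{\omega(1-\omega)}$ satisfying $\Nrm{uv}{\infty}\le \tfrac12$; applying Lemma~\ref{lem:second_quantization_op} (inequality \eqref{eq:second_quantization_op_-}) with $p=\infty$ on each sector gives $\Nrm{\sfC\,\Xi}{\cG} \le \tfrac12\Nrm{\cN\,\Xi}{\cG}$ and likewise $\Nrm{\sfC^*\,\Xi}{\cG}\le \tfrac12\Nrm{(\cN+2)\,\Xi}{\cG}$ for any $\Xi\in\cG$ in the domain (here I use the standard fact, implicit in the proof of that lemma, that the $p=2$ bound of \cite{benedikter_mean-field_2016} combined with $\Nrm{uv}{\infty}\le\tfrac12$ yields the number-operator estimate). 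Consequently each of $\sfA$, $2\sfC$, $2\sfC^*$ is dominated by $\cN + 2N + 2$ as an operator, up to shifting $\cN$ by at most $2$ each time one of the creation-type factors $\sfC^*$ acts.

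The main step is then the bookkeeping of the commutation with $\cN$: when expanding $(\sfA + 2\sfC + 2\sfC^*)^k$ into $3^k$ ordered monomials of length $k$, each factor $\sfA$ (which commutes with $\cN$), $\sfC$ (which lowers particle number), or $\sfC^*$ (which raises it by $2$) can be pushed to act on $\Psi$ one at a time; using $g(\cN)\,a^* = a^*\,g(\cN+1)$ repeatedly, every monomial is bounded in $\cG$-norm by $\Nrm{(\cN + 2N + 2k)^k\Psi}{\cG}$, because at worst $k$ creation-type factors each shift the number argument up by $2$, and $\sfA$ contributes the factor $\cN + 2N$. Summing the $3^k$ monomials gives $\Nrm{\cN^k\sfR_{\op}^*\Psi}{\cG}\le 3^k\Nrm{(\cN+2N+2k)^k\Psi}{\cG}$. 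The identical argument with the roles of \eqref{eq:bogoliubov_N} and \eqref{eq:bogoliubov_N_2} interchanged (equivalently, replacing $v$ by $-v$, which swaps $\sfR_{\op}\leftrightarrow\sfR_{\op}^*$ and flips the signs of $\sfC,\sfC^*$ without affecting the norm bounds) yields the bound for $\sfR_{\op}$.

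The part requiring the most care is the domain/ordering argument in the last step: $\sfA$, $\sfC$, $\sfC^*$ are unbounded, so one must justify the repeated "peeling off one factor at a time" on a dense domain (e.g.\ finite-particle vectors, then a density/monotone-limit argument) and keep precise track that the shift in the number operator never exceeds $2$ per creation factor and $0$ per annihilation or diagonal factor. Once the per-monomial estimate $\Nrm{M\,\Psi}{\cG}\le\Nrm{(\cN+2N+2k)^k\Psi}{\cG}$ is established for each length-$k$ monomial $M$, the triangle inequality over the $3^k$ terms closes the proof.
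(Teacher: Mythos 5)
Your overall strategy is the same as the paper's (conjugate $\cN$ through $\sfR_{\op}$ via \eqref{eq:bogoliubov_N}--\eqref{eq:bogoliubov_N_2}, bound $\sfA$, $\sfC$, $\sfC^*$ individually, and track the shifts of $\cN$ when expanding the $k$-th power), but there is one genuine gap in the way you bound the pairing terms. You invoke inequality \eqref{eq:second_quantization_op_-} with $p=\infty$ and $\Nrm{uv}{\infty}\leq\tfrac12$ to claim $\Nrm{\sfC\,\Xi}{\cG}\leq\tfrac12\Nrm{\cN\Xi}{\cG}$. But Lemma~\ref{lem:second_quantization_op} only states \eqref{eq:second_quantization_op_-} and \eqref{eq:second_quantization_op_+} for $p\in[1,2]$, and the $p=\infty$ version is in fact false for the off-diagonal quantizations $\dG^{\pm}$: taking $O=\sum_{j=1}^{M}\ket{e_{2j}}\bra{e_{2j-1}}$ with $\Nrm{O}{\infty}=1$, the vector $\dG^{+}(O)\,\Omega$ is a sum of $M$ mutually orthogonal two-particle states and has norm of order $\sqrt{M}$, while $\Nrm{(\cN+2)\Omega}{\cG}=2$; the correct control is through $\Nrm{O}{2}=\sqrt{M}$, i.e.\ the $p=2$ case. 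This is not a cosmetic issue, because the interpolation underlying Lemma~\ref{lem:second_quantization_op} for $\dG^{\pm}$ only runs between the trace-class and Hilbert--Schmidt endpoints.

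The repair is exactly what the paper does: use $p=2$ together with $\Nrm{uv}{2}\leq\Nrm{u}{\infty}\Nrm{v}{2}=N^{1/2}$ to get $\Nrm{\sfC^*\Psi}{\cG}\leq N^{1/2}\Nrm{(\cN+2)^{1/2}\Psi}{\cG}$, and then absorb the square root via $2\,N^{1/2}(\cN+2)^{1/2}\leq \cN+N+2$ (the operators commute), yielding $\Nrm{\sfC^*\Psi}{\cG}\leq\tfrac12\Nrm{(\cN+N+2)\Psi}{\cG}$ and similarly for $\sfC$. This is also where the $2N$ in the final weight $(\cN+2N+2k)^k$ genuinely comes from on the off-diagonal side; with your (unjustified) $N$-independent bound the constant would come out differently. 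A secondary, minor point: your parenthetical argument for $\sfA\geq 0$ does not work as written, since $\DG{\omega\oplus\conj\omega}\leq\cN$ only gives $\sfA\geq 2N-\cN$, which is not nonnegative on high sectors; either argue as the paper does (sum the two conjugation identities, so $2\sfA=\sfR_{\op}\cN\sfR_{\op}^*+\sfR_{\op}^*\cN\sfR_{\op}\geq0$), or observe that positivity is not needed because $-(\cN+2N)\leq\sfA\leq\cN+2N$ with all operators commuting already gives $\Nrm{\sfA\Psi}{\cG}\leq\Nrm{(\cN+2N)\Psi}{\cG}$. With these two fixes, your bookkeeping of the $3^k$ monomials and the $\pm2$ shifts of $\cN$ per creation/annihilation factor goes through and reproduces the paper's induction.
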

	
	\begin{remark}
		With a similar proof, one obtains
		\begin{equation*}
			\Nrm{\sfR_{\op}^*\Psi}{\cG_{1/2}} \leq 3^\frac{1}{2} \Nrm{\(\cN+2N\)^\frac{1}{2}\Psi}{\cG}
		\end{equation*}
		and so by interpolation, for any $s\in[0,1/2]$,
		\begin{equation}\label{eq:bogoliubov_N_bound_small_s}
			\Nrm{\sfR_{\op}^*\Psi}{\cG_{s}} \leq 3^s \Nrm{\(\cN+2N\)^s\Psi}{\cG}.
		\end{equation}
	\end{remark}
	
	\begin{proof}[Proof of Lemma~\ref{lem:bogoliubov_N_bound}]
		Since $\sfR_{\op}^* \cN \sfR_{\op}$ and $\sfR_{\op} \cN \sfR_{\op}^*$ are two positive operators, by doing the sum of the two above identities, we deduce that $\sfA$ is also a positive operator. Therefore, since $\DG{\omega \oplus \conj{\omega}}$ is a positive operator, from the definition of $\sfA$, we obtain
		\begin{equation*}
			0 \leq \sfA \leq \cN + 2N,
		\end{equation*}
		which implies that for any $\Psi\in\cG_{1}$, $\Nrm{\sfA^{1/2}\Psi}{\cG} \leq \Nrm{\(\cN + 2N\)^{1/2}\Psi}{\cG}$. Since $\sfA$ commutes with $\cN+2N$, we deduce that
		\begin{equation*}
			\Nrm{\sfA\Psi}{\cG} \leq \Nrm{\(\cN + 2N\)\Psi}{\cG}.
		\end{equation*}
		On the other hand, by Inequalities~\eqref{eq:second_quantization_op_-} and \eqref{eq:second_quantization_op_+} and the fact that $\Nrm{u}{\infty}\leq 1$ and $\Nrm{v}{2} = N^\frac{1}{2}$, we have
		\begin{equation*}
			\Nrm{\sfC^*\Psi}{\cG} \leq \Nrm{uv}{2} \Nrm{\(\cN+2\)^{1/2}\Psi}{\cG} \leq \frac{1}{2} \Nrm{\(\cN+N+2\)\Psi}{\cG}
		\end{equation*}
		and similarly, $\Nrm{\sfC\Psi}{\cG} \leq \frac{1}{2} \Nrm{\(\cN+N\)\Psi}{\cG}$. From these inequalities, using the fact that $\sfA$ commutes with $\cN$ and the fact that $\cN \sfC = \sfC \(\cN-2\)$ and $\cN \sfC^* = \sfC^* \(\cN+2\)$, we deduce that for any $j\in\N$, by defining $c_j := 2N+2j$, we have
		\begin{multline*}
			\Nrm{\(\cN+c_j\)^j\sfR_{\op} \cN \sfR_{\op}^*\Psi}{\cG} 
			\\
			\leq \Nrm{\sfA\(\cN+c_j\)^j\Psi}{\cG} + 2\Nrm{\sfC\(\cN+c_j-2\)^j\Psi}{\cG} + 2\Nrm{\sfC^*\(\cN+c_j+2\)^j\Psi}{\cG}
			\\
			\leq 3\Nrm{\(\cN+c_{j+1}\)^{j+1}\Psi}{\cG}.
		\end{multline*}
		By induction, this implies that for any $(j,k)\in\N^2$ 
		\begin{equation*}
			\Nrm{\(\cN+c_j\)^j \(\sfR_{\op} \cN \sfR_{\op}^*\)^k\Psi}{\cG} \leq 3^k\Nrm{\(\cN + c_{j+k}\)^{j+k}\Psi}{\cG}.
		\end{equation*}
		Taking $j= 0$ and using the fact that $\sfR_{\op}$ is unitary $\(\sfR_{\op} \cN \sfR_{\op}^*\)^k = \sfR_{\op} \cN^k \sfR_{\op}^*$, we get
		\begin{equation*}
			\Nrm{\cN^k \sfR_{\op}^*\Psi}{\cG} = \Nrm{\(\sfR_{\op} \cN \sfR_{\op}^*\)^k\Psi}{\cG} \leq 3^k\Nrm{\(\cN + c_{k}\)^{k}\Psi}{\cG}.
		\end{equation*}
		The case of $\cN^k \sfR_{\op}$ can be handled in the same way.
	\end{proof}

\section{The Fluctuation Dynamics}

	With the scaling provided in \eqref{eq:def_omega}, we have that $\rho(x) = N^{-1}\, \omega(x,x)$. Let us define $X_\omega(x,y) := N^{-1}\, K(x-y)\, \omega(x,y)$, then this gives us the relation $X_\omega = h^3\, \sfX_{\op}$. Thus, the Hartree--Fock equation \eqref{eq:Hartree--Fock} can be rewritten as follows
	\begin{equation}\label{eq:Hartree--Fock_omega}
		i\hbar\,\dpt \omega = \com{H_\omega,\omega} \ \text{ with } \
		H_\omega = -\frac{\hbar^2}{2}\, \Delta + K*\rho - X_\omega.
	\end{equation}
	By \cite[Proposition~3.1]{benedikter_mean-field_2016}, we know that the dynamics of $\Psi_\text{fluc}$ satisfies 
	\begin{equation*}
		i\hbar\,\dpt \sfU_{t,s} = \sfG_{t}\, \sfU_{t,s} \ \text{ with } \ \sfU_{s,s} = 1 \ \text{ for all } s\in \RR
	\end{equation*}
	and the generator $\sfG_{t}$ is given by
	\begin{equation}\label{eq:def_G}
		\sfG = \DGl{H_\omega} - \DGr{\conj{H_\omega}} + \sfD + \sfQ + \sfQ^* + \tilde{\sfQ} + \tilde{\sfQ}^*
	\end{equation}
	where%
	\begin{small}
	\begin{align*}
		\sfD &= \frac{1}{2\,N} \intdd K(x-y) \bigg(a_l^*(u_x)\, a_l^*(u_y)\, a_l(u_y)\, a_l(u_x) - a_r^*(\conj{u_x})\, a_r^*(\conj{u_y})\, a_r(\conj{u_y})\, a_r(\conj{u_x}) 
		\\
		&\qquad\qquad\qquad\qquad\quad + 2 \,a_l^*(u_x)\, a_r^*(v_x)\, a_r(v_y)\, a_l(u_y) - 2\,a_l^*(u_x)\, a_r^*(\conj{v_y})\, a_r(\conj{v_y})\, a_l(u_x)
		\\
		&\qquad\qquad\qquad\qquad\quad + 2\,a_r^*(\conj{u_x})\, a_l^*(v_y)\, a_l(v_y)\, a_r(\conj{u_x}) - 2\, a_r^*(\conj{u_x})\, a_l^*(v_x)\, a_l(v_y)\, a_r(\conj{u_y}) 
		\\
		&\qquad\qquad\qquad\qquad\quad + a_r^*(\conj{v_y})\, a_r^*(\conj{v_x})\, a_r(\conj{v_x})\, a_r(\conj{v_y}) - a_l^*(v_y)\, a_l^*(v_x)\, a_l(v_x)\, a_l(v_y)\bigg)\d x\d y
		\\
		\sfQ^* &= \frac{1}{N} \intdd K(x-y) \,\bigg(a_l^*(u_x)\, a_l^*(u_y)\, a_r^*(\conj{v_x})\, a_l(u_y)
		- a_r^*(\conj{u_x})\, a_l^*(v_y)\, a_l^*(v_x)\, a_l(v_y)
		\\
		&\qquad\qquad\qquad\qquad\quad
		+ a_r^*(\conj{u_x})\, a_r^*(\conj{u_y})\, a_l^*(v_x)\, a_r(\conj{u_y})
		- a_l^*(u_x)\, a_r^*(\conj{v_y})\, a_r^*(\conj{v_x})\, a_r(\conj{v_y})
		\bigg)\d x\d y
		\\
		\tilde{\sfQ}^* &= \frac{1}{2\,N} \intdd K(x-y) \bigg(
		a_l^*(u_x)\, a_l^*(u_y)\, a_r^*(\conj{v_y})\, a_r^*(\conj{v_x})
		- a_r^*(\conj{u_x})\, a_r^*(\conj{u_y})\, a_l^*(v_y)\, a_l^*(v_x)\bigg)\d x\d y
	\end{align*}
	\end{small}%
	with $u_x(y) := u(y,x)$ and $v_x(y) := v(y,x)$. $\sfD$ contains quartic terms that commute with $\cN = \cN_l + \cN_r$, whereas $\sfQ^*$ and $\tilde{\sfQ}^*$ contain quartic terms that do not commute with $\cN$.

\subsection{Bounds on the Fluctuation Dynamics}\label{sec:fluctuations}

	In this section, we use the uniform in $\hbar$ regularity of the solution of the Hartree--Fock equation to estimate the growth of mean number of particles for the fluctuation dynamics. 

	We fix $p\in[1,2]$ with 
	\begin{equation}\label{eq:scaling_p_b}
		p< \fb = \frac{3}{a+1}
	\end{equation}
	and take $1\leq q_0 < q_1 \leq \infty$ such that
	\begin{equation}\label{eq:scaling_relation_1}
		\frac{1}{2} \(\frac{1}{q_1} + \frac{1}{q_0}\) = \frac{1}{p} - \frac{1}{\fb}.
	\end{equation}
	We choose $T>0$ so that the following two quantities are uniformly bounded on $[0, T]$:
	\begin{subequations}
		\begin{align}\label{eq:def_Dqq}
			\tilde{\cD}_{q_0,q_1} &:= \Nrm{\Dhv{\sqrt{\op}}\,m}{\L^{q_0}}^{\frac{1}{2}} \Nrm{\Dhv{\sqrt{\op}}\,m}{\L^{q_1}}^{\frac{1}{2}},
			\\\label{eq:def_Dqq_tilde}
			\cD_{q_0,q_1} &:= \(\cD_{q_0}\cD_{q_1}\)^\frac{1}{2},
		\end{align}
	\end{subequations}
	with $\cD_{q}$ defined in Lemma~\ref{lem:nabla_u} and $m = 1 + \n{\opp}^n$ with $n>a+1$. The main result of this section is the following inequality.

	\begin{prop}\label{prop:propagator_bound}
		Let $(k_0,k)\in[0,1/2]\times\N$. Then, for any $\Psi\in\cG$ and $t\in[0,T]$, we have
		\begin{equation*}
			\Nrm{\sfU_{t,0}\Psi}{\cG_{k_0}} \leq C_M\, e^{C_M\,\lambda_\alpha\,t} \(\Nrm{\Psi}{\cG_{k_0+\frac{3}{2}k}} + \frac{h^{\(\alpha-1\) k}}{N^{\frac{k}{2}-k_0}} \, t \Nrm{\Psi}{\cG_{\frac{3}{2}k}}\)
		\end{equation*}
		where $\alpha := \frac{3}{p} - \frac{3}{2}$, $C_M = C^{k+k_0} \(1+ N^{-\frac{1}{2}} h^{-1}\)$ for some constant $C>0$, and
		\begin{equation}\label{eq:lambda-alpha}
			\lambda_\alpha = C_{p,a,q_0}\n{\kappa}h^{-\alpha} \(1+\cC_\infty^{-\frac{1}{2}}\) \sup_{[0,T]}\Big(\Nrm{\rho(t)}{L^{p_a}}, \cD_{q_0,q_1}(t), \tilde{\cD}_{q_0,q_1}(t)\Big)
		\end{equation}
		with $p_a = \frac{3}{3-2a}$. 
	\end{prop}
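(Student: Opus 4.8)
The plan is to run a Grönwall argument on the weighted number operator $\left(\cN+1\right)^{2k_0}$ along the fluctuation dynamics $\sfU_{t,0}$, and then to bootstrap the same argument for the higher weight $\left(\cN+1\right)^{2k_0+3k}$ needed to control the $t$-dependent error term. First I would compute
\[
i\hbar\,\dt \Inprod{\sfU_{t,0}\Psi}{\left(\cN+1\right)^{2k_0}\sfU_{t,0}\Psi} = \Inprod{\sfU_{t,0}\Psi}{\bigl[\left(\cN+1\right)^{2k_0},\sfG_t\bigr]\sfU_{t,0}\Psi},
\]
so only the parts of $\sfG_t$ that do not commute with $\cN$ contribute: the terms $\DGl{H_\omega}$, $\DGr{\conj{H_\omega}}$ and $\sfD$ drop out, and what survives is built from $\sfQ$, $\sfQ^*$, $\tilde\sfQ$, $\tilde\sfQ^*$. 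The key point is that each of these operators is a cubic or quartic monomial in the $a^\sharp_{x,\sigma}$, one of whose "legs" carries a kernel of the form $K(x-y)u(\cdot,x)$ or $K(x-y)v(\cdot,x)$, i.e. after taking the position integrals one is looking at second-quantized operators whose one-body kernels are products like $\left(\nabla K * \text{something}\right)$ composed with $u$ or $v$. The strategy is to peel off the "extra" creation/annihilation operators using the crude bound $\Nrm{a^\sharp(f)}{\infty}\le\Nrm{f}{\h}$ (so they cost a factor $\Nrm{v}{2}=N^{1/2}$ or are harmless for $u$), and to bound the remaining genuinely two-body piece with Lemma~\ref{lem:second_quantization_op} and Lemma~\ref{lem:second_quantization_op_2}, which convert a Schatten-$p$ norm of the kernel into a power of $\cN$. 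This is exactly where the exponent $\alpha=\frac3p-\frac32$ enters: the semiclassical $\L^p$-to-$p$ conversion $\Nrm{\cdot}{p}=h^{-3/p}\Nrm{\cdot}{\L^p}$ produces the $h^{-\alpha}$ in $\lambda_\alpha$, and the interpolation between $q_0$ and $q_1$ with the relation \eqref{eq:scaling_relation_1} is chosen precisely so that Young/Hardy--Littlewood--Sobolev applied to $\nabla K * \rho$ (with $\nabla K\in L^{\fb,\infty}$) closes with $\Nrm{\rho}{L^{p_a}}$, $p_a=\frac3{3-2a}$, together with the quantum-gradient quantities $\cD_{q_0,q_1}$, $\tilde\cD_{q_0,q_1}$ coming from writing $\Dhv$ of $u$ and $\sqrt{\op}$ via Lemma~\ref{lem:nabla_u} and Proposition~\ref{prop:regu_HF_sqrt}. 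Collecting terms gives a differential inequality of the form
\[
\dt \Nrm{\sfU_{t,0}\Psi}{\cG_{k_0}}^2 \le C_M\,\lambda_\alpha \Nrm{\sfU_{t,0}\Psi}{\cG_{k_0}}^2 + \left(\text{cross terms controlled by }\Nrm{\sfU_{t,0}\Psi}{\cG_{k_0+3k/2}}\right),
\]
and for $k=0$ plain Grönwall already yields the stated bound with the $t$-term absent.

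For $k\ge 1$ I would argue by induction on $k$. The difficulty is that the commutator $\bigl[\left(\cN+1\right)^{2k_0},\sfG_t\bigr]$ is controlled not by $\left(\cN+1\right)^{2k_0}$ alone but by a slightly higher power; the shift is by $3/2$ per unit of $k$ because the quartic terms $\sfD$ (when commuted against the higher weight, which no longer commutes with it at the quartic level after we track all pieces) and the cubic $\sfQ^\sharp$ terms each raise the effective power by $3/2$ once the $N^{1/2}=\Nrm v2$ factors are absorbed into a renormalisation $h^{(\alpha-1)k}N^{-k/2}$. Concretely, I would prove the propagation estimate for $\Nrm{\sfU_{t,0}\Psi}{\cG_{k_0+3k/2}}$ in terms of $\Nrm{\Psi}{\cG_{k_0+3(k+1)/2}}$ plus a Duhamel-type term $\int_0^t$ of the lower-order quantity, iterate $k$ times, and sum the resulting geometric-type series; the powers of $h$ and $N$ bookkeep into exactly $h^{(\alpha-1)k}N^{-(k/2-k_0)}t$, and the constant $C_M=C^{k+k_0}(1+N^{-1/2}h^{-1})$ arises because each step of the induction produces one more factor $C$ and the exchange-type terms in $\sfG_t$ carry the $N^{-1/2}h^{-1}$ prefactor (from $h^3\sfX$ combined with $1/N$ and a Hilbert--Schmidt bound on $\sfX$, cf.\ \eqref{eq:X_HS_bound}).

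The main obstacle will be the careful commutator bookkeeping in the inductive step: showing that $\bigl[\left(\cN+1\right)^{m},\sfG_t\bigr]$, for general $m=2k_0+3k$, can be estimated by $\left(\cN+1\right)^{m}$ up to the controlled gain $h^{(\alpha-1)}N^{-1/2}$ and up to the next power $\left(\cN+1\right)^{m+3/2}$ with small coefficient. This requires (i) using the pull-through identities $\cN\,a^* = a^*(\cN+1)$ etc.\ to move all weights to one side, (ii) applying Lemma~\ref{lem:second_quantization_op} and Lemma~\ref{lem:second_quantization_op_2} term by term with the right choice of Schatten index $p$ for each of the (at most) $\sim 2^{m}$ monomials produced by expanding $\left(\cN+1\right)^{m}$ — whence the $C^{k+k_0}$ — and (iii) checking that every kernel that appears ($u\,O\,u$, $\bar v\,\bar O^* v$, $v\,O\,u$ and their gradient-weighted analogues, with $O$ a commutator of $K*\rho$ or $\sfX$ against $\cN$) has finite Schatten-$p$ norm under the hypotheses \eqref{eq:regularity_op_2}--\eqref{eq:scaling_q_2}, using Proposition~\ref{prop:diag_vs_weights}, Proposition~\ref{prop:estim_commutator_Lq_Besov}, Proposition~\ref{prop:weighted_com_est} and Proposition~\ref{prop:regu_HF_sqrt}. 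Once all monomials are bounded, assembling them and invoking Grönwall's lemma finishes the proof; the rate constant is collected into $\lambda_\alpha$ as in \eqref{eq:lambda-alpha}.
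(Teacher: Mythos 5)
Your overall flavor (number-operator Grönwall plus some Duhamel iteration) is in the right family, but the central organizing idea of the actual proof is missing, and without it your primary step fails. A direct Grönwall inequality for $\Nrm{\sfU_{t,0}\Psi}{\cG_{k_0}}$ along the \emph{full} dynamics does not close: after the decomposition $\sfQ^* = \tilde{\sfP}^* + \sfP^*$ of Lemma~\ref{lem:Q_decomposition}, the part $\sfP^* = I_{12}+I_{34}$ has no hidden commutator structure, so the $1/\hbar$ cannot be absorbed and Lemma~\ref{lem:I_12} only gives $\frac{1}{\hbar}\Nrm{\sfP^*}{\cG_{j+3/2}\to\cG_j}\lesssim N^{-1/2}h^{-1}$ \emph{with a loss of} $3/2$ powers of $\cN$. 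Feeding this into your differential inequality produces an unclosable infinite hierarchy $\Nrm{\Psi_t}{\cG_{k_0}}\to\Nrm{\Psi_t}{\cG_{k_0+3/2}}\to\cdots$. The paper's resolution is the splitting $\sfG=\tilde{\sfG}+\sfB$ with $\sfB=\sfP+\sfP^*$: the Grönwall argument is run only for the auxiliary dynamics $\tilde{\sfU}$ (whose generator, thanks to the commutator identities and the Fefferman--de la Llave decomposition, propagates every $\cG_k$ norm \emph{without} weight loss, since $\tfrac12-\tfrac1p\le 0$); the bad term $\sfB$ is then treated by the iterated Duhamel formula~\eqref{eq:iterated_Duhamel}, truncated after $k$ steps, each step costing the factor $N^{-1/2}h^{-1}$ and the $3/2$ weight shift, and the remainder involving the full $\sfU$ is closed not by Grönwall but by the crude a priori bound~\eqref{eq:bound_U_0} coming from the Bogoliubov transformation and unitarity of $e^{-i\sfL t/\hbar}$ --- which is the only place the restriction $k_0\in[0,1/2]$ and the factor $N^{k_0}$ (hence $N^{-(k/2-k_0)}$) enter. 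Your ``induction on $k$ with a Duhamel-type term'' gestures at this but never isolates which piece of the generator must be expanded perturbatively, nor how the top of the hierarchy is closed.

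Two further points are wrong as stated. First, you attribute the $3/2$ weight shift partly to $\sfD$; but $\sfD$ commutes with $\cN$ exactly (the paper says so, and it contributes nothing to $[(\cN+1)^m,\sfG]$ at any order), and $\tilde{\sfQ}^*$, $\tilde{\sfP}^*$ shift the weight by at most $\tfrac1{p'}-\tfrac12\le 0$; the shift comes solely from $\sfP^*$. Second, your claim that for $k=0$ ``plain Grönwall already yields the stated bound'' is false for the same reason --- the $k=0$ case of the statement is obtained from the crude bound~\eqref{eq:bound_U_0}, not from a closed Grönwall inequality. Finally, when you ``peel off'' creation operators at cost $\Nrm{v}{2}=N^{1/2}$ you must first extract the commutators $[K_x,u]$, $[K_x,v]$ (via the symmetrization~\eqref{eq:symmetry_id_1} and the Gaussian resolution of $K$); without that step the $1/\hbar$ in front of $\tilde{\sfQ}^*$ and the $J$-terms is not compensated and $\lambda_\alpha$ would blow up as $\hbar\to0$.
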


	\begin{remark}
		With the cut-off given in Remark~\ref{remark:cutoff}, one obtains
		\begin{equation*}
			\Nrm{\sfU_{t,0}\Psi}{\cG_{k_0}} \leq C_M\, e^{C_M\,\lambda_R\,t} \(\Nrm{\Psi}{\cG_{k_0+\frac{3}{2}k}} + \frac{R^{3\alpha k}\,t}{N^{\frac{k}{2}-k_0}h^k} \Nrm{\Psi}{\cG_{\frac{3}{2}k}}\)
		\end{equation*}
		with
		\begin{equation*}
			\lambda_R = C_{p,a,q_0}\n{\kappa} R^{-3\alpha} \(1+\cC_\infty^{-\frac{1}{2}}\) \sup_{[0,T]}(\Nrm{\rho(t)}{L^{p_a}}, \cD_{q_0,q_1}(t), \tilde{\cD}_{q_0,q_1}(t))\,.
		\end{equation*}
	\end{remark}
	
	To prove Proposition~\ref{prop:propagator_bound}, we will first obtain uniform in $\hbar$ estimates for the generator \eqref{eq:def_G}. This is done by proving a series of lemmas. In particular, we will estimate each of the terms of the generator that do not commute with $\cN$ separately.
	 
	\subsubsection{Bounds for $\tilde{\sfQ}$}\label{sec:auxiliary_dynamics}
	For convenience, let us begin by recalling the following lemma. 
	\begin{lem}[Proposition~4.3 of \cite{lafleche_strong_2021}] \label{lem:K_x_commutator}
		Let $a \in (-1, \tfrac{3}{2})$, $p \in[1,\fb)$, and $q_0, q_1$ satisfying \eqref{eq:scaling_relation_1}. Then, for $n>a+1$, there exists a constant $C>0$ such that the estimate 
		\begin{equation*}
			\Nrm{[K_x, \op]}{p}\le C\,h^{1-\frac{3}{\fb}} \Nrm{\Dhv{\op}\, m}{q_0}^{\frac{1}{2}} \Nrm{\Dhv{\op}\,m}{q_1}^{\frac{1}{2}} 
		\end{equation*}
		holds. Here, $K_x$ denotes the multiplication operator $K_x(y) := K(x-y)$.
	\end{lem}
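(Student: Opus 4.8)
The plan is to reduce the potential--difference commutator to the momentum gradient $\Dhv\op$, to peel off the singular factor as a multiplication operator that can be paired with the momentum weight through the Kato--Seiler--Simon inequality, and to recover the geometric mean over $q_0$ and $q_1$ from a scale decomposition accommodating the fact that $\nabla K$ lies only in weak $L^{\fb}$. The integral kernel of $[K_x,\op]$ at $(y,z)$ is $(K(x-y)-K(x-z))\,\op(y,z)$; using the elementary bound $\bigl|\n{u}^{-a}-\n{v}^{-a}\bigr|\le C_a\,\n{u-v}\,(\n{u}^{-a-1}+\n{v}^{-a-1})$ and the identity $(y-z)\,\op(y,z)=i\hbar\,(\Dhv\op)(y,z)$, I would write $[K_x,\op]=i\hbar\,(A_L+A_R)$, where $A_L$ has kernel $g_L(y,z)\cdot(\Dhv\op)(y,z)$ with $g_L$ the explicit vector kernel obtained by multiplying and dividing by $V(y)+V(z)$, $V(w):=\n{x-w}^{-a-1}$, so that $\n{g_L(y,z)}\le C\n{\kappa}\,V(y)$, and symmetrically for $A_R$ (depending on $z$). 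Note $V\in L^{\fb,\infty}$ with $\fb=\tfrac3{a+1}$, since $\nabla K\in L^{\fb,\infty}$. One may equivalently expand $K$ to first order along the segment and use Fourier inversion, as in the proof of Proposition~\ref{prop:estim_commutator_Lq_Besov}; but here, unlike there, no regularising convolution with $\rho$ is available, so the Besov route does not close and the weight $m$ must compensate.

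The core step bounds $A_L$ (then $A_R$ by symmetry): factor $A_L=M_V\,\tilde A_L$, with $M_V$ multiplication by $V$ and $\tilde A_L$ having kernel $g_L(y,z)\n{x-y}^{a+1}\cdot(\Dhv\op)(y,z)$, which is pointwise bounded by $C\n{\kappa}\,\n{(\Dhv\op)(y,z)}$; insert $m^{-1}m$ adjacent to $M_V$ and apply Hölder for Schatten norms,
\[
	\Nrm{A_L}{p}\le \Nrm{M_V\,m^{-1}}{\sigma}\,\Nrm{m\,\tilde A_L}{q},\qquad \tfrac1\sigma+\tfrac1q=\tfrac1p.
\]
By the Kato--Seiler--Simon inequality~\eqref{eq:Kato_Seiler_Simon}, $\Nrm{M_V\,m^{-1}}{\sigma}\le C\,h^{-3/\sigma}\Nrm{V}{L^\sigma}\Nrm{(1+\n{\cdot}^{n})^{-1}}{L^\sigma}$, and the last norm is finite precisely because $n>a+1=\tfrac3\fb$ makes $(1+\n{\cdot}^{n})^{-1}\in L^{\fb}$, hence $\in L^\sigma$ for $\sigma$ near $\fb$. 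The other factor is $\le C\,\Nrm{(\Dhv\op)\,m}{q}$: the residual kernel is of Riesz-transform / Calderón--Zygmund type, bounded pointwise by a constant and with the required cancellation, hence a bounded Schur multiplier on $\mathfrak S^q$ for $q\in(1,\infty)$, uniformly in $x$ and $\hbar$ and compatibly with the left action of $m$. Collecting the $\hbar$ prefactor yields, for undecomposed $V$, a bound of the form $\Nrm{[K_x,\op]}{p}\le C\,h^{1-3/\sigma}\Nrm{V}{L^\sigma}\Nrm{(\Dhv\op)\,m}{q}$.

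To produce the stated geometric mean, split $V=V\,\Indic{\n{x-y}<R}+V\,\Indic{\n{x-y}\ge R}$: the first piece lies in $L^{\sigma}$ for every $\sigma<\fb$ with $L^\sigma$-norm $\sim\n{\kappa}\,R^{3/\sigma-(a+1)}$, the second in $L^{\tau}$ for every $\tau>\fb$ with $L^\tau$-norm $\sim\n{\kappa}\,R^{3/\tau-(a+1)}$. Applying the core step with the pair $(\sigma,q_1)$ to the first piece and $(\tau,q_0)$ to the second (so $\tfrac1\sigma+\tfrac1{q_1}=\tfrac1\tau+\tfrac1{q_0}=\tfrac1p$), and taking $\sigma,\tau$ symmetric about $\fb$ in the sense $\tfrac1\sigma+\tfrac1\tau=\tfrac2\fb$ — which is exactly Relation~\eqref{eq:scaling_relation_1}, $\tfrac12(\tfrac1{q_0}+\tfrac1{q_1})=\tfrac1p-\tfrac1\fb$ — the two contributions carry equal and opposite powers of $R$, so minimising over $R>0$ gives
\[
	\Nrm{[K_x,\op]}{p}\le C\,h^{1-\frac32(\frac1\sigma+\frac1\tau)}\Nrm{(\Dhv\op)\,m}{q_0}^{1/2}\Nrm{(\Dhv\op)\,m}{q_1}^{1/2}=C\,h^{1-3/\fb}\Nrm{\Dhv\op\,m}{q_0}^{1/2}\Nrm{\Dhv\op\,m}{q_1}^{1/2},
\]
which is the assertion.

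The main obstacle is the Schatten bound in the core step once the singular multiplication $M_V$ has been removed: the leftover kernel genuinely depends on both variables, so Hölder does not apply directly, and one must verify that it is a bounded Schur multiplier on $\mathfrak S^q$ uniformly in $x$ and $\hbar$ through a Calderón--Zygmund argument for its oscillatory part — this is what confines $q$ to $(1,\infty)$ and underlies the ordering $q_0<q_1$ in the hypotheses. A secondary point requiring care is that Kato--Seiler--Simon needs the auxiliary exponents $\sigma,\tau$ to be at least $2$, which holds for $\sigma,\tau$ close to $\fb=\tfrac3{a+1}$ only when $a\le\tfrac12$; the range $a\in(\tfrac12,\tfrac32)$ needs an additional device, consistent with the restrictions appearing in the paper's main theorems. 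Finally, tracking the powers of $h$ through the Kato--Seiler--Simon bound and the $R$-optimisation so as to land exactly on $h^{1-3/\fb}$ is routine but must be done carefully.
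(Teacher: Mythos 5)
The paper does not prove this lemma itself --- it is imported verbatim from \cite{lafleche_strong_2020} --- so I assess your argument on its own merits, against the mechanism used there and in the closely analogous Proposition~\ref{prop:estim_commutator_Lq_Besov} of this paper. Your global architecture is the right one: reduce the difference $K_x(y)-K_x(z)$ to $\Dhv\op$ via the factor $y-z$, extract a singular multiplication factor $V=\n{x-\cdot}^{-(a+1)}\in L^{\fb,\infty}$, pair it with $m(\opp)^{-1}$ through Kato--Seiler--Simon, split $V$ at a scale $R$ into an $L^{\sigma}$ and an $L^{\tau}$ piece with $\tfrac1\sigma+\tfrac1\tau=\tfrac2\fb$, and optimise over $R$; your bookkeeping of the exponents and of the power $h^{1-3/\fb}$ is consistent.

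The proof nevertheless has a genuine gap at its core step. Passing from the pointwise kernel bound $\n{[K_x,\op](y,z)}\le C\hbar\,(V(y)+V(z))\,\n{\Dhv\op(y,z)}$ to the Schatten bound $\Nrm{[K_x,\op]}{p}\le C\hbar\(\Nrm{V\,\Dhv\op}{p}+\Nrm{\Dhv\op\,V}{p}\)$ is legitimate only for $p=2$, where the $\mathfrak S^2$ norm is the $L^2$ norm of the kernel. For $p\neq2$ it is exactly the assertion that the Schur (entrywise) multiplier with symbol $\tilde{\mathbf B}(y,z)=(K_x(y)-K_x(z))(y-z)/(\n{y-z}^2V(y))$ is bounded on $\mathfrak S^p$; a merely bounded symbol does not give a bounded Schur multiplier on $\mathfrak S^p$ for $p\neq2$ (triangular truncation is the standard counterexample), your symbol is not of divided-difference or convolution type so no standard Schur-multiplier theorem applies, and the range of the lemma includes the endpoints $p=1$ and $q_1=\infty$ where even the good Schur multipliers fail. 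Invoking ``Calder\'on--Zygmund'' here is not a proof. Relatedly, inserting $m^{-1}m$ ``adjacent to $M_V$'' conflates operator composition with the Schur product: $m(\opp)$ acts by convolution in one kernel variable and does not pass through the entrywise bound, and the target norm carries the weight on the right of $\Dhv\op$. A second unresolved point is that your KSS step needs the auxiliary exponents $\sigma,\tau\ge2$, which fails for every admissible choice as soon as $a>\tfrac12$ (then $\fb<2$), i.e.\ precisely in the Coulomb regime where the paper uses the lemma; you flag this but offer no remedy. The mechanism that actually closes the argument in \cite{lafleche_strong_2020} and in Proposition~\ref{prop:estim_commutator_Lq_Besov} here is different at this point: the $\mathfrak S^2$ bound is obtained by the kernel computation, the $\mathfrak S^\infty$ bound by writing the commutator as an \emph{operator-norm-convergent superposition} $\int\widehat{\nabla K_1}(\zeta)\,e_{(1-\theta)\zeta}\(\Dhv\op\)e_{\theta\zeta}\,\d\theta\,\d\zeta$ of genuine operator compositions (which is where the smooth/long-range truncation is indispensable), and the intermediate $p$ are reached by interpolation --- no Schur multiplier ever appears. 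Your proposal, as written, does not reproduce this and therefore does not establish the lemma for $p\neq2$ or for $a>\tfrac12$.
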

	Then we have the following result. 
	\begin{lem}\label{lem:bound_Q_tilde}
		Let $a\in(-1,\tfrac{3}{2})$ and $p\in[1,2]$ satisfying $p< \fb$. Then, for any $(\Psi_1,\Psi_2)\in \cG^2$, the following inequality holds
		\begin{equation}\label{eq:bound_Q_tilde}
			\frac{1}{\hbar} \Inprod{\Psi_1}{\tilde{\sfQ}^*\Psi_2}_{\cG} \leq \n{\kappa} \(\tilde{C}_1\, h^{3\(\frac{1}{2} - \frac{1}{p}\)} + \tilde{C}_2\, N^\frac{1}{2} h^\frac{3}{p'}\) \Nrm{\Psi_1}{\cG_{\frac{1}{2}}} \Nrm{\Psi_2}{\cG_{\frac{1}{p'}}},
		\end{equation}
		where $\tilde{C}_1 = C\, \tilde{\cD}_{q_0,q_1}$ and $\tilde{C}_2 = C\(Nh^3\cC_\infty\)^\frac{1}{2} \cD_{q_0,q_1}$ for some constant $C>0$ depending only on $a, p$ and $q_0$. 
	\end{lem}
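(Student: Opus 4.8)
The plan is to expand $\tilde{\sfQ}^*$ into its two summands and estimate each by the same method; by the structure
\begin{align*}
	\tilde{\sfQ}^* = \frac{1}{2N} \intdd K(x-y)\bigg(a_l^*(u_x)\,a_l^*(u_y)\,a_r^*(\conj{v_y})\,a_r^*(\conj{v_x}) - a_r^*(\conj{u_x})\,a_r^*(\conj{u_y})\,a_l^*(v_y)\,a_l^*(v_x)\bigg)\d x\dd y,
\end{align*}
it suffices to bound one term, the other following by the obvious symmetry between left and right. First I would rewrite the $x$-integral using the operator-valued distributions: fixing $y$, the pair $\intd K(x-y)\,a_l^*(u_x)\,a_r^*(\conj{v_x})\dd x$ should be recognized as $\dG^+_{l,r}$ of an operator built from $K_y$, $u$ and $v$ — more precisely, writing $K_y(x) = K(x-y)$ and letting $M_{K_y}$ denote multiplication by $K_y$, the kernel is $(u\,M_{K_y}\,\conj{v})(x,x')$ up to the usual conventions, so that this is $\dG^+_{l,r}(u\,M_{K_y}\,\conj{v})$. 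Then $\Inprod{\Psi_1}{\tilde{\sfQ}^*\Psi_2}$ becomes, after pairing $\Psi_1$ with the remaining two creation operators $a_l^*(u_y)$ (which on the adjoint side act as annihilation on $\Psi_1$) and $a_r^*(\conj{v_y})$, an integral over $y$ of inner products of the form $\Inprod{a_l(u_y)\,a_r(\conj{v_y})\Psi_1}{\dG^+_{l,r}(u\,M_{K_y}\,\conj{v})\Psi_2}$.

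The key step is then to apply the Schatten-class bounds of Lemma~\ref{lem:second_quantization_op} and Lemma~\ref{lem:second_quantization_op_2}: by \eqref{eq:second_quantization_op_+} with the exponent $p$ of the statement,
\begin{align*}
	\Nrm{\dG^+_{l,r}(u\,M_{K_y}\,\conj{v})\Psi_2}{\cG} \leq \Nrm{u\,M_{K_y}\,\conj{v}}{p} \Nrm{(\cN+2)^{1/p'}\Psi_2}{\cG},
\end{align*}
while the two remaining annihilation operators at position $y$ contribute a factor bounded, after integrating in $y$ and using Cauchy--Schwarz, by $\Nrm{(\cN+1)^{1/2}\Psi_1}{\cG}$ times $\Nrm{v}{\infty}$-type factors — here one uses the standard pointwise bound $\intd \Nrm{a_l(u_y)\,a_r(\conj{v_y})\Psi_1}{\cG}^2 \dd y \leq \Nrm{u}{\infty}^2 \Nrm{v}{\infty}^2 \Nrm{\cN^{1/2}\Psi_1}{\cG}^2$ together with $\Nrm{u}{\infty}\leq 1$. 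This reduces everything to controlling $\int \Nrm{u\,M_{K_y}\,\conj{v}}{p}\dd y$, or rather its $L^2_y$ norm paired suitably. I would split $\Nrm{u\,M_{K_y}\,\conj{v}}{p} \leq \Nrm{M_{K_y}\,\conj{v}}{p}$ using $\Nrm{u}{\infty}\leq 1$ — but this is not integrable in $y$, so instead one writes $M_{K_y}\conj v = \com{M_{K_y},\conj v} + \conj v\,M_{K_y}$ and treats the commutator via Lemma~\ref{lem:K_x_commutator} applied to $\sqrt\omega$, which after rescaling $\omega = Nh^3\op$ and $v = \sqrt{Nh^3}\sqrt\op$ yields a factor $h^{1-3/\fb}\,\tilde\cD_{q_0,q_1}$ (this is the origin of $\tilde C_1$), while the remaining term $\conj v\,M_{K_y}$ is handled by Hölder plus the Kato--Seiler--Simon inequality \eqref{eq:Kato_Seiler_Simon}, contributing $\Nrm{v}{\infty}\Nrm{K_y}{L^p}$-type quantities, but since $\Nrm{v}{\infty}$ is not finite one instead distributes a gradient of $v$ onto one factor using $\Dhv$, bringing in $\cD_{q_0,q_1}$ and the factor $(Nh^3\cC_\infty)^{1/2}N^{1/2}h^{3/p'}$ of $\tilde C_2$.

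The main obstacle I anticipate is bookkeeping the powers of $h$ and $N$ correctly through the rescaling $\omega = Nh^3\op$, $v=\sqrt\omega$, $u=\sqrt{1-\omega}$, and making the endpoint $q_0, q_1$ interpolation in Lemma~\ref{lem:K_x_commutator} match the exponent $p$ via \eqref{eq:scaling_relation_1}; in particular one must be careful that $\Nrm{u\,M_{K_y}\,\conj v}{p}$ is estimated so that the $y$-integral converges, which forces the splitting into a commutator piece (good decay, giving $\tilde C_1$) and a ``leading'' piece where a derivative must be moved onto $v$ via $\Dhv u = -\Dhv\omega$-type identities and Lemma~\ref{lem:nabla_u} (giving $\tilde C_2$, with the extra $N^{1/2}$ reflecting that $\Dhv v$ costs a full power $N$ while $v$ alone in $\L^\infty$ costs only $(Nh^3)^{1/2}$). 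Once the two pieces are assembled, collecting the $\cG_{1/2}$ and $\cG_{1/p'}$ norms from the two sides of the inner product gives exactly \eqref{eq:bound_Q_tilde}. The factor $1/\hbar$ on the left is absorbed because the commutator estimate in Lemma~\ref{lem:K_x_commutator} carries an explicit $h^{1-3/\fb}$ and the prefactor $\frac1N$ combines with $h^3$ to produce the stated powers.
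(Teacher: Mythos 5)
There is a genuine gap, and it sits exactly where you sensed trouble. You propose to bound each of the two summands of $\tilde{\sfQ}^*$ separately, invoking ``the obvious symmetry between left and right'' for the second. But the two summands carry a relative minus sign and a $u\leftrightarrow v$ swap, and the paper's proof relies essentially on combining them: writing $u\,K_x v = v\,K_x u + u\com{K_x,v} - v\com{K_x,u}$ and using the symmetry $K(x-y)=K(y-x)$ to identify $\intd \DGplr{v\,K_x u}\DGplr{u\,\delta_x v}\dd x$ with $\intd \DGplr{v\,\delta_x u}\DGplr{u\,K_x v}\dd x$, so that the two non-commutator ``leading'' pieces cancel and only $\DGplr{u\com{K_x,v}-v\com{K_x,u}}$ survives. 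This cancellation is not cosmetic: the left-hand side of \eqref{eq:bound_Q_tilde} carries a factor $1/\hbar$, and the only source of a compensating power of $\hbar$ is the commutator estimate of Lemma~\ref{lem:K_x_commutator}, which gives $\Nrm{\com{K_x,v}}{p}\lesssim h^{1+3(\frac12-\frac1p)}\cdot(\dots)$. A single summand, bounded on its own, necessarily contains a term like $\conj{v}\,M_{K_y}$ with no commutator and hence no factor of $h$; the $1/\hbar$ then cannot be absorbed, so the estimate you would need for one summand alone is simply false with the stated right-hand side.

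Your attempted patch --- splitting $M_{K_y}\conj v = \com{M_{K_y},\conj v} + \conj v\,M_{K_y}$ within one summand and ``distributing a gradient of $v$'' onto the leftover piece --- does not close this. The term $\conj v\,M_{K_y}$ is not a commutator, so there is no mechanism inside a single summand to trade it for a derivative of $v$ (and, incidentally, $\Nrm{v}{\infty}=(\cC_\infty Nh^3)^{1/2}\leq 1$ is finite --- the obstruction is that $K_y\notin L^r$ for any $r$ and that no power of $\hbar$ appears, not the size of $v$). The piece $v\,K_x$ produced by opening the commutator has to be cancelled against the matching piece coming from the \emph{other} summand; that is the content of identities~\eqref{eq:commutator_id_1}--\eqref{eq:symmetry_id_1} in the paper. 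Once that cancellation is performed, the rest of your outline (Cauchy--Schwarz over the $x$-integral, Lemma~\ref{lem:second_quantization_op} for the $\dG^+_{l,r}$ factor, Lemma~\ref{lem:K_x_commutator} for $\com{K_x,v}$ giving $\tilde C_1$, and Lemma~\ref{lem:nabla_u} for $\com{K_x,u}$ giving $\tilde C_2$ with the extra $N^{1/2}h^{3/2}\cC_\infty^{1/2}$ from $\Nrm{v}{\infty}$) matches the paper's argument. But without the two-term cancellation the proof does not go through.
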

	
	\begin{proof}
		Recall the definition of $\tilde{\sfQ}^*$ given in Formula~\eqref{eq:def_G}. By the anti-commutation relations \eqref{CAR}, the products of creation operators in $\tilde{\sfQ}^*$ can be written as follows
		\begin{align*}
			a_l^*(u_x)\, a_l^*(u_y)\, a_r^*(\conj{v_y})\, a_r^*(\conj{v_x}) &= a_l^*(u_x)\, a_r^*(\conj{v_x})\, a_l^*(u_y)\, a_r^*(\conj{v_y})
			\\
			a_r^*(\conj{u_x})\, a_r^*(\conj{u_y})\, a_l^*(v_y)\, a_l^*(v_x)
			&= a_l^*(v_x)\, a_r^*(\conj{u_x})\, a_l^*(v_y)\, a_r^*(\conj{u_y}).
		\end{align*}
		Moreover, using the notation \eqref{def:gen-one-part-op} and the notation $K_x(y) = K(x-y)$, we have $\DGplr{u\,K_x v} := \intd K(x-y)\, a_l^*(u_y)\,a_r^*(\conj{v_y})\d y$. Therefore, we can rewrite $\tilde{\sfQ}^*$ as
		\begin{equation}\label{eq:Q_tilde_rewrite}
			\tilde{\sfQ}^* = \frac{1}{2\,N} \intd \DGplr{u\,K_xv} \DGplr{u\,\delta_x v} - \DGplr{v\,\delta_x u} \DGplr{u\,K_x v} \d x.
		\end{equation}
		Here, $u\,\delta_x v$ denotes the operator with integral kernel $(u\,\delta_x v)(y,z) = u(y,x)\,v(x,z)$. 
		
		As in \cite[Proof of Proposition~4.3]{benedikter_mean-field_2016}, we need to exploit the hidden commutator structure in \eqref{eq:Q_tilde_rewrite} to handle the $\hbar^{-1}$ on the left-hand side of inequality \eqref{eq:bound_Q_tilde}. We begin by using the fact that $u$ commutes with $v$ to deduce the identity
		\begin{equation}\label{eq:commutator_id_1}
			u\,K_xv = v\,K_xu + u\com{K_x,v} - v\com{K_x,u} =: v\,K_x u + c_x,
		\end{equation}
		for any $x\in\Rd$. Moreover, the symmetry of $K$ allows us to write
		\begin{equation}\label{eq:symmetry_id_1}
			\intd \DGplr{v\,K_x u} \DGplr{u\,\delta_x v} \d x 
			= \intd \DGplr{v\,\delta_x u} \DGplr{u\,K_x v} \d x.
		\end{equation}
		By identities \eqref{eq:commutator_id_1}--\eqref{eq:symmetry_id_1}, we make appear more explicitly the commutator structure
		\begin{align*}
			\eqref{eq:Q_tilde_rewrite} &= \frac{1}{2N}\intd \DGplr{v\,K_x u + c_x} \DGplr{u\,\delta_x v} - \DGplr{v\,\delta_x u} \DGplr{u\,K_x v - c_x} \d x
			\\
			&= \frac{1}{2N}\intd \DGplr{c_x} \DGplr{u\,\delta_x v} + \DGplr{v\,\delta_x u} \DGplr{c_x} \d x.
		\end{align*}
		Again, using the fact that the creation operators anti-commute, we obtain
		\begin{multline*}
			\tilde{\sfQ}^* = \frac{1}{2\,N} \intd \(a_l^*(u_x)\,a_r^*(\conj{v_x}) + a_l^*(v_x)\,a_r^*(\conj{u_x})\) \DGplr{u\com{K_x,v}-v\com{K_x,u}} \d x.
		\end{multline*}
		Expanding the product inside the integral gives four terms. We define $\tilde{J}_1$ and $\tilde{J}_2$ as the terms with $\com{K_x,v}$, and $\tilde{J}_3$ and $\tilde{J}_4$ the terms with $\com{K_x,u}$. Let us look at $\tilde{J}_1$. By the Cauchy--Schwarz inequality, we obtain the following bound
		\begin{align*}
			\Inprod{\Psi_1}{\tilde{J}_1\Psi_2}_{\cG} &= \intd \Inprod{a_{l}(u_x)\,\Psi_1}{a_{r}^*(\conj{v_x})\,\DGplr{u\com{K_x ,v}} \Psi_2}_{\cG} \d x
			\\
			&\leq \(\intd \Nrm{a_{l}(u_x)\,\Psi_1}{\cG}^2\d x\)^\frac{1}{2} \(\intd\Nrm{a_{r}^*(\conj{v_x})\, \DGplr{u\com{K_x ,v}} \Psi_2}{\cG}^2 \d x\)^\frac{1}{2}.
		\end{align*}
		The first factor can be written
		\begin{equation*}
			\intd \Nrm{a_{l}(u_x)\,\Psi_1}{\cG}^2\d x = \Inprod{\Psi_1}{\intd a_l^*(u_x)\,a_l(u_x) \d x\,\Psi_1}_{\cG} = \Inprod{\Psi_1}{\DGl{1-\omega} \Psi_1}_{\cG},
		\end{equation*}
		which is smaller than $\Inprod{\Psi_1}{\cN_l\,\Psi_1}_{\cG}$. To estimate the second factor, we use the fact that
		\begin{equation*}
			\Nrm{a_r^*(\conj{v_x})}{\infty}^2 = \Nrm{v_x}{L^2}^2 = N \rho(x)
		\end{equation*}
		together with Lemma~\ref{lem:second_quantization_op} and the fact that $\Nrm{u}{\infty} \leq 1$ to get
		\begin{equation*}
			\Nrm{a_{r}^*(\conj{v_x}) \DGplr{u\com{K_x ,v}} \Psi_2}{\cG} \leq \(N\rho(x)\)^\frac{1}{2} \Nrm{\com{K_x ,v}}{p} \Nrm{\(\cN+2\)^\frac{1}{p'}\Psi_2}{\cG}.
		\end{equation*}
		Combining the above inequalities leads to
		\begin{equation*}
			\Inprod{\Psi_1}{\tilde{J}_1\Psi_2}_{\cG} \leq N^\frac{1}{2} \(\intd \Nrm{\com{K_x ,v}}{p}^2 \rho(x) \d x\)^\frac{1}{2} \Inprod{\Psi_1}{\cN\Psi_1}_{\cG}^\frac{1}{2} \Nrm{\(\cN+2\)^\frac{1}{p'}\Psi_2}{\cG}.
		\end{equation*}
		Applying Lemma \ref{lem:K_x_commutator}, since $p< \fb$, and the scaling relation \eqref{eq:scaling_relation_1}, we get that
		\begin{equation}\label{eq:commutator_K_v}
			\Nrm{\com{K_x ,v}}{p} \leq C\n{\kappa}N^\frac{1}{2}\,h^{1+3\(\frac{1}{2} - \frac{1}{p}\)} \tilde{\cD}_{q_0,q_1}.
		\end{equation}
		Therefore, we finally obtain the inequality
		\begin{equation*}
			\frac{1}{N\hbar} \Inprod{\Psi_1}{\tilde{J}_1\Psi_2}_{\cG} \leq C'\n{\kappa}\tilde{\cD}_{q_0,q_1} \,h^{3\(\frac{1}{2} - \frac{1}{p}\)} \Nrm{\Psi_1}{\cG_{\frac{1}{2}}} \Nrm{\Psi_2}{\cG_{\frac{1}{p'}}}.
		\end{equation*}
		The term $\tilde{J}_2$ is treated similarly leading to the same bound. 
		
		The terms $\tilde{J}_3$ and $\tilde{J}_4$ can also be treated in a similar manner. Except in this case, we apply Lemma~\ref{lem:nabla_u} and the fact that $\Nrm{v}{\infty} = \cC_\infty^{\frac{1}{2}} (Nh^3)^\frac{1}{2}$ to get
		\begin{equation}\label{eq:commutator_K_u}
			\Nrm{v\com{K_x,u}}{p} \leq C\n{\kappa}N^\frac{3}{2}\,h^{1+3\(\frac{3}{2} - \frac{1}{p} \)}\, \cC_\infty^{\frac{1}{2}}\, \cD_{q_0,q_1}.
		\end{equation}
		So, we obtained the claimed bound for $\tilde{\sfQ}^*$.
	\end{proof}

	\begin{remark}\label{remark:cutoff_2}
		In the case of the cut-off potential described in Remark~\ref{remark:cutoff}, we can take $q_0=q_1 = \infty$ and $p=2$ in the above inequality, with an extra factor $R^{3\(\frac{1}{2} - \frac{1}{\fb}\)}$, leading to
		\begin{equation}\label{eq:cutoff_Q}
			\frac{1}{\hbar} \Inprod{\Psi_1}{\tilde{\sfQ}^*\Psi_2}_{\cG} \leq \n{\kappa} R^{3\(\frac{1}{2} - \frac{1}{\fb}\)} \(\tilde{C}_1 + \tilde{C}_2\, N^\frac{1}{2} h^\frac{3}{2} \) \Nrm{\Psi_1}{\cG_{1/2}} \Nrm{\Psi_2}{\cG_{1/2}}.
		\end{equation}
        More precisely, Inequality \eqref{eq:cutoff_Q} is a direct consequence of the following estimate
        \begin{equation*}
            \frac{1}{\hbar} \Nrm{\com{K_{R,x},\op}}{\cL^2} \leq  C \n{\kappa} R^{3\(\frac{1}{2}-\frac{1}{\fb}\)} \Nrm{\Dhv\op\, m}{\L^\infty} 
        \end{equation*}
        which follows directly from the proof of Proposition~4.3 in \cite{lafleche_strong_2021}. 
	\end{remark}

	\subsubsection{Bounds for $\sfQ^*$}\label{sec:error_term} We label the terms of $\sfQ^*$ given in \eqref{eq:def_G} by
	\begin{equation}\label{eq:Q_form_1}
		\sfQ^* = I_1+I_2+I_3+I_4.
	\end{equation}
	Using the fact that the creation operators anti-commute, we get 
	\begin{align*}
		I_1 &= -\frac{1}{N} \intdd K(x-y) \,a_l^*(u_x)\,a_r^*(\conj{v_x})\, a_l^*(u_y)\, a_l(u_y)\d x\d y,
		\\
		I_2 &= -\frac{1}{N} \intdd K(x-y) \, a_l^*(v_x)\,a_r^*(\conj{u_x})\, a_l^*(v_y)\, a_l(v_y)\d x\d y.
	\end{align*}
	$I_3$ and $I_4$ have similar forms with the "$l$" and "$r$" labels interchanged and $(u,v)$ replaced by $(\conj{u},\conj{v})$. To reveal hidden commutator structures, which are necessary when estimating $\sfQ^*$ uniformly in $\hbar$, we need to further decompose \eqref{eq:Q_form_1}. 

	Let us start with the following decomposition lemma.
	\begin{lem}\label{lem:Q_decomposition}
		Let $\sfQ^*$ be as in \eqref{eq:Q_form_1}. Then, we have the decomposition
		\begin{equation*}
			I_1+I_2 = J_1+J_2+J_{12}+I_{12}
		\end{equation*}
		where 
		\begin{align*}
			J_1 &= \frac{1}{N} \intd a_r^*(u_x)\, a_l^*(\conj{v_x}) \DGl{u\com{u, K_x}} \d x
			\\
			J_2 &= \frac{1}{N} \intd a_l^*(v_x)\, a_r^*(\conj{u_x}) \DGl{v\com{v, K_x}} \d x
			\\
			J_{12} &= \frac{1}{N} \intd \DGplr{\com{u, K_x}v+\com{K_x, v}u} a_l^*(\omega_x)\, a_{x,l} \d x,
		\end{align*}
		and 
		\begin{equation*}
			I_{12} = -\frac{1}{N} \intd a^*_l(u_x)\, a^*_r(\conj{v_x})\DGl{K_x} \d x.
		\end{equation*}
		We have the same splitting for $I_3+I_4$, interchanging "l" by "r" and replacing $(u,v)$ by $(\conj{u},\conj{v})$. Hence, we obtain a decomposition of the form
		\begin{equation}\label{eq:def_P}
			\sfQ^* = \(J_1 + J_2 + J_3 + J_4 + J_{12} + J_{34}\) +(I_{12} + I_{34})=: \tilde{\sfP}^* + \sfP^*.
		\end{equation}
	\end{lem}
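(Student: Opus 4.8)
The plan is to prove Lemma~\ref{lem:Q_decomposition} by a direct expansion of the quartic terms $I_1,\dots,I_4$ and a systematic use of the canonical anticommutation relations \eqref{CAR}; no analytic input is required, only algebra, so the whole content is a careful reorganization. First I would move the creation operators carrying the variable $x$ to the left: since $a^\sharp_l$ anticommutes with $a^\sharp_r$, one obtains (as already recorded just above the lemma)
\[
	I_1 = -\tfrac1N\intd a_l^*(u_x)\,a_r^*(\conj{v_x})\,\Big(\intd K(x-y)\,a_l^*(u_y)\,a_l(u_y)\dd y\Big)\dd x,
\]
and likewise for $I_2$ with the appropriate interchange of $u$ and $v$, while $I_3,I_4$ are obtained from $I_1,I_2$ by swapping the labels $l\leftrightarrow r$ and conjugating. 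Then I would recognize the inner $y$-integral: writing $a^\sharp_l(w_y)=\intd w(z,y)\,a^\sharp_{z,l}\dd z$ and using that $u,v,\omega$ are self-adjoint, one has $\intd K(x-y)\,a_l^*(w_y)\,a_l(w_y)\dd y=\DGl{w\,K_x\,w}$ with $K_x$ the multiplication operator $K_x(y)=K(x-y)$ and $w\in\{u,v\}$. Thus $I_1=-\tfrac1N\intd a_l^*(u_x)\,a_r^*(\conj{v_x})\,\DGl{u\,K_x\,u}\dd x$ and $I_2=-\tfrac1N\intd a_l^*(v_x)\,a_r^*(\conj{u_x})\,\DGl{v\,K_x\,v}\dd x$.

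The next step is to extract the commutators from the one-particle operators $u\,K_x\,u$ and $v\,K_x\,v$. Using $u K_x = K_x u + \com{u,K_x}$, $v K_x = K_x v + \com{v,K_x}$, the functional-calculus identities $u^2 = 1-\omega$, $v^2 = \omega$ and $\com{u,v}=0$, one writes
\[
	u\,K_x\,u = K_x - \omega\,K_x + u\com{K_x,u}, \qquad v\,K_x\,v = \omega\,K_x + v\com{K_x,v}.
\]
Inserting these back, the pure $\DGl{K_x}$ contribution of $I_1$ (with its prefactor $a_l^*(u_x)\,a_r^*(\conj{v_x})$) is exactly $I_{12}$, and the pure $\DGl{K_x}$ contribution of $I_3$ is exactly $I_{34}$; these are the only summands of $\sfQ^*$ that do not carry a commutator of $K_x$ with $u$ or $v$. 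What is left are, on the one hand, the terms containing $u\com{K_x,u}$ and $v\com{K_x,v}$, which after one more reordering of the creators by \eqref{CAR} give $J_1,J_2$ (resp.\ $J_3,J_4$), and on the other hand the $\omega$-weighted cross terms $\omega\,K_x$ stemming from both $I_1$ and $I_2$.

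Finally I would combine the $\omega$-level cross terms. Using the symmetry $K(x-y)=K(y-x)$ together with a further application of the anticommutation relations to bring the two contributions to a common form, and the identity $\com{u,K_x}v+\com{K_x,v}u = u\,K_x\,v-v\,K_x\,u$ (valid since $uv=vu$), the naive $O(1)$ part cancels and what remains reorganizes into the single term $J_{12}$, in which the annihilation factor $a_l^*(\omega_x)\,a_{x,l}$ appears; the same manipulation applied to $I_3+I_4$ produces $J_{34}$. Summing all the contributions gives the claimed identity \eqref{eq:def_P} with $\tilde{\sfP}^*=J_1+\dots+J_4+J_{12}+J_{34}$ and $\sfP^*=I_{12}+I_{34}$. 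I expect the main obstacle to be purely organizational: keeping track of the signs produced by the CAR and of which Fock-space sectors each $a^\sharp_{x,\sigma}$ acts on, and in particular verifying that \emph{every} non-$\hbar$-small contribution lands in $I_{12}+I_{34}$, so that each summand of $\tilde{\sfP}^*$ carries either a commutator $\com{u,K_x}$ or $\com{v,K_x}$ (later controlled through Lemmas~\ref{lem:K_x_commutator} and \ref{lem:nabla_u}) or the density-type operator $a_l^*(\omega_x)\,a_{x,l}$ — which is exactly the structure needed for the subsequent uniform-in-$\hbar$ estimates.
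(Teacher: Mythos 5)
Your proposal is correct, and the algebraic content is the same as the paper's: isolate the non-commutator piece $\DGl{K_x}$ (giving $I_{12}$, $I_{34}$), peel off $u\com{K_x,u}$ and $v\com{K_x,v}$ via $u^2=1-\omega$, $v^2=\omega$ (giving $J_1,\dots,J_4$), and combine the two $\omega$-weighted cross terms using $K(x-y)=K(y-x)$ and $\com{u,K_x}v+\com{K_x,v}u=u\,K_x\,v-v\,K_x\,u$ (giving $J_{12}$, $J_{34}$). The only methodological difference is bookkeeping: the paper first factorizes $K(x-y)=\kappa_a\int s^{(a+1)/2}\varphi_{s,z}(x)\varphi_{s,z}(y)\,\d z\,\d s$ via the Fefferman--de la Llave formula, performs the identical commutator extraction on products $\DGplr{u\varphi v}\DGl{u\varphi u}$ of second quantizations of genuine ($x$-independent) one-particle operators, and then reverses the expansion; you work directly with the multiplication operator $K_x$, which is legitimate and lands exactly on the paper's ``reversed'' formulas. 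Two small points: your phrase ``the naive $O(1)$ part cancels'' is loose --- nothing cancels in assembling $J_{12}$; rather, the relative minus sign between the two $\omega$-terms turns $u\,K_x\,v-v\,K_x\,u$ into a commutator, so that the only genuinely non-small contribution is the one you correctly isolate in $I_{12}+I_{34}$. Also, your derivation yields $J_1=\frac1N\intd a_l^*(u_x)\,a_r^*(\conj{v_x})\,\DGl{u\com{u,K_x}}\d x$, which agrees with the paper's proof (and, up to a sign from the CAR, with the expression for $\tilde\sfP^*$ in the appendix); the labels $l,r$ on the two prefactors in the lemma's displayed formula for $J_1$ appear to be transposed relative to the proof, so do not be alarmed by that mismatch.
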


	\begin{proof}
		To simplify our computations, we use the Fefferman--de la Llave formula (cf. \cite{fefferman_relativistic_1986, hainzl_general_2002}) in its smooth version. For the potential $K$ it reads 
		\begin{equation*}
			K(x-y) = \kappa_a \int^\infty_0 \intd s^\frac{a+1}{2} \varphi_{s, z}(x)\, \varphi_{s, z}(y)\d z\d s
		\end{equation*}
		where $\varphi_{s,z}(x) = \varphi_s(x-z) = e^{-\pi\n{x-z}^2 s}$ and $\kappa_a = 2^{\frac{3-a}{2}}\frac{\pi^{a/2}}{\Gamma(a/2)}\,\kappa$.
		This allows us to rewrite $I_1$ and $I_2$ in the following forms 
		\begin{align*}
			I_1 &= -\frac{\kappa_a}{N} \int_0^\infty\!\!\!\!\intd s^{\frac{a+1}{2}} \DGplr{u\,\varphi_{s,z}\,v} \DGl{u\,\varphi_{s,z}\,u} \d z \d s,
			\\
			I_2 &= -\frac{\kappa_a}{N} \int_0^\infty\!\!\!\!\intd s^{\frac{a+1}{2}} \DGplr{v\,\varphi_{s,z}\,u} \DGl{v\,\varphi_{s,z}\,v} \d z \d s,
		\end{align*}
		where $\varphi_{s,z}$ is seen as a multiplication operator. Since $u^2=1-\omega$, then we have the identity 
		\begin{multline*}
			\DGplr{u\,\varphi\,v} \DGl{u\,\varphi \,u} =\ \DGplr{u\,\varphi\,v} \DGl{u\com{\varphi,u}} + \DGplr{\com{u,\varphi}v} \DGl{\(1-\omega\) \varphi}
			\\
			+ \DGplr{\varphi\,u\,v} \DGl{\(1-\omega\) \varphi}
		\end{multline*}
		where we used the notation $\varphi = \varphi_{s,z}$. Similarly, since $v^2=\omega$, then we have 
		\begin{multline*}
			\DGplr{v\,\varphi\,u} \DGl{v\,\varphi \,v} = \DGplr{v\,\varphi\,u} \DGl{v\com{\varphi,v}} + \DGplr{\com{v,\varphi}u} \DGl{\omega\,\varphi}
			\\
			+ \DGplr{\varphi\,u\,v} \DGl{\omega\,\varphi}.
		\end{multline*}
		Combining the two identities yields
		\begin{multline}\label{eq:commutator_id_2}
			\DGplr{u\,\varphi\,v} \DGl{u\,\varphi \,u} + \DGplr{v\,\varphi\,u} \DGl{v\,\varphi \,v}
			\\
			= \DGplr{u\,\varphi\,v} \DGl{u\com{\varphi,u}} + \DGplr{v\,\varphi\,u} \DGl{v\com{\varphi,v}}
			\\
			+ \DGplr{\com{\varphi, u} v + \com{v, \varphi} u} \DGl{\omega\, \varphi} + \DGplr{u\, \varphi\,v} \DGl{\varphi}.
		\end{multline}
		Thus, using identity \eqref{eq:commutator_id_2}, we can write $I_1+I_2 = J_1 + J_2 + J_{12} + I_{12}$ with
		\begin{align*}
			J_{1} &:= \frac{\kappa_a}{N} \int_0^\infty\!\!\!\!\intd s^{\frac{a+1}{2}} \DGplr{u\,\varphi\,v} \DGl{u\com{u, \varphi}} \d z \d s,
			\\
			J_{2} &:= \frac{\kappa_a}{N} \int_0^\infty\!\!\!\!\intd s^{\frac{a+1}{2}} \DGplr{v\,\varphi\,u} \DGl{v\com{v, \varphi}} \d z \d s,
			\\
			J_{12} &:= \frac{\kappa_a}{N} \int_0^\infty\!\!\!\!\intd s^{\frac{a+1}{2}} \DGplr{\com{u, \varphi}v+\com{\varphi, v}u} \DGl{\omega\,\varphi} \d z \d s,
		\end{align*}
		and
		\begin{equation*}
			I_{12} := -\frac{\kappa_a}{N} \int_0^\infty\!\!\!\!\intd s^{\frac{a+1}{2}} \DGplr{u\,\varphi\,v} \DGl{\varphi} \d z \d s.
		\end{equation*}
		Reversing the Fefferman--de la Llave expansion gives us 
		\begin{align*}
			J_1 &= \frac{\kappa_a}{N} \intd\int_0^\infty\!\!\!\!\intd s^{\frac{a+1}{2}} a^*_l(u_x)\, a^*_r(\conj{v_x})\,\varphi(x) \DGl{u\com{u, \varphi}} \d z \d s \d x
			\\
			&= \frac{1}{N}\intd a^*_l(u_x)\, a^*_r(\conj{v_x}) \DGl{u\com{u, K_x}} \d x.
		\end{align*}
		The same is true for $J_2$. Lastly, we have that 
		\begin{align*}
			J_{12} &= \frac{\kappa_a}{N} \int_0^\infty\!\!\!\!\intd s^{\frac{a+1}{2}} \DGplr{\com{u, \varphi}v+\com{\varphi, v}u} a_l^*(\omega_x)\,\varphi(x)\, a_{x,l} \d z \d s
			\\
			&= \frac{1}{N} \intd \DGplr{\com{u, K_x}v+\com{K_x, v}u} a_l^*(\omega_x)\, a_{x,l} \d x.
		\end{align*}
		This completes the proof of the lemma. 
	\end{proof}

	Let us first estimate the $J$ terms, which can be treated in a similar manner as in the $\tilde{\sfQ}^*$ case. One obtains the following bounds.
	\begin{lem}\label{lem:bound_P_tilde}
		Assuming the same hypotheses as in Lemma~\ref{lem:bound_Q_tilde}. Then, for any $(\Psi_1,\Psi_2)\in\cG^2$, we have the estimates
		\begin{subequations}
			\begin{align}\label{eq:bound_tilde_J_1}
				\frac{1}{\hbar} \Inprod{\Psi_1}{J_1\,\Psi_2}_{\cG} &\leq C_1 \n{\kappa} N^\frac{1}{2}\,h^{\frac{3}{p'}} \Nrm{\Psi_1}{\cG_{\frac{1}{2}}} \Nrm{\Psi_2}{\cG_{\frac{1}{p'}}}
				\\\label{eq:bound_tilde_J_2}
				\frac{1}{\hbar} \Inprod{\Psi_1}{J_2\,\Psi_2}_{\cG} &\leq C_2 \n{\kappa} N^\frac{1}{2} \,h^{\frac{3}{p'}} \Nrm{\Psi_1}{\cG_{\frac{1}{2}}} \Nrm{\Psi_2}{\cG_{\frac{1}{p'}}}
				\\\label{eq:bound_tilde_J_12}
				\frac{1}{\hbar} \Inprod{\Psi_1}{J_{12}\,\Psi_2}_{\cG} &\leq C_{12} \n{\kappa} N^\frac{1}{2}\,h^{\frac{3}{p'}}\,\Nrm{\Psi_1}{\cG_{\frac{1}{2}}} \Nrm{\Psi_2}{\cG_{\frac{1}{p'}}},
			\end{align}
		\end{subequations}
		where $C_1 = C \, \cD_{q_0,q_1}$, $C_2 = C\, \cC_\infty^\frac{1}{2}\,\tilde{\cD}_{q_0,q_1}$, and $C_{12} = C\, \cC_2\(\(Nh^3\cC_\infty\)^\frac{1}{2}\cD_{q_0,q_1}+\tilde{\cD}_{q_0,q_1}\)$ for some constant $C$ depending only on $p$ and $a$. The same inequalities hold respectively for $J_3$, $J_4$, $J_{34}$.
	\end{lem}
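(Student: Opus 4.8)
The plan is to bound $J_1$, $J_2$, $J_{12}$ (and then $J_3$, $J_4$, $J_{34}$ by symmetry) by the same mechanism as in the proof of Lemma~\ref{lem:bound_Q_tilde}: the decomposition of Lemma~\ref{lem:Q_decomposition} has already extracted the commutators $\com{u,K_x}$ and $\com{v,K_x}$, which are precisely what absorbs the singular prefactor $\hbar^{-1}$, through Lemma~\ref{lem:K_x_commutator} (applied to $\sqrt{\op}$, scaled to cover $v=(Nh^3)^{1/2}\sqrt{\op}$) and Lemma~\ref{lem:nabla_u} (rewriting the gradients of $u$ in terms of those of $\op$). Concretely I will reuse the uniform-in-$x$ estimates \eqref{eq:commutator_K_v}--\eqref{eq:commutator_K_u}, namely $\Nrm{\com{K_x,v}}{p}\leq C\n{\kappa}N^{1/2}h^{1+3(1/2-1/p)}\tilde{\cD}_{q_0,q_1}$ and $\Nrm{v\com{K_x,u}}{p}\leq C\n{\kappa}N^{3/2}h^{1+3(3/2-1/p)}\cC_\infty^{1/2}\cD_{q_0,q_1}$, together with the variant of the latter stripped of its leading $v$, $\Nrm{\com{K_x,u}}{p}\leq C\n{\kappa}Nh^{4-3/p}\cD_{q_0,q_1}$.

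For $J_1$ I would proceed exactly as for $\tilde J_1$: write
\[
	\Inprod{\Psi_1}{J_1\,\Psi_2}_{\cG}=\frac1N\intd\Inprod{a_l(u_x)\,\Psi_1}{a_r^*(\conj{v_x})\,\DGl{u\com{u,K_x}}\,\Psi_2}_{\cG}\dd x,
\]
apply the Cauchy--Schwarz inequality in $x$ and in $\cG$, bound the first factor by $\bigl(\intd\Nrm{a_l(u_x)\Psi_1}{\cG}^2\dd x\bigr)^{1/2}=\Inprod{\Psi_1}{\DGl{1-\omega}\Psi_1}_{\cG}^{1/2}\leq\Nrm{\Psi_1}{\cG_{1/2}}$, and bound the second using $\Nrm{a_r^*(\conj{v_x})}{\infty}=\Nrm{v_x}{L^2}=(N\rho(x))^{1/2}$, $\Nrm{u}{\infty}\leq1$, Lemma~\ref{lem:second_quantization_op}, $\intd\rho\dd x=1$, and the commutator bound above; collecting the factor $\frac{1}{N\hbar}\cdot N^{1/2}$ and using $3-3/p=3/p'$ gives \eqref{eq:bound_tilde_J_1}. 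The term $J_2$ has the same shape with $u$ and $v$ interchanged inside the commutator, so that the divergent pointwise norm is now carried by $a_r^*(\conj{u_x})$; moving $a_r(\conj{u_x})$ onto $\Psi_1$, keeping $\Nrm{a_l^*(v_x)}{\infty}=(N\rho(x))^{1/2}$, and using $\Nrm{v\com{v,K_x}}{p}\leq\Nrm{v}{\infty}\Nrm{\com{v,K_x}}{p}$ with $\Nrm{v}{\infty}=(Nh^3\cC_\infty)^{1/2}$ and \eqref{eq:commutator_K_v} yields \eqref{eq:bound_tilde_J_2}.

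The main obstacle is $J_{12}$, which is cubic in the creation operators, so moving a single operator onto $\Psi_1$ no longer restores integrability in $x$. Here I would keep $a_l^*(\omega_x)$ inside the $x$-integral: with $G_x:=\com{u,K_x}v+\com{K_x,v}u$ and using \eqref{eq:dG_adjoint_property},
\[
	\Inprod{\Psi_1}{J_{12}\,\Psi_2}_{\cG}=\frac1N\intd\Inprod{\dG^-_{r,l}(G_x^*)\,\Psi_1}{a_l^*(\omega_x)\,a_{x,l}\,\Psi_2}_{\cG}\dd x,
\]
estimate $\Nrm{a_l^*(\omega_x)a_{x,l}\Psi_2}{\cG}\leq\Nrm{\omega_x}{L^2}\Nrm{a_{x,l}\Psi_2}{\cG}$ and $\Nrm{\dG^-_{r,l}(G_x^*)\Psi_1}{\cG}\leq\Nrm{G_x}{p}\Nrm{\cN^{1/p'}\Psi_1}{\cG}$ by \eqref{eq:second_quantization_op_-}, then apply the Cauchy--Schwarz inequality in $x$ using $\intd\Nrm{\omega_x}{L^2}^2\dd x=\Tr{\omega^2}=\cC_2^2N^2h^3$ (this is where $\cC_2$ enters $C_{12}$) and $\intd\Nrm{a_{x,l}\Psi_2}{\cG}^2\dd x=\Inprod{\Psi_2}{\cN_l\Psi_2}_{\cG}$. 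Splitting $\Nrm{G_x}{p}$ by the triangle inequality and inserting the two commutator bounds (using \eqref{eq:commutator_K_u} for $\com{u,K_x}v=(v\com{K_x,u})^*$, and \eqref{eq:commutator_K_v} together with $\Nrm{u}{\infty}\leq1$ for $\com{K_x,v}u$) produces the two summands of $C_{12}$, the $\cD_{q_0,q_1}$-term being controlled after absorbing a harmless factor $Nh^3\cC_\infty\leq1$. (The roles of $\Psi_1$ and $\Psi_2$ come out interchanged relative to the statement; this is immaterial, since in the application to Proposition~\ref{prop:propagator_bound} the bilinear form is symmetrised through $\sfQ+\sfQ^*$, so one simply takes the maximum of the two orderings.) Finally, $J_3$, $J_4$, $J_{34}$ satisfy the same bounds verbatim by the $l\leftrightarrow r$, $(u,v)\leftrightarrow(\conj{u},\conj{v})$ symmetry of Lemma~\ref{lem:Q_decomposition}. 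The one point of care throughout is to track which factor carries a divergent pointwise $L^2$-norm — hence must be moved onto a state vector — versus one carrying a genuine weight $\rho(x)$ or $\omega(\cdot,x)$ that survives the $x$-integration.
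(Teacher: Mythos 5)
Your proposal is correct and follows essentially the same route as the paper, whose own proof is a two-line sketch saying exactly this: reuse the commutator estimates \eqref{eq:commutator_K_v}--\eqref{eq:commutator_K_u} together with $\Nrm{u}{\infty}\leq 1$ and $\Nrm{v}{\infty}=(\cC_\infty N h^3)^{1/2}$, run the Cauchy--Schwarz argument of Lemma~\ref{lem:bound_Q_tilde} while moving the pointwise-divergent factor ($a(u_x)$ or $a_r(\conj{u_x})$) onto a state vector, and for $J_{12}$ pair $\Nrm{\omega_x}{L^2}$ against the $x$-integration via $\Tr{\omega^2}=\cC_2^2N^2h^3$. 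You supply more detail than the paper does, including the observation (which the paper elides by writing an unsubscripted $\Psi$) that the $\cG_{1/2}$ and $\cG_{1/p'}$ weights land on interchanged arguments in the $J_{12}$ bound, which is indeed harmless in the application.
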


	\begin{proof}
	 Applying Lemma~\ref{lem:K_x_commutator} and the fact that $\Nrm{u}{\infty} \leq 1$ gives us the estimate
		\begin{equation*}
			\Nrm{u\com{K_x,u}}{p} \leq C\n{\kappa} N\,h^{1+\frac{3}{p'}} \cD_{q_0,q_1}.
		\end{equation*}
		Then following the proof of Lemma~\ref{lem:bound_Q_tilde} yields inequality~\eqref{eq:bound_tilde_J_1}. Similarly,
		by Lemma~\ref{lem:K_x_commutator} and the fact that $\Nrm{v}{\infty} = \(\cC_\infty\, N\,h^3\)^{\frac{1}{2}}$, we have that
		\begin{equation*}
			\Nrm{v\com{K_x ,v}}{p} \leq C\n{\kappa}N\,h^{1+\frac{3}{p'}}\, \cC_\infty^\frac{1}{2}\, \tilde{\cD}_{q_0,q_1}
		\end{equation*}
		from which we arrive at inequality~\eqref{eq:bound_tilde_J_2}. Finally, by direct estimation, we see that 
		\begin{equation*}
			\Inprod{\Psi_1}{J_{12}\,\Psi_2}_{\cG} \leq \frac{1}{N} \(\intd \Nrm{a_l(\omega_x)\, \DGmrl{v\com{K_x,u}-u\com{K_x,v}} \Psi}{\cG}^2 \d x\)^\frac{1}{2} \Nrm{\cN_l^{\frac{1}{2}}\Psi}{\cG}.
		\end{equation*}
		Then, Inequality~\eqref{eq:bound_tilde_J_12} follows from Lemma~\ref{lem:second_quantization_op} and Lemma~\ref{lem:K_x_commutator}.
	\end{proof}

	Lastly, let us estimate $\sfP^*=I_{12}+I_{34}$.

	\begin{lem}\label{lem:I_12}
		Let $p_a = \frac{3}{3-2a}$. Then, there exists $C>0$, depending only on $a$, such that for any $(\Psi_1, \Psi_2)\in \cG^2$ we have the estimate
		\begin{equation*}
			\frac{1}{\hbar}\n{\Inprod{\Psi_1}{\sfP^* \Psi_2}_{\cG}} \leq \frac{C\n{\kappa}}{N^\frac{1}{2}h} \Nrm{\rho}{L^{p_a}}^\frac{1}{2} \Inprod{\Psi_1}{\cN\Psi_1}_{\cG}^\frac{1}{2} \Inprod{\Psi_2}{\cN\Psi_2}_{\cG}^\frac{1}{2}.
		\end{equation*}
	\end{lem}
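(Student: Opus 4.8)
The strategy is to exploit the structure of $I_{12}$ and $I_{34}$, which, unlike the $J$ and $\tilde{\sfQ}$ terms, do \emph{not} hide a commutator but instead carry only one factor of $v$ (which brings a gain of $N^{1/2}h^{3/2}$). First I would recall from Lemma~\ref{lem:Q_decomposition} that
$$
I_{12} = -\frac{1}{N} \intd a^*_l(u_x)\, a^*_r(\conj{v_x}) \DGl{K_x} \d x,
$$
and that $I_{34}$ is its Hermitian conjugate (with $l\leftrightarrow r$, $u,v\mapsto\conj u,\conj v$). Since $\sfP^* = I_{12}+I_{34}$, it suffices to bound $\n{\Inprod{\Psi_1}{I_{12}\Psi_2}}$ and treat $I_{34}$ identically. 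Writing the inner product with $a^*_l(u_x)$ moved to act on $\Psi_1$ as an annihilation operator,
$$
\Inprod{\Psi_1}{I_{12}\Psi_2} = -\frac{1}{N}\intd \Inprod{a_l(u_x)\Psi_1}{a_r^*(\conj{v_x})\,\DGl{K_x}\,\Psi_2}_{\cG}\d x,
$$
and then I apply the Cauchy--Schwarz inequality in $x$ to split off $\(\intd \Nrm{a_l(u_x)\Psi_1}{\cG}^2\d x\)^{1/2}$ from $\(\intd \Nrm{a_r^*(\conj{v_x})\DGl{K_x}\Psi_2}{\cG}^2\d x\)^{1/2}$.

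The first factor is handled exactly as in the proof of Lemma~\ref{lem:bound_Q_tilde}: $\intd \Nrm{a_l(u_x)\Psi_1}{\cG}^2\d x = \Inprod{\Psi_1}{\DGl{1-\omega}\Psi_1}_{\cG} \leq \Inprod{\Psi_1}{\cN\Psi_1}_{\cG}$, using $0\leq 1-\omega\leq 1$. For the second factor, I use $\Nrm{a_r^*(\conj{v_x})}{\infty}^2 = \Nrm{v_x}{L^2}^2 = N\rho(x)$ together with the operator-norm bound on $\DGl{K_x}$; but $K_x$ is not trace-class, so the right tool is Lemma~\ref{lem:second_quantization_op} with $p$ as in the statement, giving $\Nrm{\DGl{K_x}\Psi_2}{\cG}\leq \Nrm{K_x}{p}\Nrm{\cN^{1/p'}\Psi_2}{\cG}$. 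The key point is that $\Nrm{K_x}{p}$ — the Schatten-$p$ norm of the multiplication operator by $y\mapsto K(x-y) = \kappa\n{x-y}^{-a}$ — is \emph{not} finite for any $p<\infty$ since a multiplication operator is never compact. So a naive application fails: instead one should first commute $\DGl{K_x}$ past $a_r^*(\conj v_x)$ (they commute, being on different sectors) and keep $\DGl{K_x}$ acting inside, then estimate $\Nrm{a_r^*(\conj v_x)\DGl{K_x}\Psi_2}{\cG}$ by moving the scalar $\Nrm{v_x}{L^2}$ out and bounding $\DGl{K_x}\Psi_2$ — but this still faces the same non-compactness.

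The correct route, which I expect is what the authors intend, is to \emph{not} separate $K_x$ from $v_x$ at all, but to recognize $a_r^*(\conj v_x)\,\DGl{K_x}$, or rather to keep the pair $\DGl{K_x}$ together with an $a_l$-type operator so that the relevant object is $\DGl{K_x\,\omega^{1/2}}$ or $\DGl{v K_x}$, which \emph{is} in a Schatten class by Lemma~\ref{lem:K_x_commutator}-type estimates (one has $\Nrm{v K_x}{2}^2 = \iintd K(x-y)^2 v(y,z)\overline{v(y',z)}\cdots$, controlled by $\rho$ and the regularity of $\op$). More precisely, I would rewrite $\DGl{K_x} = \intd K_x(y)\,a^*_{y,l}\,a_{y,l}\d y$ and pair one $a_{y,l}$ with $\Psi_2$ and the other with $\Psi_1$ via Cauchy--Schwarz in both $x$ and $y$; then the relevant bound becomes $\iintd K(x-y)^2 \rho(x)\,\Nrm{a_{y,l}\Psi_2}{\cG}^2\d x\d y$, and the $x$-integral $\intd K(x-y)^2\rho(x)\d x = \n{\kappa}^2 (\n{\cdot}^{-2a}*\rho)(y)$ is controlled by $\Nrm{\rho}{L^p}\Nrm{\n{\cdot}^{-2a}}{L^{p',\infty}}$ (Hölder for Lorentz spaces), finite precisely when $2a p' < 3$, i.e. $p > \frac{3}{3-2a} = p_a$ — matching the hypothesis. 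Collecting: $\n{\Inprod{\Psi_1}{I_{12}\Psi_2}} \lesssim \frac{\n\kappa}{N} N^{1/2}\Nrm{\rho}{L^p}^{1/2}\Nrm{\cN^{1/2}\Psi_1}{\cG}\Nrm{\cN^{1/2}\Psi_2}{\cG}$, and the extra $h^{-1}$ in the claimed bound comes from converting the bare Schatten norm of the $\DGl{K_x}$-type operator into the semiclassical $\L$-norm (each missing $h^{3}$-weight on a trace costs a power of $h$), via $\Nrm{\cdot}{p} = h^{-3/p}\Nrm{\cdot}{\L^p}$ and the fact that the estimate loses one derivative's worth of $h$ — the singular-kernel bound $\Nrm{v K_x}{\cdot}$ produces an $h^{-1}$ exactly as in Lemma~\ref{lem:K_x_commutator}.

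\textbf{Main obstacle.} The delicate part is \emph{not} the operator-theoretic bookkeeping but getting the powers of $h$ and $N$ right while keeping the kernel $K_x$ attached to a factor that renders it Schatten-class: one cannot treat $\DGl{K_x}$ in isolation. I expect the cleanest argument interlaces the two creation/annihilation operators from $I_{12}$ with the two from inside $\DGl{K_x}$, applies Cauchy--Schwarz to produce a factor $\Nrm{\cN^{1/2}\Psi_i}{\cG}$ from each side, and reduces everything to the scalar inequality $\intd K(x-y)^2\rho(x)\d x \leq C\n\kappa^2\Nrm{\rho}{L^{p_a}}$ — valid by the Hardy--Littlewood--Sobolev / Lorentz-Hölder inequality exactly because $p_a = \frac{3}{3-2a}$ is the critical exponent for convolution against $\n{\cdot}^{-2a}\in L^{p_a',\infty}$. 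The bound then follows by plugging this into the Cauchy--Schwarz estimate, using $\Nrm{v_x}{L^2}^2 = N\rho(x)$ and $0\le 1-\omega\le 1$, tracking the single overall factor $N^{-1}$ from the prefactor against the $N^{1/2}$ from $v$ to get $N^{-1/2}$, and converting to semiclassical norms to produce the stated $h^{-1}$.
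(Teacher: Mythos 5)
Your proposal follows essentially the same route as the paper: Cauchy--Schwarz in $x$ after moving $a_l^*(u_x)$ onto $\Psi_1$, the bound $\intd\Nrm{a_l(u_x)\Psi_1}{\cG}^2\dd x=\Inprod{\Psi_1}{\DGl{1-\omega}\Psi_1}\leq\Inprod{\Psi_1}{\cN\Psi_1}$, the identity $\Nrm{v_x}{L^2}^2=N\rho(x)$, and finally the reduction to the scalar inequality $\iintd\rho(x)\,g(y)\,\n{x-y}^{-2a}\dd x\dd y\leq C\Nrm{\rho}{L^{p_a}}\Nrm{g}{L^1}$ at the critical exponent $p_a=\tfrac{3}{3-2a}$. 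The paper implements the last step slightly more directly than your ``pair one $a_{y,l}$ with each side'' suggestion: it writes the action of $\DGl{K_x}$ on the $(n,m)$-sector explicitly as the multiplication operator $\kappa\sum_j\n{x-x_j}^{-a}$, and Cauchy--Schwarz over the $n$ particles plus antisymmetry gives the factor $n^2$ and the function $g(y)=\Nrm{\Psi^{(n,m)}(y,\cdot)}{L^2}^2$; this cleanly sidesteps the non-compactness of $K_x$ that you correctly identified, without any need for the $\DGl{v\,K_x}$ detour you first contemplate.

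One point in your write-up is genuinely wrong, though it does not affect your displayed estimate: the $h^{-1}$ on the right-hand side does \emph{not} come from converting Schatten norms into semiclassical $\L^p$-norms, nor from a loss ``as in Lemma~\ref{lem:K_x_commutator}'' (that lemma is about commutators $\com{K_x,\op}$, which \emph{gain} a power of $\hbar$ --- precisely the structure that is absent here). The estimate for $\Inprod{\Psi_1}{\sfP^*\Psi_2}$ itself carries no power of $h$ whatsoever; the $h^{-1}$ on the right merely cancels the $\hbar^{-1}$ on the left. This is the entire reason $\sfP^*$ is split off from $\tilde{\sfP}^*$ and treated perturbatively: with no commutator to exploit, the $\hbar^{-1}$ is paid in full and is compensated only by the prefactor $N^{-1}$ against the single factor $N^{1/2}$ from $v$. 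Also, your Lorentz-space remark should read that $\n{\cdot}^{-2a}\in L^{p_a',\infty}$ exactly at $2a\,p_a'=3$ (not for $2ap'<3$), which is why $p_a$ is the exponent appearing in the statement.
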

	\begin{proof}
		Let $(\Psi_1, \Psi_2)\in \cG^2$. By the Cauchy--Schwarz inequality and the boundedness of $a^*$, we have that 
		\begin{align*}
			\n{\Inprod{\Psi_1}{I_{12}\Psi_2}_{\cG}} &\le \frac{1}{N}\(\intd \Nrm{a_l(u_x)\Psi_1}{\cG}^2\d x\)^\frac{1}{2}\( \intdd \Nrm{a_r^*(\conj{v_x})\DGl{K_x}\Psi_2}{\cG}^2 \d x\)^\frac{1}{2}
			\\
			&\le \frac{1}{N^\frac{1}{2}} \Inprod{\Psi_1}{\cN_l\Psi_1}_{\cG}^\frac{1}{2} \(\intd \rho(x)\Nrm{\DGl{K_x} \Psi_2}{\cG}^2 \d x\)^\frac{1}{2}.
		\end{align*}
		where we used $\Nrm{v_x}{L^2}^2 = N\rho(x)$. Moreover, using the fact that 
		\begin{equation*}
			(\DGl{K_x}\Psi)^{(n, m)}(\underline{x}_{n}, \underline{y}_m) =\kappa\sum_{j=1}^n \frac{\Psi^{(n, m)}(\underline{x}_{n}, \underline{y}_m)}{\n{x-x_j}^a},
		\end{equation*}
		where $\underline{x}_{n}=(x_1,\dots,x_n)$, $\underline{y}_m=(y_1,\dots,y_m)$,
		it follows 
		\begin{equation*}
			\Nrm{(\DGl{K_x}\Psi)^{(n, m)}}{L^2(\RR^{3(n+m)})}^2 \le \n{\kappa}^2n^2\intd \frac{g(y)}{\n{x-y}^{2a}}\d y
		\end{equation*}
		where we defined $g(x) = \Nrm{\Psi^{(n,m)}(x,\underline{x}_{n-1},\underline{y}_{m})}{L^2(\d \underline{x}_{n-1}\d\underline{y}_{m})}^2$. Finally, by the Hardy--Littlewood--Sobolev inequality, we have that 
		\begin{align*}
			\intd \rho(x)\Nrm{(\DGl{K_x}\Psi)^{(n, m)}}{L^2(\RR^{3(n+m)})}^2 \d x &\le \n{\kappa}^2n^2 \intdd \frac{\rho(x)\,g(y)}{\n{x-y}^{2a}}\d x \d y
			\\
			&\le C_{p_a, a} \n{\kappa}^2 n^2 \Nrm{\rho}{L^{p_a}} \Nrm{g}{L^1}
		\end{align*}
		where $\Nrm{g}{L^1} = \Nrm{\Psi^{(n,m)}}{L^2}^2$. This yields the desired estimate. The proof for the estimate on $I_{34}$ is the same. 
	\end{proof}

\subsection{Proof of Proposition~\ref{prop:propagator_bound}}

	To control the growth of the fluctuation dynamics in the $\cG_k$ norms, the strategy consists of splitting the generator $\sfG$, defined in \eqref{eq:def_G}, into two parts then solving the problem perturbatively. More precisely, we define the splitting $\sfG = \tilde{\sfG} + \sfB$ with
	\begin{subequations}
		\begin{align}
			\tilde{\sfG} &= \DGl{H_\omega} - \DGr{\conj{H_\omega}} + \sfD + \tilde{\sfQ} + \tilde{\sfQ}^* + \tilde{\sfP} + \tilde{\sfP}^* 
			\label{eq:def_G_tilde}\\
			\sfB &= \sfP + \sfP^*
		\end{align}
	\end{subequations}
	where we recall $\sfP$ and $\tilde{\sfP}$ are defined by Formula~\eqref{eq:def_P}. The idea is to view $\sfG$ as a small perturbation of $\tilde\sfG$. This view is justifiable since, when $N^\frac{1}{2}\,h$ is large, the effect of the operator $\sfB$ is small in the following sense.
	\begin{lem}\label{lem:bound_for_B}
		Let $2\,j\in\N$ and $p_a = \frac{3}{3-2a}$. Then there exists a constant $C>0$ depending only on $a$ such that
		\begin{equation}\label{eq:bound_B}
			\frac{1}{\hbar}\Nrm{\sfB}{\cG_{j+\frac{3}{2}}\rightarrow\cG_j} \leq C\,\frac{2^j \n{\kappa}}{N^\frac{1}{2}h} \Nrm{\rho}{L^{p_a}}^\frac{1}{2}.
		\end{equation}
	\end{lem}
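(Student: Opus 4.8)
The plan is to deduce the estimate directly from the bilinear bound of Lemma~\ref{lem:I_12} by a number-operator commutation argument, so the proof will be short. First I would record that, by the decomposition \eqref{eq:def_P}, $\sfB = \sfP + \sfP^*$ with $\sfP^* = I_{12}+I_{34}$ and $\sfP = (\sfP^*)^*$, and that $\sfB$ is self-adjoint; hence by duality it suffices to control the bilinear form $\Inprod{\Psi_1}{\sfB\Psi_2}_{\cG}$, by adjunction it is enough to treat $\sfP^*$, and by the symmetry of the two summands (with the roles of the $l$ and $r$ sectors exchanged) it is enough to treat $I_{12}$. Lemma~\ref{lem:I_12} is exactly the unweighted ($j=0$) version of the desired estimate and already carries the gain $\tfrac{1}{N^{1/2}h}$, which is the small factor making $\sfB$ a genuine perturbation of $\tilde\sfG$ in the regime $N^{-1/2}\ll h$ imposed by \eqref{eq:scaling}. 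I would also note in passing that $\rho\in L^1\cap L^\infty$ (Proposition~\ref{prop:regu_HF}) gives $\rho\in L^{p_a}$ since $p_a=\tfrac{3}{3-2a}\in(1,3]$, so that the right-hand side of \eqref{eq:bound_B} is finite.

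Next I would pass from $j=0$ to general $j$ with $2j\in\N$ by commuting the weight $(\cN+1)^j$ through $\sfP^*$. Since $I_{12}$ and $I_{34}$ are quartic with three creation and one annihilation operators, $\sfP^*$ raises the total number operator $\cN=\cN_l+\cN_r$ by $2$, so that $(\cN+1)^j\sfP^* = \sfP^*(\cN+3)^j$, and conversely $\sfP$ lowers $\cN$ by $2$; comparing $(\cN+3)^j$ with $(\cN+1)^j$ produces only the harmless combinatorial factor $2^j$ appearing in \eqref{eq:bound_B}. Thus for $\Phi\in\cG_{j+3/2}$,
\begin{align*}
	\Nrm{(\cN+1)^j\sfP^*\Phi}{\cG} = \Nrm{\sfP^*(\cN+3)^j\Phi}{\cG} \leq 2^j\,\Nrm{\sfP^*\,(\cN+1)^j\Phi}{\cG},
\end{align*}
which reduces matters to an estimate of the form $\tfrac1\hbar\Nrm{\sfP^*\Theta}{\cG}\leq \tfrac{C\n{\kappa}}{N^{1/2}h}\Nrm{\rho}{L^{p_a}}^{1/2}\Nrm{(\cN+1)^{3/2}\Theta}{\cG}$. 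This last inequality is Lemma~\ref{lem:I_12} with the weights carried along: the annihilation operator $a_l(u_x)$ supplies at most one factor $\cN^{1/2}$ through $\int a_l^*(u_x)\,a_l(u_x)\dd x = \DGl{1-\omega}\leq\cN_l$, while $\DGl{K_x}$ acts on the $n$-particle sector as $n$ times a multiplication operator and hence contributes a factor $\cN$ once combined with the Hardy--Littlewood--Sobolev pairing of $\rho$ against $\n{x-y}^{-2a}$ used in the proof of Lemma~\ref{lem:I_12}; together these two sources account for the loss of exactly $3/2$ powers of $\cN+1$. Summing the contributions of $I_{12}$, $I_{34}$, $\sfP$ and $\sfP^*$ and recalling $\hbar=h/(2\pi)$ then gives \eqref{eq:bound_B}.

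The main obstacle is not in this lemma at all but in Lemma~\ref{lem:I_12} on which it rests: $\sfP^*=I_{12}+I_{34}$ is precisely the part of the decomposition \eqref{eq:def_P} that carries no hidden commutator structure, hence no extra power of $\hbar$, so estimating it uniformly in $\hbar$ forces one to exploit the full strength of the Fock-space bounds of Lemma~\ref{lem:second_quantization_op} together with Hardy--Littlewood--Sobolev applied to the direct $\rho\cdot\n{x-y}^{-2a}$ interaction — which is exactly what makes the scaling $N^{-1/2}\ll h$ unavoidable for this term. Once that input is available, the only content left in the present proof is the routine combinatorial bookkeeping of number-operator weights outlined above: tracking that the $\pm2$ shift of $\cN$ yields the constant $2^j$ and that the weight loss is precisely $3/2$.
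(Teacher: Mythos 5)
Your proof is correct and follows essentially the same route as the paper's: reduce to the bilinear bound of Lemma~\ref{lem:I_12}, use the duality/adjunction reduction to $\sfP^*$, and commute $(\cN+1)^j$ through $\sfP^*$ via the $+2$ shift of $\cN$, absorbing the $(\cN+3)$ versus $(\cN+1)$ mismatch into the combinatorial factor. The only (harmless) informality is the intermediate step $\Nrm{\sfP^*(\cN+3)^j\Phi}{\cG}\leq 2^j\Nrm{\sfP^*(\cN+1)^j\Phi}{\cG}$, which is not a legitimate operator manipulation as written; one should instead apply the unweighted bound directly to $(\cN+3)^j\Phi$ and then compare weights, as the paper does.
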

	
	\begin{proof}
		This follows from Lemma \ref{lem:I_12}. Notice that $\(\cN+1\)^k \sfP^* = \sfP^* \(\cN+3\)^k$, then, by Lemma \ref{lem:I_12}, we have that
		\begin{align*}
			\Nrm{\sfP^* \Psi}{\cG_j} &= \sup_{\Nrm{\Psi_1}{\cG} \le 1}\Inprod{\(\cN+1\)^{-\frac{1}{2}}\Psi_1}{\sfP^*\(\cN+3\)^{j+\frac{1}{2}} \Psi}_{\cG}\\
			&\le \frac{C_a \n{\kappa}}{N^\frac{1}{2}}\Nrm{\rho}{L^{p_a}}^\frac{1}{2}\Nrm{\(\cN+3\)^{j+\frac{1}{2}}\cN\Psi}{\cG} \le C_a\frac{2^j\n{\kappa}}{N^\frac{1}{2}}\Nrm{\rho}{L^{p_a}}^\frac{1}{2}\Nrm{\Psi}{\cG_{j+\frac{3}{2}}}.
		\end{align*}
		The estimate for $\sfP$ also follows immediately from Lemma \ref{lem:I_12}, that is,
		\begin{align*}
			\Nrm{\sfP \Psi}{\cG_k} &= \sup_{\Nrm{\Psi_1}{\cG} \le 1}\Inprod{\sfP^*\(\cN+1\)^{-1}\Psi_1}{\(\cN-1\)^{j+1} \Psi}_{\cG}\\\
			&\le \frac{C_a\n{\kappa}}{N^\frac{1}{2}}\Nrm{\rho}{L^{p_a}}^\frac{1}{2}\Nrm{(\cN-1)^{j+1}\cN^\frac{1}{2}\Psi}{\cG} \le \frac{C_a\n{\kappa}}{N^\frac{1}{2}}\Nrm{\rho}{L^{p_a}}^\frac{1}{2}\Nrm{\Psi}{\cG_{j+\frac{3}{2}}}.
		\end{align*}
		This completes the argument. 
	\end{proof}
	
	 In light of the above lemma, we define the auxiliary dynamics $\tilde\sfU_{t, s}$ to be the unitary dynamics generated by \eqref{eq:def_G_tilde}, that is, for any $(t,s)\in\R^2$, $\tilde\sfU_{t, s}$ satisfies the differential equation 
	\begin{equation}\label{eq:def_auxiliary_dynamics}
		i\hbar\,\dpt \tilde{\sfU}_{t,s}\Psi = \tilde{\sfG}_t \tilde{\sfU}_{t,s} \Psi \ \text{ with }\ \tilde\sfU_{s,s}\Psi= \Psi
	\end{equation}
	for $\Psi$ sufficiently smooth. The existence of $\tilde{\sfU}_{t,s}$ is proven in Appendix \ref{sec:auxiliary_dynamics_appendix}. Let us begin by showing the auxiliary dynamics propagates the $\cG_k$ norm under regularity assumptions on the solution of the Hartree equation.
	
	\begin{prop}
		Let $a\in(-1,\tfrac{3}{2})$, $p\in[1,2]$ satisfying $p< \fb = \frac{3}{a+1}$, and $\Psi_t=\tilde\sfU_{t,s}\Psi$ is a solution to \eqref{eq:def_auxiliary_dynamics}. Then, for any $k$ such that $2k \in\N$, the inequality
		\begin{equation*}
			\dt \Nrm{\Psi_t}{\cG_k} \leq C_k \n{\kappa} \(\tilde{\cD}_{q_0,q_1}\, h^{3\(\frac{1}{2} - \frac{1}{p}\)} + C_{\op}\, N^\frac{1}{2}h^\frac{3}{p'}\) \Nrm{\Psi_t}{\cG_{k+\(\frac{1}{2}-\frac{1}{p}\)}}
		\end{equation*}
		holds for some constants $C_k$ of the form $C_k = C_{p,a,q_0}\, C^k$ and
		\begin{equation*}
			C_{\op} = \(1+\cC_\infty^\frac{1}{2}\)\(\cD_{q_0,q_1} + \tilde{\cD}_{q_0,q_1}\)
		\end{equation*}
		where $\cD_{q_0,q_1}$ and $\tilde{\cD}_{q_0,q_1}$ are defined by formulas~\eqref{eq:def_Dqq} and \eqref{eq:def_Dqq_tilde}.
	\end{prop}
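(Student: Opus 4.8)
The strategy is a Grönwall estimate for the squared norm $\Nrm{\Psi_t}{\cG_k}^2 = \Nrm{\(\cN+1\)^k\Psi_t}{\cG}^2$. Differentiating in time and using the equation $i\hbar\,\dpt\Psi_t = \tilde{\sfG}_t\Psi_t$ together with the self-adjointness of $\tilde{\sfG}_t$ (which is what makes $\tilde{\sfU}_{t,s}$ unitary; see Appendix~\ref{sec:auxiliary_dynamics_appendix}), one obtains
\begin{equation*}
	\hbar\,\dpt\Nrm{\(\cN+1\)^k\Psi_t}{\cG}^2 = 2\,\Im{\Inprod{\(\cN+1\)^{2k}\Psi_t}{\tilde{\sfG}_t\Psi_t}_\cG}.
\end{equation*}
The decisive structural remark is that the number-conserving part of the generator, $\DGl{H_\omega} - \DGr{\conj{H_\omega}} + \sfD$, commutes with $\cN$, hence with $\(\cN+1\)^{2k}$, and is self-adjoint, being the difference of $\tilde{\sfG}_t$ and the manifestly self-adjoint operator $\(\tilde{\sfQ}+\tilde{\sfQ}^*\)+\(\tilde{\sfP}+\tilde{\sfP}^*\)$. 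Its contribution to the pairing is thus $\Inprod{\chi}{\tilde{\sfG}_0\chi}_\cG$ with $\chi = \(\cN+1\)^k\Psi_t$ and $\tilde{\sfG}_0$ self-adjoint, i.e.\ a real number, and drops out of the imaginary part. One is left with
\begin{equation*}
	\hbar\,\dpt\Nrm{\(\cN+1\)^k\Psi_t}{\cG}^2 = 2\,\Im{\Inprod{\(\cN+1\)^{2k}\Psi_t}{\(\tilde{\sfQ} + \tilde{\sfQ}^* + \tilde{\sfP} + \tilde{\sfP}^*\)\Psi_t}_\cG}.
\end{equation*}

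Next I would estimate the four remaining terms. Since $\tilde{\sfQ} = \(\tilde{\sfQ}^*\)^*$ and $\tilde{\sfP} = \(\tilde{\sfP}^*\)^*$, after taking adjoints inside the pairing it suffices to bound quantities of the form $\Inprod{\(\cN+c\)^{2k}\Psi_t}{\tilde{\sfQ}^*\Psi_t}_\cG$ and $\Inprod{\(\cN+c\)^{2k}\Psi_t}{J_i\,\Psi_t}_\cG$, where the $J_i$ are the six pieces $J_1,\dots,J_{34}$ of $\tilde{\sfP}^*$ produced by Lemma~\ref{lem:Q_decomposition} and $c$ is a bounded integer shift. Each of $\tilde{\sfQ}^*$ and the $J_i$ is a product of four fermionic creation/annihilation operators changing $\cN$ by a bounded amount, so I would split $\(\cN+c\)^{2k} = \(\cN+c\)^{k_1}\(\cN+c\)^{k_2}$ with $k_1+k_2 = 2k$, commute $\(\cN+c\)^{k_2}$ across the quartic operator at the cost of a factor $\(\cN+c'\)^{k_2}\leq C^k\(\cN+1\)^{k_2}$, and then apply Lemma~\ref{lem:bound_Q_tilde} to the $\tilde{\sfQ}^*$ term and Lemma~\ref{lem:bound_P_tilde} to each $J_i$ with $\Psi_1 = \(\cN+c\)^{k_1}\Psi_t$ and $\Psi_2 = \(\cN+c'\)^{k_2}\Psi_t$ (the hypotheses $p\in[1,2]$ and $p<\fb$ are exactly what those lemmas require, through Lemma~\ref{lem:second_quantization_op} and Lemma~\ref{lem:K_x_commutator}). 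Choosing $k_1,k_2$ suitably and using the log-convexity of $s\mapsto\Nrm{\Psi_t}{\cG_s}$ (Hölder in the spectral decomposition of $\cN$), each product $\Nrm{\Psi_1}{\cG_{1/2}}\Nrm{\Psi_2}{\cG_{1/p'}}$ reorganizes into $C^k\,\Nrm{\Psi_t}{\cG_k}\,\Nrm{\Psi_t}{\cG_{k+\(\frac12-\frac1p\)}}$.

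It then remains to assemble the prefactors. From Lemma~\ref{lem:bound_Q_tilde} the $\tilde{\sfQ}^*$-term carries $\n{\kappa}\(\tilde{C}_1 h^{3\(\frac12-\frac1p\)} + \tilde{C}_2 N^{1/2}h^{3/p'}\)$ with $\tilde{C}_1 = C\tilde{\cD}_{q_0,q_1}$ and $\tilde{C}_2 = C\(Nh^3\cC_\infty\)^{1/2}\cD_{q_0,q_1}$; from Lemma~\ref{lem:bound_P_tilde} the $J_i$-terms carry $\n{\kappa} N^{1/2}h^{3/p'}$ times the constants $C_1 = C\cD_{q_0,q_1}$, $C_2 = C\cC_\infty^{1/2}\tilde{\cD}_{q_0,q_1}$, $C_{12} = C\cC_2\(\(Nh^3\cC_\infty\)^{1/2}\cD_{q_0,q_1}+\tilde{\cD}_{q_0,q_1}\)$. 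Using that $Nh^3$ and $\cC_2$ are $O(1)$ under our scaling, the sum of all these collapses to $C_{p,a,q_0}\,C^k\,\n{\kappa}\(\tilde{\cD}_{q_0,q_1}h^{3\(\frac12-\frac1p\)} + C_{\op}N^{1/2}h^{3/p'}\)$ with $C_{\op} = \(1+\cC_\infty^{1/2}\)\(\cD_{q_0,q_1}+\tilde{\cD}_{q_0,q_1}\)$. Dividing $\dpt\Nrm{\Psi_t}{\cG_k}^2$ by $2\Nrm{\Psi_t}{\cG_k}$ gives the claimed inequality, the $C^k$ inside $C_k$ accounting for the constants picked up when moving $\(\cN+c\)^{2k}$ through the quartic operators. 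All of this is rigorous for $\Psi$ in the domain of $\(\cN+1\)^{k+1}$, which is dense and invariant under $\tilde{\sfU}_{t,s}$, and the general case follows by density.

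The genuinely delicate input, namely controlling the $\hbar^{-1}$ in front of $\tilde{\sfG}$ uniformly in $\hbar$ by exploiting the commutator structure of $\tilde{\sfQ}^*$ and the $J_i$ and the bounds on $\com{K_x,v}$ and $\com{K_x,u}$, is already packaged in Lemma~\ref{lem:K_x_commutator} and Lemma~\ref{lem:nabla_u} and enters here only as a black box inside Lemmas~\ref{lem:bound_Q_tilde}--\ref{lem:bound_P_tilde}. The main obstacle I anticipate is therefore purely organizational: distributing the weight $\(\cN+c\)^{2k}$ so that exactly one factor $\Nrm{\Psi_t}{\cG_k}$ is extracted and the residual factor has the announced order, while keeping every constant independent of $\hbar$ and $N$ and correctly tracking the $3^n$-type combinatorial factors coming from expanding commutators inside the four-operator products.
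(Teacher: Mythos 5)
Your overall strategy coincides with the paper's: differentiate the squared weighted norm, observe that only $\tilde{\sfQ}+\tilde{\sfQ}^*$ and $\tilde{\sfP}+\tilde{\sfP}^*$ survive after taking the imaginary part, balance the powers of $\cN$ across the quartic operators, and invoke Lemmas~\ref{lem:bound_Q_tilde} and \ref{lem:bound_P_tilde} together with $Nh^3\cC_\infty\leq 1$ and $\cC_2\leq\cC_\infty^{1/2}$. There is, however, one concrete arithmetic gap in your weight bookkeeping, and it sits exactly where the stated exponent $k+\(\tfrac{1}{2}-\tfrac{1}{p}\)$ must be produced.

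After discarding the number-conserving part, you reduce to pairings of the form $\Inprod{\(\cN+c\)^{2k}\Psi_t}{\tilde{\sfQ}^*\Psi_t}_{\cG}$ and split $\(\cN+c\)^{2k}$ with $k_1+k_2=2k$, landing on $\Nrm{\Psi_t}{\cG_{k_1+\frac{1}{2}}}\Nrm{\Psi_t}{\cG_{k_2+\frac{1}{p'}}}$. The total index of that product is $2k+\tfrac{3}{2}-\tfrac{1}{p}$, whereas the target $\Nrm{\Psi_t}{\cG_k}\Nrm{\Psi_t}{\cG_{k+\frac{1}{2}-\frac{1}{p}}}$ has total index $2k+\tfrac{1}{2}-\tfrac{1}{p}$; log-convexity of $s\mapsto\Nrm{\Psi_t}{\cG_s}$ preserves the total index and therefore cannot perform the ``reorganization'' you invoke --- you are one full power of $\cN$ short, and the uncorrected version only yields the weaker bound with $\Nrm{\Psi_t}{\cG_{k+\frac{3}{2}-\frac{1}{p}}}$ on the right. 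The missing power is recovered only by keeping $\tilde{\sfQ}$ and $\tilde{\sfQ}^*$ together rather than estimating them separately: writing $2i\,\Im{\Inprod{\Psi_t}{\(\cN+1\)^{2k}\tilde{\sfG}\Psi_t}_{\cG}} = \Inprod{\Psi_t}{\com{\(\cN+1\)^{2k},\tilde{\sfG}}\Psi_t}_{\cG}$ and telescoping $\com{\(\cN+1\)^{2k},\,\cdot\,} = \sum_{j=1}^{2k}\(\cN+1\)^{j-1}\com{\cN,\,\cdot\,}\(\cN+1\)^{2k-j}$, each term carries total weight $2k-1$ (equivalently, the two adjoint contributions combine into $\(\cN+1\)^{2k}-\(\cN-3\)^{2k}$, a polynomial of degree $2k-1$). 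This is precisely what the paper does; with total weight $2k-1$ the balanced splitting gives $\Nrm{\Psi_1}{\cG_{1/2}}=\Nrm{\Psi_t}{\cG_k}$ and $\Nrm{\Psi_2}{\cG_{1/p'}}=\Nrm{\Psi_t}{\cG_{k+\frac{1}{2}-\frac{1}{p}}}$ exactly. The remainder of your argument (cancellation of the self-adjoint number-conserving part, the application of the two lemmas, the density argument) is sound once this is corrected.
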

	
	\begin{remark}\label{remark:bound_for_U_tilde}
		Since $N^\frac{1}{2}\,h^\frac{3}{2} \leq \cC_\infty^{-\frac{1}{2}}$, then, by Gr\"onwall's inequality, we deduce that
		\begin{equation}\label{eq:bound_U_tilde}
			\Nrm{\tilde{\sfU}_{t,s}}{\cG_k\rightarrow \cG_k} \leq e^{C_{t,s}}
		\end{equation}
		where $C_{t,s} = C_k\n{\kappa} h^{-\alpha} \(1+\cC_\infty^{-\frac{1}{2}}\) \int_s^t \cD_{q_0,q_1}(\tau) + \tilde{\cD}_{q_0,q_1}(\tau)\d\tau$ with $\alpha := \frac{3}{p} - \frac{3}{2} \geq 0$ and is $0$ if and only if $p=2$.
	\end{remark}
	
	\begin{proof}
		Let $k\in\N$. Using the fact that $\tilde{\sfG} = \tilde{\sfG}^*$, we get 
		\begin{align*}
			i\hbar\,\dt \Nrm{\Psi_t}{\cG_\frac{k}{2}}^2 &= \Inprod{\Psi_t}{\com{\(\cN+1\)^k,\tilde{\sfG}}\Psi_t}_{\cG}
			\\
			&= \sum_{j=1}^k \Inprod{\Psi_t}{\(\cN+1\)^{j-1}\com{\cN,\tilde{\sfG}}\(\cN+1\)^{k-j}\Psi_t}_{\cG}.
		\end{align*}
		Note that the only terms in $\tilde{\sfG}$ that do not commute with $\cN$ are $\tilde{\sfQ}$, $\tilde{\sfP}$, and their adjoints. Since $\cN_{\sigma} a^*_{\sigma} = a^*_{\sigma} \(\cN_{\sigma}+1\)$ for $\sigma \in \{r, l\}$, we obtain
		\begin{equation*}
			\com{\cN,\tilde{\sfG}} = \com{\cN,\tilde{\sfQ}^*+\tilde{\sfQ}+\tilde{\sfP}^*+\tilde{\sfP}} = 4\(\tilde{\sfQ}^*-\tilde{\sfQ}\) + \tilde{\sfP}^*-\tilde{\sfP},
		\end{equation*}
		which leads to
		\begin{equation}\label{eq:dt_N_auxiliary_dynamic}
			\dt \Nrm{\Psi_t}{\cG_\frac{k}{2}}^2 = \frac{2}{\hbar}\Im{\sum_{j=1}^k \Inprod{\Psi_t}{\(\cN+1\)^{j-1}\(4\tilde{\sfQ}^*+\tilde{\sfP}^*\)\(\cN+1\)^{k-j}\Psi_t}_{\cG}}.
		\end{equation}
		Using again the commutation relation between the number operator and the creation operator, we can balance the power of the number operators appearing on the left and on the right of $\tilde{\sfQ}^*$. More precisely, if $j>\frac{k+1}{2}$, then
		\begin{align*}
			\(\cN+1\)^{j-1}\tilde{\sfQ}^*\(\cN+1\)^{k-j} &= \(\cN+1\)^\frac{k-1}{2}\tilde{\sfQ}^*\(\cN+5\)^{j-\frac{k+1}{2}} \(\cN+1\)^{k-j},
			\\
			\(\cN+1\)^{j-1}\tilde{\sfQ}^*\(\cN+1\)^{k-j} &= \(\cN+1\)^\frac{k-1}{2}\tilde{\sfQ}^*\(\cN+2\)^{j-\frac{k+1}{2}} \(\cN+1\)^{k-j},
		\end{align*}
		and similarly if $j<\frac{k+1}{2}$, using the fact that $\tilde{\sfQ}^*\(\cN+1\)^s = \(\cN-3\)^s \tilde{\sfQ}^*$. Therefore, applying Lemma~\ref{lem:bound_Q_tilde} and Lemma~\ref{lem:bound_P_tilde} on each term of the right-hand side of Equation~\eqref{eq:dt_N_auxiliary_dynamic} and the fact that $Nh^3\cC_\infty\leq 1$ and $\cC_2\leq \cC_\infty^\frac{1}{2}$, we obtain
		\begin{equation*}
			\frac{1}{2}\,\dt \Nrm{\Psi_t}{\cG_\frac{k}{2}}^2 \leq C^{k} \n{\kappa} C_{p,a,q_0} \(\tilde{\cD}_{q_0,q_1}\, h^{3\(\frac{1}{2} - \frac{1}{p}\)} + C_{\op}\, N^\frac{1}{2}h^\frac{3}{p'}\) \Nrm{\Psi_t}{\cG_\frac{k}{2}}\Nrm{\Psi_t}{\cG_{\frac{k}{2}+\(\frac{1}{2}-\frac{1}{p}\)}}
		\end{equation*}
		which leads to the desired result.
	\end{proof}

	Moreover, by Proposition~\ref{lem:bogoliubov_N_bound} and by weighted interpolation, we deduce that for any $t>0$, $\sfR_{\op_t}$ is a bounded mapping from $\cG_k$ to $\cG_k$. More precisely, for any $k\in [0,1/2]$, we have a bound of the form~\eqref{eq:bogoliubov_N_bound_small_s}, and the same bound is valid for $\sfR_{\op}^*$. Therefore, recalling that by definition $\sfU_{t,s} = \sfR_{\op_t}^*\, e^{-i\sfL (t-s)/\hbar} \, \sfR_{\op_s}$, and since $e^{-i\sfL (t-s)/\hbar}$ commutes with the number operator, we obtain for $k\in [0,1/2]$
	\begin{equation}\label{eq:bound_U_0}
		\Nrm{\sfU_{t,s}\Psi}{\cG_k} \leq 3 \Nrm{\Psi}{\cG_k} + 5\,N^k \Nrm{\Psi}{\cG}.
	\end{equation}
	
	Combining these three inequalities~\eqref{eq:bound_U_tilde}, \eqref{eq:bound_B}, \eqref{eq:bound_U_0}, and using the Duhamel formula, we obtain the main result of the section.
	\begin{proof}[Proof of Proposition~\ref{prop:propagator_bound}]
		Let $\sfB_h := \frac{1}{i\hbar} \sfB$. Then the Duhamel formula can be written
		\begin{equation*}
			\sfU_{t,0} = \tilde{\sfU}_{t,0} + (\sfU \star \sfB_h \tilde{\sfU})_{t,0}
		\end{equation*}
		where we used the notation $\star$ for the time convolution of operators 
		\begin{equation*}
			(\sfU \star \sfV)_{t,s} := \int_s^t \sfU_{t,s'}\sfV_{s',s}\d s',
		\end{equation*}
		We now define the iterated time convolution $\sfU^{(\star k)}$ by $\sfU^{(\star 1)} = \sfU$ for $k=1$ and by $\sfU^{(\star k)} = \sfU \star \sfU^{(\star (k-1))}$ for $k\geq 2$. With these notations, one can write the following iterated Duhamel's formula
		\begin{equation}\label{eq:iterated_Duhamel}
			\sfU_{t,0} = \sum_{j=0}^{k-1} (\tilde{\sfU} \star (\sfB_h \tilde{\sfU})^{(\star j)})_{t,0} + (\sfU \star(\sfB_h \tilde{\sfU})^{(\star k)})_{t,0}\,.
		\end{equation}
		and from Inequality~\eqref{eq:bound_U_0}, we deduce
		\begin{multline*}
			\Nrm{\sfU_{t,0}\Psi}{\cG_{k_0}} \leq \sum_{j=0}^{k-1} \Nrm{(\tilde{\sfU} \star (\sfB_h \tilde{\sfU})^{(\star j)})_{t,0}\Psi}{\cG_{k_0}}
			\\
			+ \int_0^t \( 3\Nrm{(\sfB_h \tilde{\sfU})^{(\star k)}_{s,0}\Psi}{\cG_{k_0}} + 5N^{k_0} \Nrm{(\sfB_h \tilde{\sfU})^{(\star k)}_{s,0}\Psi}{\cG}\)\d s.
		\end{multline*}
		Since we know from Part \ref{part:regularity} that $C_T := \sup_{[0,T]}(\Nrm{\rho}{L^{p_a}}, \cD_{q_0,q_1}, \tilde{\cD}_{q_0,q_1})$ is bounded, we deduce that $C_{t,s} \leq C_T\,C^{k_0}\, C_{p,a,q_0} \n{\kappa} h^{-\alpha} \(1+\cC_\infty^{-\frac{1}{2}}\) \(t-s\) =: \lambda_\alpha\,C^{k_0} \(t-s\)$.
		
		From inequalities~\eqref{eq:bound_B} and~\eqref{eq:bound_U_tilde}, we obtain for any $0\leq s\leq t \leq T$
		\begin{equation*}
			\Nrm{\(\sfB_h \tilde{\sfU}\)_{t,s}}{\cG_{k_0+\frac{3}{2}}\rightarrow \cG_{k_0}} \leq \frac{2^{k_0}\,C\,\lambda_0}{M}\, e^{\lambda_\alpha \,C^k \(t-s\)}
		\end{equation*}
		where $M = N^\frac{1}{2} h$, which leads to
		\begin{equation*}
			\Nrm{\(\sfB_h \tilde{\sfU}\)^{(\star j)}_{t,s}}{\cG_{k_0+\frac{3}{2}j}\rightarrow\cG_{k_0}} \leq \frac{\(C\lambda_0\)^j 2^{\(k_0+j\)\(j+1\)}}{M^j\(j-1\)!}\(t-s\)^{j-1} e^{\lambda_\alpha \, C^k \(t-s\)}\,.
		\end{equation*}
		Hence, for $\sfU$, we obtain
		\begin{align*}
			\Nrm{\sfU_{t,0}\Psi}{\cG_{k_0}} &\leq \sum_{j=0}^{k-1} \frac{\(C\lambda_0\)^j 2^{\(k_0+j\)\(j+1\)}}{M^j\,j!}\, t^j e^{\lambda_\alpha \, C^k \,t} \Nrm{\Psi}{\cG_{k_0+\frac{3}{2}j}}
			\\
			&\qquad+ \frac{\(2^k C\lambda_0\)^k}{M^k\,k!\,\lambda_\alpha}\, t^k e^{\lambda_\alpha \, C^k t} \(2^{k_0\(k+1\)}\Nrm{\Psi}{\cG_{k_0+\frac{3}{2}k}} + N^{k_0}\Nrm{\Psi}{\cG_{\frac{3}{2}k}}\)
			\\
			&\leq C_M\, e^{C_M\,\lambda_\alpha\,t} \Nrm{\Psi}{\cG_{k_0+\frac{3}{2}k}} + \frac{\(2^k C\)^k\lambda_0^{k-1}}{k!}\, t^k e^{\lambda_\alpha \, C^k t} \,\frac{N^{k_0}\,h^{3\alpha}}{M^k}\Nrm{\Psi}{\cG_{\frac{3}{2}k}}
		\end{align*}
		where $C_M = C^{k+k_0}\(1+\frac{1}{M}\)$. Observing that $\lambda_0 = h^{\alpha}\lambda_\alpha$, this implies
		\begin{multline*}
			\Nrm{\sfU_{t,0}\Psi}{\cG_{k_0}} \leq C_M\, e^{C_M\,\lambda_\alpha\,t} \Nrm{\Psi}{\cG_{k_0+\frac{3}{2}k}}
			\\
			+ \frac{2^k C}{k} \frac{\(2^k C\)^{k-1}\lambda_\alpha^{k-1}}{(k-1)!}\, t^{k-1} e^{\lambda_\alpha \, C^k t} \, \frac{N^{k_0}\,h^{\alpha k}\,t}{M^k}\Nrm{\Psi}{\cG_{\frac{3}{2}k}}
		\end{multline*}
		and using the fact that for $x>0$, $\frac{x^{k-1}}{(k-1)!} \leq e^x$, replacing the constant $C^k + 2^k C$ by $C^k$ for some other numerical constant $C$ in the second exponential and bounding $2^k C/k$ by $C^k$, we can simplify a bit the result and write
		\begin{align*}
			\Nrm{\sfU_{t,0}\Psi}{\cG_{k_0}} &\leq C_M\, e^{C_M\,\lambda_\alpha\,t} \Nrm{\Psi}{\cG_{k_0+\frac{3}{2}k}} + C^k \, e^{\lambda_\alpha \, C^k t}\, \frac{N^{k_0}\,h^{\alpha k}\,t}{M^k}\Nrm{\Psi}{\cG_{\frac{3}{2}k}}
			\\
			&\leq C_M\, e^{C_M\,\lambda_\alpha\,t} \(\Nrm{\Psi}{\cG_{k_0+\frac{3}{2}k}} + \frac{h^{\(\alpha-1\) k}}{N^{\frac{k}{2}-k_0}} \, t \Nrm{\Psi}{\cG_{\frac{3}{2}k}}\).
		\end{align*}
	\end{proof}

\section{Proofs of Theorem \ref{thm:mean_field_2} and Theorem \ref{thm:mean_field}}

	We can now prove our general theorem.
	\begin{proof}[Proof of Theorem~\ref{thm:mean_field_2}]
		In order to get the result, we want to apply Proposition~\ref{prop:propagator_bound}. Hence, we define
		\begin{equation*}
			\frac{1}{p_\alpha} := \frac{\alpha}{3} + \frac{1}{2}.
		\end{equation*}
		The assumptions $\alpha \in \lt[0,1\rt]$ and $\alpha > a - \frac{1}{2}$ are equivalent to say that $p_\alpha \in \lt[\tfrac{5}{6},2\rt] \text{ and } p_\alpha < \fb$, and imply that \eqref{eq:scaling_q_2} is a non-empty condition. Therefore, $p_\alpha$ satisfies the assumption~\eqref{eq:scaling_p_b}. Now we define
		\begin{equation}\label{eq:q0_q1_def}
			q_1 := q \quad \text{ and } \quad \frac{1}{q_0} := 2 \(\frac{1}{p_\alpha} - \frac{1}{\fb}\) - \frac{1}{q_1}
		\end{equation}
		so that Formula~\eqref{eq:scaling_relation_1} holds with $p= p_\alpha$. Assumption~\eqref{eq:scaling_q_2} can be written
		\begin{equation}\label{eq:q0_q1_prop}
			\frac{1}{q_1} \in \lt[2\(\frac{1}{p_\alpha} - \frac{1}{\fb}\)-\frac{1}{2},\frac{1}{p_\alpha} - \frac{1}{\fb}\rt].
		\end{equation}
		Definition~\eqref{eq:q0_q1_def} and Equation~\eqref{eq:q0_q1_prop} together imply that $q_0$ and $q_1$ satisfy $2\leq q_0 < q_1 \leq \infty$.
		
		Next, we have to check that we have a uniform in $h$ bound for the quantity
		\begin{equation*}
			\sup_{[0,T]}\Big(\Nrm{\rho(t)}{L^{p_a}}, \cD_{q_0,q_1}(t), \tilde{\cD}_{q_0,q_1}(t)\Big)
		\end{equation*}
		appearing in the growth rate $\lambda_\alpha$ defined in \eqref{eq:lambda-alpha}. This is done by using the propagation of regularity results for the Hartree--Fock equation of Part~\ref{part:regularity}. First, by our initial regularity assumptions and Proposition~\ref{prop:regu_HF}, we deduce that $\Nrm{\rho(t)}{L^{p_a}}$ is bounded uniformly in $h$ and in $t\in[0,T]$ for some $T = T_{\op^\init}$ depending on the initial condition of the Hartree--Fock Equation \eqref{eq:Hartree--Fock}. Then, by Proposition~\ref{prop:regu_HF_sqrt} we deduce that $\sqrt{\op}\in \cW^{1,q}(m_n)$ for any $q\in[2,q_1)$, and so
		\begin{equation*}
			\tilde{\cD}_{q_0,q_1} = \Nrm{\Dhv{\sqrt{\op}}\,m_n}{\L^{q_0}}^{\frac{1}{2}} \Nrm{\Dhv{\sqrt{\op}}\,m_n}{\L^{q_1}}^{\frac{1}{2}}
		\end{equation*}
		is uniformly bounded on $[0,T]$. Moreover, by Lemma~\ref{lem:regu_rho_vs_sqrt}, we obtain
		\begin{equation*}
			\cD_{q,q_0} = \Nrm{\Dhv{\op}\,m_n}{\L^{q_0}}^{\frac{1}{2}} \Nrm{\Dhv{\op}\,m_n}{\L^{q_1}}^{\frac{1}{2}} \leq \tilde{\cD}_{q_0,q_1}
		\end{equation*}
		so $\cD_{q,q_0}$ is also uniformly bounded on $[0,T]$. Therefore, by Proposition~\ref{prop:propagator_bound}, we deduce that
		\begin{equation}\label{eq:propagator_bound_2}
			\Nrm{\sfU_{t,0}\Psi}{\cG_\frac{1}{2p}}^2 \leq C_M^2\, e^{C_M\,\lambda \,t/h^{\alpha}} \(\Nrm{\Psi}{\cG_{\frac{3 k}{2} + \frac{1}{2p}}}^2 + \frac{h^{2k\(\alpha-1\)}}{N^{k-\frac{1}{p}}} \, t^2 \Nrm{\Psi}{\cG_\frac{3 k}{2}}^2\)
		\end{equation}
		with $\lambda$ uniformly bounded in $t\in[0,T]$ and in the Planck constant $h$. Then, by Proposition~\ref{prop:degree_evaporation_vs_Sp}, the following inequality holds
		\begin{equation*}
			\Nrm{\op_{N:1} - \op}{\L^p} \leq \frac{C_p}{\min(N^{\frac{1}{2}},N\, h^{\frac{3}{p'}})} \Nrm{\Psi}{\cG_\frac{1}{2p}}^2.
		\end{equation*}
		We conclude the proof by combining Inequality~\eqref{eq:propagator_bound_2} with the above inequality.
	\end{proof}
	
	Next, we prove that from our general Theorem~\ref{thm:mean_field_2}, we can deduce our simplified mean-field results, i.e. Theorem~\ref{thm:mean_field}. To this end, we come back to the setting of density operators over the Fock space by the following Lemma

	\begin{lem}\label{lem:bound_evaporation_vs_L1}
		Let $\opNop := \n{\sfI_\cG^{-1}(\sfR_{\op}\,\Omega)}^2$ as defined in \eqref{eq:reference_state}. Then for any $\opN \in \L^1_s(\cF)$, that commutes with $\cN$, there exists $\Psi\in\cG$ such that 
		\begin{equation}\label{eq:correspondence}
			\opN = \n{\sfI_\cG^{-1}(\sfR_{\op}\,\Psi)}^2
		\end{equation}
		and
		\begin{equation}\label{eq:bound_evaporation_vs_L1}
			\Nrm{\cN^s\Psi}{\cG}^2 \leq C_s \Nrm{\(\cN+2N\)^s\(\opN - \opNop\)}{\L^1(\cF)}
		\end{equation}
		with $C_s = 12^s\(s+1\)^s$.
	\end{lem}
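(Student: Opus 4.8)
The plan is to first construct the vector $\Psi$ and then estimate its weighted Fock-space norm. The construction is purely algebraic. Since $\opN$ is a positive trace-class operator on the single Fock space $\cF$ that commutes with $\cN$, we may purify it: write $\opN = \n{\opupN}^2$ via a Schmidt decomposition $\opupN = \sum_\jj \eps_\jj\ket{\psi_\jj}\bra{\phi_\jj}$ as in Section~\ref{sec:Bogoliubov}, set $\Phi := \sfI_\cG(\opupN) \in \cG$, and then define $\Psi := \sfR_{\op}^*\,\Phi$. Because $\sfR_{\op}$ is unitary on $\cG$ and $\sfI_\cG$ is an isometric isomorphism from $\L^2(\cF)$ onto $\cG$, we get $\opN = \n{\sfI_\cG^{-1}(\sfR_{\op}\,\Psi)}^2$, which is exactly \eqref{eq:correspondence}. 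Moreover $\opNop = \n{\sfI_\cG^{-1}(\sfR_{\op}\,\Omega)}^2$, so $\Omega = \sfR_{\op}^*\,\sfI_\cG(\opnu_{\op})$ plays the analogous role for the reference state.

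The second and main step is the norm estimate \eqref{eq:bound_evaporation_vs_L1}. First I would reduce $\Nrm{\cN^s\Psi}{\cG}^2 = \Nrm{\cN^s\sfR_{\op}^*\Phi}{\cG}^2$ using Lemma~\ref{lem:bogoliubov_N_bound}, which gives $\Nrm{\cN^s\sfR_{\op}^*\Phi}{\cG} \leq 3^s\Nrm{(\cN+2N+2s)^s\Phi}{\cG}$ (applying it with integer part of $s$, or more precisely the statement is for $k\in\N$, so for general half-integer $s$ one interpolates with the remark after the lemma, absorbing a harmless constant). Combined with $(\cN+2N+2s) \le (1+s)(\cN+2N)$ this yields $\Nrm{\cN^s\Psi}{\cG}^2 \le 9^s(1+s)^{2s}\Nrm{(\cN+2N)^s\Phi}{\cG}^2$. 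Now the key observation: $\Phi$ and $\Omega$ are both vectors of $\cG$, and the isomorphism $\sfI_\cG = \sfU\sfJ$ translates the action of $\cN$ on $\cG$ into the action of $\cN\otimes 1 + 1\otimes\cN$ on $\L^2(\cF)$, i.e. $\cN\,\sfI_\cG(\opupN) = \sfI_\cG(\cN\opupN + \opupN\cN)$. Hence $\Nrm{(\cN+2N)^s\Phi}{\cG}^2 = h^3\Tr_\cF\!\big((\cN_{\mathrm{L}}+\cN_{\mathrm{R}}+2N)^s\opN(\cN_{\mathrm{L}}+\cN_{\mathrm{R}}+2N)^s\big)$, where $\cN_{\mathrm{L}},\cN_{\mathrm{R}}$ are the left/right copies; since $\opN$ commutes with $\cN$ this equals $h^3\Tr_\cF\!\big((2\cN+2N)^{s}\opN(2\cN+2N)^{s}\big) = 4^s\,h^3\Tr_\cF\!\big((\cN+N)^{2s}\opN\big)$. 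So everything is expressed as a single trace against $\opN$ of a power of $\cN+N$.

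The last step is to compare this trace for $\opN$ with the same trace for $\opNop$ and control the difference by $\Nrm{(\cN+2N)^s(\opN-\opNop)}{\L^1(\cF)}$. For the difference term I would simply bound $\big|h^3\Tr_\cF((\cN+N)^{2s}(\opN-\opNop))\big| \le h^{-3}\cdot h^3\Nrm{(\cN+2N)^s(\opN-\opNop)}{\L^1(\cF)}\cdot(\text{const})$ after symmetrizing $(\cN+N)^{2s}$ into $(\cN+N)^s\,\cdot\,(\cN+N)^s$ and using cyclicity plus Hölder; and for the reference-state term one notes $h^3\Tr_\cF((\cN+N)^{2s}\opNop) = \Nrm{(\cN+2N)^s\opnu_{\op}}{\L^2(\cF)}^2 \lesssim N^{2s}$ is already of the right size (indeed $\opnu_{\op}$ commutes with $\cN$ by Lemma~\ref{lem:commut_opnu_N}, and $\sigma_\cN\le\sqrt N$), which gets reabsorbed into the $\L^1_s$ norm of $\opN-\opNop$ together with the bound for $\opNop$ itself — keeping careful track of the normalizations $\Tr\opNop=\Tr\opN = h^{-3}$. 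Collecting all the numerical factors ($9^s$, $(1+s)^{2s}$, $4^s$, and the $O(1)$ constants from Hölder) gives a constant of the form $C_s = 12^s(1+s)^s$ as claimed. The main obstacle I anticipate is bookkeeping: correctly matching the power $2s$ that appears after the square against the $\L^1_s$ norm weighted by only the first power $(\cN+2N)^s$, which forces the symmetrization $(\cN+2N)^{s}(\opN-\opNop)(\cN+2N)^{s}$ inside the trace and a careful use of the fact that $\opN$ commutes with $\cN$; the rest is a chain of already-established estimates (Lemma~\ref{lem:bogoliubov_N_bound}, Lemma~\ref{lem:commut_opnu_N}, and the definition \eqref{eq:def_I_G} of $\sfI_\cG$).
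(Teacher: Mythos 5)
Your construction of $\Psi$ and the reduction via Lemma~\ref{lem:bogoliubov_N_bound} are fine as far as they go, but the final step contains a genuine gap that cannot be patched along the route you describe. After unwinding the Bogoliubov transform you arrive at a bound of the form $\Nrm{\cN^s\Psi}{\cG}^2 \lesssim h^3\Tr{(\cN+N)^{2s}\opN}$. This is an \emph{absolute} quantity: it is bounded below by $N^{2s}\,h^3\Tr{\opN}=N^{2s}$ and does not vanish when $\opN=\opNop$, whereas the right-hand side of \eqref{eq:bound_evaporation_vs_L1} is $O(1)$ in the intended application and equals $0$ for $\opN=\opNop$. The proposed ``reabsorption'' of the reference term $h^3\Tr{(\cN+N)^{2s}\opNop}\sim N^{2s}$ into $\Nrm{(\cN+2N)^s(\opN-\opNop)}{\L^1(\cF)}$ is therefore impossible. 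The point you are missing is that $\cN^s\Omega=0$, so one must write $\cN^s\Psi=\cN^s(\Psi-\Omega)$ and estimate the \emph{difference} $\Phi-\Phi_{\op}$, i.e.\ $\Nrm{(\cN+N)^s(\opupN-\opupNop)}{\L^2(\cF)}$, never the two vectors separately.

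Two further ingredients are then needed, both absent from your proposal. First, the purification of $\opN$ cannot be an arbitrary Schmidt decomposition: it is only determined up to a partial isometry, and a generic choice produces a $\Psi$ far from $\Omega$ even when $\opN=\opNop$, so \eqref{eq:bound_evaporation_vs_L1} would simply fail for that $\Psi$. The paper fixes this by taking the polar decomposition $\opupNop=U_{N,\op}\,\n{\opupNop}$ of the reference purification and setting $\opupN:=U_{N,\op}\sqrt{\opN}$ with the \emph{same} partial isometry; this choice also makes the identity $(\cN+N)^s(\opupN-\opupNop)=U_{N,\op}\(\n{(\cN+N)^s\opupN}-\n{(\cN+N)^s\opupNop}\)$ available and ensures the compatibility with $\cN$ that your trace computation implicitly uses. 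Second, to pass from the Hilbert--Schmidt norm of this difference of nonnegative operators to the trace norm of $(\cN+N)^s(\opN-\opNop)(\cN+N)^s$ one needs the Powers--Stormer inequality $\Tr{\n{A-B}^2}\le\Tr{\n{A^2-B^2}}$ for $A,B\ge0$. That inequality is precisely what converts the square on the left of \eqref{eq:bound_evaporation_vs_L1} into a single power of the weighted $\L^1$ norm on the right --- the bookkeeping mismatch you flagged at the end but did not resolve.
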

	
	\begin{proof}
		Let $\Phi_{\op} = \sfR_{\op}\Omega = \sfI_\cG(\opupNop)$. Then $\n{\opupNop} = \sqrt{\opNop}$, and by the polar decomposition of $\opupNop$, there exists a unique operator $U_{N,\op}$ such that 
		\begin{equation*}
			\opupNop = U_{N,\op} \n{\opupNop},
		\end{equation*}
		with $\Nrm{U_{N,\op}\psi}{\cF} = \Nrm{\psi}{\cF}$ if $\psi \in \(\ker{\opupNop}\)^{\perp}$ and $\Nrm{U_{N,\op}\psi}{\cF} = 0$ if $\psi \in \ker{\opupNop}$ (see e.g. \cite[Theorem~VI.10]{reed_functional_1980}). Then we define
		\begin{equation*}
			\opupN := U_{N,\op} \n{\sqrt{\opN}},
		\end{equation*}
		and $\Phi = \sfI_\cG(\opupN) $, $\Psi := \sfR_{\op}^*\Phi$. In particular, $\opN = \n{\opupN}^2$ so Formula~\eqref{eq:correspondence} is satisfied. Now from Lemma~\ref{lem:bogoliubov_N_bound}, we have
		\begin{equation*}
			\Nrm{\cN^s\Psi}{\cG} = \Nrm{\cN^s(\Psi-\Omega)}{\cG} \leq 3^s \Nrm{\(\cN+2N+2s+2\)^s \(\Phi-\Phi_{\op}\)}{\cG}.
		\end{equation*}
		Using the fact that $\sfI_\cG$ is an isometry, $\cN_l\Phi = \sfI_\cG(\cN\opupN)$, $\cN_r\Phi = \sfI_\cG(\opupN\cN)$ and $\opupN$ commutes with $\cN$, we deduce that
		\begin{equation*}
			\Nrm{\cN^s\Psi}{\cG} \leq C_s \Nrm{\(\cN+N\)^s \(\opupN-\opupNop\)}{\L^2(\cF)}.
		\end{equation*}
		By our choice of $U_{N,\op}$, it holds
		\begin{equation*}
			\(\cN+N\)^s \(\opupN-\opupNop\) = U_{N,\op}\(\n{\(\cN+N\)^s\opupN}-\n{\(\cN+N\)^s\opupNop}\)
		\end{equation*}
		with $\Nrm{U_{N,\op}}{\infty} \leq 1$. Now recall the Powers--St{\o}rmer inequality \cite[Lemma~4.1]{powers_free_1970} which tells that if $A$ and $B$ are nonnegative operators, then $\tr\!\big(\n{A-B}^2\big)\leq \Tr{\n{A^2-B^2}}$. Hence,
		\begin{align*}
			\Nrm{\cN^s\Psi}{\cG}^2 &\leq C_s^2 \Nrm{\n{\opupN \(\cN+N\)^s}^2-\n{\opupNop\(\cN+N\)^s}^2}{\L^1(\cF)}
			\\
			&\leq C_s^2 \Nrm{\(\cN+N\)^{s}\(\opN-\opNop\)\(\cN+N\)^{s}}{\L^1(\cF)}.
		\end{align*}
	\end{proof}

	\begin{proof}[Proof of Theorem~\ref{thm:mean_field}]
		In the setting of Theorem~\ref{thm:mean_field}, since $a<1/2$, we can take $\alpha = 0$ in Theorem~\ref{thm:mean_field_2}, and the hypothesis for $q$ implies that condition~\eqref{eq:scaling_q_2} is satisfied. With this choice, Theorem~\ref{thm:mean_field_2} yields for any $k_1\in\N$
		\begin{equation*}
			\Nrm{\op_{N:1}-\op}{\L^p} \leq \frac{C\,e^{\lambda\,t}}{\min(N^\frac{1}{2}, N\,h^\frac{3}{p'})} \Nrm{\Psi}{\cG_{\frac{3}{2} k_1+\frac{1}{2p}}}^2 \(1+ \frac{h^{-2 k_1}}{N^{k_1-\frac{1}{p}}}\).
		\end{equation*}
		Taking $k = \frac{3}{2} k_1+\frac{1}{2p}$, the hypothesis on $k$ implies that $\frac{h^{-2 k_1}}{N^{k_1-\frac{1}{p}}}\leq C$. Finally, we use Lemma~\ref{lem:bound_evaporation_vs_L1} to get
		\begin{equation*}
			\Nrm{\Psi}{\cG_k}^2 \leq 2^{k+1}\(\Nrm{\Psi}{\cG}^2+\Nrm{\cN^k\Psi}{\cG}^2\) \leq C_k \(1+\Nrm{\opN - \opNop}{\L^1_k(\cF)}\)
		\end{equation*}
		for some $k$ dependent constant $C_k>0$.
	\end{proof}

\appendix

\section{Existence of the Auxiliary Dynamics}\label{sec:auxiliary_dynamics_appendix}

	The purpose of this appendix is to extend the result on the existence of the auxiliary dynamics for smooth potentials in the interaction picture given in the appendix of~\cite{benedikter_mean-field_2016} to the case of singular interaction potentials of the form $K(x)=\n{x}^{-a}$ for $0 \le a \le 1$. 
	
	In this section, $\hbar$ will not play any role in our analysis. Therefore, to simplify the presentation, we set $\hbar\equiv 1$. By Formula~\eqref{eq:def_P}, the time-dependent operator $\tilde{\sfG}$ defined in Equation~\eqref{eq:def_G} can be written
	\begin{equation}\label{eq:aux_gen}
		\tilde \sfG = 
		\DGl{H_\omega} - \DGr{\overline{H_\omega}} + \tilde \sfQ + \tilde \sfQ^* + \sfD + 
		\tilde \sfP + \tilde \sfP^* 
	\end{equation}
	where $\tilde{\sfQ}^*$ and $\sfD$ are already defined after Equation~\eqref{eq:def_G} and
	\begin{align*}
		\tilde \sfP^* &= \frac{1}{N} \intdd \bigg( a^*_r(\bar v_x) \, a_l^*(u_x)\DGl{u\com{K_x, u}} + a^*_r(\bar u_x) \, a_l^*(v_x)\DGl{v\com{K_x, v}}
		\\
		&\qquad\qquad + \DGplr{\com{u, K_x}v +\com{K_x, v}u} \DGl{\omega_x}
		\\
		&\qquad\qquad + a^*_l(v_x) \, a_r^*(\bar u_x) \DGr{\bar u\com{K_x,\bar u}} + a^*_l(u_x) \, a_r^*(\bar v_x) \DGr{\bar v \com{K_x, \bar v}}
		\\
		&\qquad\qquad + \DGprl{\com{\bar u, K_x}\bar v + \com{K_x, \bar v}\bar u} \DGr{\bar \omega_x}\bigg)\d x.
	\end{align*}
	The goal is to show that the operator $\tilde{\sfG}$ generates a unitary dynamics $\tilde \sfU_{t, s}$ in Fock space that satisfies the differential equation
	\begin{equation}\label{aux_eq}
		i\,\dpt \tilde \sfU_{t, s}\Psi = \tilde \sfG_t\tilde \sfU_{t, s}\Psi
		\ \text{ with } \ \tilde \sfU_{s, s}\Psi = \Psi,
	\end{equation}
	for sufficiently smooth $\Psi \in \cG$. To this end, it is convenient to consider the dynamics in the interaction picture. More precisely, define the operator
	\begin{equation*}
		\widehat \sfG_t = -\sfL_0 + \sfU^{(0)*}_t \tilde\sfG_t\sfU^{(0)}_t
	\end{equation*}
	where $\sfL_0 = \DGr{\lapl} - \DGl{\lapl}$ and $\sfU^{(0)}_t = \sfU^{(0)}_{t,0}$ is the free evolution, i.e. $\sfU^{(0)}_{t,s}$ solves
	\begin{equation*}
			i\,\dpt \sfU^{(0)}_{t, s} \Psi = \sfL_0 \sfU^{(0)}_{t, s} \Psi, 
	\end{equation*}
	with $\sfU_{s, s}^{(0)}\Psi = \Psi$. We will show that $\widehat\sfG_t$ generates a unitary operator $\widehat \sfU_{t, s}$ in Fock space which in turn allows us to define the auxiliary dynamics by
	\begin{equation*}
		\tilde \sfU_{t, s} := \sfU^{(0)}_t\widehat \sfU_{t, s} 
		\sfU^{(0)*}_s.
	\end{equation*}
	which formally satisfies Equation~\eqref{aux_eq}. 
	
	Since much of the result in this appendix is similar to that of the appendix of \cite{benedikter_mean-field_2016}, we will only focus on the part of the result that relies explicitly on the regularity of the potential and refer the reader to \cite{benedikter_mean-field_2016} for a more complete proof of the result. Hence, the rest of this section will be devoted to prove that the mapping $t\mapsto \widehat \sfG_t\Psi$ is H\"older continuous when $\Psi$ is sufficiently smooth. More precisely, we define the homogeneous Sobolev-type double Fock space by the norm
	\begin{equation}\label{eq:Fock_Sobolev_norm}
		\Lp{\Psi}{\dot{\cH}^s_k} := \Nrm{\cN^{k-1/2} \dG((-\Delta)^s)^{1/2}\Psi}{\cG}.
	\end{equation}
	In particular, $\Lp{\Psi}{\dot{\cH}^0_k} = \Lp{\cN^k\Psi}{\cG}$.
	The main proposition of this section is the following result.
	\begin{prop}\label{prop:time_regularity}
		Let $\op$ be a solution to the Hartree--Fock equation with initial condition $\op^\init$ satisfying \eqref{eq:regularity_op}, \eqref{eq:regularity_sqrt}, and $\intd \rho^\init(x)\,(1+\n{x}^3)\d x \leq C$. Then there exists $T>0$ and a constant $C_T$ depending on $\op^\init$ such that for any $\(t,s\)\in[0,T]^2$,
		\begin{equation*}
			\Nrm{\(\widehat\sfG_t-\widehat\sfG_s\)\Psi}{\cG} \leq C_T \n{t-s}^\frac{3-2a}{7} \(\Nrm{\Psi}{\cG_2} + \Nrm{\Psi}{\dot{\cH}^{3/2}_2}\).
		\end{equation*}
	\end{prop}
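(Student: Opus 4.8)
The strategy is to decompose the difference $\widehat\sfG_t - \widehat\sfG_s$ according to the building blocks of the generator $\tilde\sfG$ appearing in \eqref{eq:aux_gen}, namely the second-quantized Hartree--Fock Hamiltonian $\DGl{H_\omega}-\DGr{\conj{H_\omega}}$, the quartic term $\sfD$, and the cubic-type terms $\tilde\sfQ,\tilde\sfQ^*,\tilde\sfP,\tilde\sfP^*$. Since conjugation by the free evolution $\sfU^{(0)}_t$ does not change Schatten norms of one-particle kernels and commutes with $\cN$, the passage from $\tilde\sfG$ to $\widehat\sfG$ costs nothing beyond keeping track of time-translated kernels $e^{it\Delta}\,\cdot\,e^{-is\Delta}$; the term $-\sfL_0$ cancels exactly in the difference. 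Thus it suffices to control, for each block, the difference of its one- and two-particle kernels in the Schatten norms that appear in Lemmas~\ref{lem:second_quantization_op}, \ref{lem:second_quantization_op_2} and \ref{lem:bound_Q_tilde}--\ref{lem:I_12}, and then estimate the resulting Fock-space operator norms against $\Nrm{\Psi}{\cG_2}+\Nrm{\Psi}{\dot\cH^{3/2}_2}$.

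The key steps, in order, are: (1) Write $\widehat\sfG_t-\widehat\sfG_s$ as a sum of terms, each of which is a second-quantized operator (of type $\dG$, $\dG^\pm$, or a product of two such) whose symbol is a difference $O(t)-O(s)$ built out of $\omega(t)=N h^3\op(t)$, $u(t)$, $v(t)$, their commutators with $K_x$, and free-evolution factors. (2) For the time increments of $\op$ itself, use the Hartree--Fock equation \eqref{eq:Hartree--Fock_omega}: $\op(t)-\op(s)=-\tfrac{i}{\hbar}\int_s^t\com{H_\omega,\op}\,d\tau$, so that in the appropriate weighted Schatten norm $\Nrm{(\op(t)-\op(s))m}{\L^p}\leq |t-s|\sup_{[0,T]}\Nrm{\com{H_\omega,\op}m}{\L^p}/\hbar$, and the right-hand side is bounded by the propagation-of-regularity results of Part~\ref{part:regularity} (Propositions~\ref{prop:regu_HF}--\ref{prop:regu_HF_sqrt}) together with the commutator estimates of Section~\ref{sec:commutators}. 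For the free-evolution increments $e^{it\Delta}-e^{is\Delta}$, interpolate: $\Nrm{(e^{it\Delta}-e^{is\Delta})g}{L^2}\lesssim |t-s|^\theta\Nrm{g}{\dot H^{2\theta}}$, which is where the Sobolev-type Fock norm $\dot\cH^{3/2}_2$ and the H\"older exponent enter. (3) Convert increments of $u=\sqrt{1-\omega}$ and $v=\sqrt\omega$ into increments of $\omega$ via the power-series argument of Lemma~\ref{lem:nabla_u} (or, for $v$, a square-root Lipschitz bound using $\Nrm{\omega}{\infty}<1$), and increments of $\com{K_x,u},\com{K_x,v}$ via Lemma~\ref{lem:K_x_commutator} applied to $\op(t)-\op(s)$. (4) Assemble: each block contributes a bound of the form $C_T|t-s|^{\beta}$ times $\Nrm{\Psi}{\cG_2}+\Nrm{\Psi}{\dot\cH^{3/2}_2}$, using the factor $N^{1/2}h^{3/2}\leq\cC_\infty^{-1/2}$ to absorb the $N$- and $h$-dependent constants exactly as in Section~\ref{sec:fluctuations}; optimizing the interpolation exponents across the worst term yields the exponent $\tfrac{3-2a}{7}$.

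The main obstacle is step (2)--(3) for the terms where the singularity of $K$ interacts with time regularity: one needs time-H\"older control of $\com{K_x,u(t)}$ and of $K_x$-weighted kernels in $\L^2$, and the only leverage for the $K_x$-commutator is Lemma~\ref{lem:K_x_commutator}, which loses a power $h^{1-3/\fb}$ and demands $\Dhv$-regularity of the increment $\op(t)-\op(s)$. Feeding the Hartree--Fock equation into $\Dhv(\op(t)-\op(s))$ requires the second-order propagation results (Proposition~\ref{prop:regu_HF_2}) and the weighted commutator estimates, and the bookkeeping of which weighted norm survives is delicate; moreover the free-propagator increment costs $\dot H^{3/2}$ on $\Psi$ (hence the space $\dot\cH^{3/2}_2$), and balancing the gain $|t-s|^\theta$ against the at-most-$\dot H^{3/2}$ regularity available — while staying compatible with $a\le 1$, i.e. $\fb=\tfrac{3}{a+1}\ge\tfrac32$ — is what pins down the exponent $\tfrac{3-2a}{7}$. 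Everything else (the quartic $\sfD$, which commutes with $\cN$ and only needs an $\L^\infty$-type bound on $V_\omega$ and $X_\omega$, and the purely algebraic recombination into $\dG$-type operators) is routine once the weighted-Schatten time-increment estimates are in hand.
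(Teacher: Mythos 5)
Your overall decomposition of $\widehat\sfG_t-\widehat\sfG_s$ into the blocks of \eqref{eq:aux_gen} matches the paper, and your treatment of the quadratic Hamiltonian part (interaction-picture conjugation, the continuity equation for $\dpt\rho$, the propagation-of-regularity bounds) is essentially Lemma~\ref{lem:appendix_I}. But the mechanism you propose for producing the H\"older exponent is not the one that works, and this is a genuine gap rather than a cosmetic difference.

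You attribute the exponent $\tfrac{3-2a}{7}$ to an interpolation bound $\Nrm{(e^{it\Delta}-e^{is\Delta})g}{L^2}\lesssim \n{t-s}^\theta\Nrm{g}{\dot H^{2\theta}}$ for the free-propagator increments, with the $\dot\cH^{3/2}_2$ norm entering there. This cannot be right: that route would give an exponent tied to the available Sobolev regularity of $\Psi$ (e.g.\ $\theta=3/4$ for $\dot H^{3/2}$), independent of $a$, whereas the actual exponent degenerates as $a\to 1$. The exponent in the paper comes from an entirely different competition. The time derivative of the conjugated singular blocks ($\tilde\sfQ^*$, $\sfD$, $\tilde\sfP^*$) produces, after Lemma~\ref{lem:conju}, factors of $\weight{y}^2\,\widehat{K}(y)$ integrated in $y$; since $\widehat{K}(y)\sim\n{y}^{a-3}$, this integral diverges, so these blocks are simply not Lipschitz in time and your ``use the Hartree--Fock equation to get a Lipschitz bound, then interpolate'' strategy breaks down at step (2). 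The paper instead splits $K=K^L_R+K^S_R$ via the Fefferman--de la Llave representation \eqref{pot-decomp}: the long-range part is time-differentiable with constant $\sim R^{-(a+2)}$ (Formula~\eqref{eq:fourier_cutoff}), while the singular part is bounded \emph{without} taking a time difference by $\sim R^{\frac32-a}$, and optimizing $R^{7/2}=\n{t-s}/T$ yields exactly $\n{t-s}^{\frac{3-2a}{7}}$. This splitting is the key idea, and it is absent from your proposal.

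Relatedly, you dismiss the quartic term $\sfD$ as routine, needing ``only an $\L^\infty$-type bound on $V_{\op}$ and $\sfX_{\op}$.'' In fact $\sfD$ is the hardest block (Lemma~\ref{lem:appendix_III}): after writing $u=1-w$, its leading singular part acts as a genuine two-body potential $K^S_R(x_j-x_k)$ on the $N$-body wave function, and controlling it requires a localized Hardy--Rellich inequality on balls of radius $R$ together with Sobolev embedding — this is precisely where the norm $\Nrm{\Psi}{\dot\cH^{3/2}_2}$ is consumed, not in any free-propagator increment. So both the source of the exponent and the source of the $\dot\cH^{3/2}_2$ norm are misidentified in your plan, and the step that would replace the cutoff argument is missing.
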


	\begin{remark}
		For a fixed $\hbar$, the global-in-time well-posedness of solutions to the Hartree--Fock equation is a standard result (see for instance \cite{chadam_time-dependent_1976}). However, the bounds of the propagated quantity may depend on $\hbar$. In particular, for a general fixed $\hbar$, the constant $C_T$ in the above proposition may depend on $\hbar$.
	\end{remark}
	
	\begin{remark}
		We know from Part~\ref{part:regularity} that the conditions \eqref{eq:regularity_op} and \eqref{eq:regularity_sqrt} remain satisfied on $[0,T]$. In particular, $\Nrm{\sqrt{\op}}{\L^2(\n{\opp}^n)}^2 = \Tr{\op \n{\opp}^{2n}}$ is uniformly bounded on $[0,T]$. To see that the third-order spatial moment $\intd \rho^\init(x) \n{x}^3\d x = \Tr{\op \n{x}^3}$ remains bounded, one can notice that
		\begin{equation*}
			i\,\dt \Tr{\op \n{x}^3} = \Tr{\com{\tfrac{\n{\opp}^2}{2},\n{x}^3}\op} + \Tr{\com{\sfX_{\op},\n{x}^3}\op}.
		\end{equation*}
		The first term is controlled, using \cite[Formula~(42)]{lafleche_global_2021}, by a term proportional to
		\begin{equation*}
			\Tr{\op \(\n{x}^3+\n{\opp}^3+1\)}.
		\end{equation*}
		The second term is zero since
		\begin{equation*}
			\Tr{\com{\sfX_{\op},\n{x}^3}\op} = \iintd \n{\op(x,y)}^2 \frac{\n{y}^3-\n{x}^3}{\n{x-y}^a}\d x\d y
		\end{equation*}
		is the integral of an anti-symmetric function of $x$ and $y$. Then, by the standard Gr\"onwall argument, one obtains the desired result. 
	\end{remark}

	It will be convenient to use the fact that the above defined norm~\eqref{eq:Fock_Sobolev_norm} controls quantities of the form $\Nrm{\dG(A\nabla)\Psi}{\cG}$ as stated in the following lemma.
	\begin{lem}
		Let $A\in \L^\infty$ and $\Psi\in\dot{\cH}_1^1$. Then $\Nrm{\dG(A\nabla)\Psi}{\cG} \leq \Nrm{A}{\infty} \Lp{\Psi}{\dot{\cH}^1_{1}}$.
	\end{lem}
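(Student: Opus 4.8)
The plan is to prove the estimate sector by sector on the Fock space $\cG$, reducing it to the elementary one-particle inequality $\nrm{A\nabla\phi} \leq \Nrm{A}{\infty}\,\nrm{\nabla\phi} = \Nrm{A}{\infty}\,\Inprod{\phi}{(-\Delta)\phi}^{1/2}$, which is merely the definition of the operator norm of $A$ applied to the vector-valued function $\nabla\phi$. First I would recall that on the $n$-particle sector the operators $\dG(A\nabla)$ and $\dG((-\Delta)^1) = \dG(-\Delta)$ act, respectively, as the sums $\dG_n(A\nabla) = \sum_{j=1}^n (A\nabla)_j$ and $\dG_n(-\Delta) = \sum_{j=1}^n (-\Delta)_j$ over the $n$ particle variables, and that, by the definition \eqref{eq:Fock_Sobolev_norm} of the norm with $s = k = 1$, for $\Psi = (\psi^{(n)})_{n\geq 0}$ one has
\[
	\Nrm{\Psi}{\dot{\cH}^1_1}^2 = \Inprod{\Psi}{\cN\,\dG(-\Delta)\,\Psi}_{\cG} = \sum_{n\geq 0} n\,\Inprod{\psi^{(n)}}{\dG_n(-\Delta)\,\psi^{(n)}}.
\]

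Then, for fixed $n$ and $\psi = \psi^{(n)}$, I would apply the Cauchy--Schwarz inequality to the sum of $n$ terms and afterwards the one-particle inequality in each variable, obtaining
\[
	\nrm{\dG_n(A\nabla)\,\psi}^2 \leq n\sum_{j=1}^n \nrm{(A\nabla)_j\,\psi}^2 \leq n\,\Nrm{A}{\infty}^2\sum_{j=1}^n \nrm{\nabla_j\,\psi}^2 = n\,\Nrm{A}{\infty}^2\,\Inprod{\psi}{\dG_n(-\Delta)\,\psi}.
\]
Summing over $n$, using that the $\cG$-norm is the $\ell^2$-sum of the sector norms, yields $\Nrm{\dG(A\nabla)\,\Psi}{\cG}^2 \leq \Nrm{A}{\infty}^2 \sum_{n\geq 0} n\,\Inprod{\psi^{(n)}}{\dG_n(-\Delta)\,\psi^{(n)}} = \Nrm{A}{\infty}^2\,\Nrm{\Psi}{\dot{\cH}^1_1}^2$; I would first establish this for $\Psi$ with finitely many nonzero, sufficiently smooth sectors, and then extend to all $\Psi \in \dot{\cH}^1_1$ by density.

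There is no genuine obstacle in this argument; the only point deserving attention is the power-counting of the particle number. Bounding $\sum_{j=1}^n (A\nabla)_j\psi$ by $\sum_{j=1}^n \nrm{(A\nabla)_j\psi}$ through the triangle inequality would cost a factor $n^2$, whereas using the Cauchy--Schwarz inequality costs only the single factor $n$, which is exactly absorbed by the weight $\cN$ in the definition of $\Nrm{\cdot}{\dot{\cH}^1_1}$. Alternatively, one could write $A\nabla = (A\,R)\,(-\Delta)^{1/2}$ with $R = \nabla(-\Delta)^{-1/2}$ the vector Riesz transform (bounded with norm $1$, so $\Nrm{AR}{\infty} \leq \Nrm{A}{\infty}$) and invoke a composition-type bound in the spirit of Lemma~\ref{lem:second_quantization_op}, but the direct sectorwise computation above is shorter and self-contained.
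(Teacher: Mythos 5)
Your argument is correct and follows essentially the same route as the paper's: reduce to a fixed $n$-particle sector, use Cauchy--Schwarz on the sum over the $n$ variables to pay only a single factor of $n$ (the paper applies the triangle inequality first and then Cauchy--Schwarz on the resulting scalar sum, which is equivalent), and absorb that factor with the weight $\cN$ in the definition of $\Nrm{\cdot}{\dot{\cH}^1_1}$. Your remark about the power-counting is exactly the point of the lemma, so nothing is missing.
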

	
	\begin{proof}
		Using the fact that $A$ is a bounded operator, we obtain
		\begin{equation*}
			\Nrm{\dG(A\nabla)\Psi}{\cG}^2 \leq \Nrm{A}{\infty}^2 \sum_{n=1}^\infty \bigg(\sum_{j\leq n} \Nrm{\nabla_{x_j} \Psi^{(n)}}{L^2}\bigg)^2.
		\end{equation*}
		By the Cauchy--Schwarz inequality and integration by parts, the last factor satisfies
		\begin{multline*}
			\sum_{n=1}^\infty \bigg(\sum_{j\leq n} \Nrm{\nabla_{x_j} \Psi^{(n)}}{L^2}\bigg)^2 \leq \Inprod{\cN_l\Psi^{(n)}}{\DGl{-\Delta} \Psi^{(n)}}_{\cG} = \Nrm{\DGl{-\Delta}^{1/2}\cN_l^{1/2}\Psi}{\cG}^2
		\end{multline*}
		which is bounded above by $\Lp{\Psi}{\dot{\cH}^1_{1}}$.
	\end{proof}
	
	To simplify some of the calculation, it will also be convenient to employ the following lemma.
	\begin{lem}\label{lem:conju}
		For any self-adjoint integral operator $A$ on $\h = L^2(\Rd)$, we have the identities
		\begin{subequations}
			\begin{align}\label{conju1}
				\sfU^{(0)*}_t \DGl{A} \sfU^{(0)}_t &= \DGl{A_I}
				\\\label{conju2}
				\sfU^{(0)*}_t \DGplr{A} \sfU^{(0)}_t &= \DGplr{A_I}
			\end{align}
		\end{subequations}
		where $A_I := e^{-it\lapl}\,A \,e^{it\lapl}$ denotes the operator $A$ in the interaction picture.
	\end{lem}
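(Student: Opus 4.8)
The plan is to exploit the fact that the free Fock evolution is generated by the second quantization of a one‑particle operator, and to verify that the two sides of each identity solve the same linear Cauchy problem — this is cleaner than chasing the conjugation through the operator‑valued distributions.

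First, note that $\sfL_0 = \DGr{\lapl} - \DGl{\lapl} = \DG{H_0}$ with $H_0 := (-\lapl)\oplus\lapl$ on $\h\oplus\h$, using the linearity of $O\mapsto\DG{O}$ together with $\DGl{O} = \DG{O\oplus 0}$, $\DGr{O} = \DG{0\oplus O}$. Since $H_0$ is time‑independent, $\sfU^{(0)}_t = \sfU^{(0)}_{t,0} = e^{-it\sfL_0}$, so for any $B$ on $\cG$ one has $i\dpt\bigl(\sfU^{(0)*}_t B\,\sfU^{(0)}_t\bigr) = \com{\sfU^{(0)*}_t B\,\sfU^{(0)}_t,\ \sfL_0}$. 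Applying this with $B = \DGl{A}$ and with $B = \DGplr{A}$ shows that the left‑hand sides of \eqref{conju1} and \eqref{conju2} solve
\[
	i\dpt X_t = \com{X_t,\sfL_0},\qquad X_0 = \DGl{A}\ \text{(resp. }\DGplr{A}\text{).}
\]

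Next I would check that the right‑hand sides solve the same problem. Differentiating $A_I = e^{-it\lapl}A\,e^{it\lapl}$ gives $i\dpt A_I = \com{\lapl,A_I}$. Since $\DGl{\cdot}$ is a Lie homomorphism of the left copy ($\DGl{\com{\lapl,\cdot}} = \com{\DGl{\lapl},\DGl{\cdot}}$), since $\DGr{\lapl}$ commutes with every left‑sided second‑quantized operator, and since the canonical anticommutation relations yield $\DGplr{\com{\lapl,\cdot}} = \com{\DGplr{\cdot},\sfL_0}$ (the left and right Laplacians acting with opposite signs on the two indices of the kernel), we obtain
\[
	i\dpt\DGl{A_I} = \com{\DGl{\lapl}-\DGr{\lapl},\DGl{A_I}} = \com{\DGl{A_I},\sfL_0},\qquad i\dpt\DGplr{A_I} = \com{\DGplr{A_I},\sfL_0}.
\]
At $t=0$ one has $A_I = A$, so the initial data match, and both identities follow by uniqueness for this linear evolution. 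The self‑adjointness of $A$ enters only to make $\DGl{A}$ a well‑defined, essentially self‑adjoint operator. The one place where actual work is needed — the ``main obstacle'', such as it is — is operator‑theoretic rather than conceptual: $\sfL_0$ is unbounded, so the time differentiations and the uniqueness argument should be carried out on a common core of smooth finite‑particle vectors of $\cG$ (exactly as in the appendix of \cite{benedikter_mean-field_2016}) and then extended by density. As an alternative route one can argue directly, using that $\sfU^{(0)*}_t(\cdot)\sfU^{(0)}_t$ intertwines $a^\sharp_{x,l},a^\sharp_{x,r}$ with the one‑particle free flow $e^{-it\lapl}$ on the left and $e^{it\lapl}$ on the right factor of $\h\oplus\h$, and then substituting into $\DGl{A} = \intdd A(x,y)\,a^*_{x,l}a_{y,l}\dd x\dd y$ and $\DGplr{A} = \intdd A(x,y)\,a^*_{x,l}a^*_{y,r}\dd x\dd y$ and regrouping the kernels; there the bookkeeping of the antilinearity of $a$ and of the interchange of the conjugation with the distributional integrals is the only delicate point.
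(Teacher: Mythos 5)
Your argument is correct, and it takes a genuinely different route from the paper. The paper proves the lemma "from the inside out": it first computes $\com{\sfL_0,a_{x,l}}=\lapl_x a_{x,l}$, deduces via the Baker--Campbell--Hausdorff expansion that $\sfU^{(0)*}_t a_{x,l}\sfU^{(0)}_t=e^{it\lapl_x}a_{x,l}$, and then substitutes this conjugation into $\DGl{A}=\intdd A(z_1,z_2)\,a^*_{z_1,l}\,a_{z_2,l}\dd z_1\dd z_2$, integrating by parts to move the free propagators onto the kernel of $A$ — i.e.\ exactly the "alternative route" you sketch in your last sentence, together with the bookkeeping issues you correctly flag there. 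Your primary argument instead works "from the outside": both sides of each identity solve the Heisenberg equation $i\dpt X_t=\com{X_t,\sfL_0}$ with the same initial datum, the right-hand sides because $i\dpt A_I=\com{\lapl,A_I}$ and second quantization is a Lie homomorphism (with the sign for $\DGplr{\cdot}$ coming out right, since $\com{\sfL_0,a^*_{x,l}}=-\lapl_x a^*_{x,l}$ while $\com{\sfL_0,a^*_{y,r}}=+\lapl_y a^*_{y,r}$, which after integration by parts reproduces $\DGplr{\com{\lapl,A_I}}$). What each approach buys: the paper's version yields the explicit intertwining of the operator-valued distributions, which is reused later in the appendix (e.g.\ to rewrite $\tilde\sfQ^{L*}_{1,R}$ and $\sfD^L_{1,R}$ via $e^{-it\lapl}e^{iy\cdot x}e^{it\lapl}=e^{iy\cdot(x-2it\grad)}$), whereas your uniqueness argument is cleaner and avoids manipulating distributions, at the cost of having to justify differentiation and uniqueness on a common core of finite-particle vectors — a point you acknowledge and which is at the same (formal) level of rigor as the paper's own use of BCH for unbounded generators.
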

	
	\begin{proof}
		By a direct computation, we see that
		\begin{equation}\label{commutator-id}
				\com{\sfL_0, a_{x, l}} = \com{\DGl{-\lapl}, a_{x, l}} = \lapl_x a_{x, l}.
		\end{equation}
		Therefore, using the Baker--Campbell--Hausdorff formula
		\begin{equation*}
			e^X Y e^{-X} = Y + \com{X, Y} + \frac{1}{2!}\com{X, \com{X, Y}} + \ldots
		\end{equation*}
		and Equation~\eqref{commutator-id}, one can show the conjugation formula
		\begin{equation*}
			\sfU^{(0)*}_t a_{x, l}\sfU^{(0)}_t = e^{it\lapl_x} a_{x, l}.
		\end{equation*}
		Hence, we arrive at the desired identity
		\begin{align*}
			\sfU^{(0)* }_t\DGl{A} \sfU^{(0)}_t &= \intdd A(z_1, z_2)\, 
			\sfU^{(0)*}_t\, a_{z_1,l}^* \, a_{z_2,r}\, \sfU^{(0)}_t \d z_1 \d z_2
			\\
			&= \intdd e^{-it\lapl_{z_1}}A(z_1, z_2)\,e^{it\lapl_{z_2}}\, a^*_{z_1, l} \,
			a_{z_2, r}\d z_1 \d z_2.
		\end{align*}
		This establishes \eqref{conju1}. The proof of Identity~\eqref{conju2} is similar. 
	\end{proof}
	
	\begin{proof}[Proof of Proposition~\ref{prop:time_regularity}]
		To prove Proposition~\ref{prop:time_regularity}, first notice that $\com{\sfU^{(0)}_t, \DG{-\lapl}} = 0$, which using Formula~\eqref{eq:aux_gen} allows us to write
		\begin{align*}
			\widehat\sfG_t &= \sfU^{(0)*}_t \Big(\DGl{V_{\op}-\sfX_{\op}} - \DGr{V_{\op}-\sfX_{\op}}\Big) \sfU^{(0)}_t
			\\
			&\qquad + \sfU^{(0)*}_t \(\tilde \sfQ + \tilde\sfQ^*\) \sfU^{(0)}_t + \sfU^{(0)*}_t\,\sfD\, \sfU^{(0)}_t + \sfU^{(0)*}_t\(\tilde \sfP+\tilde\sfP^*\)\sfU^{(0)}_t
			\\
			&=: \mathrm{I}_t + \mathrm{II}_t + \mathrm{III}_t + \mathrm{IV}_t.
		\end{align*}
		We shall prove the H\"older continuity of $t\mapsto \widehat\sfG_t\Psi$ by proving the property for each term $\mathrm{I}_t$, $\mathrm{II}_t$, $\mathrm{III}_t$, and $\mathrm{IV}_t$. This is the content of the following lemmas~\ref{lem:appendix_I}, \ref{lem:appendix_II}, \ref{lem:appendix_III}, \ref{lem:appendix_IV}. Combining these lemmas leads to the result.
	\end{proof}
	
	\begin{lem}\label{lem:appendix_I}
		Under the conditions of Proposition~\ref{prop:time_regularity}, there exists a constant $C_T$ depending on the initial conditions such that
		\begin{equation*}
			\Nrm{\dpt\mathrm{I}_t\Psi}{L^\infty((0,T),\cG)} \leq C_T \(\Nrm{\Psi}{\cG_1} + \Lp{\Psi}{\dot{\cH}^1_{1}}\).
		\end{equation*}
	\end{lem}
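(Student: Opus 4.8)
\textbf{Proof proposal for Lemma~\ref{lem:appendix_I}.}

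The plan is to differentiate $\mathrm{I}_t$ in time and estimate the resulting Fock-space operator acting on $\Psi$ using the second-quantization bounds of Lemma~\ref{lem:second_quantization_op} together with the propagated regularity of $\op$ from Part~\ref{part:regularity}. Recall $\mathrm{I}_t = \sfU^{(0)*}_t\bigl(\DGl{W_t} - \DGr{W_t}\bigr)\sfU^{(0)}_t$ where $W_t := V_{\op(t)} - \sfX_{\op(t)}$ is the (time-dependent) mean-field potential minus exchange operator. Since $\com{\sfU^{(0)}_t,\DG{-\Delta}} = 0$, differentiating gives two contributions: the term coming from $\dpt W_t$, and the term coming from the conjugation by $\sfU^{(0)}_t$. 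For the first, one uses the Hartree--Fock equation~\eqref{eq:Hartree--Fock_omega} to express $\dpt \op = \frac{1}{i\hbar}\com{H_\omega,\op}$, so that $\dpt V_{\op} = K*\dpt\rho$ with $\dpt\rho = \Diag{\dpt\op}$, and similarly $\dpt\sfX_{\op} = \sfX_{\dpt\op}$. For the second, by Lemma~\ref{lem:conju} we rewrite the conjugation as replacing $W_t$ by its interaction-picture version and the derivative brings down a commutator $\com{\Delta, (W_t)_I}$ — equivalently, after undoing the conjugation, a term of the form $\sfU^{(0)*}_t\bigl(\DGl{\com{-\Delta,W_t}} - \DGr{\cdots}\bigr)\sfU^{(0)}_t$ up to factors of $i$.

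The key steps, in order: (1) write $\dpt\mathrm{I}_t = \sfU^{(0)*}_t\bigl(\DGl{\dpt W_t + i\com{\Delta, W_t}} - \DGr{\dots}\bigr)\sfU^{(0)}_t$, using $\com{\sfU^{(0)}_t, \DG{-\Delta}}=0$ and Lemma~\ref{lem:conju}; (2) estimate $\Nrm{\DGl{O}\Psi}{\cG} \leq \Nrm{O}{p}\Nrm{\cN^{1/p'}\Psi}{\cG}$ by Lemma~\ref{lem:second_quantization_op}, so it suffices to bound $\Nrm{\dpt W_t}{p}$ and $\Nrm{\com{\Delta,W_t}}{p}$ in a suitable Schatten norm (taking $p=1$ or $p=2$ and controlling $\cN^{1/p'}\Psi$ by $\Psi$ in $\cG_1$); (3) for $\dpt V_{\op} = K * \dpt\rho$, bound it as a multiplication operator using $\Nrm{K*g(x)}{\infty}\lesssim \Nrm{g}{L^r}$-type inequalities (Hardy--Littlewood--Sobolev) with $\dpt\rho$ controlled via Proposition~\ref{prop:diag_vs_weights} applied to $\dpt\op = \frac{1}{i\hbar}\com{H_\omega,\op}$, itself estimated through the commutator bounds of Section~\ref{sec:commutators} and the propagated norms of Propositions~\ref{prop:regu_HF} and~\ref{prop:regu_HF_2}; (4) for $\dpt\sfX_{\op} = \sfX_{\dpt\op}$ use Lemma~\ref{lem:X_p_bound} (or \eqref{eq:X_HS_bound}) to get $\Nrm{\sfX_{\dpt\op}}{p} \lesssim h^{-\alpha}\Nrm{\dpt\op\,(1+\n{\opp}^\alpha)}{p}$, again controlled by the propagated second-order Sobolev bounds; (5) for the commutator $\com{\Delta, W_t} = \com{\Delta, V_{\op}} - \com{\Delta,\sfX_{\op}}$, note $\com{\Delta, V_{\op}}$ involves $\nabla V_{\op}\cdot\nabla$ and $\Delta V_{\op}$, i.e. $E_{\op}$ and $\nabla E_{\op}$ terms which are bounded via the commutator estimates of Proposition~\ref{prop:estim_commutator_Lq_Besov} and Proposition~\ref{prop:weighted_com_est}, while $\com{\Delta,\sfX_{\op}} = \sfX_{\com{\Delta,\op}}$ by Lemma~\ref{lem:X_commut_gradients}, reducing to Lemma~\ref{lem:X_p_bound} applied to $\Dhx^2\op$; (6) collect all bounds, using that the relevant norms of $\op$, $\sqrt{\op}$, $\rho$ are finite on $[0,T]$ by Part~\ref{part:regularity}, to obtain the claimed bound with the right-hand side $\Nrm{\Psi}{\cG_1} + \Lp{\Psi}{\dot{\cH}^1_1}$, where the $\dot{\cH}^1_1$ norm absorbs the $\nabla$ appearing in the $\com{\Delta, V_{\op}}$ term via the lemma preceding this one.

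The main obstacle will be step (5), controlling $\com{\Delta, W_t}$: the operator $\com{\Delta, V_{\op}}$ is a first-order differential operator whose symbol involves $\nabla V_{\op}$, which for singular $K$ is only in a Lorentz/Besov space, so one must carefully pair the $\nabla$ with $\Psi$ (producing the $\dot{\cH}^1_1$ norm on the right) while the coefficient $\nabla V_{\op} = -E_{\op}$ goes into an $L^\infty$ or Schatten bound — this is exactly where the preliminary lemma $\Nrm{\dG(A\nabla)\Psi}{\cG}\leq \Nrm{A}{\infty}\Lp{\Psi}{\dot{\cH}^1_1}$ is used, and one needs $E_{\op}\in L^\infty$, which holds by Proposition~\ref{prop:regu_HF} since $\rho\in W^{1,4}\cap L^1$. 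A secondary difficulty is bookkeeping the powers of $\hbar$: several estimates (e.g. $\sfX_{\op}$) carry negative powers of $h$, but since this appendix works at fixed $\hbar$ (as noted in the remark), these are harmless constants, so one need not track them sharply here. Once these estimates are assembled, the $L^\infty$-in-time bound follows immediately from the uniform-on-$[0,T]$ control of the Hartree--Fock solution.
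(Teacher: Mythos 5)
Your proof follows essentially the same route as the paper's: the same splitting of $\dpt\mathrm{I}_t$ into a $\dpt W_t$ part and a $\com{-\lapl,W_t}$ part, the same use of Lemma~\ref{lem:second_quantization_op} and of the preliminary bound $\Nrm{\dG(A\nabla)\Psi}{\cG}\leq\Nrm{A}{\infty}\Lp{\Psi}{\dot{\cH}^1_1}$ to absorb first-order terms, and the same reliance on the propagated regularity of Part~\ref{part:regularity}. Two small points. First, the identity $\com{\Delta,\sfX_{\op}}=\sfX_{\com{\Delta,\op}}$ you invoke in step (5) is not what Lemma~\ref{lem:X_commut_gradients} gives: the correct Leibniz expansion is $\com{\sfX_{\op},-\lapl}=2\,\sfX_{\Dh_x\op}\cdot\grad+\sfX_{\DDh_x\op}$, so the exchange commutator also produces a first-order term whose $\grad$ must be paired with $\Lp{\Psi}{\dot{\cH}^1_1}$, exactly as you already do for $\grad V_{\op}\cdot\grad$; your framework handles this once the expansion is written correctly, and $\grad V_{\op}$, $\lapl V_{\op}$ are then bounded in $L^\infty$ by elementary Lorentz-space duality $\Nrm{\nabla K}{L^{\fb,\infty}}\Nrm{\rho}{L^{\fb',1}}$ rather than by the commutator propositions of Section~\ref{sec:commutators}. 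Second, for $K*\dpt\rho$ the paper uses the continuity equation $\dpt\rho+\nabla\cdot j_{\op}=0$ to move the derivative onto $K$; your route of estimating $\dpt\rho=\Diag{\dpt\op}$ directly from the Hartree--Fock equation is equivalent, but only because $\Diag{\com{V_{\op},\op}}=0$ and $\Diag{\com{\sfX_{\op},\op}}=0$ (the latter by symmetry of $K$), so that only the kinetic commutator survives and reproduces $-\nabla\cdot j_{\op}$; this cancellation should be stated, since the diagonals of those commutators are not otherwise obviously controllable. Neither point is a structural gap.
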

	
	\begin{proof}[Proof of Lemma~\ref{lem:appendix_I}]
		It suffices to consider the left contribution since the proof for the right contribution is exactly the same. Let us first handle the term with $V_{\op}$. Using Identity~\eqref{conju1}, we see that
		\begin{align*}
			i\dt\(\sfU^{(0)*}_t \DGl{V_{\op}} \sfU^{(0)}_t\) &= \sfU^{(0)*}_t \(\DGl{\com{V_{\op}, -\lapl}}
			+ \DGl{i\dpt\rho* K}\) \sfU^{(0)}_t
			\\
			&=: \sfJ_1 + \sfJ_2.
		\end{align*}
		We start by estimating $\sfJ_1\Psi$. We rewrite the commutator in $\sfJ_1$ by using the fact that
		\begin{equation*}
			\DGl{\com{V_{\op}, -\lapl}} = 2 \DGl{\grad V_{\op}\cdot \grad} + \DGl{\lapl V_{\op}}.
		\end{equation*}
		Then, since $\sfU^{(0)}_t$ is unitary and commutes with $\nabla$, we obtain
		\begin{equation*}
			\Lp{\sfU^{(0)*}_t \DGl{\grad V_{\op}\cdot \grad} \sfU^{(0)}_t\Psi}{\cG} \leq \Lp{\nabla V_{\op}}{L^\infty} \Lp{\Psi}{\dot{\cH}^1_{1}},
		\end{equation*}
		where since $\nabla K\in L^{\fb,\infty}$, we have
		\begin{equation*}
			\Lp{\nabla V_{\op}}{L^\infty} \leq \sup_{x\in\Rd} \intd \n{\nabla K(x-y)}\rho(y)\d y \leq \Nrm{\nabla K}{L^{\fb,\infty}} \Nrm{\rho}{L^{\fb',1}}.
		\end{equation*}
		Similarly, for the second term, by Lemma~\ref{lem:second_quantization_op} and the fact that $\nabla K\in L^{\fb,\infty}$, we have that
		\begin{equation*}
			\Lp{\sfU^{(0)*}_t \DGl{\lapl V_{\op}} \sfU^{(0)}_t\Psi}{\cG}
			\leq \Nrm{\lapl V_{\op}}{L^\infty} \Nrm{\Psi}{\cG_1} \leq \Nrm{\nabla K}{L^{\fb,\infty}} \Nrm{\nabla\rho}{L^{\fb',1}} \Nrm{\Psi}{\cG_1}.
		\end{equation*}
		By Proposition~\ref{prop:regu_HF}, the norm of $\rho$ in $L^{\fb',1}$ remains bounded for $t\in[0,T]$. When $\fb'\geq 2$, the same holds for $\nabla \rho$. Moreover, since $\hbar=1$, $\Nrm{\nabla \rho}{L^1} \leq C \Tr{\(1-\Delta\)\op}$ is also bounded on $[0,T]$ by Proposition~\ref{prop:regu_HF}, and so $\nabla \rho$ is in $L^\infty([0,T],L^p)$ for any $p\in[1,4]$. Hence, it follows that
		\begin{equation}\label{eq:appendix_bound_J1}
			\Lp{\sfJ_1\Psi}{\cG}\leq C_T \(\Lp{\Psi}{\cG_{1}} + \Lp{\Psi}{\dot{\cH}^1_{1}}\).
		\end{equation}
		For the $\sfJ_2$ term, let us begin by recalling the fact that $\rho$ satisfies the equation
		\begin{equation*}
			\dpt \rho + \nabla\cdot j_{\op} = 0
		\end{equation*}
		where $j_{\op} = \frac{1}{2} \Diag{\op\,\opp+\opp\,\op}$ is known as the probability current. Similarly as for $\sfJ_1$, we have the estimate
		\begin{equation}\label{eq:appendix_bound_J2}
			\Lp{\sfJ_2\Psi}{\cG} = \Lp{\sfU^{(0)*}_t\DGl{\grad\cdot\(j_{\op} * K\)} \sfU^{(0)}_t \Psi}{\cG} \leq \Nrm{j_{\op}}{L^{\fb',1}} \Lp{\nabla K}{L^{\fb,\infty}} 
			\Lp{\Psi}{\cG_1}.
		\end{equation}
		The term $\Nrm{j_{\op}}{L^{\fb',1}}$ is bounded as for $\rho$ by Proposition~\eqref{prop:diag_vs_weights} and the kinetic energy of $\op$.
		
		Now let us handle the exchange term $\sfX_{\op}$ in term $\mathrm{I}_t$. Note that 
		\begin{equation*}
			i\dt\(\sfU^{(0)*}_t \DGl{\sfX_{\op}} \sfU^{(0)}_t\) = \sfU^{(0)*}_t\!\(\DGl{\com{\sfX_{\op}, -\lapl}} + \DGl{i\bd_t \sfX_{\op}}\) \sfU^{(0)}_t
			=: \sfJ_3 + \sfJ_4.
		\end{equation*}
		We start by rewriting the $\sfJ_3$ term. Observe we have that
		\begin{equation*}
			\DGl{\com{\sfX_{\op}, -\lapl}} = 2 \DGl{(\sfX_{\Dh_x \op})\cdot \grad} + \DGl{\sfX_{\DDh_x\op}}.
		\end{equation*}
		The two terms are handled in the same exact manner as before. We will only deal with the second term. By Lemma~\ref{lem:second_quantization_op} and Inequality~\eqref{eq:X_HS_bound_2}, we have that
		\begin{equation}\label{eq:appendix_bound_J3_0}
			\Lp{\sfU^{(0)*}_t \DGl{\sfX_{\DDh_x \op}}\sfU^{(0)}_t\Psi}{\cG}
			\leq \Lp{\sfX_{\DDh_x \op}}{2} \Lp{\Psi}{\cG_{1}} \leq \Lp{\DDh_x\op \n{\opp}^a}{2} \Lp{\Psi}{\cG_{1}},
		\end{equation}
		and since $\hbar=1$, we have $\DDh_x\op = -\sum_{\jj = 1}^3 \com{\opp_\jj,\com{\opp_\jj,\op}}$ and so by Lemma~\ref{lem:expand_commutators}, $\Lp{\DDh_x\op \n{\opp}^a}{2}\leq C \Nrm{\op \n{\opp}^{a+2}}{2}$ which remains bounded on $[0,T]$ by Proposition~\ref{prop:regu_HF}.
		Hence we have the estimate
		\begin{equation}\label{eq:appendix_bound_J3}
			\Lp{\sfJ_3\Psi}{\cG}\leq C_T\(\Lp{\Psi}{\cG_{1}} + \Lp{\Psi}{\dot{\cH}^1_{1}}\).
		\end{equation}
		For the $\sfJ_4$ term, we have that
		\begin{equation*}
			\sfJ_4 = \sfU^{(0)*}_t \(\DGl{\sfX_{\com{-\lapl,\op}}} + \DGl{\sfX_{\com{V_{\op},\op}}} - \DGl{\sfX_{\com{\sfX_{\op},\op}}}\) \sfU^{(0)}_t.
		\end{equation*}
		To estimate the term with the Laplacian, we proceed as in Inequality~\eqref{eq:appendix_bound_J3_0} and use the fact that since $\hbar = 1$, $\com{-\lapl,\op} = \n{\opp}^2\op - \op \n{\opp}^2$. To estimate the second term, we use Inequality~\eqref{eq:X_HS_bound_2} to get
		\begin{equation*}
			 \Nrm{\sfX_{\com{V_{\op},\op}}}{\infty} \leq \Nrm{\com{V_{\op},\op}\n{\opp}^a}{\infty} \leq \Nrm{\com{V_{\op},\op}\(1+\n{\opp}^2\)}{\infty}.
		\end{equation*}
		Then similarly as in Section~\ref{sec:commutators}, we write $\com{V_{\op},\op} m = \com{V_{\op},\op\,m} - \com{V_{\op},m} \op$ and use Proposition~\ref{prop:estim_commutator_Lq_Besov} and Proposition~\ref{prop:weighted_com_est} with $V_{\op}$ instead of $E_{\op}$. Similarly, to bound the last term, we use Inequality~\eqref{eq:X_HS_bound_2} and then Proposition~\ref{prop:exchange_com}. Hence we have the estimate
		\begin{equation}\label{eq:appendix_bound_J4}
			\Lp{\sfJ_4\Psi}{\cG} \leq C_T \Nrm{\Psi}{\cG_1}.
		\end{equation}
		The bound on $\dpt \mathrm{I}_t$ now follows by combining the inequalities for each part, i.e. \eqref{eq:appendix_bound_J1}, \eqref{eq:appendix_bound_J2}, \eqref{eq:appendix_bound_J3} and \eqref{eq:appendix_bound_J4}.
		\end{proof}
		
	\begin{lem}\label{lem:appendix_II}
		Under the conditions of Proposition~\ref{prop:time_regularity}, there exists a constant $C_T$ depending on the initial conditions such that for any $\(t,s\)\in[0,T]^2$,
		\begin{equation*}
			\Nrm{\(\mathrm{II}_t-\mathrm{II}_s\)\Psi}{\cG} \leq C_T \n{t-s}^\frac{3-2a}{7} \Nrm{\Psi}{\cG_{3/2}}.
		\end{equation*}
	\end{lem}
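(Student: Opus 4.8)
The plan is to separate the two mechanisms through which $\mathrm{II}_t = \sfU^{(0)*}_t(\tilde\sfQ_t + \tilde\sfQ^*_t)\sfU^{(0)}_t$ depends on time: the free conjugation $\sfU^{(0)}_t$ and the operators $u_t = \sqrt{1-\lambda\op_t}$, $v_t = \sqrt{\lambda\op_t}$ appearing in $\tilde\sfQ^*_t$. I would write $\mathrm{II}_t - \mathrm{II}_s$ as a \emph{flow term} $\sfU^{(0)*}_t\big((\tilde\sfQ_t + \tilde\sfQ^*_t) - (\tilde\sfQ_s + \tilde\sfQ^*_s)\big)\sfU^{(0)}_t$ plus a \emph{conjugation term} $\sfU^{(0)*}_t(\tilde\sfQ_s + \tilde\sfQ^*_s)\sfU^{(0)}_t - \sfU^{(0)*}_s(\tilde\sfQ_s + \tilde\sfQ^*_s)\sfU^{(0)}_s$ with the kernels frozen at time $s$. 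In both I would use the hidden commutator structure of $\tilde\sfQ^*$ exhibited in the proof of Lemma~\ref{lem:bound_Q_tilde}, namely $\tilde\sfQ^* = \frac{1}{2N}\intd\big(a_l^*(u_x)a_r^*(\conj{v_x}) + a_l^*(v_x)a_r^*(\conj{u_x})\big)\DGplr{u\com{K_x,v} - v\com{K_x,u}}\dd x$, so that the only singular ingredients are the commutator kernels $u\com{K_x,v}$, $v\com{K_x,u}$, controlled by Lemmas~\ref{lem:K_x_commutator} and~\ref{lem:nabla_u}, and I would apply Lemma~\ref{lem:conju} to push the free evolution onto all the kernels.

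For the flow term I would first note that $\sfU^{(0)}_t$ commutes with $\cN$, hence is isometric on every $\cG_k$, so it suffices to bound $\tilde\sfQ_t + \tilde\sfQ^*_t - \tilde\sfQ_s - \tilde\sfQ^*_s$ as an operator from $\cG_{3/2}$ to $\cG$. Since $\hbar = 1$ and the regularity of Part~\ref{part:regularity} propagates on $[0,T]$, the Hartree--Fock flow is Lipschitz in the operator and Hilbert--Schmidt norms: $\dpt\op = \tfrac{1}{i}\com{-\tfrac12\Delta + V_\op - \sfX_\op,\op}$, and every commutator on the right is bounded on $[0,T]$ by Propositions~\ref{prop:regu_HF}, \ref{prop:regu_HF_2}, \ref{prop:com_V} and the exchange estimates. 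Because $\Nrm{\lambda\op_t}{\infty} < 1$, expanding $v_t = \sqrt{\lambda\op_t} = \sum_{n}\binom{1/2}{n}(-1)^n(\lambda\op_t)^n$ (and similarly $u_t$) exactly as in Lemma~\ref{lem:nabla_u} shows that $u_t$, $v_t$ and the commutator kernels depend Lipschitz-continuously on $t$ in the relevant Schatten norms. Feeding these Lipschitz bounds into Lemmas~\ref{lem:second_quantization_op} and~\ref{lem:bound_Q_tilde}, with the number-operator bookkeeping $\cN a^\sharp = a^\sharp(\cN\mp 1)$ used in the proof of Proposition~\ref{prop:propagator_bound} to turn the quadratic-form estimates into $\cG_{3/2}\to\cG$ bounds, gives a flow-term contribution $\le C_T\,\n{t-s}\,\Nrm{\Psi}{\cG_{3/2}}$, which is stronger than needed.

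The Hölder exponent therefore comes entirely from the conjugation term, where the kernels are frozen and only $e^{\pm it\Delta}$ changes. By Lemma~\ref{lem:conju} and multilinearity in the kernels, this term is a finite sum of contributions each containing one kernel-factor of the form $A_I^{(t)} - A_I^{(s)}$, with $A$ ranging over the (finitely many, finitely regular) operator kernels entering $\tilde\sfQ^*_s$ — the columns $u_{s,x}$, $v_{s,x}$ and the commutator kernels $u_s\com{K_x,v_s}$, $v_s\com{K_x,u_s}$ — and $A_I^{(\tau)} = e^{-i\tau\Delta}A e^{i\tau\Delta}$; by Lemma~\ref{lem:second_quantization_op} and $\Nrm{A_I^{(\tau)}}{p} = \Nrm{A}{p}$, it then suffices to prove that $\Nrm{A_I^{(t)} - A_I^{(s)}}{p}$ is bounded by $\n{t-s}^{\theta}$ times a suitable Sobolev-type norm of $A$, with $\theta = \frac{3-2a}{7}$. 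I would split $A_I^{(t)} - A_I^{(s)} = (e^{-it\Delta} - e^{-is\Delta})A\,e^{it\Delta} + e^{-is\Delta}A\,(e^{it\Delta} - e^{is\Delta})$ and use, on the level of the integral kernel, the elementary bound $\Nrm{(e^{-it\Delta} - e^{-is\Delta})f}{L^2} \le C\,\n{t-s}^{\theta}\Nrm{f}{\dot{H}^{2\theta}}$ for $\theta\in[0,1]$, interpolated against the trivial estimate $\Nrm{A_I^{(t)} - A_I^{(s)}}{p}\le 2\Nrm{A}{p}$; this buys $\n{t-s}^{\theta}$ at the price of $2\theta$ spatial derivatives on $A$.

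The main obstacle is that the regularity budget of the singular kernels $A = u_s\com{K_x,v_s}$ is finite: $\nabla K\in L^{\fb,\infty}$ with $\fb = \frac{3}{a+1}$ limits the smoothing available from $K$, and the Sobolev control of $\sqrt{\op}$ and $\op$ provided by Propositions~\ref{prop:regu_HF_2} and~\ref{prop:regu_HF_sqrt}, together with the weight $m = 1+\n{\opp}^n$ with $n>a+1$, is of finite order; distributing the $2\theta$ derivatives optimally among these constraints (and among the several $e^{\pm i\tau\Delta}$-differences) is exactly the computation that fixes the admissible value $\theta = \frac{3-2a}{7}$. Once this is done, the $(\cN+2)^{1/p'}$ weights produced by the two creation pairs, together with the $\cN^{1/2}$ from the $a^*a^*$ prefactor and the few units of $\cN$ moved around by $\cN a^\sharp = a^\sharp(\cN\mp1)$, are all absorbed into $\Nrm{\Psi}{\cG_{3/2}}$ since $p\in[1,2]$, which yields the claimed estimate.
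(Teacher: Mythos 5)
There is a genuine gap: your proof never actually produces the exponent $\frac{3-2a}{7}$. You correctly isolate the conjugation term as the source of the Hölder behaviour and propose to estimate each kernel difference $A_I^{(t)}-A_I^{(s)}$ by interpolating $\Nrm{(e^{-it\Delta}-e^{-is\Delta})f}{L^2}\leq C\,\n{t-s}^{\theta}\Nrm{f}{\dot H^{2\theta}}$ against the trivial bound, but the step "distributing the $2\theta$ derivatives optimally \ldots is exactly the computation that fixes the admissible value $\theta=\frac{3-2a}{7}$" is precisely the content of the lemma, and it is left undone. Worse, the mechanism you propose is not the one that produces this exponent. The paper instead splits the \emph{potential}, $K=K^L_R+K^S_R$, via the Fefferman--de la Llave representation: the conjugated long-range part is genuinely differentiable in time (combining the free conjugation and the Hartree--Fock flow in a single derivative) with $\Nrm{\dt\(\sfU^{(0)*}_t\tilde\sfQ^{L*}_{1,R}\sfU^{(0)}_t\)\Psi}{\cG}\lesssim R^{-(a+2)}\Nrm{\Psi}{\cG_1}$, coming from $\intd\n{\widehat{K^L_R}}\weight{y}^2\dd y\sim R^{-(a+2)}$, while the singular part is bounded directly (no time derivative) by $\Nrm{\tilde\sfQ^{S*}_{1,R}\Psi}{\cG}\lesssim R^{\frac{3}{2}-a}\Nrm{\Psi}{\cG_{3/2}}$ via $\Nrm{K^S_{R,x}v}{2}\lesssim N^{1/2}R^{\frac32-a}\Nrm{\rho}{L^\infty}^{1/2}$; optimizing $R^{7/2}=\n{t-s}/T$ gives $\frac{3-2a}{7}$. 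Your Sobolev-interpolation route would produce an exponent governed by the fractional regularity of the commutator kernels $u\com{K_x,v}$, which is a different quantity; you give no argument that it matches, and since $\nabla K\in L^{\fb,\infty}$ only, there is no reason to expect it does.

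A second, more local error: your claim that $v_t=\sqrt{\omega_t}$ depends Lipschitz-continuously on $t$ "by expanding $v_t=\sum_n\binom{1/2}{n}(-1)^n(\lambda\op_t)^n$" is wrong — that series represents $\sqrt{1-\omega}$, i.e.\ $u_t$; $\sqrt{x}$ has no power series at $x=0$ and $\omega_t$ is not bounded below, so the square root is not a smooth function of $\omega_t$ (Powers--Størmer only gives Hölder-$\tfrac12$). The differentiability of $v_t$ has to come instead from the unitary-conjugation structure of the Hartree--Fock flow, $\omega_t=W_t\,\omega_0\,W_t^*$, whence $i\,\dpt v=\com{H_\omega,v}$ — which is exactly the identity the paper uses when it computes $i\,\dpt\(u_I\,e^{iy\cdot(x-2it\grad)}\,v_I\)$. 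Even granting this fix, your "flow term is Lipschitz" claim requires commutator bounds such as $\Nrm{\com{K_x,\dpt v}}{p}$, which via Lemma~\ref{lem:K_x_commutator} would demand second-order semiclassical regularity of $\sqrt{\op}$ that Proposition~\ref{prop:regu_HF_sqrt} does not provide; the paper's arrangement of the time derivative avoids ever differentiating the singular kernel in this way.
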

	
	\begin{proof}[Proof of Lemma~\ref{lem:appendix_II}]
		To estimate term $\mathrm{II}$, it suffices to focus on the first term of $\tilde\sfQ^*$, which we will denote by $\tilde \sfQ_1^*$. Furthermore, we decompose the singular potential into a long-range part and a singular part as follows
		\begin{equation}\label{pot-decomp}
			K = K_R^L + K_R^S := C_a \(\int^{R^{-2}}_0 s^{\frac{a}{2}-1}\varphi_{s}\d s + \int^\infty_{R^{-2}} s^{\frac{a}{2}-1}\varphi_{s}\d s\) .
		\end{equation}
		for some $R$ which we will determine shortly, and with $\varphi_s(x) = e^{-\pi \n{x}^2 s}$. Consequently, we have the decomposition
		\begin{equation*}
			N\, \tilde \sfQ_1^* = \intdd \(K^L_R + K^S_R\)\!(x-y) \DGplr{u\,\delta_x\,v} \DGplr{u\,\delta_y\,v} \d x\d y =: \tilde \sfQ_{1, R}^{L *}+\tilde \sfQ_{1, R}^{S *}.
		\end{equation*}
		
		For the long-range part, we follow the proof of the bounded potential case as in \cite{benedikter_mean-field_2016} and show that $\tilde\sfQ_{1, R}^{L*}$ is time differentiable. Applying Lemma~\ref{lem:conju} and the operator identity $e^{-it\lapl}A(x)\,e^{it\lapl} = A(x-2it\grad)$, we can now rewrite $\tilde\sfQ^L_{1,R}$ as follows
		\begin{multline*}
			\sfU^{(0)*}_t \tilde\sfQ^{L*}_{1,R} \sfU^{(0)}_t = \intd \widehat{K^L_R}(y) \DGplr{u_I\, e^{iy\cdot\(x-2it\grad\)}\,v_I} \DGplr{u_I\,e^{-iy\cdot\(x-2it\grad\)}\,v_I} \d y,
		\end{multline*}
		where $A_I := e^{-it\lapl}A\,e^{it\lapl}$ denotes the operator $A$ in the interaction picture. To estimate the time derivative of $\tilde\sfQ^L_{1,R}$, we make the observation that
		\begin{multline*}
			i\,\dpt(u_I\,e^{iy\cdot(x-2it\grad)}\,v_I) = e^{-it\lapl} \(u \com{e^{i y\cdot x}, -\lapl} v\) e^{it\lapl}
			\\
			+ e^{-it\lapl} \(\com{V_{\op}-\sfX_{\op}, u} e^{i y\cdot x}\, v + u\,e^{i y\cdot x} \com{V_{\op}-\sfX_{\op},v}\) e^{it\lapl}.
		\end{multline*}
		Applying Lemma~\ref{lem:second_quantization_op}, we have the estimates
		\begin{equation}\label{time-deriv-uv}
		\begin{split}
			\Lp{\DGplr{u_I e^{iy\cdot(x-2it\grad)}v_I}\Psi}{\cG} &\leq 2 \Lp{u}{\infty} \Lp{v}{2} \Lp{\Psi}{\cG_1},
			\\
			\Lp{\DGplr{\dpt(u_I\, e^{iy\cdot\(x-2it\grad\)}\,v_I)}\Psi}{\cG} &\leq C_T \weight{y}^2 \Lp{u}{\infty} \Lp{\bangle{\grad}v}{2}\Lp{\Psi}{\cG_1},
		\end{split}
		\end{equation}
		where $C_T = C\sup_{t\in[0,T]}\(1 + \Lp{V_{\op}}{L^\infty} + \Lp{\sfX_{\op}}{\infty}\)$ is finite, and $\weight{y}^2 = 1+\n{y}^2$. In particular, it follows from~\eqref{time-deriv-uv} that we have the inequality
		\begin{equation*}
			\Lp{\dt \sfU^{(0)*}_t\tilde\sfQ^{L*}_{1, R}\sfU^{(0)}_t\Psi}{\cG} \leq C_T \intd \n{\widehat{K^L_R}(y)} \weight{y}^2 \d y \Lp{\Psi}{\cG_1}.
		\end{equation*}
		To complete the estimate, we need to compute the $L^1$-norm of $\widehat{K^L_R}$ to get the explicit dependence of the constant on $R$. Using the fact that $\widehat\varphi_s = s^{-\frac{3}{2}}\varphi_{1/s}$, we have 
		\begin{equation}\label{eq:fourier_cutoff}
			\intd \n{\widehat{K^L_R}} \weight{y}^2 \d y = \int_0^{R^{-2}}\!\!\!\!\!\intd s^{\frac{a-5}{2}} e^{-\frac{\pi}{s} \n{y}^2} \weight{y}^2 \d y \d s = \frac{3}{\pi}\(\tfrac{R^{-\(a+2\)}}{a+2} + \tfrac{R^{-a}}{a}\).
		\end{equation}
		Therefore, provided $R<1$, we obtain the estimate
		\begin{equation*}
			\Lp{\dt\(\sfU^{(0)*}_t \tilde\sfQ^{L*}_{1,R} \sfU^{(0)}_t\Psi\)}{\cG} \leq \frac{6}{\pi a} R^{-(a+2)} \Lp{\Psi}{\cG_1}
		\end{equation*}
		which implies for any $(t,s)\in[0,T]^2$,
		\begin{equation}\label{eq:holder1}
			\Lp{\(\sfU^{(0)*}_t\tilde\sfQ^{L*}_{1, R}\sfU^{(0)}_t-\sfU^{(0)*}_s\tilde\sfQ^{L*}_{1, R}\sfU^{(0)}_s\)\Psi}{\cG} \leq \frac{6}{\pi a}\, R^{-(a+2)} \n{t-s} \Lp{\Psi}{\cG_1}.
		\end{equation}
		For the singular part, by the Cauchy--Schwarz inequality, we have
		\begin{multline*}
			\n{\Inprod{\Psi_1}{\tilde\sfQ^{S*}_{1, R}\Psi_2}} =  \n{\intd \Inprod{a_l(u_x)\Psi_1}{a^*_r(\conj{v_x}) \DGplr{u\, K^S_{R, x}v}\Psi_2} \d x}
			\\
			\leq \(\intd \Lp{a_l(u_x)\Psi_1}{\cG}^2 \d x \)^\frac{1}{2} \(\intd \Lp{a^*_r(\conj{v_x}) \DGplr{u\, K^S_{R,x}v}\Psi_2}{\cG}^2 \d x\)^\frac{1}{2}.
		\end{multline*}
		Applying Lemma~\ref{lem:second_quantization_op} and the fact that $\Lp{u}{\infty}\le 1$ yields 
		\begin{equation*}
			\Lp{a^*_r(\conj{v_x}) \DGplr{u\, K^S_{R, x}v} \Psi_2}{\cG} \leq (N\rho(x))^\frac{1}{2} \Lp{K_{R,x}^S v}{2} \Lp{\Psi_2}{\cG_{1/2}},
		\end{equation*}
		which gives us
		\begin{equation*}
			\n{\Inprod{\Psi_1}{\tilde\sfQ^{S*}_{1, R}\Psi_2}} \leq C\,N^\frac{1}{2} \Inprod{\Psi_1}{\cN\Psi_1}^\frac{1}{2} \(\intd \rho(x) \Lp{K^S_{R, x}v}{2}^2\d x\)^\frac{1}{2} \Lp{\Psi_2}{\cG_{1/2}}.
		\end{equation*}
		Since $\Diag{v^2} = N\,\rho$, we see that
		\begin{align*}
			\Lp{K^S_{R, x}v}{2} &\leq \int^\infty_{R^{-2}} s^{\frac{a}{2}-1} \Lp{\varphi_{s,x}\,v}{2} \d s
			\\
			&= N^\frac{1}{2}\int^\infty_{R^{-2}} s^{\frac{a}{2}-1} \(\n{\varphi_{s}}^2* \rho\)^\frac{1}{2}\!(x)\, \d s
			\\
			&\le N^\frac{1}{2} \Lp{\rho}{L^\infty}^\frac{1}{2} \int_{R^{-2}}^\infty s^{\frac{a}{2}-1} \Lp{\varphi_{s}}{L^2} \d s \leq C_T\, N^\frac{1}{2}\,R^{\frac{3}{2}-a}.
		\end{align*}
		Hence, by duality, it follows that $\Lp{\tilde\sfQ^{S}_{1, R}\Psi}{\cG} \leq R^{\frac{3}{2}-a} \Lp{\Psi}{\cG_{3/2}}$. By a similar argument, one can also show the same inequality for the dual operator $\tilde\sfQ^{S*}_{1,R}$. Therefore,
		\begin{equation}\label{eq:holder2}
			\Lp{\tilde\sfQ^{S*}_{1,R}\Psi}{\cG} \leq C_{T}\,N\, R^{\frac{3}{2}-a} \Lp{\Psi}{\cG_{3/2}}.
		\end{equation}
		Combining \eqref{eq:holder1} and \eqref{eq:holder2}, we obtain that for any $(t,s)\in [0,T]^2$ and any $R\in (0,1)$, the following inequality holds
		\begin{equation*}
				\Lp{\(\sfU^{(0)*}_t\tilde\sfQ^{*}_{1}\sfU^{(0)}_t-\sfU^{(0)*}_s\tilde\sfQ^{*}_{1}\sfU^{(0)}_s\)\Psi}{}
				\leq C_T \(R^{-(a+2)}\n{t-s} + R^{\frac{3}{2}-a}\) \Lp{\Psi}{\cG_{3/2}}.
		\end{equation*}
		In particular, if $t\neq s$, one can take $R^{\frac{7}{2}} = \frac{\n{t-s}}{T} \leq 1$, leading to
		\begin{equation*}
			\Lp{\(\sfU^{(0)*}_t\tilde\sfQ^{*}_{1}\sfU^{(0)}_t-\sfU^{(0)*}_s\tilde\sfQ^{*}_{1}\sfU^{(0)}_s\)\Psi}{\cG} \leq C_T \n{t-s}^\frac{3-2a}{7}\Lp{\Psi}{\cG_{3/2}}.
		\end{equation*}
		If $t=s$, we can make $R\to 0$ to obtain the same inequality.
	\end{proof}
	
	Next, let us consider the type $\mathrm{III}$ terms.
	\begin{lem}\label{lem:appendix_III}
		Under the conditions of Proposition~\ref{prop:time_regularity}, there exists a constant $C_T$ depending on the initial conditions such that for any $\(t,s\)\in[0,T]^2$,
		\begin{equation*}
			\Nrm{\(\mathrm{III}_t-\mathrm{III}_s\)\Psi}{\cG} \leq C_T \n{t-s}^\frac{3-2a}{7} \(\Nrm{\Psi}{\cG_2} + \Nrm{\Psi}{\dot{\cH}^{3/2}_2}\).
		\end{equation*}
	\end{lem}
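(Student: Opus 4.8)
The approach is to treat $\mathrm{III}_t = \sfU^{(0)*}_t\,\sfD\,\sfU^{(0)}_t$ exactly as the term $\mathrm{II}_t$ was treated in Lemma~\ref{lem:appendix_II}, now for the quartic number-conserving operator $\sfD$ from \eqref{eq:def_G}, which is a finite sum of terms of the schematic form $\frac{1}{2N}\intdd K(x-y)\,a^\sharp_{\sigma_1}(w_x)\,a^\sharp_{\sigma_2}(w'_y)\,a_{\sigma_3}(w''_y)\,a_{\sigma_4}(w'''_x)\dd x\dd y$ with $\sigma_i\in\set{l,r}$ and $w,w',w'',w'''\in\set{u,v}$. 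As in \eqref{pot-decomp} I would split $K=K^L_R+K^S_R$ for a parameter $R\in(0,1)$ and correspondingly $\sfD=\sfD^L_R+\sfD^S_R$, estimating the smooth long-range piece in a Lipschitz-in-time fashion and the singular piece in a small-norm fashion, and finally optimizing $R$.

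For the long-range piece I would conjugate $\sfD^L_R$ by the free evolution using $\sfU^{(0)*}_t a_{x,\sigma}\sfU^{(0)}_t = e^{it\lapl_x}a_{x,\sigma}$ (the computation underlying Lemma~\ref{lem:conju}) and represent each factor $K^L_R(x-y)$ through its Fourier transform, so that $\sfU^{(0)*}_t\sfD^L_R\sfU^{(0)}_t$ becomes a frequency integral of products of operators $\DGplr{w_I\,e^{iy\cdot(x-2it\grad)}w'_I}$ and their $\DGl{\cdot}$, $\DGr{\cdot}$ analogues, with $w_I=e^{-it\lapl}w\,e^{it\lapl}$. Differentiating in time, and using that $u$ and $v$ evolve by conjugation with the propagator of $H_\omega$, hence solve $i\dpt u=\com{H_\omega,u}$ and $i\dpt v=\com{H_\omega,v}$, the time derivative produces only (i) the commutator $\com{K^L_R,-\lapl}$, i.e. terms $2\,\grad K^L_R\cdot\grad$ and $\lapl K^L_R$, and (ii) commutators $\com{V_{\op}-\sfX_{\op},w}$; each such term is controlled by Lemma~\ref{lem:second_quantization_op} together with the uniform-in-$\hbar$ bounds on $\Nrm{V_{\op}}{L^\infty}$, $\Nrm{\grad V_{\op}}{L^\infty}$, $\Nrm{\sfX_{\op}}{\infty}$ and on the kinetic moments of $\op$ on $[0,T]$ from Proposition~\ref{prop:regu_HF}, exactly as in the proof of Lemma~\ref{lem:appendix_I}; since $\sfD$ is quartic, closing the estimate requires the weight $\cN^2$, while the derivative from $\grad K^L_R\cdot\grad$ is absorbed by the Sobolev-type norm, so the total bound is $\intd\n{\widehat{K^L_R}(y)}\weight{y}^2\dd y$ times $\bigl(\Nrm{\Psi}{\cG_2}+\Nrm{\Psi}{\dot{\cH}^{3/2}_2}\bigr)$. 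Combined with \eqref{eq:fourier_cutoff}, which gives $\intd\n{\widehat{K^L_R}(y)}\weight{y}^2\dd y\leq C\,R^{-(a+2)}$ for $R<1$, this yields
\begin{equation*}
	\Nrm{\bigl(\sfU^{(0)*}_t\sfD^L_R\sfU^{(0)}_t-\sfU^{(0)*}_s\sfD^L_R\sfU^{(0)}_s\bigr)\Psi}{\cG}\leq C_T\,R^{-(a+2)}\,\n{t-s}\,\bigl(\Nrm{\Psi}{\cG_2}+\Nrm{\Psi}{\dot{\cH}^{3/2}_2}\bigr).
\end{equation*}

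For the singular piece I would estimate $\sfD^S_R\Psi$ directly (without passing to the interaction picture), pairing with a test vector and using the Cauchy--Schwarz inequality as in the treatment of $\tilde\sfQ^{S*}_{1,R}$ in Lemma~\ref{lem:appendix_II}: one strips the two annihilation factors, whose $L^2$ masses are $\Nrm{u_x}{L^2}^2\leq 1$ or $\Nrm{v_x}{L^2}^2=N\rho(x)$, and bounds the surviving $\DGplr{w\,K^S_{R,x}w'}$ by Lemma~\ref{lem:second_quantization_op}. The needed bounds on $K^S_R$ are of the same type as in Lemma~\ref{lem:appendix_II}: from $\Diag{v^2}=N\rho$ and the boundedness of $\Nrm{\rho}{L^\infty}$ on $[0,T]$ (Proposition~\ref{prop:regu_HF}) one gets $\Nrm{K^S_{R,x}v}{2}\leq N^\frac{1}{2}\Nrm{\rho}{L^\infty}^\frac{1}{2}\int_{R^{-2}}^\infty s^{\frac{a}{2}-1}\Nrm{\varphi_s}{L^2}\dd s\leq C_T\,N^\frac{1}{2}\,R^{\frac{3}{2}-a}$ (valid for $a<\tfrac{3}{2}$). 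For the terms of $\sfD$ in which $v$ does not appear, one writes $u=1-(1-u)$ with $0\leq 1-u\leq\omega$ (so $\Nrm{(1-u)_x}{L^2}^2\leq N\rho(x)$, Lemma~\ref{lem:nabla_u}); the remainder pieces are handled as the $v$-pieces, and the surviving purely local singular kernel is controlled by the Hardy-type bound underlying \eqref{eq:X_HS_bound_2}, trading the $\n{x-y}^{-a}$ singularity ($a\leq 1$) for derivatives on $\Psi$ which are absorbed by $\dot{\cH}^{3/2}_2$. The $N^{\pm1}$ factors then cancel against the prefactor $\tfrac{1}{N}$, giving $\Nrm{\sfD^S_R\Psi}{\cG}\leq C_T\,R^{\frac{3}{2}-a}\bigl(\Nrm{\Psi}{\cG_2}+\Nrm{\Psi}{\dot{\cH}^{3/2}_2}\bigr)$, hence the same bound (up to a factor $2$) for the difference at times $t$ and $s$ by the triangle inequality. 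Finally, combining the two bounds and choosing $R^{\frac72}=\n{t-s}/T\leq 1$ (which makes $R^{-(a+2)}\n{t-s}=T\,R^{\frac32-a}$) gives
\begin{equation*}
	\Nrm{\bigl(\mathrm{III}_t-\mathrm{III}_s\bigr)\Psi}{\cG}\leq C_T\,\n{t-s}^{\frac{3-2a}{7}}\bigl(\Nrm{\Psi}{\cG_2}+\Nrm{\Psi}{\dot{\cH}^{3/2}_2}\bigr),
\end{equation*}
with the case $t=s$ following by letting $R\to0$. I expect the main obstacle to be the long-range part: among the several quartic terms of $\sfD$, with their mixed $u/v$ and $l/r$ structure, one must verify that differentiating the interaction-picture kernel produces only the two admissible types of commutators above and, above all, that the resulting second-quantized quantities close using no more than the weights in $\cG_2\cap\dot{\cH}^{3/2}_2$ — which is where the uniform-in-$\hbar$ propagation of regularity of $\op$ from Part~\ref{part:regularity} enters in an essential way.
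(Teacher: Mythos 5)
Your overall architecture is the paper's: split $K=K^L_R+K^S_R$ as in \eqref{pot-decomp}, prove a Lipschitz-in-time bound of size $R^{-(a+2)}$ for the long-range piece via the interaction picture and the Fourier representation, prove a static bound of size $R^{\frac32-a}$ for the singular piece, and optimize $R^{7/2}=\n{t-s}/T$. Your treatment of the long-range part matches the paper's (the time derivative of $u_I\,e^{iy\cdot(x-2it\grad)}u_I$ produces exactly the commutators $\com{V_{\op}-\sfX_{\op},u}$ and $u\com{e^{iy\cdot x},-\lapl}u$, controlled by Lemma~\ref{lem:second_quantization_op} and the propagated regularity, with the weight $\weight{y}^2$ absorbed by \eqref{eq:fourier_cutoff}).

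The gap is in the singular part, and it sits exactly where the lemma is hard. First, a factual slip: $u=\sqrt{1-\omega}$ has kernel $\delta(x-y)-w(x,y)$, so $u_x\notin L^2$ and ``$\Nrm{u_x}{L^2}^2\leq 1$'' is false; only the integrated bound $\intd\Nrm{a_l(u_x)\Psi}{\cG}^2\dd x\leq\Inprod{\Psi}{\cN_l\Psi}$ holds. Consequently the Cauchy--Schwarz/strip-off argument of Lemma~\ref{lem:appendix_II} (which always has a genuine $v$ factor with $\Nrm{v_x}{L^2}^2=N\rho(x)\in L^\infty$ to absorb the kernel) does not apply to the four-$u$ term of $\sfD$. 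After writing $u=1-w$, the irreducible piece is $\intdd K^S_R(x-y)\,a^*_{x,l}a^*_{y,l}a_l(u_y)a_l(u_x)$, i.e.\ the kernel $\n{x-y}^{-a}\indic_{\n{x-y}\lesssim R}$ acting directly on two coordinates of $\Psi^{(n,m)}$. You wave at ``the Hardy-type bound underlying \eqref{eq:X_HS_bound_2}'', but Hardy--Rellich alone gives $\iint\n{x-y}^{-2a}\n{\Psi}^2\lesssim\Nrm{(-\Delta)^{a/2}\Psi}{}^2$ with \emph{no} smallness in $R$; the factor $R^{\frac32-a}$ and the norm $\dot\cH^{3/2}_2$ are not produced by that inequality. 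The paper extracts them by rescaling $z\mapsto Rz$, applying a bounded extension operator $T_{B_1}:H^a(B_1)\to H^a(\R^3)$ so that Hardy--Rellich can be used on the localized function, and then paying an additional $\tfrac32-a$ (resp.\ $\tfrac32$) derivatives through H\"older against $\Indic{B_R}$ plus Sobolev embedding --- that is precisely how $\Lp{\sfI_1\Psi}{\cG}\leq C N^{-1}R^{\frac32-a}(\Lp{\Psi}{\dot\cH^{3/2}_2}+\Lp{\n{\opp}^{3/2}w}{2}\Lp{\Psi}{\cG_2})$ is obtained. You correctly guessed the exponent and the norm, but the mechanism that delivers them is the content of the proof and is missing from your argument.
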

	
	\begin{proof}[Proof of Lemma~\ref{lem:appendix_III}]
		Let us focus on the first term of $\sfD$ which we denote by $\sfD_1$. The proof of H\"older
		continuity of $\sfD_1$ is similar to that of $\tilde\sfQ_1$. Using \eqref{pot-decomp}, we decompose $\sfD_1$ into two parts 
		\begin{equation*}
			2\,N\,\sfD_1 = \sfD_{1,R}^L+\sfD_{1,R}^S.
		\end{equation*}
		For the long-range part, we begin by writing 
		\begin{multline*}
			\sfU^{(0)*}_t \sfD_{1,R}^L \sfU^{(0)}_t = \intd \widehat{K^L_R}(y) \DGl{u_I \, e^{iy\cdot\(x-2it\grad\)} u_I} \DGl{u_I \, e^{-iy\cdot\(x-2it\grad\)} \, u_I} \d y. 
		\end{multline*}
		Using the identity
		\begin{equation*}
			i\dpt(u_I \, e^{iy\cdot\(x-2it\grad\)} \, u_I) = e^{-it\lapl} \(\eta \, e^{iy\cdot x} \, u + u \, e^{iy\cdot x} \, \eta + u \com{e^{iy\cdot x}, -\lapl} u\) e^{it\lapl},
		\end{equation*} 
		where $\eta = [V_{\op}-\sfX_{\op}, u]$, and Lemma~\ref{lem:second_quantization_op}, we deduce the following estimates
		\begin{equation}\label{time-deriv-uu}
		\begin{split}
			\Lp{\DGl{u_I \, e^{iy\cdot(x-2it\grad)} u_I}\Psi}{\cG} &\leq \Nrm{\Psi}{\cG_1}
			\\
			\Lp{\DGl{\dpt (u_I \, e^{iy\cdot(x-2it\grad)} \, u_I)}\Psi}{\cG}
			&\leq \(\Lp{\eta}{\infty}+\n{y}^2\) \Nrm{\Psi}{\cG_1} + \n{y} \Lp{\Psi}{\dot{\cH}^1_{1}}. 
		\end{split}
		\end{equation}
		By the above inequalities~\eqref{time-deriv-uu} and Formula~\eqref{eq:fourier_cutoff}, provided $R\in(0,1)$, we get an estimate of the form
		\begin{equation*}
			\Lp{\dt\(\sfU^{(0)*}_t \sfD^{L}_{1,R} \sfU^{(0)}_t\)\Psi}{\cG} \leq C_a \(1+\Nrm{\eta}{\infty}\) R^{-(a+2)} \(\Lp{\Psi}{\cG_{2}} + \Lp{\Psi}{\dot{\cH}^1_{2}}\).
		\end{equation*}
		To handle the singular part, we begin by writing $u = 1 - w$. Then it follows that
		 \begin{align*}
			 \sfD^S_{1,R} &= \intdd K^S_R(x-y) \,a^*_{x, l} \, a^*_{y,l} \, a_l(u_y) \, a_l(u_x) \d x \d y
			 \\
			 &\qquad + \intdd K^S_R(x-y) \, a^*_{l}(w_x) \, a^*_{l}(w_y) \, a_l(u_y) \, a_l(u_x) \d x \d y
			 \\
			 &\qquad - \intdd K^S_R(x-y) \, a^*_{x,l} \, a^*_{l}(w_y) \, a_l(u_y) \, a_l(u_x) \d x \d y
			 \\
			 &\qquad - \intdd K^S_R(x-y) \, a^*_{l}(w_x) \, a^*_{y,l} \, a_l(u_y) \, a_l(u_x) \d x \d y
			 \\\nonumber
			 &=: \sfI_1+ \sfI_2+\sfI_3+\sfI_4.
		\end{align*}
		To estimate $\sfI_1$, we begin by observing that
		\begin{equation*}
			(\sfI_1\Psi)^{(n,m)}\!\(\underline{x}_{n},\underline{y}_{m}\) =  \sum_{1\leq j < k \leq n} K^S_R(x_i-x_j) \(\bar u^{(x_j)} \bar u^{(x_k)} \Psi^{(n,m)}\)\!\(\underline{x}_{n},\underline{y}_{m}\)
		\end{equation*}
		where $u^{(x_j)}$ is the operator acting on the variable $x_j$ and $\underline{x}_{n} = (x_1,\dots,x_n)$. Defining $g(x,y) := \Nrm{\bar u^{(x)} \bar u^{(y)} \Psi^{(n,m)}\!\(x,y,\dots\)}{L^2(\R^{3\(n+m-2\)})}$, it follows from the triangle inequality and the anti-symmetry of $\Psi$ that
		\begin{align*}
			\Lp{(\sfI_1\Psi)^{(n,m)}}{L^2} &\leq n\(n-1\) \(\iint_{\n{z}\leq R} \frac{\n{g(x+z,x)}^2}{\n{z}^{2a}} \d z \d x\)^\frac{1}{2}
			\\
			&\leq n^2 R^{\frac{3}{2}-a} \(\intd\Nrm{\frac{g_{R,x}(z)}{\n{z}^{a}}}{L^2_z(B_1)}^2 \d x\)^\frac{1}{2}
		\end{align*}
		where $g_{R,x}(z) = g(x+zR,x)$ and $B_1$ is the unit ball of $\R^3$. Now let $T_{B_1}$ be the bounded extension operator 
		\begin{align*}
			T_{B_1} &: H^a({B_1}) \to H^a(\R^3), & \forall x\in B_1, (T_{B_1} g)(x) &= g(x)
		\end{align*}
		which exists as proved for example in \cite[Theorem~IX.7]{brezis_analyse_2005} when $a\in\N$. One can proceed by interpolation when $a\in\R$. Then one has
		\begin{equation*}
			\Nrm{\frac{g_{R,x}(z)}{\n{z}^{a}}}{L^2_z(B_1)} = \Nrm{\frac{T_{B_1} g_{R,x}(z)}{\n{z}^{a}}}{L^2_z(B_1)} \leq \Nrm{\frac{T_{B_1} g_{R,x}(z)}{\n{z}^{a}}}{L^2_z(\R^3)}
		\end{equation*}
		and by Hardy--Rellich's inequality
		\begin{equation*}
			\Nrm{\frac{T_{B_1} g_{R,x}(z)}{\n{z}^{a}}}{L^2_z(\R^3)} \leq C \Nrm{\(-\Delta\)^\frac{a}{2} T_{B_1}g_{R,x}}{L^2(\R^3)} \leq C_{T_{B_1}} \Nrm{g_{R,x}}{H^a(B_1)}.
		\end{equation*}
		For $\sfI_1$, this leads to
		\begin{align*}
			\Lp{(\sfI_1\Psi)^{(n,m)}}{L^2} &\leq C\,n^2 R^{\frac{3}{2}-a} \(\iint_{\n{z}\leq 1} \n{(-\Delta)^\frac{a}{2}g_{R,x}(z)}^2 + \n{g_{R,x}(z)}^2 \d x \d z\)^\frac{1}{2}
			\\
			&\leq C\,n^2 \(\iint_{\n{z}\leq R} \n{\(-\Delta\)_z^\frac{a}{2} g}^2 + \n{R^{-a} g}^2 \d z \d x\)^\frac{1}{2}.
		\end{align*}
		Now by H\"older's inequality and by Sobolev's embedding, for any $\alpha > 0$ and any $f\in H^\alpha$,
		\begin{equation*}
			\int_{\n{z}\leq R} \n{f(z)}^2\d z \leq \Nrm{f}{L^{2p'}}^2 \Nrm{\indic_{B_R}}{L^{p}} \leq C\,R^{2\alpha} \Nrm{(-\Delta)^{\alpha/2} f}{L^2}^2,
		\end{equation*}
		with $p = \frac{3}{2\alpha}$. In particular, taking $f= \(-\Delta\)_z^\frac{a}{2} g$ and $\alpha = \frac{3}{2} - a$ and taking $f= R^{-a} g$ and $\alpha = \frac{3}{2}$ we obtain for $\sfI_1$
		\begin{equation*}
			\Lp{(\sfI_1\Psi)^{(n,m)}}{L^2} \leq C\,n^2\, R^{\frac{3}{2} - a} \Nrm{(-\Delta)^{3/4}_x g}{L^2}.
		\end{equation*}
		Using the fact that $\Nrm{u}{\infty} \leq 1$, we can control the $L^2$ norm on the right-hand side of the above inequality by
		\begin{align*}
			\Nrm{\(-\Delta\)_x^{3/4} g}{L^2} &= \Nrm{\(\bar u^{(y)} \(-\Delta\)_x^{3/4} \bar u^{(x)} \Psi^{(n,m)}\)\!\(x,y,\dots\)}{L^2(\R^{3\(n+m\)})}
			\\
			&\leq \Nrm{\(\(-\Delta\)^{3/4}\bar u\)^{(x)} \Psi^{(n,m)}\(x,\dots\)}{L^2(\R^{3\(n+m\)})}
		\end{align*}
		and using the fact that $\bar{u} = 1 - \bar{w}$, we finally obtain
		\begin{equation*}
			\Lp{\sfI_1\Psi}{\cG} \leq \frac{C}{N}\,R^{\frac{3}{2}-a} \(\Lp{\Psi}{\dot{\cH}^{\frac{3}{2}}_2} + \Lp{\n{\opp}^{\frac{3}{2}}w}{2} \Lp{\Psi}{\cG_2}\).
		\end{equation*}
		The other $\sfI_i$ terms are less singular and treated in the same way, leading to
		\begin{equation*}
			\Lp{\sfU^{(0)*}_t\sfD^{S}_{1, R}\sfU^{(0)}_t\Psi}{\cG} \leq C_T\, R^{\frac{3}{2}-a} \(\Lp{\cN\Psi}{\dot\cH^{3/2}_2} + \Nrm{\Psi}{\cG_2}\).
		\end{equation*}
		By the same argument as in the case of $\tilde\sfQ_1$, we see that $\sfU^{(0)*}_t\sfD_{1}\sfU^{(0)}_t\Psi$ is also H\"older continuous in time.
	\end{proof}
	
	Finally, let us handle type $\mathrm{IV}$ terms.
		
	\begin{lem}\label{lem:appendix_IV}
		Under the conditions of Proposition~\ref{prop:time_regularity}, there exists a constant $C_T$ depending on the initial conditions such that for any $\(t,s\)\in[0,T]^2$,
		\begin{equation*}
			\Nrm{\(\mathrm{IV}_t-\mathrm{IV}_s\)\Psi}{\cG} \leq C_T \n{t-s}^\frac{3-2a}{7} \(\Nrm{\Psi}{\cH^1_2} + \Nrm{\Psi}{\cG_2}\).
		\end{equation*}
	\end{lem}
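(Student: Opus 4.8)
The plan is to follow the scheme of Lemmas~\ref{lem:appendix_II} and~\ref{lem:appendix_III} line by line. Up to the exchanges $l\leftrightarrow r$, $(u,v)\leftrightarrow(\conj u,\conj v)$ and up to taking adjoints, the six summands $J_1,\dots,J_{34}$ that make up $\tilde\sfP^*$ in \eqref{eq:def_P} are all of the $J_1$-type or the $J_{12}$-type written out in Lemma~\ref{lem:Q_decomposition}, so it suffices to establish the Hölder bound for one representative of each. First I would conjugate by $\sfU^{(0)}_t$, use Lemma~\ref{lem:conju} together with the identity $e^{-it\Delta}A(x)\,e^{it\Delta}=A(x-2it\nabla)$, and split $K=K^L_R+K^S_R$ as in \eqref{pot-decomp} for a parameter $R\in(0,1)$ to be optimized at the very end; call the resulting pieces $\tilde\sfP^L_R$, $\tilde\sfP^S_R$ (and their adjoints).

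For the long-range (bounded, smooth) part, the operator $\sfU^{(0)*}_t\(\tilde\sfP^L_R+\tilde\sfP^{L*}_R\)\sfU^{(0)}_t$ becomes time-differentiable and I would bound its time derivative exactly as in the bounded-potential argument of \cite{benedikter_mean-field_2016}, applied to $K^L_R$. Differentiating, each of the factors $u_I$, $v_I$, $\(u\com{K_x,u}\)_I$, $a_l^*(\omega_x)\,a_{x,l}$ and of the Gaussian multiplication kernels produces, through $i\partial_t u=\com{-\tfrac12\Delta,u}+\com{V_{\op}-\sfX_{\op},u}$ and the analogous identities for $v$ and $\omega$, either a commutator with $-\Delta$ --- contributing, after the Fourier representation of $K^L_R$, a weight $\weight{y}^2$ controlled by $\intd\n{\widehat{K^L_R}(y)}\weight{y}^2\dd y\le C\,R^{-(a+2)}$ via \eqref{eq:fourier_cutoff} --- or a commutator with $V_{\op}-\sfX_{\op}$, contributing $\sup_{[0,T]}\(1+\Nrm{V_{\op}}{L^\infty}+\Nrm{\sfX_{\op}}{\infty}\)$, which is finite thanks to the propagation of regularity of Part~\ref{part:regularity} (together with the propagated bounds on $\rho$, $j_{\op}$, the kinetic energy and $\Tr{\op\weight{x}^3}$). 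The inner commutators $\com{K^L_{R,x},u}$, $\com{K^L_{R,x},v}$ are controlled in Hilbert--Schmidt norm using the cut-off commutator estimate of Remark~\ref{remark:cutoff_2} and Lemma~\ref{lem:nabla_u}. Passing from the one-particle kernels to powers of $\cN$ by Lemma~\ref{lem:second_quantization_op}, and absorbing the gradient terms via the bound $\Nrm{\dG(A\nabla)\Psi}{\cG}\le\Nrm{A}{\infty}\Nrm{\Psi}{\dot\cH^1_1}$, one should arrive at
\begin{equation*}
	\Nrm{\dpt\(\sfU^{(0)*}_t\(\tilde\sfP^L_R+\tilde\sfP^{L*}_R\)\sfU^{(0)}_t\)\Psi}{\cG}\le C_T\,R^{-(a+2)}\(\Nrm{\Psi}{\cH^1_2}+\Nrm{\Psi}{\cG_2}\),
\end{equation*}
and hence a Lipschitz-in-time bound for this part with constant proportional to $R^{-(a+2)}$.

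For the singular part I would estimate $\(\tilde\sfP^S_R+\tilde\sfP^{S*}_R\)\Psi$ directly, without differentiating in time, proceeding as for $\tilde\sfQ^S_{1,R}$ in Lemma~\ref{lem:appendix_II}: a Cauchy--Schwarz argument, after anticommuting the two creation operators, peels off $\intd\Nrm{a_l(u_x)\Psi}{\cG}^2\dd x=\Inprod{\Psi}{\DGl{u^2}\Psi}\le\Inprod{\Psi}{\cN\Psi}$ since $u^2=1-\omega\le1$, together with $\Nrm{a_r^*(\conj{v_x})}{\infty}^2=\Nrm{v_x}{L^2}^2=N\rho(x)$, while $\DGl{u\com{K^S_{R,x},u}}$ is handled by Lemma~\ref{lem:second_quantization_op}. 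The crucial point is that $K^S_{R,x}$ is a multiplication operator, so $K^S_{R,x}u$ is not Hilbert--Schmidt and one must use the commutator already present: writing $u=1-w$ with $0\le w\le\omega$ gives $u\com{K^S_{R,x},u}=-\com{K^S_{R,x},w}+w\com{K^S_{R,x},w}$, and then $\Nrm{K^S_R}{L^2}\le C\,R^{3/2-a}$ (which uses $\Nrm{\varphi_s}{L^2}=(2s)^{-3/4}$ and $a<3/2$) together with $\(w^2\)(y,y)\le\(\omega^2\)(y,y)\le C\,N^2\rho(y)$ yields $\intd\rho(x)\Nrm{\com{K^S_{R,x},w}}{2}^2\dd x\le C_T\,N^2\,R^{3-2a}$. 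The $J_{12}$-type term, with its extra $a_l^*(\omega_x)\,a_{x,l}$, is treated the same way at the cost of one more power of $\cN$. This should give $\Nrm{\(\tilde\sfP^S_R+\tilde\sfP^{S*}_R\)\Psi}{\cG}\le C_T\,N\,R^{3/2-a}\Nrm{\Psi}{\cG_2}$. Combining the two contributions and choosing $R^{7/2}=\n{t-s}/T$ (letting $R\to0$ when $t=s$) balances $R^{-(a+2)}\n{t-s}$ against $R^{3/2-a}$, producing the exponent $\tfrac{3-2a}{7}$.

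The hard part is the singular estimate: the naive bound on $\DGl{u\com{K^S_{R,x},u}}$ fails because $K^S_{R,x}u$ is not Hilbert--Schmidt, so one genuinely needs the commutator structure built into $\tilde\sfP^*$ (equivalently the splitting $u=1-w$), combined with the sharp decay $\Nrm{K^S_R}{L^2}\sim R^{3/2-a}$; keeping track of the $\cN$- and Sobolev-indices in the $J_{12}$-type term --- so that only $\Nrm{\Psi}{\cH^1_2}$ and $\Nrm{\Psi}{\cG_2}$ appear on the right-hand side --- is the most delicate piece of book-keeping.
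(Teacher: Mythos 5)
Your proposal is correct and, for the $J_1$-, $J_2$-, $J_3$-, $J_4$-type terms, coincides with the paper's argument: conjugate by $\sfU^{(0)}_t$, split $K=K^L_R+K^S_R$, get a Lipschitz bound $\sim R^{-(a+2)}\n{t-s}$ for the long-range part via the Fourier weight \eqref{eq:fourier_cutoff} and a direct bound $\sim R^{3/2-a}$ for the singular part (your mechanism via $u=1-w$, $0\le w\le\omega$, and $\Nrm{K^S_{R,x}w}{2}^2\le N\Nrm{\rho}{L^\infty}\Nrm{K^S_R}{L^2}^2$ is exactly the right way to exploit the commutator structure, and is what the paper invokes through Lemma~\ref{lem:bound_Q_tilde} and Remark~\ref{remark:cutoff_2}), then optimize $R^{7/2}=\n{t-s}/T$. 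The one place where you genuinely diverge from the paper is the $J_{12}$-type term: the paper does \emph{not} split $K$ there, but differentiates $\sfU^{(0)*}_tJ_{12}\sfU^{(0)}_t$ directly in time, obtaining a full Lipschitz estimate. This works because in $J_{12}$ every occurrence of $K_x$ (and of $\nabla K_x$, $\Delta K_x$ produced by $\com{\Delta,K_{x,I}}$) is paired with the smoothing kernels $v$, $\omega_x$ and integrated against $\rho^{1/2}(x)$, and the resulting quantity $\sup_{z}\intd\rho(x)^{1/2}\n{x-z}^{-a}\dd x$ is finite precisely thanks to the spatial-moment hypothesis $\intd\rho^\init(x)\weight{x}^3\dd x\le C$ — this is where that assumption of Proposition~\ref{prop:time_regularity} is actually used. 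Your uniform splitting of $J_{12}$ avoids that moment argument for the singular piece and is slightly lossy there (Hölder where the paper gets Lipschitz), but it is internally consistent, still needs only $\Nrm{\Psi}{\cH^1_2}$ (the $\dot\cH^1$ norm enters through $\grad_1\omega_x\cdot\grad$ when differentiating $\DGl{\omega_{x,I}}$ in the long-range part, exactly as you flag) and yields the stated exponent $\tfrac{3-2a}{7}$ after the same optimization in $R$; the trade-off is that the paper's route isolates $J_{12}$ as the "easy" term at the price of invoking the moment bound, while yours treats all six summands by one mechanism.
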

	
	\begin{proof}[Proof of Lemma~\ref{lem:appendix_IV}]
		For this case, it suffices to consider 
		\begin{align*}
		 J_1 &= -\intd \DGplr{u\,\delta_x\,v} \DGl{u \com{K_x, u}} \d x
		 \\
		 J_{12} &= - \intd \DGplr{\com{K_x, u} v + \com{v, K_x} u} \DGl{\omega_x} \d x.
		\end{align*}
		Following the same routine as before, we decompose the operators into a long-range part and a singular part using Formula~\eqref{pot-decomp}. Again, we will denote the decomposition by $J_{1, R}^L+J_{1, R}^S$ and likewise for $J_{12}$. Applying Lemma~\ref{lem:conju}, we can now rewrite $J^L_{1,R}$ as follows
		\begin{equation*}
			\sfU^{(0)*}_tJ^{L}_{1, R}\sfU^{(0)}_t = \intd \widehat{K^L_R}(y) \DGplr{u_I \, e^{-iy\cdot (x-2it\grad)} v_I} \DGl{u_I \com{e^{-iy\cdot(x-2it\grad)}, w_I}} \d y.
		\end{equation*}
		Since we have that
		\begin{multline*}
		i\,\dpt\!\(u_I \com{e^{-iy\cdot(x-2it\grad)}, w_I}\) = e^{-it\lapl} \(\com{V_{\op}-\sfX_{\op}, u}\com{e^{-iy\cdot x}, w}\) e^{it\lapl}
		\\
		+ e^{-it\lapl} \(u \com{\com{e^{i y \cdot x}, -\lapl}, w} + u \com{e^{-iy\cdot x}, \com{V_{\op}-\sfX_{\op}, w}}\) e^{it\lapl}
		\end{multline*}
		then by Lemma~\ref{lem:second_quantization_op}, since $\Nrm{u}{\infty}\leq 1$ and $\Nrm{w}{\infty}\leq 1$, we have the estimate 
		\begin{subequations}\label{time-deriv-up}
		\begin{align}
			\Lp{\DGl{u_I \com{e^{-iy\cdot(x-2it\grad)},w_I}}\Psi}{\cG} &\leq 2 \Nrm{\Psi}{\cG_1}
			\\
			\Lp{\DGl{\dpt\!\(u_I \com{e^{-iy\cdot(x-2it\grad)}, w_I}\)}\Psi}{\cG} &\leq \cC_T \weight{y}^2 \Lp{\weight{\opp}^2 w}{2} \Nrm{\Psi}{\cG_1}.
		\end{align}
		\end{subequations}
		where $\cC_T = C\sup_{[0,T]}\!\(1 + \Lp{V_{\op}}{\infty} + \Lp{\sfX_{\op}}{\infty}\)$. In particular, by inequalities~\eqref{time-deriv-uv}, \eqref{time-deriv-up}, and~\eqref{eq:fourier_cutoff}, we have that
		\begin{equation*}
		 \Lp{\dt \sfU^{(0)*}_t J^{L}_{1,R} \sfU^{(0)}_t \Psi}{\cG} \leq C_T\, R^{-(a+2)} \Nrm{\Psi}{\cG_1}.
		\end{equation*}
		The singular part follows from Lemma~\ref{lem:bound_Q_tilde} and Remark~\ref{remark:cutoff_2}. More precisely, we have that
		\begin{equation}
			\Lp{\sfU^{(0)*}_tJ^{S}_{1, R}\sfU^{(0)}_t\Psi}{\cG} \leq C_T\, R^{\frac{3}{2}-a} \Nrm{\Psi}{\cG_1}.
		\end{equation}
		Repeating the argument of $\tilde \sfQ_1$ shows that $\sfU^{(0)*}_tJ_{1}\sfU^{(0)}_t\Psi$ is H\"older continuous in time. 
		
		Lastly, let us estimate the operator $J_{12}$. We begin by writing 
		\begin{subequations}
			\begin{align}\label{eq:def_J12_term1}
				\sfU^{(0)*}_tJ_{12}\sfU^{(0)}_t &= \intd \DGplr{\com{K_{x, I}, w_I} v_I } \DGl{\omega_{x, I}} \d x
				\\\label{eq:def_J12_term2}
				&\qquad +\intd \DGplr{ \com{K_{x, I}, v_I} u_I} \DGl{\omega_{x, I}} \d x.
			\end{align}
		\end{subequations}
		It suffices to handle Term~\eqref{eq:def_J12_term2} since Term~\eqref{eq:def_J12_term1} can be treated in a similar manner. Taking its time-derivative yields
		\begin{align*}
			i\,\dpt \eqref{eq:def_J12_term2} &= \intd \DGplr{ i\bd_t(\com{K_{x, I}, v_I} u_I)} \DGl{\omega_{x, I}} \d x
			\\
			&\qquad + \intd \DGplr{ \com{K_{x, I}, v_I} u_I} \DGl{i\bd_t\omega_{x, I}} \d x =: \sfI_5 + \sfI_6.
		\end{align*}
		Let us first consider $\sfI_6$. Notice, we have the identity
		\begin{equation*}
			i\,\dpt \omega_{x, I} = e^{-it\lapl} \(2\,\grad_1\omega_x\cdot \grad_1 + \lapl_2\omega_x + \com{V_{\op}-\sfX_{\op}, \omega}_x\)e^{it\lapl}.
		\end{equation*}
		In particular, we can write 
		\begin{align*}
			\sfI_6 &= \sfU^{(0)*}_t \intd \DGplr{ \com{K_{x}, v} u} \DGl{2\,\grad_1\omega_x\cdot\grad} \d x\, \sfU^{(0)}_t
			\\
			&\quad + \sfU^{(0)*}_t \intd \DGplr{ \com{K_{x}, v} u} \DGl{\lapl_x\omega_x} \d x\, \sfU^{(0)}_t
			\\
			&\quad + \sfU^{(0)*}_t \intd \DGplr{ \com{K_{x}, v} u} \DGl{\com{V_{\op}-\sfX_{\op}, \omega}_x} \d x\, \sfU^{(0)}_t
			\\
			&=: \sfJ_1+\sfJ_2+\sfJ_3.
		\end{align*}
		To bound $\sfJ_1$, it suffices to estimate the following quantity
		\begin{equation}\label{eq:term_J1_of_J12}
			\Nrm{\intd (\com{K_x, v}u)(z_1, z_2)\, \grad_1\omega(x_n, x) \cdot\grad\Psi^{(n,m)}(\underline{x}_{n-1}, x_n,\underline y_n)\d x}{L^2(\d\underline z_2 \d \underline x_n\d \underline y_n)}
		\end{equation}
		where $\d\underline x_n = \d x_1\ldots \d x_n$ and $\d\underline z_2 = \d z_1 \d z_2$. Let us also break the commutator, that is, 
		\begin{align*}
			\eqref{eq:term_J1_of_J12} &\leq \Nrm{\intdd \frac{v(z_1, z) \, u(z, z_2)}{\n{x-z_1}^a} \, \grad_1\omega(x_n, x) \cdot \grad\Psi^{(n,m)}(\underline{x}_{n-1}, x_n,\underline y_n)\d x\d z}{L^2(\d\underline z_2 \d \underline x_n\d \underline y_n)}
			\\
			&\quad + \Nrm{\intdd \frac{v(z_1, z)\, u(z, z_2)}{\n{x-z}^a} \, \grad_1\omega(x_n, x)\cdot \grad\Psi^{(n,m)}(\underline{x}_{n-1}, x_n,\underline y_n) \d x\d z}{L^2(\d\underline z_2 \d \underline x_n\d \underline y_n)}.
		\end{align*}
		 Since $u = 1-w$ where $w$ is a Hilbert--Schmidt operator, we will focus on the identity part. Using the fact that $\omega = v^2$ and the Cauchy--Schwarz inequality, we see that
		\begin{multline*}
			\Nrm{\intd \frac{v(z_1, z_2)}{\n{x-z_1}^a} \, \grad_1\omega(x_n, x) \cdot \grad_{x_n}\Psi^{(n,m)}(\underline{x}_{n-1}, x_n,\underline y_n)\d x}{L^2(\d\underline z_2 \d \underline x_n\d \underline y_n)}.
			\\
			= \Nrm{\intdd \frac{v(z, x)}{\n{x-z_1}^a} \, v(z_1, z_2) \grad_1v(x_n, z) \cdot \grad_{x_n}\Psi^{(n,m)}(\underline{x}_{n-1}, x_n,\underline y_n) \d x\d z}{L^2(\d\underline z_2 \d \underline x_n\d \underline y_n)}
			\\
			\le \sup_{z_1} \(\intd \frac{\rho(x)^\frac{1}{2}}{\n{x-z_1}^a} \d x\) \Nrm{v}{2} \Nrm{\grad_1v}{L^\infty_xL^2_z} \Nrm{\grad\Psi^{(n)}}{L^2(\d \underline x_n\d \underline y_n)},
		\end{multline*}
		since $\Nrm{v_x}{L^2} = \rho(x)^{1/2}$. Note that by Young's and H\"older's inequalities,
		\begin{equation*}
			\intd \frac{\rho(x)^\frac{1}{2}}{\n{x- z_1}^a}\d x \leq C\Nrm{\rho^{1/2}}{L^\frac{3}{3-a}} \leq C \intd \rho(x)\weight{x}^k\d x
		\end{equation*}
		provided $k>3-2a$. Hence, we have the estimate 
		\begin{equation*}
			\Nrm{\sfJ_1\Psi}{\cG}\le C_T\(\Nrm{\Psi}{\cH_2^1}+\Nrm{\Psi}{\cG_2}\).
		\end{equation*}
		The other two terms $\sfJ_2$ and $\sfJ_3$ can be handled in the same manner since $v$ is sufficiently smooth and $\Nrm{V_{\op}}{L^\infty}$ and $\Nrm{\sfX_{\op}}{L^\infty_xL^2_y} \le C \Nrm{\op\n{\opp}^{2+a}}{2} $ are bounded. Thus, it follows that 
		\begin{equation*}
			\Nrm{\sfI_6\Psi}{\cG}\le C_T\(\Nrm{\Psi}{\cH_2^1}+\Nrm{\Psi}{\cG_2}\).
		\end{equation*}

		Lastly, we handle the $\sfI_5$ term. Since we have that
		\begin{align*}
			i\,\dpt\!\(\com{K_{x, I}, v_I}u_I\) &= e^{-it\lapl} \(\com{K_{x}, v} \com{V_{\op}-\sfX_{\op}, u}\) e^{it\lapl}
			\\
			&\qquad + e^{-it\lapl} \(\com{\com{\lapl, K_x}, v}u + \com{K_x,\com{V_{\op}-\sfX_{\op}, v}}u\) e^{it\lapl}
		\end{align*}
		then we can write 
		\begin{align*}
			\sfI_5 &= \sfU^{(0)*}_t \intd \DGplr{ \com{K_{x}, v} \com{-V_{\op}+\sfX_{\op}, w}} \DGl{\omega_x} \d x\, \sfU^{(0)}_t
			\\
			&\qquad + \sfU^{(0)*}_t \intd \DGplr{ \com{K_x, \com{V_{\op}-\sfX_{\op}, v}}u} \DGl{\omega_x} \d x\, \sfU^{(0)}_t
			\\
			&\qquad + \sfU^{(0)*}_t \intd \DGplr{ \com{\com{\lapl, K_x}, v}u} \DGl{\omega_x} \d x\, \sfU^{(0)}_t
			\\
			&=: \sfJ_4+\sfJ_5+\sfJ_6.
		\end{align*}
		Term $\sfJ_4$ and $\sfJ_5$ can be estimated in the same manner as in the previous case, since $\com{-V_{\op}+\sfX_{\op}, w}$ is a bounded operator and $\com{V_{\op}-\sfX_{\op}, v}$ is a Hilbert--Schmidt operator. Hence, it suffices to estimate $\sfJ_6$. 

		To bound $\sfJ_6$, it suffices to estimate the following quantity
		\begin{equation}\label{eq:term_J6_of_J12}
			\Nrm{\intd \com{\lapl K_x+2\grad K_x\cdot\grad, v}\!(z_1, z_2)\,\omega(x_n, x) \, \Psi^{(n,m)}(\underline{x}_{n-1}, x_n,\underline y_n) \d x}{L^2(\d\underline z_2 \d \underline x_n\d \underline y_n)}.
		\end{equation}
		In the case $a=1$, we have that 
		\begin{align*}
			\eqref{eq:term_J6_of_J12} &\le C\Nrm{\intd (v\,\delta_x )(z_1, z_2) \, \omega(x_n, x) \, \Psi^{(n,m)}(\underline{x}_{n-1}, x_n,\underline y_n) \d x}{L^2(\d\underline z_2 \d \underline x_n\d \underline y_n)}
			\\
			&\quad +C \Nrm{\intd \frac{x-z_1}{\n{x-z_1}^{3}}\cdot\grad_1v(z_1, z_2) \, \omega(x_n, x)\, \Psi^{(n,m)}(\underline{x}_{n-1}, x_n,\underline y_n) \d x}{L^2(\d\underline z_2 \d \underline x_n\d \underline y_n)}.
		\end{align*}
		For the first term, we have 
		\begin{multline*}
			\Nrm{v(z_1, z_2)\, \omega(x_n, z_2) \, \Psi^{(n,m)}(\underline{x}_{n-1}, x_n,\underline y_n)}{L^2(\d\underline z_2 \d \underline x_n\d \underline y_n)}
			\\
			\le \Nrm{v}{L^\infty_xL^2_y} \Nrm{\omega}{L^\infty_xL^2_y} \Nrm{\Psi^{(n)}}{L^2(\d \underline x_n\d \underline y_n)}
			\le C \Nrm{\rho}{L^\infty}^\frac{1}{2} \Nrm{\omega\n{\opp}^2}{2} \Nrm{\Psi^{(n)}}{L^2(\d \underline x_n\d \underline y_n)}.
		\end{multline*}
		For the second term, we have 
		\begin{align*}
			&\Nrm{\intd \frac{x-z_1}{\n{x-z_1}^{3}}\cdot\grad_1v(z_1, z_2) \, \omega(x_n, x) \, \Psi^{(n,m)}(\underline{x}_{n-1}, x_n,\underline y_n) \d x}{L^2(\d\underline z_2 \d \underline x_n\d \underline y_n)}
			\\
			&\le C \sup_{z_1}\!\(\intd \frac{\rho(x)^\frac{1}{2}}{\n{x-z_1}^2} \d x \) \Nrm{\rho}{L^\infty}^\frac{1}{2}\Nrm{\grad_1 v}{2}\Nrm{\Psi^{(n)}}{L^2(\d \underline x_n\d \underline y_n)}.
		\end{align*}
		where the first integral term is controlled by $\Nrm{\rho}{L^\infty}^\frac{1}{2}+\Nrm{\rho}{L^1}^\frac{1}{2}$. The case when $0<a<1$ is similar, except that when $a\leq 1/2$, we need to estimate the last quantity with moments in $x$. Thus, it follows that 
		\begin{equation*}
			\Nrm{\sfI_5\Psi}{\cG}\le C_T \Nrm{\Psi}{\cG_2}
		\end{equation*}
		which completes the proof. 
	\end{proof}
	
{\bf Acknowledgments.} J.C. was supported by the NSF through the RTG grant DMS-RTG 184031. L.L. has received funding from the European Research Council (ERC) under the European Union’s Horizon 2020 research and innovation program (grant agreement No 865711). C.S. acknowledges the support of the Swiss National Science Foundation through the Eccellenza project PCEFP2\_181153 and of the NCCR SwissMAP.


\bibliographystyle{abbrv} 
\bibliography{Vlasov}

\end{document}